\date{}
\def\dim{\operatorname{dim}}
\def\Aut{\operatorname{Aut}}
\def\Ker{\operatorname{Ker}}
\def\Hom{\operatorname{Hom}}
\def\cHom{\operatorname{\check Hom}}
\def\gr{\operatorname{gr}}
\def\gr{{\rm gr}\,}
\theoremstyle{plain}
\newtheorem{theorem} {Theorem} [section]
\newtheorem{proposition} {Proposition} [section]
\newtheorem{lemma}   {Lemma} [section]
\newtheorem{corollary}   {Corollary} [section]
\theoremstyle{definition}
\newtheorem{definition}   {Definition} [section]
\newtheorem{remark}  {Remark} [section]
\newtheorem{thmy}{Theorem}
  \title[Prolongations and invariants  of geometric structures and fundamental identities ]{Prolongations, invariants, and fundamental identities of geometric structures}
 \author[J. Hong]{Jaehyun Hong}
   \address{Center for Complex Geometry,  Institute for Basic Science (IBS), Daejeon 34126, Republic of Korea}
    \email{jhhong00@ibs.re.kr}
   \author[T. Morimoto]{Tohru Morimoto}
 \address{Seki Kowa Institute of Mathematics, Yokkaichi University, Yokkaichi 512-8045,
 Institute Kiyoshi Oka de Math\'ematiques, Nara Women's University, Nara 630-8506, Japan}
   \email{morimoto@cc-nara-wu.ac.jp, morimoto.thr@gmail.com}
\begin{document}

\begin{abstract}
 Working in the framework of nilpotent geometry,
we give a unified scheme for the equivalence
problem of geometric structures
which extends and integrates the earlier works by Cartan, Singer-Sternberg, Tanaka, and Morimoto.

By giving a new formulation of the higher order geometric structures and the universal frame bundles, we reconstruct the step prolongation of Singer-Sternberg and Tanaka.
We then investigate the structure function $\gamma$ of the complete step prolongation of a   proper  geometric structure by expanding it into components
$\gamma = \kappa + \tau + \sigma$
and establish the fundamental identities
for $\kappa$, $\tau$, $\sigma$.
This then enables us to study the equivalence problem of geometric structures
in full generality and to extend applications largely to the geometric structures which have not necessarily Cartan connections.

Among all we give an algorithm to construct a complete system
of invariants for any higher order  proper  geometric structure of
constant symbol by making use of generalized Spencer cohomology
group  associated to
the symbol of the geometric structure. We then discuss thoroughly the equivalence problem for
geometric structure in both cases of infinite and finite type.

We also give a characterization of the Cartan connections by means of the structure function $\tau$
and make clear where the Cartan connections are placed in the perspective of the step prolongations.
\end{abstract}

 \maketitle

  \setcounter{tocdepth}{1}
\tableofcontents


\section*{Introduction}


The equivalence problem of  geometric structures is to find  criteria  and to determine whether two
arbitrarily given geometric structures are equivalent or not.
The  theory for the equivalence problem has been developed
over more than a hundred years
and has played an important role in differential geometry.

Recently motivated partly by a necessity to improve the theory for applications to
concrete problems, in particular,
 in complex geometry and in subriemannian geometry,
 and  partly for the unity and  completeness of the theory itself,
 we are led to reconsider
 the equivalence problem of geometric structures.

\medskip

 It was S. Lie  who first posed this problem in a general form  under his theory of continuous transformation groups, and developed a method to obtain the differential invariants of a geometric structure or a system of differential equations by integration of  the completely integrable systems on certain jet spaces defined by the action of a continuous transformation group (\cite{L84}).

From 1904 to 1909 in his series of papers (\cite{C04}, \cite{C05}, \cite{C08}) \' E. Cartan developed
the theory of continuous infinite groups, in the course of which he
 invented a heuristic method for general equivalence problems,
by using the method of bundles of moving frames and the theory of Pfaff systems  in involution, nowadays  called the Cartan-K\"ahler theory.

This ingenious idea of Cartan then found rigorous foundations
through the modern theories of equivalence problem as developed, in particular,
  in  I. M. Singer and S. Sternberg
  (\cite{SS65}),
  N. Tanaka
   (\cite{T70}, \cite{T79})
  and  the second-named author
   (\cite{M83},
   \cite{M93}).

Fundamental notions such as principal fiber bundles, $G$-structure, etc., were first used by Ehresmann and Chern, and the algebraic nature of the systems in involution was maid clear by M. Kuranishi, D. C. Spencer, S. Sternberg, V. Guillemin, J. P. Serre, D. Quillen,  H. Goldschmidt, and others (\cite{K57}, \cite{S62}, \cite{SS65}, \cite{GS64},  \cite {Q64},  \cite{G67a}).

Singer-Sternberg treated the equivalence problem of geometric structures as that of $G$-structures and
gave a 
foundation of an  aspect of Cartan's prolongation procedure, which we call here the step prolongation.

Tanaka  introduced a method of nilpotent approximation by making use of differential systems and
 began to develop   nilpotent geometry (\cite{T70}). First he extended the step prolongation of Singer-Sternberg
to a nilpotent version to prove the finite dimensionality of the automorphism group of a geometric structure of finite type in the  sense of nilpotent geometry.
Next he developed extensively   Cartan's  espace g\'en\'eralis\'e  (\cite{C22}) by constructing Cartan connections to  geometric structures
associated with simple graded Lie algebras (\cite{T79}), which plays a fundamental role in 
parabolic geometry (\cite{CS09}).

 A general theoretical method to solve completely the equivalence problem of geometric structures  was given
by the second-named author  (\cite{M83}),
by introducing  the notion of higher order non-commutative frame bundles.
This, in turn,
applied to nilpotent geometry, gave a unified method to construct Cartan connections as well as
a best possible criterion for constructing Cartan connections (\cite{M93}). It was also given a nilpotent version of the notion of
  involutive geometric structure  in terms of generalized Spencer cohomology groups.

\medskip

In the present paper, working in the framework of nilpotent geometry,
we give a unified scheme for the equivalence
problem of geometric structures
which extends and integrates the earlier works
(\cite{SS65}, \cite{T70}, \cite{T79}, and \cite{M83}),
and which is well adapted to applications.

\medskip

The base spaces of the geometric structures that we consider in this paper are filtered manifolds
of constant symbols:
A {\it filtered manifold} is a differentiable manifold $M$ equipped with a tangential filtration
$ \{ F^p \} _{ p \in   \mathbf Z} $ of depth $\mu$,
$$
TM = F^{-\mu } \supset F^{-\mu +1} \supset \cdots\supset F^{-1} \supset F^0 = 0 $$
satisfying
$ [ \underline{ F}^p, \underline{ F}^q ]
\subset
\underline{ F}^{p + q}
$, where $ \underline F $ denotes the sheaf of sections of $F$ (Section 1.1.1).

To every point of a filtered manifold $(M, F)$
there is attached a nilpotent graded Lie algebra
$\gr F_x$ called the {\it symbol (algebra)} of the filtered manifold at the point.
If the symbol algebra $ \gr F_x$ is isomorphic to a graded Lie algebra
$\mathfrak g _- = \oplus _{p < 0} \mathfrak g_p $ for all $x \in M$, we say that the filtered manifold $(M, F)$ has {\it constant symbol of type} $\mathfrak g_-$.
A usual differentiable manifold $M^n$ is regarded as a trivial filtered manifold $(M, F)$ with the trivial filtration $F^{-1} =TM$  of depth 1, of which the symbol algebra is isomorphic to the abelian Lie algebra  $\mathbb R^n$.

The {\it frame bundle} $\mathscr S^{(0)}(M,F)$ of order 0
of a filtered manifold $(M,F)$ of constant symbol $\mathfrak g_-$
is the set of all graded Lie algebra isomorphisms
 $$ z: \mathfrak g_- \to \gr F_x \quad (x \in M). $$
It is a principal fiber bundle over $M$
with structure group $G_0(\mathfrak g_-)$,
the group of all graded Lie algebra  automorphisms  of $\frak g_-$.

A $G$-structure on a filtered manifold $(M,F)$ (in the sense of nilpotent geometry) is a $G_0$-principal subbundle of
$\mathscr S^{(0)}(M,F)$, where $G_0$ is a Lie subgroup of $G_0(\mathfrak g_-)$.


\medskip

We know many important examples of $G$-structures in the usual sense.
And there are also many interesting $G$-structures in the nilpotent sense, such as contact structures, CR-structures, and various geometric structures linked to differential systems. Indeed, most of the geometric structures studied in differential geometry are $G$-structures in the extended nilpotent sense, which we define in this paper as geometric structures of order 0.

There are as well geometric structures which should be regarded as order 1 such as projective structures and contact projective structures.

Our first task in this paper is to give a general definition of higher order geometric structures, which is indispensable to
find higher order differential invariants  of a given geometric structure that may be even of lower order.
How to define the geometric structures  depends  on  how  to derive the differential invariants.

In order to well formulate the step prolongation in a clearer setting, we present a new category of higher order geometric structures by broadening the category of towers introduced in \cite{M93}.

  \medskip

A {\it geometric structure  $Q^{(k)}$ of order $k\ge -1$ of type} (or {\it of symbol})
$(\mathfrak g_-,G_0, \dots, G_k) $ is defined to be a series of principal fiber bundles
$$Q^{(k)} \stackrel{G_k}{\longrightarrow} Q^{(k-1)} \stackrel{G_{k-1}}{\longrightarrow} \dots \stackrel{G_1}{\longrightarrow} Q^{(0)} \stackrel{G_0}{\longrightarrow} Q^{(-1)}$$
satisfying  that (1) $Q^{(-1)}$ is a filtered manifold $( M,F)$ of type $\mathfrak g_-$, and
(2) for $0 \le i \le k$, $Q^{(i )}$ is a geometric structure of order $i $ of type $(\frak g_-, G_0, \dots, G_{i })$ and is a principal subbundle of the universal frame bundle $ \mathscr
S^{(i)}Q^{(i-1)}$ of order $i$ of $Q^{(i-1)}$,
while the universal frame bundle is defined by the properties of naturalness
and universality (Definition \ref{def:universal frame bundle}). 
%
  Therefore, geometric structures of order 0 are $G$-structures in the sense of nilpotent geometry as mentioned above.
   However, the class of higher-order geometric structures is quite large and includes ``virtual" geometric structures that do not appear in ``real" geometry but are theoretically necessary.

Setting
 $\mathscr S^{(\ell)} Q^{(k)} =
 \mathscr S^{(\ell)}\mathscr S^{(\ell-1)} Q^{(k)}$
and passing to the projective limit,
we obtain  the {\it completed universal frame bundle}
 $$ \mathscr S Q^{(k)} =\lim_{\ell} \,\mathscr S^{(l)} Q^{(k)}, $$
 which proves to be a principal fibre bundle over $Q^{(k-1)}$
 equipped with a canonical 1-form $\theta=\theta_{\mathscr SQ^{(k)}}$
giving an isomorphism
$$\theta_z: T_z\mathscr SQ^{(k)} \to
 E(\mathfrak g _-,
 \mathfrak g_0
, \dots ,
\mathfrak g_{k}) \quad \text{ for all } z \in \mathscr SQ^{(k)}
 $$
and then defining an absolute parallelism on
$\mathscr SQ^{(k)}$, where $ E(\mathfrak g _-,
 \mathfrak g_0
, \dots ,
\mathfrak g_{k}) $ is the universal graded vector space determined by
$  (\mathfrak g _-,
 \mathfrak g_0
, \dots ,
\mathfrak g_{k}) $, which we denote simply by $E$.

The first fundamental observation is that
%
the equivalence problem of the geometric structures $Q^{(k)}$
reduces to the equivalence  problem of the absolute parallelism
$(\mathscr S Q^{(k)},\theta_{\mathscr SQ^{(k)}})$ of the completed universal frame bundles (Theorem \ref{thm:isomorphism via canonical class of any order}).

The invariants of the absolute parallelism  are  given by its structure function $\gamma$, a
$\Hom(\wedge^2E,E)$
-valued function on
$\mathscr SQ^{(k)}$
defined by:
$$d\theta+\frac12\gamma(\theta \wedge \theta)=0. $$


 All information on  $Q^{(k)}$ is encoded in the structure function of $\mathscr S Q^{(k)}$. However, the completed universal frame bundle $\mathscr S Q^{(k)}$ is   infinite dimensional and of large magnitude.
   Following the idea of Cartan, we make a reduction of the universal frame bundle
  by using the structure function to obtain a smaller subbundle   representing
  the invariants more effectively, which is carried out in Section 3.

 To do that, we study basic properties of structure function. In particular, we have the following decomposition:
 $$ \gamma =\gamma _{I} +\gamma _{II}+\gamma _{III}
 =\kappa + \tau + \sigma ,
 \quad
 \text{ with }
 \gamma _{I}=\kappa ,\,\, \gamma _{II}=\tau,\,\, \gamma _{III}=\sigma$$
according to
 the direct sum decomposition
   $$\Hom(\wedge ^2E,E)=
 \Hom(E_-\wedge E_-,E) \oplus \Hom(E_+\wedge E_-,E)\oplus \Hom(E_+\wedge E_+,E),$$
 where
 $ E_- = \bigoplus _{p < 0} E_p ,
  E_+ = \bigoplus _{p \ge 0} E_p
  $.
 We also decompose $\gamma$ as
 $$\gamma =\sum \gamma _{ p },\ \gamma ^{(l)}=\sum_{p \leq l}\gamma _p,
 \quad \text{ and } \quad \gamma =\sum \gamma _{[p]},\ \gamma ^{[l]}=\sum_{p \leq l}\gamma _{[p]},$$
 according to the homogeneous degree and
 modified homogeneous degree of $ \Hom (\wedge ^2 E, E)$, respectively. For the definition, see Section \ref{sect: structure functions of various degree}.
 We then show that
 the truncated structure function
 $\gamma ^{[l]}$
 on $ \mathscr SQ^{(k)}$
 is a function on $\mathscr S ^{(l)} Q^{(k)}$.

\medskip

In Section 3
 we  first introduce the class of  proper  geometric structures.
 We say that a geometric structure $Q^{(k)}$ of type
  $( \mathfrak g_- , G_0, \dots,G_k )$
  is  proper  if  $( \mathfrak g_- , \mathfrak g_0, \dots, \mathfrak g_k )$
  forms a truncated transitive graded Lie algebra $\frak g[k]$.
  It is the   proper  geometric structure that has moderate magnitude and
  appears 
  actually in real geometry.  See the remark after Definition \ref{def:normal geometric structure}.

         Now let us make the key procedure of $W$-normal reduction.
    Given a truncated transitive graded Lie algebra
$\mathfrak g [k] =
\bigoplus_{p \le k}
\mathfrak g_p$
 and Lie groups $G_0, \dots, G_k$ with Lie algebras
 $\mathfrak g_0, \dots, \mathfrak g_k$,
let $\mathfrak g = \bigoplus \mathfrak g_p $ be the prolongation of $\mathfrak g[k]$,
 $\mathfrak g [l] =
\bigoplus_{p \le l}
\mathfrak g_p$,
and $G_{\ell} $ be the vector group $\mathfrak g_{\ell}$
for $\ell >k$.    Here, the vector group $\mathfrak g_{\ell}$ means the same vector space viewed as a commutative Lie group.
 Fix complementary subspaces
 $W=\{W^1_{\ell}, W^2_{\ell+1}\}_{\ell \geq k}$ such that
\begin{eqnarray*}
 \Hom(\wedge^2\frak g_-, \frak g)_{\ell+1} &= &W_{\ell+1}^2 \oplus \partial \Hom(\frak g_-, \frak g)_{\ell+1} \\
 \Hom (\frak g_-, \frak g)_{\ell} &= &W_{\ell}^1 \oplus \partial \frak g_{\ell}.
 \end{eqnarray*}

For any    proper  geometric structure
$Q^{(k)}$ of type       
$(\mathfrak g_-, G_0. \dots, G_k)$,
set
$\mathscr S_W ^{(k)} Q^{(k)} = Q^{(k)}$ and
$$ \mathscr S_W ^{(\ell+1)} Q^{(k)}
 =\{  z \in \mathscr S ^{(\ell+1)}
  \mathscr S_W ^{(\ell)} Q^{(k)} :
  \kappa^{[\ell+1]}(z) \in W^2_{l+1} ,
 \tau^{[\ell+1]}(z) \in   \Hom(\oplus_{i=0}^{l-1}\mathfrak g_i, W^1_{l})
 \}.
 $$
 Then
 $\mathscr S_W ^{(\ell)} Q^{(k)} $
is a  proper  geometric structure of type
  $\mathfrak g[\ell]$
and the projective limit
$\mathscr S_W  Q^{(k)}
  =\lim_{\ell} \mathscr S_W ^{(\ell)} Q^{(k)}   $
 is endowed with a canonical  $\mathfrak g$-valued 1-form
 $ \theta = \theta _{\mathscr S_W} $
 which gives an absolute parallelism on
 $\mathscr S_W  Q^{(k)}$.

  We call
 $\mathscr S_W  Q^{(k)}  $
 the {\it $W$-normal step prolongation} of $Q^{(k)} $
 or the {\it $W$-normal reduction} of
 $\mathscr S  Q^{(k)} $.
  We then have

 \begin{thmy} [Theorem \ref{thm:W normal prolongation}]  \label{theorem II}
 The equivalence problem of the  proper  geometric structures $Q^{(k)}$
of type
$(\mathfrak g_-, G_0. \cdots, G_k)$,
reduces to the equivalence  problem of the absolute parallelisms
$(\mathscr S_W Q^{(k)}, \theta)$ of the step prolongations.
Moreover if $\mathfrak g[k]$ is of finite type, that is, the prolongation
$\mathfrak g $
is finite dimensional, then so is
$\mathscr S_WQ^{(k)}$
and
$\dim \mathscr S_WQ^{(k)}= \dim \mathfrak g$.

\end{thmy}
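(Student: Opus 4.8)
The plan is to deduce both assertions from the properties of the $W$-normal step prolongation recorded above — that each $\mathscr S_W^{(\ell)}Q^{(k)}$ is a normal geometric structure of type $\mathfrak g[\ell]$ sitting as a principal subtower inside $\mathscr S^{(\ell)}Q^{(k)}$, and that $\theta_{\mathscr S_W}$ is the ($\mathfrak g$-valued) restriction of the canonical form $\theta_{\mathscr SQ^{(k)}}$ — together with two earlier facts: Theorem \ref{thm:isomorphism via canonical class of any order}, which reduces the equivalence of the $Q^{(k)}$'s to that of $(\mathscr SQ^{(k)},\theta_{\mathscr SQ^{(k)}})$, and the observation (Section \ref{sect: structure functions of various degree}) that $\gamma^{[\ell]}$, hence its components $\kappa^{[\ell]}$ and $\tau^{[\ell]}$, is a function on $\mathscr S^{(\ell)}Q^{(k)}$ intertwined by every isomorphism of the canonical parallelism.

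For the reduction of the equivalence problem, let $Q^{(k)}$ and $\bar Q^{(k)}$ be normal geometric structures of the given type. Given an isomorphism $\varphi\colon Q^{(k)}\to\bar Q^{(k)}$, I would first use naturalness of the universal frame bundle (Definition \ref{def:universal frame bundle}) to lift it successively to isomorphisms $\varphi^{(\ell)}\colon\mathscr S^{(\ell)}Q^{(k)}\to\mathscr S^{(\ell)}\bar Q^{(k)}$ intertwining the canonical forms, and hence $\kappa^{[\ell]}$ and $\tau^{[\ell]}$. Then I would show by induction on $\ell\ge k$ that $\varphi^{(\ell)}$ carries $\mathscr S_W^{(\ell)}Q^{(k)}$ onto $\mathscr S_W^{(\ell)}\bar Q^{(k)}$: the base case is $\varphi^{(k)}=\varphi$, and in the inductive step naturalness applied to the inductive hypothesis gives that $\varphi^{(\ell+1)}$ maps $\mathscr S^{(\ell+1)}\mathscr S_W^{(\ell)}Q^{(k)}$ onto $\mathscr S^{(\ell+1)}\mathscr S_W^{(\ell)}\bar Q^{(k)}$, while invariance of $\kappa^{[\ell+1]},\tau^{[\ell+1]}$ shows the normalization conditions $\kappa^{[\ell+1]}\in W^2_{\ell+1}$, $\tau^{[\ell+1]}\in\Hom(E,W^1_{\ell})$ are preserved; passing to the limit yields an isomorphism $(\mathscr S_WQ^{(k)},\theta)\to(\mathscr S_W\bar Q^{(k)},\theta)$. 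For the converse, given a diffeomorphism $\Psi$ with $\Psi^{*}\theta=\theta$, I would argue that $\Psi$ carries the $\theta$-characterized vertical distributions of the tower $\mathscr S_WQ^{(k)}\to\cdots\to Q^{(k)}\to\cdots\to Q^{(-1)}=(M,F)$ to those of the target, hence descends through the tower to isomorphisms at each level, in particular to $\Psi^{(k)}\colon Q^{(k)}\to\bar Q^{(k)}$; since (as in Theorem \ref{thm:isomorphism via canonical class of any order}) the embeddings $Q^{(i)}\hookrightarrow\mathscr S^{(i)}Q^{(i-1)}$ are encoded in the lower-degree part of $\theta$, which $\Psi$ preserves, $\Psi^{(k)}$ is an isomorphism of geometric structures.

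For the finite-type statement, I would note that by construction each projection $\mathscr S_W^{(\ell+1)}Q^{(k)}\to\mathscr S_W^{(\ell)}Q^{(k)}$ with $\ell\ge k$ is a principal bundle whose structure group $G_{\ell+1}$ is, for $\ell+1>k$, the vector group $\mathfrak g_{\ell+1}$, so $\dim\mathscr S_W^{(\ell+1)}Q^{(k)}=\dim\mathscr S_W^{(\ell)}Q^{(k)}+\dim\mathfrak g_{\ell+1}$. If $\mathfrak g=\bigoplus_{p}\mathfrak g_p$ is finite dimensional, set $l_0=\max\{\,p:\mathfrak g_p\ne 0\,\}$; for $\ell\ge\max(k,l_0)$ one has $\mathfrak g_{\ell+1}=0$, so $G_{\ell+1}$ is trivial, the projection is a diffeomorphism, and the projective limit stabilizes: $\mathscr S_WQ^{(k)}\cong\mathscr S_W^{(\max(k,l_0))}Q^{(k)}$, a finite-dimensional manifold. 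Finally, since $Q^{(-1)}=(M,F)$ has constant symbol $\mathfrak g_-$ one has $\dim M=\dim\mathfrak g_-$, and since $Q^{(k)}\to\cdots\to Q^{(-1)}$ is a tower of principal bundles with structure groups $G_0,\dots,G_k$ of dimensions $\dim\mathfrak g_0,\dots,\dim\mathfrak g_k$, telescoping gives
$$\dim\mathscr S_WQ^{(k)}=\dim M+\sum_{i=0}^{k}\dim\mathfrak g_i+\sum_{i>k}\dim\mathfrak g_i=\sum_{p}\dim\mathfrak g_p=\dim\mathfrak g.$$

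I expect the real obstacle to lie not in this bookkeeping but in the structural input taken for granted at the start: that the conditions $\kappa^{[\ell+1]}\in W^2_{\ell+1}$ and $\tau^{[\ell+1]}\in\Hom(E,W^1_{\ell})$ genuinely cut out a smooth $G_{\ell+1}$-principal subbundle of $\mathscr S^{(\ell+1)}\mathscr S_W^{(\ell)}Q^{(k)}$, equivalently that $\mathscr S_W^{(\ell)}Q^{(k)}$ is normal of type $\mathfrak g[\ell]$ and carries the $\mathfrak g$-valued $\theta$. This is precisely where the fundamental identities for $\kappa$ and $\tau$, combined with the chosen splittings $\Hom(\wedge^2\mathfrak g_-,\mathfrak g)_{\ell+1}=W^2_{\ell+1}\oplus\partial\Hom(\mathfrak g_-,\mathfrak g)_{\ell+1}$ and $\Hom(\mathfrak g_-,\mathfrak g)_{\ell}=W^1_{\ell}\oplus\partial\mathfrak g_{\ell}$, must be used: they show that moving along a fibre changes the relevant part of $\gamma^{[\ell+1]}$ exactly by a $\partial$-exact term, so that the normalized locus meets each fibre in a single $G_{\ell+1}$-orbit.
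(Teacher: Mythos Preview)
Your proposal is correct and follows essentially the same route as the paper: the reduction of the equivalence problem is obtained by showing that isomorphisms lift through the $W$-normal tower (because lifts preserve $\gamma^{[\ell]}$ and hence the normalization conditions) and conversely descend via the canonical Pfaff class, while the finite-type statement is the same telescoping dimension count. One small terminological point: the fibrewise variation formula you need at the end---that moving along the fibre changes $\kappa_{[\ell+1]}$ by $\partial\varphi$ and $\tau_{[\ell+1]}(A,\cdot)$ by $\partial\psi(A)$---is in the paper the elementary Proposition~\ref{prop:structure equation along fiber}, not the full ``fundamental identities'' of Theorem~\ref{thm:fundamental identities}, which come later and are not required for this step.
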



This is one of the main theorems of
\cite{SS65} and \cite{T70}.
The $W$-normal step  prolongation
$\mathscr S^{(\ell)}_WQ^{(0)} $
essentially coincides with what Tanaka constructed.
We thus reformulate the construction of step prolongation in our new scheme of the complete universal frame bundles.

\medskip
By virtue of the conceptional construction of step prolongations, we are naturally led
to the fundamental identities: In Section 4 we prove the following theorem, which plays
a key role in our prolongation scheme.


\begin{thmy} [Theorem \ref{thm:fundamental identities}] \label{theorem III}
Let $Q^{(k)}$ be a  proper  geometric structure of order $k$ and let
$\mathscr S _WQ^{(k)} $ be
 the $W$-normal step prolongation of
 $Q^{(k)}$, and let
$\gamma = \gamma_{I} + \gamma_{II} + \gamma_{III} $ be
 its structure function.
Then we have
$$
\begin{aligned}
1)\,\, & \partial \gamma _{ I [k]} &= & \,\,\,\Psi _{I [k]}( \gamma _{ I [i]},\gamma _{ II[i]};i<k)\\
2)\,\,& \partial \gamma _{ II [k]} &= & \,\,\,  \Psi _{II [k]}( \gamma _{ I [i]},\gamma _{ II[i]},\gamma _{ III [i]};i<k ) \\
3)\,\,& \partial \gamma _{ III [k]}& = & \,\,\,\Psi _{III [k]}( \gamma _{ II[i]},\gamma _{ III [i]};i<k )
\end{aligned}
$$
where $\Psi_{X[k]}$  for $X \in \{I,II,III\}$  is a polynomial in $\gamma_{Y[i]}$   with $Y \in \{I,II,III\}$ and $i<k$, and  their  covariant derivatives.
\end{thmy}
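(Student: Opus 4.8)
The plan is to derive the three identities by differentiating the structure equation $d\theta + \tfrac12\gamma(\theta\wedge\theta)=0$ on the step prolongation $\mathscr S_WQ^{(k)}$ and extracting its components according to the bidegree decomposition $\Hom(\wedge^2E,E)=\Hom(E_-\wedge E_-,E)\oplus\Hom(E_+\wedge E_-,E)\oplus\Hom(E_+\wedge E_+,E)$. First I would apply the exterior derivative $d$ to the structure equation and use $d^2=0$ together with $d\theta=-\tfrac12\gamma(\theta\wedge\theta)$ to obtain the Bianchi-type identity $(d\gamma)(\theta\wedge\theta) = \gamma(\gamma(\theta\wedge\theta)\wedge\theta)$, schematically. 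The right-hand side is quadratic in $\gamma$, and the left-hand side, after writing $d\gamma$ in terms of the covariant derivative $\nabla\gamma$ (relative to the absolute parallelism) plus terms coming from bracketing with the $\mathfrak g_0$-part and the higher $\mathfrak g_p$-parts of $\theta$, produces the Spencer coboundary $\partial\gamma$ in the appropriate homogeneity. This is the standard mechanism by which $\partial$ enters: the component of $d\gamma$ along $\theta_{\mathfrak g_-}$ wedge $\theta$ reassembles into $\partial(\text{something})$ because $\partial$ is built from the bracket with $\mathfrak g_-$.

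The heart of the argument is a careful bookkeeping of homogeneous (and modified-homogeneous) degrees. I would fix the target degree: look at the component of the Bianchi identity in modified degree $[k]$ (here the ``$k$'' in the theorem is a running degree index for the prolongation, not the order of $Q^{(k)}$). Since $\theta$ is split as $\theta=\theta_-+\theta_0+\theta_+$ with $\theta_p$ of degree $p$, and $\gamma=\sum_p\gamma_{[p]}$, each monomial in the quadratic expression $\gamma(\gamma(\theta\wedge\theta)\wedge\theta)$ carries a well-defined modified degree equal to the sum of the degrees of the two $\gamma$-factors. Collecting all contributions of modified degree $[k]$ on the right gives precisely a polynomial $\Psi_{X[k]}$ in the $\gamma_{Y[i]}$ with $i<k$ (strict inequality because the ``top'' occurrence of $\gamma_{[k]}$ on the right is always paired with a $\theta_-$-leg, which lowers degree, or is absorbed into the $\partial$ on the left); the covariant derivatives $\nabla\gamma_{[i]}$ appear from the $\nabla\gamma$ piece of $d\gamma$. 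One must then check that each of the three bidegree projections $I,II,III$ only involves the $\gamma_Y$ asserted in the theorem: the $III$-identity (both legs in $E_+$) cannot see $\gamma_I$ because a $\gamma_I$-factor would contribute an $E_-$-leg that, when fed back through the outer $\gamma$ paired with $\theta_+$, lands in the $II$ or $I$ part, not $III$ — and symmetrically the $I$-identity cannot see $\gamma_{III}$. This combinatorial stability of the three blocks under the Bianchi operation is exactly what makes the statement clean.

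The main obstacle I anticipate is twofold. First, identifying the left-hand side as literally $\partial\gamma_{X[k]}$ requires matching the Spencer coboundary operator $\partial$ on $\Hom(\wedge^2\mathfrak g_-,\mathfrak g)$ (and its extensions used in the $W$-reduction) with the specific combination of bracket-terms that falls out of $d\gamma$; this depends on the precise normalization of $\theta$ on $\mathscr S_WQ^{(k)}$ and on the structure equations of the universal frame bundle established earlier, and getting signs and the placement of the $\mathfrak g_0$- versus $\mathfrak g_{>0}$-contributions right is delicate. Second, because $\mathscr S_WQ^{(k)}$ is cut out by the $W$-normalization conditions $\kappa^{[\ell+1]}\in W^2$, $\tau^{[\ell+1]}\in\Hom(E,W^1)$, the 1-form $\theta$ there is not the full universal $\theta$ but its restriction, so one must verify that the Bianchi identity descends to the reduced bundle and that the extra terms produced by differentiating in directions transverse to $\mathscr S_WQ^{(k)}$ either vanish or are reabsorbed — this is where the transitivity of $\mathfrak g[k]$ and the defining splittings $W$ are used essentially. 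Once these two points are settled, the three identities follow by projecting the single Bianchi identity onto the three blocks and onto modified degree $[k]$, and reading off $\Psi_{I[k]},\Psi_{II[k]},\Psi_{III[k]}$ as the resulting polynomials.
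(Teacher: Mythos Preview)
Your approach is essentially the paper's: the Bianchi identity $\gamma(\gamma(\theta,\theta),\theta)=d\gamma(\theta,\theta)$ is projected onto modified degree $[k]$ and onto the three blocks $I,II,III$, with the degree constraints on $\gamma$ (Proposition~\ref{prop:degree of structure function}) controlling which cross-terms survive. Two small corrections: in the paper's organization $\partial\gamma_{X[k]}$ emerges on the \emph{quadratic} side, namely from the summands of $\gamma\circ\gamma$ in which one factor is $\gamma_{(0)}$ (equal to the Lie bracket of $\mathfrak g$), while $d\gamma=D\gamma\circ\theta$ contributes the covariant-derivative terms directly; and your second anticipated obstacle is a non-issue, since $\mathscr S_WQ^{(k)}$ carries its own canonical form $\theta$ and structure function $\gamma$ satisfying $d\theta+\tfrac12\gamma(\theta,\theta)=0$ intrinsically, so the Bianchi identity holds on the reduced bundle with nothing to descend and no transverse directions to track.
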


 In the above theorem, we have written the fundamental identities simply by trivially extending the Spencer cochain complex. We provide    more precise and explicit formulas, which give  the identities for the invariants of a geometric structure in the most general form (Section 4).
In the special case of Riemannian structures they are known as the first and second Bianchi identities.
We remark also that if there  is  no $\gamma _{II}$ and $\gamma_{III}$, the above identities   reduce to those known in the geometry of Cartan connection.

From the fundamental identities it follows many important consequences  as described in  Section \ref{sec:Invariants} -- Section \ref{sect:Cartan connection}, each of which is of independent interest.


\medskip

In Section \ref{sec:Invariants}
we show that, as an immediate consequence of Theorem \ref{theorem II} and \ref{theorem III}, and  on account of the vanishing of the generalized Spencer cohomology group for higher degree,  we obtain generators of a fundamental system of invariants after a finite number
of  steps in  prolongation procedure even  in case of   a geometric structure of infinite type.

\begin{thmy}  [Theorem  \ref{thm:complete system of invariants}]  \label{theorem IV}
Let $\frak g[k]$ be a truncated transitive graded Lie algebra and $\frak g$ be its prolongation.
Let
\begin{eqnarray*}
I^1 &=& \{ i \in \mathbb Z_{>0} : H^1_i (\frak g_-,\frak g) \not=0 \} \\
I^2&=& \{ i \in \mathbb Z_{>0} : H^2_i (\frak g_-, \frak g) \not=0 \},
\end{eqnarray*}
 which are finite sets by Theorem \ref{vanishing cohomology of higher degree}.

Let $Q^{(k)}$  be  a  proper  geometric structure of type $\frak g[k]$.
Then the following invariants, called the set of  {\it essential invariants}  of  $Q^{(k)}$,
$$\{  D^m  \kappa_{[i]}: i \in I^2, m\geq 0\} \text{ and } \{D^m\tau_{[i]}: i-1 \in I^1, m \geq 0\}$$
  form  a fundamental system of invariants of $Q^{(k)}$.

 \end{thmy}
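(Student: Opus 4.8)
The plan is to read off the finiteness of $I^1$ and $I^2$ from the general theory of Spencer cohomology of transitive graded Lie algebras, and then to use the fundamental identities of Theorem \ref{theorem III} together with the $W$-normal reduction of Theorem \ref{theorem II} to close up the generation process after finitely many steps. First I would recall that if $\mathfrak g = \bigoplus_p \mathfrak g_p$ is the prolongation of a truncated transitive graded Lie algebra, then the generalized Spencer cohomology groups $H^q_i(\mathfrak g_-,\mathfrak g)$ vanish for $i$ sufficiently large; for $q=1,2$ this is the statement that $I^1$ and $I^2$ are finite. (When $\mathfrak g$ is finite-dimensional this is immediate; in the infinite type case it is the key algebraic input, and I would cite the vanishing theorem for Spencer cohomology in the transitive/involutive setting.) Set $N = \max(I^1 \cup (I^2) \cup \{k\})$, or rather the largest degree in which either cohomology is nonzero, so that for homogeneous degree $i > N$ one has $H^1_i = H^2_i = 0$, i.e. $\partial\colon \Hom(\mathfrak g_-,\mathfrak g)_i \to \Hom(\wedge^2\mathfrak g_-,\mathfrak g)_i$ and the next Spencer differential are exact in the relevant spots.

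The heart of the argument is an induction on the modified homogeneous degree $\ell$, showing that on $\mathscr S_W Q^{(k)}$ each truncated component $\kappa_{[\ell+1]}$ and $\tau_{[\ell+1]}$, for $\ell \ge N$, is expressible as a polynomial (with constant coefficients) in the covariant derivatives $D^m\kappa_{[i]}$, $D^m\tau_{[i]}$ with $i \le \ell$. Indeed, by the $W$-normal reduction the values $\kappa_{[\ell+1]}(z)$ lie in the complement $W^2_{\ell+1}$ to $\partial\Hom(\mathfrak g_-,\mathfrak g)_{\ell+1}$, and $\tau_{[\ell+1]}(z)$ lies in $\Hom(E,W^1_{\ell+1})$; once $\ell+1 > N$ the cohomology vanishes, so $W^2_{\ell+1}$ can be taken to be $0$ in the $\gamma_I$-direction and $W^1_{\ell+1} = \partial\mathfrak g_{\ell+1}$, which forces $\kappa_{[\ell+1]}$ and the "harmonic part" of $\tau_{[\ell+1]}$ to vanish, while the fundamental identities 1)--2) of Theorem \ref{theorem III} express $\partial\kappa_{[\ell+1]}$ and $\partial\tau_{[\ell+1]}$ as universal polynomials $\Psi_{I[\ell+1]}$, $\Psi_{II[\ell+1]}$ in the lower-degree $\gamma$'s and their covariant derivatives. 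Injectivity of $\partial$ on the relevant complement (again from the vanishing of cohomology in degree $> N$) then recovers $\kappa_{[\ell+1]}$, $\tau_{[\ell+1]}$ themselves as such polynomials, and $\sigma$ is dealt with the same way using identity 3) (or, more structurally, by the remark that $\sigma$ is determined by $\tau$ via the structure equations on the fibre). Iterating, every component of $\gamma$ of modified degree $> N$ is a differential-polynomial consequence of the finitely many families $\{D^m\kappa_{[i]}: i\in I^2\}$, $\{D^m\tau_{[i]}: i-1\in I^1\}$, and of the finitely many low-degree components of modified degree $\le N$; but those low-degree components that are not themselves in the essential list are, by the normality and the $W$-reduction, either constant (lying in a fixed complement to an exact subspace that happens to be zero there) or again expressed through the identities in terms of the essential ones. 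Hence the essential invariants generate the full structure function $\gamma$, which by Theorem \ref{thm:isomorphism via canonical class of any order} (Theorem \ref{theorem II} here) is a complete invariant of $Q^{(k)}$.

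For the precise bookkeeping I would organize the induction so that at stage $\ell$ the inductive hypothesis is: $\gamma_{[i]}$ for all $i \le \ell$ is a universal polynomial in $\{D^m\kappa_{[j]}, D^m\tau_{[j]} : j \le \ell,\ m \ge 0\}$ and in the finitely many essential generators of degree $\le \ell$; the inductive step uses (i) the $W$-normal constraint to kill the cohomologically trivial directions, (ii) the fundamental identities to pin down $\partial\gamma_{[\ell+1]}$, and (iii) exactness/injectivity of the Spencer differentials in degree $\ell+1 > N$ to invert $\partial$. Finally one checks the base of the induction, namely that the components of modified degree $\le N$ are accounted for: those in $I^2$ (resp. $i-1 \in I^1$) are in the essential list by definition, and the remaining ones are determined by the identities from still-lower-degree essential invariants, the process terminating because degree is bounded below by $-\mu$.

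The main obstacle I expect is step (iii) and its interplay with the $W$-choice: one must verify that the complementary subspaces $W^1_{\ell+1}, W^2_{\ell+1}$ can indeed be taken trivial (in the harmonic direction) once $\ell+1$ exceeds the top degree of nonvanishing cohomology, so that the $W$-reduced bundle genuinely stabilizes the non-essential part of $\gamma$, and that the inversion of $\partial$ is compatible with covariant differentiation — i.e. that $D$ commutes with the Spencer differential up to lower-order terms already controlled by the induction. Making this compatibility explicit (rather than invoking it informally) is where the "more precise and explicit formula" of Section 4 alluded to after Theorem \ref{theorem III} has to be used in full, and it is the only genuinely technical point; the finiteness of $I^1, I^2$ and the generation statement are then formal consequences.
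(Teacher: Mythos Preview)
Your overall strategy matches the paper's: cite the finitude theorem for Spencer cohomology to get $I^1, I^2$ finite, then use the fundamental identities together with the $W$-normal reduction to show that every $\kappa_{[\ell]}, \tau_{[\ell]}$ with $\ell$ outside the essential range is a universal polynomial in the lower-degree essential invariants and their covariant derivatives. However, one step is misstated and would not survive scrutiny as written.

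The claim that ``once $\ell+1 > N$ the cohomology vanishes, so $W^2_{\ell+1}$ can be taken to be $0$ \dots\ and $W^1_{\ell+1} = \partial\mathfrak g_{\ell+1}$'' is wrong. Recall that $W^2_{\ell+1}$ is a complement to $\partial\Hom(\mathfrak g_-,\mathfrak g)_{\ell+1}$ inside \emph{all} of $\Hom(\wedge^2\mathfrak g_-,\mathfrak g)_{\ell+1}$, not merely inside the closed cochains. The vanishing $H^2_{\ell+1}=0$ says $\Ker\partial = \mathrm{Im}\,\partial$ in degree $\ell+1$; it does \emph{not} say that $\partial$ from $C^1$ to $C^2$ is surjective, which is what $W^2_{\ell+1}=0$ would require. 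Likewise $W^1_{\ell+1}$ is by definition a complement to $\partial\mathfrak g_{\ell+1}$, so writing $W^1_{\ell+1}=\partial\mathfrak g_{\ell+1}$ is nonsensical. Consequently $\kappa_{[\ell+1]}$ need not vanish in these degrees; it is merely \emph{determined} by lower data.

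The correct mechanism is the one you state in the very next sentence, and it is exactly what the paper uses: since $\kappa_{[\ell]}$ takes values in $W^2_{\ell}$ and $W^2_{\ell}\cap\Ker\partial \subset W^2_{\ell}\cap\mathrm{Im}\,\partial = 0$ whenever $H^2_{\ell}=0$, the map $\partial|_{W^2_{\ell}}$ is injective; the fundamental identity expresses $\partial\kappa_{[\ell]}$ as a polynomial in lower-degree $\kappa_{[i]},\tau_{[i]}$ and their covariant derivatives, so $\kappa_{[\ell]}$ is recovered. The same argument with $H^1_{\ell-1}=0$ and $W^1_{\ell-1}$ handles $\tau_{[\ell]}$, and identity (3) handles $\sigma$. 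Once you drop the erroneous ``$W^2=0$'' claim and keep only this injectivity argument, your proof is the paper's proof.

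Finally, the obstacle you flag at the end --- compatibility of inverting $\partial$ with covariant differentiation --- is not actually an issue. The operator $\partial$ acts algebraically on the \emph{target} space $\Hom(\wedge^{\bullet}\mathfrak g_-,\mathfrak g)$ in which the structure functions take values; applying $\partial$ to $\kappa_{[\ell]}$ is post-composition by a fixed linear map, so inverting it on $W^2_{\ell}$ is pure linear algebra and commutes with $D$ trivially. No ``lower-order correction'' arises here.
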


Thus we have given an algorithm to find a fundamental system of invariants for any
 proper  geometric structure, which solves the first half of the equivalence problem (Necessity).

\medskip
In Section \ref{sect:equivalence}
we consider the second half of equivalence problem (Sufficiency) and pose
the following question: \\

\noindent
[Equivalence Problem (Sufficiency)] Let $P^{(k)}$ and $Q^{(k)}$ be two proper geometric structures of order $k$.
Assume that there is an isomorphism between
the (fundamental system of, or essential) invariants of   $P^{(k)}$
and those of $Q^{(k)}$ (in a suitable sense). Then, is there an isomorphism
 between $P^{(k)}$ and $Q^{(k)}$? \\




We say that a proper  geometric structure $P^{(k)}$ is {\it involutive} if it is quasi-involutive (i.e., $H^1_{\ell}
(\mathfrak g_-,\mathfrak g)=H^2_{\ell+1}(\mathfrak g_-,\mathfrak g)=0 $
for $\ell\ge k$) and the structure function $\gamma ^{[k]}$ 
of $P^{(k)}$ is constant.
%
%
We then prove

\begin{thmy}  [Theorem \ref{thm:involutive local equivalence}] \label{theorem VI} The answer to the  equivalence problem (Sufficiency)   is affirmative for the involutive geometric structures
in the analytic category,
 and  for those of finite type in the $C^{\infty}$-category.
\end{thmy}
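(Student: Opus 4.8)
The plan is to reduce the equivalence problem for the geometric structures $P^{(k)}, Q^{(k)}$ to an equivalence problem for the absolute parallelisms on their $W$-normal step prolongations, and then to settle the latter by an integration argument. By Theorem II, $P^{(k)} \cong Q^{(k)}$ if and only if there is an isomorphism of the absolute parallelisms $(\mathscr S_W P^{(k)}, \theta_P) \cong (\mathscr S_W Q^{(k)}, \theta_Q)$. So the first step is to show that under the hypothesis of involutivity the structure functions $\gamma_P$ and $\gamma_Q$ are constant and equal (after identifying the model fibers $E$), hence the two parallelisms are locally isomorphic to the \emph{same} constant-structure-function model. Here involutivity is used twice: quasi-involutivity ($H^1_\ell = H^2_{\ell+1} = 0$ for $\ell \ge k$) guarantees, via the fundamental identities of Theorem III together with the vanishing of the relevant Spencer cohomology, that the full structure function $\gamma$ is determined by the truncated part $\gamma^{[k]}$ and, in fact, that the prolongation does not produce new essential invariants past level $k$; and the assumed constancy of $\gamma^{[k]}$ then forces $\gamma$ itself to be constant on $\mathscr S_W P^{(k)}$ (and likewise on $\mathscr S_W Q^{(k)}$). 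This is where the finiteness of the sets $I^1, I^2$ from Theorem IV enters: there are only finitely many degrees where obstructions could live, and involutivity kills all of them.

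The second step is the integration: given two absolute parallelisms with the \emph{same constant} structure function $\gamma$, they are locally isomorphic. This is the classical Lie–Cartan theorem on absolute parallelisms (equivalently, the Frobenius theorem applied to the graph of the putative isomorphism inside $\mathscr S_W P^{(k)} \times \mathscr S_W Q^{(k)}$): the $E$-valued $1$-form $\mathrm{pr}_1^*\theta_P - \mathrm{pr}_2^*\theta_Q$ cuts out a distribution whose integrability is exactly the condition $d\theta + \tfrac12\gamma(\theta\wedge\theta)=0$ with the same $\gamma$ on both factors, and whose leaves are the graphs of local isomorphisms. In the analytic category one applies the analytic Frobenius theorem directly and obtains local, then (by the usual monodromy/analytic-continuation argument along paths, using simple connectedness or passing to universal covers) the stated local equivalence. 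In the $C^\infty$-category the Frobenius theorem still applies provided the ambient manifolds are finite-dimensional; this is why the $C^\infty$ statement is restricted to finite type, where by Theorem II $\dim \mathscr S_W Q^{(k)} = \dim \mathfrak g < \infty$. In the infinite-type $C^\infty$ case the completed prolongation is genuinely infinite-dimensional and the Frobenius argument is unavailable, which is precisely the reason analyticity is required there.

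The third step is bookkeeping: one must check that the local isomorphism of parallelisms produced above is compatible with the tower structure, i.e. that it descends through $\mathscr S_W Q^{(k)} \to \cdots \to Q^{(k)} \to \cdots \to Q^{(-1)} = (M,F)$ to an isomorphism of the geometric structures themselves, not merely of the top absolute parallelisms. This follows because $\theta$ encodes the filtration and the $G_i$-actions (the canonical form takes values in the graded pieces $E_p$, and the structure-group directions are the $\mathfrak g_{\ge 0}$-components), so any parallelism isomorphism automatically respects all of this; this is the content of the "first fundamental observation" (Theorem \ref{thm:isomorphism via canonical class of any order}) run in reverse, and here it is just a matter of unwinding definitions.

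I expect the main obstacle to be the first step: proving rigorously that involutivity plus constancy of $\gamma^{[k]}$ forces \emph{all} of $\gamma$ to be constant. This is a genuine application of the fundamental identities — one argues inductively in the (modified homogeneous) degree $\ell$, using identity (1)–(3) of Theorem III to express $\partial\gamma_{[\ell]}$ in terms of lower-degree data and their covariant derivatives, then using $H^2_{\ell} = 0$ (and $H^1_{\ell-1}=0$ for the $\tau$-part) to conclude that $\gamma_{[\ell]}$ is itself a universal polynomial in the lower $\gamma_{[i]}$ and hence constant once those are — and it requires care about which cohomology groups control which components ($\kappa$ versus $\tau$ versus $\sigma$) and about the role of the $W$-normalization in pinning down the coboundary ambiguity. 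The Frobenius/integration step, by contrast, is standard once finite-dimensionality (or analyticity) is in hand.
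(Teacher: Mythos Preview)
Your first step (involutivity forces $\gamma$ to be constant) and your third step (descending the parallelism isomorphism) are correct and match the paper. The finite-type $C^\infty$ case is also handled exactly as you say, via Frobenius on the finite-dimensional absolute parallelism.

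The genuine gap is in your integration step for the \emph{analytic infinite-type} case. You write that ``in the analytic category one applies the analytic Frobenius theorem directly,'' but this does not work: in infinite type, $\mathscr S_W Q^{(k)}$ is the projective limit of an infinite tower and is genuinely infinite-dimensional, and the Frobenius theorem is no more available there in the analytic category than in the $C^\infty$ category. Analyticity does not rescue Frobenius in infinite dimensions; what it rescues is a completely different existence theorem, namely the Cartan--K\"ahler theorem (or its nilpotent generalization). The paper's proof does \emph{not} work on $\mathscr S_W P^{(k)}\times \mathscr S_W Q^{(k)}$ at all. Instead it sets up the equivalence problem as a PDE system $\mathcal R^{(\ell)}\subset \mathscr J^{(\ell)}(M\times\overline M)$ on \emph{finite-dimensional} weighted jet bundles over the base manifolds, identifies the fibers of $\mathcal R^{(\ell)}$ with $\overline Q^{(\ell)}$ via the formal-equivalence theorem, checks that every jet is strongly prolongable (because the prolongations of the geometric structures are surjective step to step), and then invokes a generalized Cartan--K\"ahler theorem to produce a formal solution with Gevrey estimates; a separate analytic estimate (under the H\"ormander condition on $\mathfrak g_-$) then upgrades this to a convergent analytic isomorphism.

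So the missing idea is precisely the passage from the infinite-dimensional parallelism picture to a finite-order involutive PDE on the base, together with the use of Cartan--K\"ahler rather than Frobenius. Your proposal correctly identifies that something special is needed in the analytic case, but misdiagnoses what that something is.
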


For the proof we   rely on the Cartan-K\"ahler theorem or its generalization to a nilpotent version.
For the geometric structure of finite type the theorem holds in the $C^{\infty}$-category, which is easily proved
by the Frobenius theorem.

In particular, we obtain as a corollary that  if the fundamental invariants vanish then the geometric structure is equivalent to a standard one, under the assumption of analyticity if it is of infinite type.
%
For the finite type analytic geometric structures we have a
more general theorem worth noting:
%
 The
 equivalence problem (Sufficiency)  is affirmative for
the analytic geometric structures of finite type (Theorem \ref{thm: analytic and of finite type}).

\medskip

In Section \ref{sect:Cartan connection}
we study relations between
 the $W$-normal step prolongations and the  Cartan connections
by using  fundamental identities,
and make clear where the Cartan connections are placed in the perspective of step prolongations.

For this, we revisit the notion of tower and reformulate it to define the category of principal geometric structures, which is parallel and in a sharp contrast to that of geometric structures
introduced in Section 2. This then allows us to characterize the proper  principal geometric structure among the  proper  geometric structures, and then the Cartan connection
among the $W$-normal step prolongations in terms of the structure function $\tau$.

We first show that   Cartan connections are   almost equal to   complete proper
geometric structures with constant structure function $\tau$ (Theorem 7.1).
More precisely and specifically we prove:

\begin{thmy} [Theorem \ref{thm:vanishing of tau implies}]   \label{theorem VIII} Let $Q^{(0)}$ be a geometric structure of order 0 with
a connected structure group $G_0$
and $\mathscr S _WQ^{(0)}$
the $W$-normal step prolongation. If $\tau^{[k+1]}$ is flat, then $\mathscr S_W^{(k)}Q^{(0)} \rightarrow M$ is a proper principal geometric structure of order $k$.
If the structure function $\tau$ of $\mathscr S _W Q^{(0)}$
is flat, then $\mathscr S _W Q^{(0)}$ is a Cartan connection.
\end{thmy}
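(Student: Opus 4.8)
The plan is to derive the statement from the two characterizations obtained earlier in Section~\ref{sect:Cartan connection} --- of normal principal geometric structures among normal geometric structures, and of Cartan connections among $W$-normal step prolongations, both phrased through the structure function $\tau$ --- after checking that the hypothesis on $\tau$ is exactly what those criteria require; the bridge is Theorem~\ref{theorem II} (to organize the tower), Theorem~\ref{theorem III} (to carry flatness across the three components $\kappa,\tau,\sigma$ and downward in degree), and the $W$-normality normalization.

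\emph{First assertion.} First I would record, by Theorem~\ref{theorem II}, that every $\mathscr S_W^{(\ell)}Q^{(0)}$ is a normal geometric structure of type $\mathfrak g[\ell]$, so we are handed the tower $\mathscr S_W^{(k)}Q^{(0)}\to\cdots\to Q^{(0)}\to M$, and the task is to promote it to a principal bundle over $M$ with structure group the truncated group $P^{(k)}$ whose Lie algebra is $\mathfrak g_0\oplus\cdots\oplus\mathfrak g_k$. Since $\tau$ lies in $\Hom(E_+\wedge E_-,E)$, it records the action of the vertical (fibre) directions on the soldering part $\theta_-$ of the canonical form; flatness of $\tau^{[k+1]}$ --- agreement with the corresponding bracket component of the flat model --- is precisely the infinitesimal equivariance that lets the nested actions of $G_0,\dots,G_k$ along the fibres of $\mathscr S_W^{(k)}Q^{(0)}\to M$ close up into a single right $P^{(k)}$-action, which is the content of the characterization of normal principal geometric structures. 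I would then feed the hypothesis into that characterization --- here the $W$-normality conditions $\tau^{[\ell]}\in\Hom(E,W^1_\ell)$, $\ell\le k$, together with identities 2) and 3) of Theorem~\ref{theorem III} are what propagate flatness of $\tau$ and of $\sigma$ to all lower degrees --- and use the connectedness of $G_0$ to integrate the infinitesimal $P^{(k)}$-action to a free right action with quotient $M$, concluding that $\mathscr S_W^{(k)}Q^{(0)}\to M$ is a normal principal geometric structure of order $k$.

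\emph{Second assertion.} If the full structure function $\tau$ of $\mathscr S_W Q^{(0)}$ is flat, the first assertion applies for every $k$, so each $\mathscr S_W^{(k)}Q^{(0)}\to M$ is a principal $P^{(k)}$-bundle and, in the projective limit, $\mathscr S_W Q^{(0)}\to M$ is a principal $P$-bundle, $P=\lim_k P^{(k)}$, with Lie algebra $\mathfrak g_0\oplus\mathfrak g_1\oplus\cdots$. It then remains to verify the Cartan connection axioms for the canonical $1$-form $\theta$: $\theta_z\colon T_z\mathscr S_W Q^{(0)}\to\mathfrak g$ is an isomorphism, which is the absolute parallelism already recorded; $\theta$ reproduces the fundamental vector fields, because restricting the structure equation $d\theta+\frac12\gamma(\theta\wedge\theta)=0$ to vertical arguments leaves only $d\theta+\frac12\sigma(\theta\wedge\theta)=0$, and flatness of $\sigma$ (a consequence of that of $\tau$) turns this into the Maurer--Cartan equation of $P$, so $\theta$ restricts to the Maurer--Cartan form on each fibre; and $R_a^{*}\theta=\operatorname{Ad}(a^{-1})\theta$ for $a\in P$, which follows from the equivariance built into the universal frame bundle together with flatness of $\tau$, which governs how $\operatorname{Ad}(a^{-1})$ twists the $\mathfrak g_-$-component. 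Finally the structure equation computes the curvature $\Omega=d\theta+\frac12[\theta,\theta]=\frac12\bigl([\,\cdot\,,\,\cdot\,]-\kappa-\tau-\sigma\bigr)(\theta\wedge\theta)$; flatness of $\tau$ and $\sigma$ cancels everything except the term on $\Hom(E_-\wedge E_-,E)$, so $\Omega$ is horizontal, and $(\mathscr S_W Q^{(0)}\to M,\theta)$ is a Cartan connection of type $(\mathfrak g,P)$.

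\emph{The hard part.} The substance lies in the characterization invoked in the first assertion, established in Section~\ref{sect:Cartan connection} via the fundamental identities: there one must turn the single hypothesis ``$\tau^{[k+1]}$ flat'' into flatness of $\tau$ and $\sigma$ in all lower degrees, coupling identities 2) and 3) of Theorem~\ref{theorem III} with the $W$-normalization along a carefully arranged downward induction and tracking the truncation index $k+1$ against the order $k$. Within the present argument the remaining delicate point --- the one responsible for the word ``almost'' in the rougher form of the theorem --- is the passage from the infinitesimal principal structure to a genuine principal $P^{(k)}$-bundle, which is exactly where the connectedness of $G_0$ is indispensable.
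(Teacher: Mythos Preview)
Your outline has the right shape --- induction up the tower, an infinitesimal action that must be integrated, connectedness of $G_0$ to make the integration global --- but the place where you defer to ``the characterization invoked in the first assertion, established in Section~\ref{sect:Cartan connection}'' is exactly the theorem you are proving. Theorem~\ref{thm:vanishing of tau implies} \emph{is} the characterization of when the $W$-normal step prolongation is a principal geometric structure in the flat-$\tau$ case; Theorem~\ref{thm: constant tau implies pre Cartan} treats constant $\tau$ but only for the \emph{complete} step prolongation, not for a finite stage $\mathscr S_W^{(k)}Q^{(0)}$, so it does not directly give you what you need. Your ``hard part'' paragraph candidly admits this, and what remains is therefore not a proof but a plan.

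The missing mechanism is the following. By induction $Q^{(k-1)}\to M$ is principal with group $G^{(k-1)}$, and Proposition~\ref{prop:W-normal and principal prolongation} identifies $\mathscr R^{(k)}Q^{(k-1)}$ with $\mathscr S_{W^1}^{(k)}Q^{(k-1)}$; on this larger bundle the group $G^{(k)}$ already acts, and one has the equivariance $\kappa^{[k]}(z\,a)=\rho(a)^{-1}\kappa^{[k]}(z)$. The point is to show that this $G^{(k)}$-action leaves invariant the sub-locus $Q^{(k)}=\{\kappa_{[k]}\in W^2_k\}$. For that the paper compares two curves through $z^{(k)}\in Q^{(k)}$: the orbit $t\mapsto z^{(k)}\exp(tA)$ under $G^{(k)}$ (which a priori lives only in $\mathscr R^{(k)}$), and the integral curve $t\mapsto z^{(k)}(t)$ of a vertical vector field $A^*$ built from a section of $Q\to Q^{(k)}$ (which by construction stays in $Q^{(k)}$). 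Flatness of $\tau^{[k+1]}$ enters through Corollary~\ref{cor:formula of tau}, which gives $A^*\kappa^{[k]}=-\rho(A)\kappa^{[k]}$; integrating, $\kappa^{[k]}(z^{(k)}(t))=\rho(\exp tA)^{-1}\kappa^{[k]}(z^{(k)})=\kappa^{[k]}(z^{(k)}\exp tA)$. Since both curves project to the same point of $Q^{(k-1)}$ and $Q^{(k)}$ is cut out of the fibre by the condition on $\kappa_{[k]}$, one concludes $z^{(k)}\exp(tA)\in Q^{(k)}$, hence $G^{(k)}$ preserves $Q^{(k)}$. None of this --- the identification via Proposition~\ref{prop:W-normal and principal prolongation}, the two competing curves, or the use of Corollary~\ref{cor:formula of tau} --- appears in your argument.

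A smaller point: your claim that identities 2) and 3) of Theorem~\ref{thm:fundamental identities} together with $W$-normality ``propagate flatness of $\tau$ to all lower degrees'' is misdirected. Flatness of $\tau^{[k+1]}$ already contains flatness of $\tau^{[\ell]}$ for $\ell\le k+1$ by definition. What the fundamental identities are actually used for here is the relation between $\tau_{[k+1]}$ and the covariant derivative $D_A\kappa^{[k]}$ (Corollary~\ref{cor:formula of tau}), which is precisely what makes the two curves above agree on $\kappa^{[k]}$.
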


 Here, we say that $\tau^{[k+1]}$ is {\it flat}  if $\tau_{(\ell)[m]}$ is zero for any $\ell>0$ and $m \leq k+1$ and that $\tau$ is {\it flat} if $\tau_{(\ell)[m]}$ is zero for any $\ell>0$ and $m$.

From this we obtain  a more   precise result
well adapted to applications in complex geometry:
 the fundamental invariants can be regarded as  sections of bundles on the base manifold inductively as in the case when there exists a Cartan connection associated
with a given geometric structure
(Theorem \ref{thm: sections all vanishing}).

 We also prove:

\begin{thmy} [Theorem \ref{thm:Cartan connection}] \label{theorme IX}
 If the structure group $G_0$ of a geometric structure $Q^{(0)}$ of order 0 satisfies the condition (C), then $W$-normal step prolongation $\mathscr S_W Q^{(0)}$ coincides with the $W$-normal Cartan connection  $\mathscr R_WQ^{(0)}$ constructed in \cite{M93}.
 \end{thmy}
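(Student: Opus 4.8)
The plan is to compare the two constructions directly by identifying $\mathscr S_W Q^{(0)}$ with $\mathscr R_W Q^{(0)}$ step by step through the prolongation tower. First I would recall what condition (C) buys us: it should be precisely the algebraic condition (on $\mathfrak g_0$ acting on the relevant Spencer cochain groups, or equivalently on the complementary subspaces $W$) under which the $W$-normal reduction procedure cuts the fibres down to a \emph{single} orbit of the structure group at each step, so that the reduced bundle carries the rigidity required of a Cartan connection. Under (C), the defining equations $\kappa^{[\ell+1]}(z)\in W^2_{\ell+1}$ and $\tau^{[\ell+1]}(z)\in\Hom(E,W^1_\ell)$ should force $\tau$ to be flat (constant), so by Theorem \ref{theorem VIII} $\mathscr S_W Q^{(0)}$ is already a Cartan connection; the remaining content is that it is \emph{the same} Cartan connection as the one built in \cite{M93}.

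The key steps, in order: (1) Unwind the definition of $\mathscr R_W Q^{(0)}$ from \cite{M93} and present it as a tower $\mathscr R_W^{(\ell)}Q^{(0)}$ of principal bundles, observing that it too is obtained by a sequence of reductions of universal (there: non-commutative) frame bundles governed by the same complements $W$. (2) Show by induction on $\ell$ that $\mathscr S_W^{(\ell)}Q^{(0)}$ and $\mathscr R_W^{(\ell)}Q^{(0)}$ are canonically isomorphic as principal bundles over $Q^{(0)}$ compatibly with the canonical forms. The base case $\ell=0$ is trivial since both equal $Q^{(0)}$. For the inductive step, both $\mathscr S_W^{(\ell+1)}$ and $\mathscr R_W^{(\ell+1)}$ sit inside the order-$(\ell+1)$ universal frame bundle over the previously identified object; I would check that condition (C) makes the normalization conditions defining the two subbundles literally coincide — this is where the fundamental identities of Theorem \ref{theorem III} enter, as they guarantee that the conditions on $\kappa^{[\ell+1]}$ and $\tau^{[\ell+1]}$ are not independent but determine each other in the way that matches \cite{M93}'s normalization. (3) Pass to the projective limit to conclude $\mathscr S_W Q^{(0)}=\mathscr R_W Q^{(0)}$ with matching $\mathfrak g$-valued $1$-forms, hence as Cartan connections.

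The main obstacle I expect is step (2)'s inductive step: the two constructions live in a priori different frameworks — the present paper's ``universal frame bundles'' of filtered manifolds versus the ``higher order non-commutative frame bundles''/towers of \cite{M93} — so the real work is a translation lemma showing these produce the same bundle at each order, after which the normalization conditions can be compared. Condition (C) must be used precisely to collapse any discrepancy between the two normalizations into a triviality; without (C) the $W$-normal step prolongation is genuinely larger (it need not be a principal geometric structure at all), so the proof must locate exactly where (C) forces $\tau$ flat and the fibre dimension to drop to $\dim\mathfrak g_{\ell+1}$. I would isolate this as a single algebraic lemma about the $\mathfrak g_0$-module structure of $W^1_\ell$ and $W^2_{\ell+1}$ under (C), and then the geometric identification becomes formal, invoking Theorem \ref{theorem VIII} to certify that the resulting object is a Cartan connection and the uniqueness part of the Cartan-connection construction in \cite{M93} to certify it is \emph{the} one constructed there.
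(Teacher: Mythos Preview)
Your inductive comparison strategy is correct and matches the paper's proof. However, you misidentify where the real work happens and overcomplicate the roles of condition (C) and the fundamental identities.

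The ``translation lemma'' you anticipate is Proposition \ref{prop:W-normal and principal prolongation}, and it is simpler than you expect: a choice of $W^1=\{W^1_i\}$ \emph{structurally} identifies the principal universal frame bundle $\mathscr R^{(k+1)}\breve Q^{(k)}$ with the $\tau$-reduced bundle $\mathscr S_{W^1}^{(k+1)}Q^{(k)}$, for \emph{any} principal geometric structure, with no appeal to condition (C) or the fundamental identities. In particular, $\tau$ is automatically flat on $\mathscr S_{W^1}^{(k+1)}$ simply because that bundle is, by construction, the image of $\mathscr R^{(k+1)}$. Once this identification is made at level $k$, both $P^{(k+1)}$ (the \cite{M93} reduction) and $Q^{(k+1)}$ (the step prolongation) are cut out of the \emph{same} ambient bundle by the \emph{same} condition $\kappa_{[k+1]}\in W^2_{k+1}$, so they coincide on the nose --- there is no discrepancy between normalizations to collapse.

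Condition (C) does not enter to force $\tau$ flat, nor to reconcile the two normalizations. Its sole role is on the \cite{M93} side: the $G^0$-invariance of $W^2$ is what guarantees that $P^{(k+1)}\to M$ is actually a principal $G^{(k+1)}$-bundle, i.e., that the $\kappa$-reduction respects the enlarged group action (the paper reproves this separately right after the main argument). The fundamental identities and Theorem \ref{thm:vanishing of tau implies} play no role in this proof; you should drop the plan to invoke them and instead focus on establishing the identification $\mathscr R^{(k+1)}\cong \mathscr S_{W^1}^{(k+1)}$ directly from the definitions of the two frame bundles.
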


Thus the $W$-normal step prolongations contain, as special cases, the Cartan connections, and therefore have vast applications
 to the geometric structures which do not necessarily admit Cartan connections.

\medskip
In Section 8 we explain how our method is applied to subriemannian geometry
and   complex geometry.



We give a mention to the works \cite{AD17}, \cite{Z09} which provide alternative proofs of Tanaka's step prolongation.
In our opinion, without the fundamental identities, the prolongation scheme
would not be as clear and as effective  as shown
 in the present paper.

 It is to intrinsic geometry that our present paper is concerned.
   In the paper \cite{DMM}
 a general theory on extrinsic geometry
 is developed
 and it is made clear the similarity existing between
intrinsic and extrinsic geometry.
The present paper will make more
visible and highlight    this similarity.

In this paper we have confined ourselves to geometric structures of constant symbols.
But if we use the method in \cite{M83}
to treat prolongations of $G$-structures with varying structure group,  we can extend the prolongation scheme of the present paper to the geometric structures of non-constant symbol, which will be an object of our forthcoming paper. \\

\noindent{ \bf Acknowledgements.}

We would like to thank  Yoshio Agaoka who carefully read through
the manuscript and gave us many valuable comments.

The first named author was supported by the Institute for Basic Science IBS-R032-D1. The second named author was partially supported by JSPS KAK-ENHI Grant Number 17K05232.



\section{Preliminaries}

\subsection{Geometric preliminaries}

 \subsubsection  
A {\it filtered manifold} is  a differentiable manifold $M$ equipped with a filtration $F=\{F^p\}_{p \in \mathbb Z}$ of the tangent bundle $TM$ of $M$ satisfying:
\begin{enumerate}
\item $F^p$ is a subbundle of $TM$ and $F^p \supset F^{p+1}$ for all $p \in \mathbb Z$;
\item $\cup_{p \in \mathbb Z} F^p = TM$ and $F^0=0$;
\item $[\underline{F}^p, \underline{F}^q] \subset \underline{F}^{p+q}$ for all $p,q \in \mathbb bZ$, where $\underline{F}^{\bullet}$ is the sheaf of sections of $F^{\bullet}$.
\end{enumerate}
The minimal integer $\mu$ such that $F^{-\mu} = TM$ is called the {\it depth} of $F$.
A filtered manifold will be denoted as $(M, F)$, or $(M, F_M)$,
or simply as $M$.
The filtration  is written  as $\{F^p\}$,  or  alternatively, as
$\{F^p_M\}$,
$\{F^pTM\}$ or $\{T^pM\}$.

Let $(M,F)$ be a filtered manifold. For each $x \in M$ we set
$$\gr F_x=\oplus_p \gr_pF_x, \quad \gr_p F_x = F^p_x/F^{p+1}_x$$
where $F_x^p$ denotes the fiber of $F^p$ over $x$.
The bracket operation of vector fields induces a bracket
$$[\,,\,]: \gr_pF_x \times \gr_qF_x \rightarrow \gr_{p+q}F_x.$$
Then $\gr F_x$ becomes a nilpotent graded Lie algebra and is called the {\it symbol (algebra) of} $(M,F)$ at $x$.

An {\it isomorphism} of a filtered manifold $M$ onto another $N$
is a diffeomorphism
$\varphi : M \to N $ which preserves the filtrations, that is,
$\varphi_* F^p_M = F^p_N $.
It should be noted that if $\varphi : (M, F_M) \to (N, F_N) $ is an isomorphism of filtered manifolds, then
$\varphi$ induces a graded Lie algebra isomorphism
$ gr \varphi _* : \gr F_{M, x} \to \gr F_{N, \varphi (x)}$.

We say that a filtered manifold $(M,F)$ is {\it of (constant symbol  of) type} $\frak g_-$ if $\frak g_-=\oplus_{p<0} \frak g_p$ is a graded Lie algebra and $\gr F_x$ is isomorphic to $\frak g_-$ as graded Lie algebras for all $x \in  M$.


In this paper, we will be  almost exclusively concerned  with filtered manifolds which
are of constant symbol.   Unless otherwise stated, this will always be assumed.

\subsubsection{ } \label{sect:reduced frame bundel of order 0}

Let $\frak g_- = \oplus_{p<0}\frak g_p$ be a graded Lie algebra and  $(M,F)$ be a filtered manifold of type $\frak g_-$. For each $x \in M$ we set
$$\mathscr S_x^{(0)}(M):=\{z:\frak g_- \rightarrow \gr F_x: \text{ graded Lie algebra isomorphisms }\}$$
and $\mathscr S^{(0)}(M)=\cup_{x \in M} \mathscr S^{(0)}_x(M)$.
We denote by $G_0(\frak g_-)$ the Lie group consisting of all graded Lie algebra automorphisms of $\frak g_-$.
Then
$\mathscr S^{(0)}(M)$ is a principal fiber bundle over $M$ with structure group $G_0(\frak g_-)$.
%
%
The bundle $\mathscr S^{(0)}(M)$ is called the {\it (reduced)  frame bundle of} $(M,F)$ {\it (of order $0$)} in the sense of nilpotent geometry.

It is a quotient space  of {\it the {(reduced) frame bundle} $\widehat{\mathscr S}^{(0)}(M)$ of $(M,F)$ (of order $0$)} in the usual sense  defined as follows.  We regard the graded Lie algebra $\frak g_-$ as a filtered vector space with the filtration $\{F^p\frak g_-\}$ given by $F^p\frak g_- =\oplus_{i \geq p}\frak g_i$. For $x \in M$, let $\widehat{\mathscr S}^{(0)}_x(M)$ be the set of all linear isomorphisms $\zeta: \frak g_- \rightarrow T_xM$ such that $\zeta$ preserves the filtration and
$$[\zeta]:\gr \frak g_-(=\frak g_-) \rightarrow \gr F_x$$
is an isomorphism of graded Lie algebras, and let $\widehat{\mathscr S}^{(0)}(M) = \cup_{x \in M}\widehat{\mathscr S}^{(0)}_x(M)$. Then $\widehat{\mathscr S}^{(0)}(M)$ is a principal fiber bundle over $M$ with structure group $\widehat{G}_0(\frak g_-)$ consisting of all filtration preserving  automorphisms  $\alpha$ of $\frak g_-$ such that the induced map $[\alpha]$ is contained in $G_0(\frak g_-)$.

  For a   vector space $V$ with a filtration $ \{F^pV\}_p$, $\Hom(V,V)$ and $GL(V)$ are filtered as follows:
\begin{eqnarray*}
F^{p}\Hom(V,V)&=&\{A \in \Hom(V,V): A(F^qV) \subset F^{p+q}V \text{ for all } q \}\\
F^p GL(V) &=& \{ a \in GL(V): a - id_V \in F^p \Hom(V,V)\}.
\end{eqnarray*}
 We see that
$$G_0(\frak g_-) =\widehat{G}_0(\frak g_-)/F^1, \quad \mathscr S^{(0)}(M) = \widehat{\mathscr S}^{(0)}(M)/F^1,$$
where $F^1 (= F^1\widehat{G}_0 (\frak g_-))$ is the  subgroup of
$\widehat{G}_0(\frak g_-)$
defined by the induced filtration.

 We remark that the definition of frame bundles $\mathscr S^{(0)}(M)$ and
 $ \widehat{\mathscr S} ^{(0)} (M)$ depends on the choice
 of a graded Lie algebra $\frak g _-$. To avoid the ambiguity
 we shall fix, once and for all,  one representative $\frak g_- $
  for each equivalence class of   graded Lie algebras.

  Then for each isomorphism
  $\varphi: M \to N $ of filtered manifolds, we have
  the associated bundle isomorphisms, called {\it the lift or the prolongation} of
  $\varphi$:
   $$
     \widehat{\mathscr S} ^{(0)}\varphi:
 \widehat{\mathscr S} ^{(0)}M
 \to
 \widehat{ \mathscr S} ^{(0)}N,
 \quad
 \mathscr S ^{(0)}\varphi:
 \mathscr S ^{(0)}M
 \to
  \mathscr S ^{(0)}N
  $$
 where
 $
 \widehat{\mathscr S}
 ^{(0)}
 \varphi (\zeta)
 = \varphi _* \circ \zeta, \,
 \mathscr S^{(0)}\varphi(z)
 = gr \varphi_* \circ z $
 for
 $\zeta \in
 \widehat{\mathscr S}
 ^{(0)} M ,
 z \in
\mathscr S^{(0)} M$.

 \medskip

 \subsubsection{ }
 \label{sect:geometric structure of order 0}

We define a {\it geometric structure of order 0} on a filtered manifold $(M, F)$
{\it of type} $\frak g_-$ to be  a reduction of the frame bundle
$\mathscr S ^{(0)} (M)$
to a subgroup
$G_0 \subset G_0(\frak g _-)$,
that is, a $G_0$-principal subbundle
$Q^{(0)}$
of $\mathscr S ^{(0)} (M)$.
Two geometric structures
 $P_0 \rightarrow (M, F_M)$
  and $Q_0 \rightarrow (N, F_N) $
  are said to be {\it isomorphic}, or {\it equivalent}
   if there is an isomorphism $\varphi:(M, F_M)\rightarrow
   (N, F_N) $ such that the induced map
   $\mathscr S^{(0)}\varphi:
   \mathscr S^{(0)}(M) \rightarrow \mathscr S^{(0)}(N )$
   maps $P_0$ onto $Q_0$.

Thus a  geometric structure of order 0 may be
called a $G$-structure in the sense of nilpotent geometry:
if the underlying filtered manifold is of depth 1,
 i.e., a trivial filtered manifold,
it is  just  what is usually called  a $ G $-structure.

 We know various important examples of $G$-structures,
and almost all geometric structures that we treat in differential geometry are geometric structures
of order 0 of the above definition or their generalizations to higher orders.
 In the proceeding sections we shall study the equivalence problem of geometry structures  of order 0 or higher order.


\subsection{Algebraic preliminaries}

\subsubsection{ }
An infinitesimal algebraic model of a transitive geometric structure
on a filtered manifold is represented by a transitive filtered Lie algebra
defined as follows:
%
 A {\it transitive filtered Lie algebra}   is a Lie algebra
$ L$
equipped with
a decreasing filtration  $\{F^p L\}$
satisfying the following conditions:
\begin{enumerate}
\item $[ F^p L, F^qL] \subset F^{p+q}L $
\item $\dim L/F^{0}L < \infty$,
 \item $\cap_p F^pL =0$
 \item (transitivity)
 If $i \geq 0, X \in F^iL$
 and if $[X, F^aL] \subset F^{i+ a + 1} L$ for all $ a < 0$,
 then $ X \in F^{i+1} L $.

\item $L$ is complete with respect to the topology which makes $
\{ F^p\}$ as a fundamental system of neighbourhoods of the origin.

\end{enumerate}

Passing to the graded object
$gr L = \bigoplus F^pL/ F^{p+1}L $, we have
%
a {\it transitive graded Lie algebra}, that is,  a Lie algebra
$\frak g$
equipped with
a grading  $\frak g=\oplus_i \frak g_i$
satisfying the following conditions:
\begin{enumerate}
\item $[ \frak g _i, \frak g_j] \subset \frak g_{i+ j} $
\item $\dim \frak g_- < \infty$,
where we put  $\frak g_- = \oplus_{p<0} \frak g_p$.
\item (transitivity)\ If $X  \in \frak g_i$  ($i \geq 0$) and $[X , \frak g_-]=0$, then $X =0$.
\end{enumerate}

This (or its completion)
   gives  an infinitesimal algebraic model of a {\it flat } geometric structure.

\subsubsection{ }

 A {\it truncated transitive graded Lie algebra of order $k$}
$(k \geq -1)$ is a graded vector space
 $\frak g[k]=\oplus _{i \leq k}\frak g_i$
  endowed with a bracket operation $[\,\,,\,\,]: \frak g_i \otimes \frak g_j \rightarrow \frak g_{i+j}$
  which is defined only for $i,j,i+j \leq k$
  and satisfies the Jacobi identity whenever it makes  sense,
  and moreover satisfies the conditions (2) and (3) in the definition of a transitive graded
  Lie algebra.

  Given a transitive graded Lie algebra $\frak g = \oplus_{p}\frak g_p$ with the Lie bracket $[\,\,,\,\,]$, the graded vector space $\oplus_{p\leq k}\mathfrak g_p$ with the bracket operation $[\,\,,\,\,]^{(k)}$ defined by
  \begin{eqnarray*}
  [X,Y]^{(k)}=\left\{\begin{array}{ll}
  [X,Y]& \text{ if } X\in \frak g_i, Y \in \frak g_j, \text{ and } i+j\leq k \\
  0 & \text{ otherwise,}
  \end{array}\right.
  \end{eqnarray*}
  is a truncated transitive graded Lie algebra of order $k$, denoted by $Trun ^{(k)} \, (\frak g)$. We call $Trun ^{(k)} \, (\frak g)$ a {\it truncation} of $\frak g$.

Note that $ \frak g_- $ and $\frak g_- \oplus \frak g_0$
 are truncated transitive graded Lie algebras,
 where $\frak g_-$ is a graded Lie algebra  negatively concentrated and of finite dimension,
 and $\frak g_0$ is a Lie subalgebra of the Lie algebra
 $Der_0(\frak g_- ) $ of all derivations of degree 0 of $\frak g_-$.
  Fundamental is the following:

\begin{proposition} [\cite{SS65}, \cite{T70}]
For a truncated transitive graded Lie algebra
$\frak g [k]=\oplus _{p \le k} \frak g_p $ there exists, uniquely up to isomorphisms,
 a maximal transitive graded Lie algebra
$\bar {\frak g} = \oplus \bar{\frak g} _p$
such that $\frak g [k]$ is a truncation of $\bar{\frak g}$.
\end{proposition}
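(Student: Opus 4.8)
\emph{Proof plan.} This is the prolongation theorem of Singer-Sternberg and Tanaka, and the plan is to reconstruct their universal prolongation and then verify that it has the stated properties. Keeping $\bar{\mathfrak g}_p:=\mathfrak g_p$ for $p\le k$, I would define the spaces $\bar{\mathfrak g}_l$ for $l>k$ recursively: assuming that $\bar{\mathfrak g}_{<l}=\bigoplus_{p<l}\bar{\mathfrak g}_p$, together with all brackets among its summands, is already built as a truncated transitive graded Lie algebra extending $\mathfrak g[k]$, set
\[
\bar{\mathfrak g}_l:=\Bigl\{\,A\in\bigoplus_{p<0}\Hom(\mathfrak g_p,\bar{\mathfrak g}_{p+l})\ :\ A[X,Y]=[AX,Y]+[X,AY]\ \ \forall\,X,Y\in\mathfrak g_-\,\Bigr\},
\]
the space of degree-$l$ derivations of $\mathfrak g_-$ into $\bar{\mathfrak g}_{<l}$; this is meaningful because $p+l<l$ and $-\mu\le p+l$ whenever $-\mu\le p<0$. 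Only degrees $>k$ are affected, so the given data $\mathfrak g_0,\dots,\mathfrak g_k$ survive unchanged.

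Next I would extend the bracket to $\bar{\mathfrak g}:=\bigoplus_p\bar{\mathfrak g}_p$. On $\mathfrak g[k]$ it is the given one; for $a\in\bar{\mathfrak g}_l$ with $l>k$ and $u\in\mathfrak g_-$ one puts $[a,u]:=a(u)\in\bar{\mathfrak g}_{l+\deg u}$; and for the remaining pairs $a\in\bar{\mathfrak g}_p$, $b\in\bar{\mathfrak g}_q$ (both in nonnegative degree, at least one degree $>k$) I would define $[a,b]\in\bar{\mathfrak g}_{p+q}$ by induction on $p+q$ as the unique element $c$ with
\[
c(u)=[\,[a,u],b\,]-[\,[b,u],a\,]\qquad (u\in\mathfrak g_-),
\]
the right-hand side being already defined by the induction hypothesis and the previous two cases. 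Uniqueness of such a $c$ is exactly the transitivity axiom for the lower truncation; the real point is to check that $c$, so defined, is again a derivation of $\mathfrak g_-$, hence genuinely lies in $\bar{\mathfrak g}_{p+q}$, and that this new bracket agrees with the old one wherever both are defined. I would then establish the Jacobi identity $[[a,b],c]+[[b,c],a]+[[c,a],b]=0$ by a simultaneous induction on total degree: when that degree is nonnegative it suffices, by transitivity, to verify the identity after applying $\mathrm{ad}\,u$ for all $u\in\mathfrak g_-$, whereupon it unwinds into a combinatorial identity that closes under the induction; the cases of negative total degree reduce to the Jacobi identity already valid in $\mathfrak g[k]$ and in the Lie algebra $\mathfrak g_-$.

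Granting this, $\bar{\mathfrak g}$ is a graded Lie algebra with $\bar{\mathfrak g}_-=\mathfrak g_-$ of finite dimension, and it is transitive: an $X\in\bar{\mathfrak g}_i$ ($i\ge0$) with $[X,\mathfrak g_-]=0$ is either zero by transitivity of $\mathfrak g[k]$ (if $i\le k$) or is the zero map (if $i>k$); and by construction $\mathfrak g[k]$ is a truncation of $\bar{\mathfrak g}$. For maximality and uniqueness I would prove the universal property: for any transitive graded Lie algebra $\mathfrak h=\bigoplus\mathfrak h_p$ with $\mathfrak h_{\le k}=\mathfrak g[k]$ as truncated transitive graded Lie algebras, the assignment $X\mapsto(u\mapsto[X,u])$ defines, by induction on degree, an injective homomorphism of graded Lie algebras $\mathfrak h\hookrightarrow\bar{\mathfrak g}$ restricting to the identity on $\mathfrak g[k]$ --- injectivity from transitivity of $\mathfrak h$, the homomorphism property again reduced to brackets with $\mathfrak g_-$ via transitivity of $\bar{\mathfrak g}$. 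Applied to $\mathfrak h=\bar{\mathfrak g}$ this map is the identity, so $\bar{\mathfrak g}$ is terminal among transitive graded Lie algebras having $\mathfrak g[k]$ as a truncation, hence maximal and unique up to a unique isomorphism.

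The one genuinely delicate step is the middle one: justifying the recursive definition of the bracket, i.e.\ that the element $c$ above is a derivation, and, above all, verifying the Jacobi identity --- the classical bookkeeping computation. Once the bracket is in place, finiteness, transitivity, the truncation property, and the universal property are all formal.
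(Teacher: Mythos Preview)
Your proposal is correct and follows exactly the approach the paper indicates: the paper does not give a full proof but, immediately after the statement, writes down the same inductive definition
\[
\bar{\mathfrak g}_{p+1}=\{\alpha\in\Hom(\mathfrak g_-,\bar{\mathfrak g})_{p+1}:\alpha([u,v])=[\alpha(u),v]+[u,\alpha(v)]\}
\]
and refers to \cite{SS65}, \cite{T70} for the remaining verifications (Jacobi identity, maximality), which are precisely the steps you outline.
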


In fact, we can construct the transitive graded Lie algebra
$\bar {\frak g} = \oplus \bar{\frak g} _p$
by setting $\bar{\frak g}_p = \frak g_p$
for $p \le k$ and then inductively so as to satisfy
$$ \bar{\frak g} _{p+1} = \{ \alpha \in Hom( \frak g _- , \oplus_{i \leq p} \bar{\frak g}_i )_{p+1}:
\alpha([u, v]) = [\alpha(u), v] + [u, \alpha (v)], \  u, v \in \frak g_-\}
$$
as well as the Jacobi identity,  whenever it is defined.

The transitive graded Lie algebra $\bar {\frak g}$ is called the {\it prolongation} of $\frak g [k] $ and denoted by $Prol (\frak g [k])$.


 \subsubsection{ }

Let $\frak g$ be a transitive graded Lie algebra
and $\mathfrak g_-=\oplus_{p<0} \mathfrak g_p$
be its negative part. The adjoint representation
of $\frak g_-$ on $\frak g$ gives rise to the cohomology
group $ H(\frak g _- , \frak g)
=\oplus H^q (\frak g _- , \frak g)$, called {\it the generalized Spencer cohomology
group}. It is the cohomology group associated to the
following chain complex:
$$0 \longrightarrow \frak g  \stackrel{\partial}{\longrightarrow} \cdots
 \stackrel{\partial}{\longrightarrow}
  \Hom(\wedge^q \frak g_-, \frak g)
 \stackrel{\partial}{\longrightarrow}
 \Hom(\wedge^{q+1} \frak g_-, \frak g)
 \stackrel{\partial}{\longrightarrow} \cdots . $$

The coboundary operator is defined as follows:
 For $c \in \Hom(\wedge^q \frak g_-, \frak g)$, define  $\partial c \in \Hom(\wedge^{q+1}\frak g_-, \frak g)$  by
\begin{eqnarray*}
\partial c(v_1, \dots, v_{q+1}) &=& \sum_i (-1)^{i+1}[v_i, c(v_1, \dots, \hat{v}_i, \dots, v_{q+1})] \\ && \qquad \quad  + \sum_{i<j} (-1)^{i+j} c([v_i, v_j], v_1, \dots, \hat{v}_i, \dots, \hat{v}_j, \dots, v_{q+1}).
\end{eqnarray*}

Let $\Hom(\wedge \frak g_-,  \frak g)_r$
denote the set of all maps of degree $r$.
Then,
since the coboundary operator preserves the degree,
we have the subcomplex:
$$0 \longrightarrow \frak g_r  \stackrel{\partial}{\longrightarrow} \cdots
 \stackrel{\partial}{\longrightarrow}
  \Hom(\wedge^q \frak g_-, \frak g)_r
 \stackrel{\partial}{\longrightarrow}
 \Hom(\wedge^{q+1} \frak g_-, \frak g)_r
 \stackrel{\partial}{\longrightarrow} \cdots . $$
The associated cohomology group is denoted by $H^q_r(\frak g_-, \frak g)$. Then we have
$$ H(\frak g _- , \frak g)
=\oplus H^q_r(\frak g _- , \frak g).$$
It is
$ H^q_+(\frak g _- , \frak g)
:=\oplus _{r >0} H^q_r( \frak g _- , \frak g)$ where $q = 1, 2$
that plays an important role
in our intrinsic geometry.
Note that $\frak g$ is the prolongation of its truncation $\frak g [k]$
if and only if
$H^1_r( \frak g _- , \frak g) = 0 $ for $ r > k$.

We have the   following finitude of the cohomology group.

\begin{theorem}[Theorem of finitude, \cite{M88}]  \label{vanishing cohomology of higher degree}
For every transitive graded Lie algebra $\frak g$
there exists an positive integer $r_0$
such that
$$H^q_r( \frak g _- , \frak g) = 0
\quad  {\text for\,  all\, } q \geq 1
{\,\text  and \, }
  r > r_0.$$
\end{theorem}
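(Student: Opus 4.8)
The plan is to establish the vanishing of $H^q_r(\mathfrak g_-, \mathfrak g)$ for large $r$ by exploiting the transitivity of $\mathfrak g$ together with the algebraic structure of the Spencer complex in each fixed cohomological degree $q$. First I would reduce to the two relevant degrees $q=1,2$ — and in fact prove a statement for all $q\ge 1$ — by observing that the Spencer complex $\{\Hom(\wedge^\bullet \mathfrak g_-, \mathfrak g)_r,\partial\}$ of fixed homogeneous degree $r$ is built from the finite-dimensional space $\mathfrak g_-$ and the prolongation spaces $\mathfrak g_p$. The key structural input is that, by the transitivity condition (3) in the definition of a transitive graded Lie algebra and the description of the prolongation given after the proposition, the positive part $\mathfrak g_p$ for $p\ge 0$ is a subspace of $\Hom(\mathfrak g_-,\mathfrak g_{p-1})$ cut out by a derivation-type condition; iterating, $\mathfrak g_{p}$ embeds into $\Hom(S^{p}\mathfrak g_-,\mathfrak g_0)\oplus\Hom(S^{p+1}\mathfrak g_-,\mathfrak g_{-1})\oplus\cdots$, so a cochain of degree $r$ and cohomological degree $q$ lives in a space that, after this unfolding, looks like a piece of a Koszul-type complex for the (graded) symmetric algebra on $\mathfrak g_-^*$.

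The heart of the argument is then a Spencer-cohomology vanishing theorem of the type originally due to Serre--Guillemin--Quillen--Sternberg for $G$-structures, adapted to the filtered/graded (nilpotent) setting: the generalized Spencer complex in cohomological degree $q$, truncated to degrees $r > r_0$, becomes exact once $r_0$ is chosen past the ``generating degree'' of the relevant module. Concretely, I would set up the bigraded Spencer complex, identify the degree-$r$, weight-$q$ part with a subquotient of $\mathfrak g_{\mathrm{top}}\otimes \wedge^\bullet\mathfrak g_-^*\otimes S^\bullet\mathfrak g_-^*$ (interpreting the prolongation as the symbol module), and invoke the fact — provable by a dimension count or by a standard homotopy operator argument on the Koszul/Spencer complex — that this complex is acyclic in positive weight for $r$ large. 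The finite-generation of the prolongation module over the symmetric algebra (a Noetherian ring) is what forces a uniform $r_0$ independent of $q$; this is where I would cite or reconstruct the Noetherian argument: a finitely generated graded module over $S(\mathfrak g_-^*)$ has Spencer cohomology (equivalently, Koszul homology) vanishing above the maximal generating degree of a finite free resolution.

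I expect the main obstacle to be the bookkeeping that translates the \emph{filtered} transitivity of $\mathfrak g$ into the \emph{graded} Noetherian module statement cleanly — in particular, ensuring that the prolongation $\mathfrak g = Prol(\mathfrak g[k])$, which is defined inductively and may be infinite-dimensional, genuinely organizes into a finitely generated $S(\mathfrak g_-^*)$-module so that the Hilbert syzygy / Noetherian argument applies uniformly across all cohomological degrees $q$ simultaneously. A secondary technical point is handling the mixed terms $\Hom(\wedge^i\mathfrak g_-,\mathfrak g)$ where the $\wedge$ and the ``hidden'' $S$ factors interact via the bracket part of $\partial$ (the $c([v_i,v_j],\dots)$ term), which is absent in the abelian $G$-structure case; I would deal with this by filtering the complex by the $S$-degree and running a spectral-sequence argument whose $E_1$ page is the classical (abelian) Spencer complex, where the known vanishing applies, and checking that the bracket terms only contribute to higher differentials that cannot resurrect cohomology in weights $r>r_0$. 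Once exactness of the truncated complex is in hand in each degree $q$, the conclusion $H^q_r(\mathfrak g_-,\mathfrak g)=0$ for $q\ge 1$, $r>r_0$ is immediate.
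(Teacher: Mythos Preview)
The paper does not actually contain a proof of this theorem: it is stated as a cited result from \cite{M88}, with the surrounding text noting only that the depth-one case was settled by Singer--Sternberg and the general case by the second-named author. There is therefore no in-paper argument to compare your proposal against.

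That said, the strategy you outline is broadly in line with how such finitude results are proved in the literature referenced here. Reducing to a Koszul-type complex, invoking finite generation of the symbol module over $S(\mathfrak g_-^*)$, and appealing to a Noetherian/syzygy argument to get a uniform $r_0$ is essentially the Serre--Guillemin--Quillen--Sternberg mechanism for the depth-one case that the paper attributes to \cite{SS65}. Your proposed handling of the extra difficulty in depth $\mu>1$ --- filtering so that the $E_1$ page sees only the abelian Spencer complex and then arguing that the bracket terms in $\partial$ contribute only to higher differentials which cannot create cohomology in degrees $r>r_0$ --- is a reasonable route, and it is plausibly close in spirit to what is done in \cite{M88}, though without consulting that reference one cannot say whether the actual argument there is organized via a spectral sequence or via a more direct inductive estimate on the filtration.

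The point you correctly flag as the main obstacle --- verifying that the prolongation really organizes into a finitely generated graded $S(\mathfrak g_-^*)$-module in the nilpotent-graded setting --- is indeed the nontrivial step, and your sketch does not resolve it; it only names it. So as written your proposal is a credible outline but not yet a proof, and the paper itself offers no help here beyond the citation.
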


A finitude theorem concerning the prolongation of exterior differential systems
proved by Kuranish opened a way to
modern theory of infinite Lie pseudo-groups. The finitude theorem in this form was
proved in the case of depth 1 by Singer-Sternberg. The general case is due to the second-named author.


\section{Geometric structures of higher order and universal frame bundles} \label{sect: geometric structures}
 In this section we introduce a category of
higher order geometric structures which represents the most general geometric structures of constant symbols
and which will turn out to be well adapted to the step prolongations to be studied in the next section.

\subsection{Geometric structures of order $k$} $\,$ \label{sect:geometric structure of order k}





\begin{definition} \label{def:universal frame bundle}
A {\it geometric structure  ${\bf Q}^{(k)}$ of order $k$}
and its {\it universal frame bundle $\mathscr S^{(k+1)}{\bf Q}^{(k)}$ of order $k+1$} are defined inductively
for $k \geq -1$  by the following properties.

\begin{enumerate}


\item 
  A geometric structure ${\bf Q}^{(k)}$
 of order $k \geq -1$ on a filtered manifold $(M,F)$
 (of type $( \frak g_-,G_0,\cdots, G_k)$)
 is a step-wise
principal fiber bundle over $(M,F)$ of type $\frak g_-$
$$
Q^{(k)} \stackrel{G_k}{\rightarrow} Q^{(k-1)}
\rightarrow \cdots \rightarrow
Q^{(0)} \stackrel{G_0}{\rightarrow} Q^{(-1)}=M,
$$ (that is, each
$
Q^{(i)} \stackrel{G_i}{\rightarrow} Q^{(i-1)}
$ is a principal fiber bundle with structure group
$G_i$ for $0 \leq i \leq k$) satisfying:

\begin{enumerate}
\item                      

 A geometric structure ${\bf Q}^{(-1)}$ of order $-1$ on $(M,F)$
 is the filtered manifold $(M,F)$ itself.

\item 
If $k\geq0$, the truncated sequence ${\bf Q}^{(i-1)}=(Q^{(i-1)} \rightarrow \dots Q^{(0)} \rightarrow Q^{(-1)}=M)$ is a geometric structure of order $ i-1$
for $i=k$  (and then consequently for $0 \leq i \leq k$ by induction).

\item 
 If $k\geq0$, $Q^{(i)} \rightarrow Q^{(i-1)}$ is a principal subbundle of the universal frame bundle
 $\mathscr S^{(i)}{\bf Q}^{(i-1)}\rightarrow Q^{(i-1)} $ of $ {\bf Q}^{(i-1)}$ of order $i$
for $i=k$  and then consequently for
$0 \leq i \leq k$ by induction).

\end{enumerate}

\item
To every geometric structure ${\bf Q}^{(k)}$ of order $ k \geq -1$ there is associated a principal fiber bundle
$\widehat{\mathscr S}^{(k+1)}{\bf Q}^{(k)}
\stackrel{\widehat  {\bf G}_{k+1}  }{\longrightarrow} Q^{(k)}$
 which is called the universal linear frame bundle of ${\bf Q}^{(k)}$ of order $k+1$. Its structure group ${\widehat  {\bf G} _{k+1} }$ is endowed with a filtration $\{F^{\ell}\widehat {\bf G}_{k+1} \}$.
The quotient
\begin{eqnarray*}
{\mathscr S}^{(k+1)}{\bf Q}^{(k)}: =\widehat{\mathscr S}^{\,(k+1)}{\bf Q}^{(k)} /F^{k+2}\widehat {\bf G}_{k+1}
\end{eqnarray*}
  is a principal fiber bundle over $Q^k$ with structure group
\begin{eqnarray*}
{\bf G}_{k+1} : =\widehat {\bf G}_{k+1}   /F^{k+2}\widehat {\bf G}_{k+1},
\end{eqnarray*}
which is regarded as a geometric structure of order $k+1$ and called the universal frame bundle of ${\bf Q}^{(k)}$ of order $ k+1 $.

  \item
The  universal linear frame bundle $\widehat{\mathscr S}^{(k+1)}{\bf Q}^{(k)}$ for a geometric structure ${\bf Q}^{(k)}$ of order $k \geq -1$ is defined as follows:
\begin{enumerate}
\item
The bundle $\widehat{\mathscr S}^{(k+1)}{\bf Q}^{(k)}$.
 We set
    $$               E^{(-1)} :=\frak g_- \text{ and }
                                        E  ^{(k)} :=\frak g_- \oplus \frak g_0 \oplus \dots \oplus \frak g_k \text{ if }k\geq 0 , $$
  where $ \frak g_i$ is the Lie algebra of the structure
  group $G_i$ for $0 \leq i \leq k$.
We regard $ E^{(k)} $ not only as a graded vector space but also as a filtered vector space in the standard way.
 We note also that the tangent space $T_{z^i}Q^{(i)}\ (-1\leq i\leq k)$ has a canonical filtration
 $\{F^{\ell}T_{z^i}Q^{(i)}\}_{\ell\in\mathbf Z}$ defined inductively
by the following conditions for $0 \leq i\leq k$:
$
F^{i+1}T_{z^i}Q^{(i)}=0$  and
\begin{eqnarray*}
0 \rightarrow F^{\ell}T_{z^{i}}
Q^{(i)}
\rightarrow T_{z^{i}}
Q^{(i)}
\rightarrow T_{z^{i-1}}
Q^{(i-1)}/ F^{\ell}T_{z^{i-1}}
Q^{(i-1)}\rightarrow 0
 \quad \text {(exact for }
 \ell \leq i\text{)},
\end{eqnarray*}
 where $z^{i-1} \in Q^{(i-1)}$ is the projection of $z^{k} \in Q^{(k)}$. \\



 It being prepared, we define
   the fiber $\widehat{\mathscr S}^{(k+1)}_{z^k}{\bf Q}^{(k)}$ over
    $z^k \in Q^{(k)}$ to be  the set of all filtration preserving isomorphisms
$$\zeta^{k+1}: E ^{(k)} \rightarrow T_{z^k}Q^{(k)}$$
which satisfy the following conditions:

If $k=-1$, $ [\zeta ^0] = \gr \zeta^0:\frak g_- \to \gr T_{z^{(-1)}} Q^{(-1)}$ is a graded Lie algebra isomorphism.

If $k\geq 0$, the following diagram is commutative:
\begin{eqnarray*}
 \xymatrix{
 0 \ar[r] & \frak g_k \ar[d]^{\widetilde{\cdot}} \ar[r] &  E ^{(k)} \ar[d]^{\zeta^{k+1}} \ar[r] &   E ^{(k-1)} \ar[d]^{\zeta^k} \ar[r] &0 \\
 0 \ar[r] & (\widetilde{\frak g_k})_{z^k}   \ar[r] &T_{z^k}Q^{(k)}   \ar[r] & T_{z^{k-1}}Q^{(k-1)} \ar[r] &0
 }
 \end{eqnarray*}
 where
  $\widetilde{\cdot}$ denote the map which sends $A \in \frak g_k$ to $\frac{d}{dt}|_{t=0}(z^k \exp \, tA) \in (\widetilde{\frak g_k})_{z^k}$, and $\zeta^k$ is the truncation   of $\zeta^{k+1}$, and
  $$z^{k } =[\zeta^k] =\zeta^k/F^{k+1}GL(  E ^{(k-1)}) $$
and $z^{k-1}$ is the image of $z^k$ by the projection map $Q^{(k)} \rightarrow Q^{(k-1)}$.




\item
The group $\widehat{\bf G}_{k+1}$ consists of all filtration preserving linear isomorphisms $\alpha^{k+1}:  E ^{(k)} \rightarrow   E ^{(k)}$ which satisfy the following conditions:

If $k=-1$, $ [\alpha ^0] =\gr \alpha^0:\frak g_- \to \frak g_-$ is a graded Lie algebra isomorphism.

If $k\geq 0$, the following diagram is commutative:

\begin{eqnarray*}
 \xymatrix{
 0 \ar[r] & \frak g_k \ar[d]^{=} \ar[r] &  E ^{(k)} \ar[d]^{\alpha^{k+1}} \ar[r] &  E ^{(k-1)} \ar[d]^{\alpha^k} \ar[r] &0 \\
 0 \ar[r] & \frak g_k   \ar[r] &  E ^{(k)}   \ar[r] &  E ^{(k-1)} \ar[r] &0
 }
 \end{eqnarray*}
and $[\alpha^k] =\alpha^k/F^kGL( E ^{(k-1)})=1$.
%
The group $\widehat {\bf G}_{k+1}$ is endowed  with a natural filtration induced from that of $GL(  E ^{(k)})$. 

\item 
The action of $\widehat{\bf G}_{k+1}$ on $\widehat{\mathscr S}^{(k+1)}{\bf Q}^{(k)}$.
For $\alpha^{k+1} \in \widehat{\bf G}_{ k+1 }$ and $\zeta^{k+1} \in \widehat{\mathscr S}^{(k+1)}_{z^k}{\bf Q}^{(k)}$,
 $\zeta^{k+1}\cdot \alpha^{k+1}$ is given by the following commutative diagram:
\begin{eqnarray*}
 \xymatrix{
  E ^{(k)} \ar[d]^{\alpha^{k+1}}\ar[r]^{\zeta^{k+1}\cdot \alpha^{k+1}} & T_{z^k }Q^{(k)} \\
   E ^{(k)} \ar[r]^{\zeta^{k+1}}& T_{z^{k}}Q^{(k)} \ar[u]_{identity}
 }
 \end{eqnarray*}
where $z^{k} =[\zeta^{k}]$ is the projection  of $\zeta^{k+1}$   to $Q^{(k)}$. 
\end{enumerate}

 \end{enumerate}
\end{definition}

This completes the definition. Indeed, the three properties (1) (2) (3) above being bound each to each, the inductive definition is well carried out in the order
  of:
   $${\bf Q}^{(-1)},\widehat{\mathscr S}^{(0)}{\bf Q}^{(-1)},\mathscr S^{(0)}{\bf Q}^{(-1)},
{\bf Q}^{(0)},\widehat{\mathscr S}^{(1)}{\bf Q}^{(0)},\mathscr S^{(1)}{\bf Q}^{(0)},
{\bf Q}^{(1)},\cdots. $$


 We remark that for a geometric structure of order $-1$, ${\bf Q}^{(-1)} =(M,F) $, the universal frame bundle $\mathscr S^{(0)}
 {\bf Q}^{(-1)} $ is clearly identified with the (reduced) frame bundle $\mathscr S^{(0)} (M,F)$ of $(M,F)$ introduced in Section 1, so that the geometric structures
 of order 0 are the $G$-structures in the sense of nilpotent geometry.
 We remark also that
 $\widehat{\mathscr S}^{(k+1)}{\bf Q}^{(k)} =
\mathscr S^{(k+1)}{\bf Q}^{(k)}$
if the filtration of the filtered manifold $Q^{(-1)}$ is trivial.  \\

 In  Definition \ref{def:universal frame bundle}, the filtered vector space $   E ^{(k)}$ and the filtered group $  {\bf G}_{k+1}$ depend on the graded Lie algebra $\frak g_-$ and  the sequence of structure groups $ (G_0, \dots, G_k)$ of ${\bf Q}^{(k)}$, and may be written more precisely as
 $$  E ^{(k)}(\frak g_-, \frak g_0, \dots, \frak g_k) \text{  and  } {\bf G}_{k+1}(\frak g_-, G_0, \dots, G_k)   .$$

Let us see more concretely what ${\bf G}_{k+1}(\frak g_-, G_0, \dots, G_k)$ is. Recall that $G_0(\frak g_-)$ is the group of automorphisms of $\frak g_-$ as graded Lie algebra, and $\frak g_0(\frak g_-)$ is the Lie algebra of all derivation of $\frak g_-$ preserving the degree. If we take a Lie subgroup $G_0 \subset G_0(\frak g_-)$, we see that the Lie algebra $\frak g_1(\frak g_-,\frak g_0)$ of ${\bf G}_1(\frak g_-, \frak g_0)$ is abelian and can be identified as:
$$\frak g_1(\frak g_-,\frak g_0) = \oplus_{p<0}\Hom(\frak g_p, \frak g_{p+1})  $$
and ${\bf G}_1(\frak g_-, G_0) =\exp \frak g_1(\frak g_-,\frak g_0)$, where we view an element of $\frak g_1(\frak g_-, \frak g_0)$ and ${\bf G}_1(\frak g_-, G_0)$ as a matrix in $\Hom(\frak g_- \oplus \frak g_0, \frak g_-\oplus \frak g_0)$ modulo the filtration $F^2\Hom(\frak g_- \oplus \frak g_0, \frak g_- \oplus \frak g_0)$ and $\exp$ denote the usual exponential map of matrix.

Taking successively subgroups
$$G_i \subset {\bf G}_i(\frak g_-, G_0, \dots, G_{i-1}) \text{ for } i=0,1, \dots, k$$
we get a ``symbol" $(\frak g_-, G_0, \dots, G_k)$ of a geometric structure of order $k$. For $k \geq 1$, the Lie algebra $\frak g_{k+1}(\frak g_-, \frak g_0, \dots, \frak g_{k })$ of ${\bf G}_{k+1}(\frak g_-, G_0, \dots, G_k)$ can be identifies as:
$$\frak g_{k+1}(\frak g_-, \frak g_0, \dots, \frak g_{k }) =\left(\oplus_{p<0} \Hom(\frak g_p, \frak g_{p+k+1}) \right) \oplus \left(\oplus_{i=0}^{k-1}\Hom(\frak g_i, \frak g_{k } )\right) $$
and ${\bf G}_{k+1}(\frak g_-, G_0, \dots, G_k)$ is ......
Then $\frak g_{k+1}(\frak g_-, \frak g_0, \dots, \frak g_k)$ and ${\bf G}_{k+1}(\frak g_-, G_0, \dots, G_k)$ can be represented in matrix as illustrated below (Figure \ref{G3} for $\frak g_- =\frak g_{-2} \oplus \frak g_{-1}$ and $k=2$).

\begin{figure}[h]
\begin{eqnarray*}
\frak g_3(\frak g_-, \frak g_0,\frak g_1, \frak g_2) \qquad \qquad \quad \quad &\qquad& {\bf G}_3(\frak g_-, G_0, G_1, G_2) \\[10 pt]
\begin{array}{c|c|c|c|c|c}
 & \frak g_{-2} & \frak g_{-1} & \frak g_0 & \frak g_1 &\frak g_2 \\ \hline
 \frak g_{-2} &&&&& \\ \hline
 \frak g_{-1} &&&&&  \\ \hline
 \frak g_0 &&&&&  \\ \hline
 \frak g_1 &\alpha^{-2}_1 &&&& \\ \hline
 \frak g_2 & & \alpha^{-1}_2&\beta^0_2 & \beta^1_2 & \\ \hline
\end{array}
&\qquad&
\begin{array}{c|c|c|c|c|c}
& \frak g_{-2} & \frak g_{-1} & \frak g_0 & \frak g_1 &\frak g_2 \\ \hline
 \frak g_{-2} &{\rm Id}&&&& \\ \hline
 \frak g_{-1} && {\rm Id}&&&  \\ \hline
 \frak g_0 &&& {\rm Id}&&  \\ \hline
 \frak g_1 &\alpha^{-2}_1 &&& {\rm Id}& \\ \hline
 \frak g_2 & & \alpha^{-1}_2&\beta^0 _2 & \beta^1_2 & {\rm Id} \\ \hline
\end{array}
\end{eqnarray*}
\smallskip
\noindent
where $\alpha = \alpha^{-2}_1 + \alpha^{-1}_2 \in \oplus_{p<0} \Hom(\frak g_p, \frak g_{p+k+1})$ and $\beta =\beta^0_2 + \beta^1_2 \in \oplus_{i=0}^{k-1}\Hom(\frak g_i, \frak g_{k } )$.

\caption{$\frak g_3(\frak g_-, \frak g_0,\frak g_1, \frak g_2)$ and ${\bf G}_3(\frak g_-, G_0, G_1, G_2)$} \label{G3}
\end{figure}

\medskip
\noindent{\bf  Notations.}
  For simplicity of notation,  we will write a geometric structure ${\bf Q}^{(k)}=(Q^{(k)} \rightarrow Q^{(k-1)} \rightarrow \dots \rightarrow Q^{(-1)})$ (respectively, $  E ^{(k)}(\frak g_-, \frak g_0, \dots, \frak g_k)$ and  ${\bf G}_{k+1}(\frak g_-, G_0, \dots, G_k)$)  simply as  $Q^{(k)}\stackrel{G_k}{\longrightarrow} Q^{(k-1)}$ (respectively,  $E^k(G_k)$ and $G_{k+1}(G_k)$)    or  $Q^{(k)}$ (respectively, $E^k$ and $G_{k+1}$)     when no confusion can arise.
  The Lie algebra $\frak g_{k+1}(\frak g_-, \frak g_0, \dots, \frak g_k)$  of ${\bf G}_{k+1}(\frak g_-, G_0, \dots, G_k)$ will be also written as  $\frak g_{k+1}(\frak g_k)$  or $\frak g_{k+1}$.

  Similarly, we will write $\widehat{\mathscr S}^{(k+1)}{\bf Q}^{(k)}$ and $\mathscr S ^{(k+1)}{\bf Q}^{(k)}$ simply as $\widehat{\mathscr S}^{(k+1)}{ Q}^{(k)}$ and $\mathscr S ^{(k+1)}{ Q}^{(k)}$ when no confusion can arise.  \\
%


\subsection{Completed universal frame bundles} $\,$

For a geometric structure $Q^{(k)} \stackrel{G_k}{\longrightarrow} Q^{(k-1)}$ we define $\mathscr S^{(\ell+1)}Q^{(k)}$ and $G_{\ell+1}(G_k)$ for $\ell \geq k$ inductively as
$$\mathscr S^{(\ell+1)}Q^{(k)} = \mathscr S^{(\ell+1)}\mathscr S^{(\ell  )}Q^{(k)} \text{ and } G_{\ell+1}(G_k) = G_{\ell+1}(G_{\ell }(G_k)).$$
Then $\mathscr S^{(\ell+1)}Q^{(k)} \stackrel{G_{\ell+1}(G_k)}{\longrightarrow} \mathscr S^{(\ell )}Q^{(k)} \longrightarrow \dots \longrightarrow \mathscr S^{(k+1)}Q^{(k)} \longrightarrow Q^{(k)} \longrightarrow \dots \longrightarrow Q^{(-1)}$
is a geometric structure of order $\ell$ for $\ell \geq k$.
In this subsection we shall show that
$\mathscr S ^{(\ell+1)}Q^{(k)} \rightarrow  \dots \rightarrow Q^{(k)} \rightarrow  Q^{(k-1)}$
is not only a step-wise principal fiber bundle but also a principal fiber bundle over $Q^{(k-1)}$ for $\ell \geq k$.

Note that for $\ell \geq k+1$, the Lie algebra $ \frak g _{\ell}(\frak g_k) $ of $G_{\ell}(G_k)$ is inductively given by
$$ \frak g _{\ell}(\frak g_k)  = \Hom \left(\frak g_-, \frak g_- \oplus \left( \oplus_{i=0}^{\ell-1} \overline{\frak g}_i\right)\right)_{\ell} \oplus \Hom\left( \oplus_{i=0}^{i = \ell-2} \overline{\frak g}_i, \overline{\frak g}_{\ell-1} \right).$$
where  $\overline{\frak g}_i $ denotes $ \frak g_i$ for $0 \leq i \leq k$ and $\frak g_i(\frak g_k)$ for $k+1 \leq i \leq \ell-1$.
Set $$ E^{(\ell)}(\frak g_k):=\frak g_- \oplus \frak g_0 \oplus \dots \oplus \frak g_k \oplus \frak g_{k+1}(\frak g_k) \oplus \dots \oplus \frak g_{\ell}(\frak g_k). $$
%

First, let us define the structure group $G^{(\ell+1)}(G_k)$ of $\mathscr S ^{(\ell+1)}Q^{(k)} \rightarrow     Q^{(k-1)}$
  with an exact sequence
$$ 1 \rightarrow G_{\ell+1}(G_k) \rightarrow G^{(\ell+1)}(G_k) \rightarrow G^{(\ell)}(G_k) \rightarrow 1$$
  by induction on $\ell \geq k$.
 We set $G^{(k)}(G_k) =G_k$. Let $\ell \geq k$ and suppose that we have defined the groups  $G^{(i)}(G_k)$ ($k \leq i \leq \ell $)
  with an exact sequence
$$ 1 \rightarrow G_{i}(G_k) \rightarrow G^{(i)}(G_k) \rightarrow G^{(i-1)}(G_k) \rightarrow 1$$
Define  $\widehat{G}^{(\ell+1)}(G_k)$   by  the group of all filtration preserving isomorphisms $\alpha^{\ell+1}:E^{(\ell)}(\frak g_k) \rightarrow E^{(\ell)}(\frak g_k)$ which makes the following diagram commutative:
\begin{eqnarray*}
 \xymatrix{
 0 \ar[r] &  \frak g _{\ell}(\frak g_k) \ar[d]^{Ad(a^{\ell})} \ar[r] &E^{(\ell)}(\frak g_k) \ar[d]^{\alpha^{\ell+1}} \ar[r] & E^{(\ell-1)} (\frak g_k) \ar[d]^{\alpha^{\ell}} \ar[r] &0 \\
 0 \ar[r] &  \frak g _{\ell}(\frak g_k)   \ar[r] &E^{(\ell)} (\frak g_k)  \ar[r] & E^{(\ell-1)} (\frak g_k) \ar[r] &0
 }
 \end{eqnarray*}
with $a^{\ell}=[\alpha^{\ell}] \in G^{(\ell)}(G_k)$.
Here,  $\alpha^{\ell} $ is the induced map from $\alpha^{\ell+1}$ and $ [\alpha^{\ell}]$ denotes $\alpha^{\ell}/F^{\ell+1}GL(E^{(\ell-1)})$.
%
 %
%
 %
Note also   that by the inductive assumption we have the following exact sequence:
$$1 \rightarrow G_{\ell}(G_k) \rightarrow G^{(\ell)}(G_k) \rightarrow G^{(\ell-1)}(G_k) \rightarrow 1$$
so that $a^{\ell} \in G^{(\ell)}(G_k)$ acts on $\frak g_{\ell}(\frak g_k)$, denote it by ${\rm Ad}(a^{\ell})$.
The group $G^{(\ell+1)}(G_k)$  is defined by the quotient $$G^{(\ell+1)}(G_k) =\widehat{G}^{(\ell+1)}(G_k)/F^{\ell+2}GL(E^{(\ell)}).$$
  %
Then we get an exact sequence
 $$1 \rightarrow G_{\ell+1}(G_k) \rightarrow G^{(\ell+1)}(G_k) \rightarrow G^{(\ell )}(G_k) \rightarrow 1.$$
 This completes the inductive definition of $G^{(\ell)}(G_k)$.

  Next, let us show that the group $G^{(\ell)}(G_k)$ acts on $\mathscr S^{(\ell+1)}Q^{(k)}$. Supposing that the action of $G^{(\ell)}(G_k)$ on $\mathscr S^{(\ell)}Q^{(k)}$ is defined, we will show that we can define it for $\ell+1$.  For $\alpha^{\ell+1} \in \widehat{G}^{(\ell+1)}(G_k)$ and $\zeta^{\ell+1} \in \widehat{\mathscr S}^{(\ell+1)}Q^{(k)}$ we define the right action $\zeta^{\ell+1}\cdot \alpha^{\ell+1}$ by the following commutative diagram:
\begin{eqnarray*}
 \xymatrix{
 E^{(\ell)} \ar[d]^{\alpha^{\ell+1}}\ar[r]^{\zeta^{\ell+1}\cdot \alpha^{\ell+1}} & T_{z^{\ell}\cdot a^{\ell}}Q^{(\ell)} \\
 E^{(\ell)} \ar[r]^{\zeta^{\ell+1}}& T_{z^{\ell}}Q^{(\ell)} \ar[u]_{{R_{a^{\ell}}}_*}
 }
 \end{eqnarray*}
where $z^{\ell} =[\zeta^{\ell}]$ and $a^{\ell} =[\alpha^{\ell}]$ are projections of $\zeta^{\ell+1}$ and $\alpha^{\ell+1}$ to $\mathscr S^{(\ell)}Q^{(k)}$ and $G^{(\ell)}(G_k)$ respectively, and $z^{\ell}\cdot a^{\ell}$ is the associated action of $G^{(\ell)}(G_k)$ on $\mathscr S^{(\ell)}Q^{(k)}$.

  Since $G^{(\ell)}(G_k)$ acts on $\mathscr S^{(\ell)}Q^{(k)}$, we have
$$(R_{a^{\ell}})_* \widetilde{A}_{z^{\ell}} = \widetilde{{\rm Ad}(a^{\ell})^{-1} A}_{z^{\ell}\cdot a^{\ell}}$$
for $A \in \frak g^{(\ell)}(\frak g_k)$. Therefore, we see that $\zeta^{\ell+1}\cdot \alpha^{\ell+1} $ belongs to $\widehat{\mathscr S}^{(\ell+1)}Q^{(k)}$. It then follows that $\widehat{G}^{(\ell+1)}(G_k)$ acts on $\widehat{\mathscr S}^{(\ell+1)}$. Passing to the quotients, we see that $G^{(\ell+1)}(G_k)$ acts on $\mathscr S^{(\ell+1)}Q^{(k)}$.

 Hence we have proven the following:
\begin{proposition}
For a geometric structure $Q^{(k)} \stackrel{G_k}{\longrightarrow} Q^{(k-1)}$,
$$\mathscr S^{(\ell+1)}Q^{(k)} \rightarrow Q^{(k-1)}$$
is a principal fiber bundle over $Q^{(k-1)}$ with structure group $G^{(\ell+1)}(G_k)$.

\end{proposition}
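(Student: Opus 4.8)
The plan is to iterate, for $j=k+1,\dots,\ell+1$ along the tower
$$\mathscr S^{(\ell+1)}Q^{(k)}\to\mathscr S^{(\ell)}Q^{(k)}\to\cdots\to\mathscr S^{(k+1)}Q^{(k)}\to Q^{(k)}\to Q^{(k-1)},$$
the following \emph{one--step claim}: for $j>k$, if $\mathscr S^{(j-1)}Q^{(k)}\to Q^{(k-1)}$ is a principal fiber bundle with structure group $G^{(j-1)}(G_k)$ (with the convention $\mathscr S^{(k)}Q^{(k)}=Q^{(k)}$, $G^{(k)}(G_k)=G_k$), then $\mathscr S^{(j)}Q^{(k)}\to Q^{(k-1)}$ is a principal fiber bundle with structure group $G^{(j)}(G_k)$, the latter being a Lie group extension $1\to G_j\to G^{(j)}(G_k)\to G^{(j-1)}(G_k)\to 1$. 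Since $\mathscr S^{(j)}Q^{(k)}=\mathscr S^{(j)}\mathscr S^{(j-1)}Q^{(k)}$ and $\mathscr S^{(j)}\mathscr S^{(j-1)}Q^{(k)}\to\mathscr S^{(j-1)}Q^{(k)}$ is by construction already a principal $G_j$-bundle, the entire content of the one--step claim is that the $G^{(j-1)}(G_k)$-action on $\mathscr S^{(j-1)}Q^{(k)}$ over $Q^{(k-1)}$ lifts to a smooth action on $\mathscr S^{(j)}Q^{(k)}$ combining with the $G_j$-action into a free, fiber-transitive action over $Q^{(k-1)}$ of the extension $G^{(j)}(G_k)$.

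To build the lift, take $h\in G^{(j-1)}(G_k)$ with associated diffeomorphism $R_h$ of $\mathscr S^{(j-1)}Q^{(k)}$ (covering $\mathrm{id}_{Q^{(k-1)}}$) and a point $\zeta\colon E^{(j-1)}\to T_{z}\mathscr S^{(j-1)}Q^{(k)}$ of $\widehat{\mathscr S}^{(j)}\mathscr S^{(j-1)}Q^{(k)}$. The naive candidate $(R_h)_*\circ\zeta$ fails the defining commutativity of Definition~\ref{def:universal frame bundle}(3)(a), because $(R_h)_*$ carries the vertical subspace of $\mathscr S^{(j-1)}Q^{(k)}\to\mathscr S^{(j-2)}Q^{(k)}$ to the corresponding vertical subspace at $R_h(z)$ after an automorphism of the $\frak g_{j-1}$-summand determined by $h$ (while acting as the identity on $T\mathscr S^{(j-2)}Q^{(k)}$). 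One repairs this by setting $\widetilde R_h(\zeta):=(R_h)_*\circ\zeta\circ\rho(h)$, where $\rho(h)\in\GL(E^{(j-1)})$ is the filtration-preserving correction equal to that induced automorphism on the $\frak g_{j-1}$-block and to the identity on $E^{(j-2)}$; then $\widetilde R_h$ preserves $\widehat{\mathscr S}^{(j)}\mathscr S^{(j-1)}Q^{(k)}$ and, being filtration-preserving, descends modulo $F^{j+1}\widehat G_j$ to $\mathscr S^{(j)}Q^{(k)}$. The assignment $h\mapsto\widetilde R_h$ need not be a homomorphism, but its deviation takes values in the $G_j$-action along the fibers over $\mathscr S^{(j-1)}Q^{(k)}$; one then defines $G^{(j)}(G_k)$ to be the Lie group generated, inside the relevant frame group, by $G_j$ and the $[\rho(h)]$, an extension of $G^{(j-1)}(G_k)$ by $G_j$ through which the combined action factors. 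Freeness and fiber-transitivity over $Q^{(k-1)}$ then follow from the corresponding properties of $G_j$ (over $\mathscr S^{(j-1)}Q^{(k)}$) and of $G^{(j-1)}(G_k)$ (over $Q^{(k-1)}$), and local triviality is obtained by composing local trivializations of $\mathscr S^{(j)}Q^{(k)}\to\mathscr S^{(j-1)}Q^{(k)}$ and $\mathscr S^{(j-1)}Q^{(k)}\to Q^{(k-1)}$ with a local smooth section of $G^{(j)}(G_k)\to G^{(j-1)}(G_k)$.

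The main obstacle is the bookkeeping concealed in the last paragraph: one must verify that $\rho(h)$ genuinely restores the commutative diagram (so that $\widetilde R_h$ really maps the universal linear frame bundle into itself and descends correctly modulo $F^{j+1}\widehat G_j$), and, crucially, that the failure of $h\mapsto\widetilde R_h$ to be multiplicative is \emph{exactly} absorbed by the $G_j$-action, so that $G^{(j)}(G_k)$ is a genuine Lie group acting smoothly and freely. The structural reason the group must be enlarged is that $\rho(h)$ does not itself lie in $\widehat G_j$: Definition~\ref{def:universal frame bundle}(3)(b) forces the $\frak g_{j-1}$-block of elements of $\widehat G_j$ to be the identity, whereas that of $\rho(h)$ is the (generally nontrivial) automorphism of $\frak g_{j-1}$ determined by $h$; hence the structure group of $\mathscr S^{(j)}Q^{(k)}\to Q^{(k-1)}$ must be the extension $G^{(j)}(G_k)$ and not merely $G_j$. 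Once these verifications are in place, the remaining principal-bundle axioms are routine.
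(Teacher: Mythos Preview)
Your inductive strategy and the key formula $\widetilde R_h(\zeta)=(R_h)_*\circ\zeta\circ\rho(h)$ are exactly the right idea and match the paper's action $\zeta^{\ell+1}\cdot\alpha^{\ell+1}=(R_{a^\ell})_*\circ\zeta^{\ell+1}\circ\alpha^{\ell+1}$. However, your specification of $\rho(h)$ contains a genuine error. You assert that $(R_h)_*$ ``acts as the identity on $T\mathscr S^{(j-2)}Q^{(k)}$'' and therefore take $\rho(h)|_{E^{(j-2)}}=\mathrm{id}$. This is false: for $h\in G^{(j-1)}(G_k)$, the diffeomorphism $R_h$ of $\mathscr S^{(j-1)}Q^{(k)}$ covers $R_{\bar h}$ on $\mathscr S^{(j-2)}Q^{(k)}$, where $\bar h$ is the image of $h$ in $G^{(j-2)}(G_k)$, and this is the identity only when $h\in G_{j-1}$. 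With your choice of $\rho(h)$, the induced map of $\widetilde R_h(\zeta)$ on $E^{(j-2)}$ is the original $\zeta^{j-1}$, whose class is $z^{j-1}$, not $z^{j-1}\cdot h$ as required by Definition~\ref{def:universal frame bundle}(3)(a); hence $\widetilde R_h(\zeta)$ is not even a point of $\widehat{\mathscr S}^{(j)}$. The correction $\rho(h)$ must act nontrivially on all of $E^{(j-1)}$: on $E^{(j-2)}$ it must be a filtration-preserving lift $\alpha^{j-1}$ with $[\alpha^{j-1}]=h$, and on $\overline{\frak g}_{j-1}$ it must be $Ad(h)$.

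The paper sidesteps the extension bookkeeping you describe by defining the group $\widehat G^{(\ell+1)}(G_k)$ \emph{directly} as a concrete subgroup of the filtration-preserving linear automorphisms of $E^{(\ell)}$: namely those $\alpha^{\ell+1}$ whose induced map $\alpha^{\ell}$ on $E^{(\ell-1)}$ has $[\alpha^{\ell}]=a^{\ell}\in G^{(\ell)}(G_k)$ and whose restriction to $\overline{\frak g}_{\ell}$ equals $Ad(a^{\ell})$. This immediately exhibits $G^{(\ell+1)}(G_k)=\widehat G^{(\ell+1)}(G_k)/F^{\ell+2}$ as a Lie group (no need to \emph{generate} it from pieces or to analyze cocycle defects), and the action $\zeta^{\ell+1}\cdot\alpha^{\ell+1}=(R_{a^{\ell}})_*\circ\zeta^{\ell+1}\circ\alpha^{\ell+1}$ is then manifestly well-defined because $\alpha^{\ell+1}$ already carries the full correction on every block of $E^{(\ell)}$. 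Once you fix $\rho(h)$ as above, your construction becomes essentially this one, but the paper's direct definition is cleaner and avoids the verification that the ``deviation'' of $h\mapsto\widetilde R_h$ is absorbed by $G_j$.
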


\begin{figure}[t]
 \begin{eqnarray*}
 \xymatrix{
    &  & && &  \mathscr S(Q^{(k)})\ar@/^3pc/[dddddd]^{G(G_k)} \\
    &  & && &  \\
    &  & && &   \vdots \ar[d]_{G_{k+3}(G_k)} \\
    &  & && &  \mathscr S^{(k+2)}(Q^{(k)})   \ar[d]_{G_{k+2}(G_k)} \\
    &  & && &  \mathscr S^{(k+1)}(Q^{(k)}) \ar[d]_{G_{k+1}(G_k)}\ar@/^2pc/[dd]^{G^{(k+1)}(G_k)} \\
    &  & && \mathscr S^{(k)} (Q^{(k-1)}) \ar[d]_{G_{k }(G_{k-1}) } & \ar@{_{(}->}[l] Q^{(k) } \ar[d]_{G_{k }} \\
    &  &   & &  Q^{(k-1)}\ar[d]_{G_{k-1}} & \ar@{=}[l]  Q^{(k-1)}     \\
    &  & & \vdots &   & \\
    &   &\mathscr S^{(2)}(Q^{(1)}) \ar[d]_{G_{2}(G_1)}  && &  \\
    & \mathscr S^{(1)} (Q^{(0)}) \ar[d]_{G_{1}(G_0) }    & \ar@{_{(}->}[l] Q^{(1)} \ar[d]_{G_1} && & \\
   \mathscr S^{(0)}(M) \ar[d]   & \ar@{_{(}->}[l] Q_0\ar[d]_{G_0} & Q_0  \ar@{=}[l] && &  \\
 M  &M \ar@{=}[l]   & && & .
}
 \end{eqnarray*}
\caption{Completed universal frame bundles} \label{fig complete}
\end{figure}
 \vskip 10 pt

\begin{definition}
Passing to the projective limit, we set
$$\mathscr S Q^{(k)}:=\lim_{\leftarrow_{\ell}}\mathscr S^{(\ell)}Q^{(k)} \text{ and } G(G_k):=\lim_{\leftarrow_{\ell}} G^{(\ell)}(G_k) \text{ and } E(\frak g_k):=\lim_{\leftarrow_{\ell}} E^{(\ell)}(\frak g_k) .$$
Then $\mathscr SQ^{(k)} \rightarrow Q^{(k-1)}$ is a principal bundle with structure group $G(G_k)$. We call $\mathscr SQ^{(k)}$ the {\it completed universal frame bundle of} $Q^{(k)}$ and $\mathscr S^{(\ell)}Q^{(k)}$ the {\it universal frame bundle of} $Q^{(k)}$ {\it of order} $\ell$ (Figure \ref{fig complete}).
\end{definition}

We remark that the completed frame bundle $\mathscr SQ^{(k)}$ is of infinite dimension.
However, since it is the projective limit of a sequence of finite dimensional  manifolds, we can deal with it almost equally
as in the finite dimensional case.
In this paper we will freely use standard terminologies concerning   finite dimensional manifolds also for these infinite dimensional objects, such as Lie groups, fiber bundles, tangent spaces, smooth or analytic differential forms, absolute parallelism etc.
The reader may refer to Section 2.1 (p.275--p.279) of \cite{M93}  for a brief summary of this convention.

\subsection{Canonical Pfaff class and canonical Pfaff form}  $\,$

\subsubsection{ }

Here we give the definition of an isomorphism of geometric structures.

 \begin{definition} \label{def:isomorphism of geometric structure of order k}
 An isomorphism of geometric structures is defined inductively by the following conditions:
 \begin{enumerate}
 \item An isomorphism $\Phi:Q^{(-1)}(=(M,F))\rightarrow Q'^{(-1)}(=(M',F'))$ of geometric structures
 of order $-1$ is an isomorphism of filtered manifolds.
 \item  If $ Q ^{(k)} $
and $ Q'^{(k)} $ are geometric structures of order $ k \geq -1$ of same type $ (\frak g_-,G_0,\cdots G_k )        $ and
 if $\Phi:Q^{(k)}\to Q'^{(k)}$
  is an isomorphism of the geometric structures,
 then there is induced a bundle isomorphism
 $ \mathscr S^{(k+1)} \Phi:  \mathscr S^{(k+1)}Q^{(k)}\to\mathscr S^{(k+1)}Q'^{(k)}$ which lifts
  $\Phi $.
  \item Let $ Q^{(k+1)} $ and $ Q'^{(k+1)} $ be geometric structures of order $ k+1\geq 0 $ of type $(\frak g_-,G_0,\cdots,G_{k+1})$.
  A bijection $ \Phi ^{(k+1)}:Q^{(k+1)} \to Q'{(k+1)} $ is an isomorphism of the geometric structures
  if and only if there exists an isomorphism
   $ \Phi ^{(k)}:Q^{(k)} \to Q'{(k)} $ such that the lift $\mathscr S^{( k+1)} \Phi ^{(k)}$ maps  $ Q^{(k+1)}
   $ to $ Q'^{(k+1)}$
    and the restriction of  $\mathscr S^{( k+1)} \Phi ^{(k)}$ to $ Q^{(k+1)}$ coincides with $\Phi ^{(k+1)}$.

    \end{enumerate}

   \end{definition}


 To complete the definition, we just indicate how to define the canonical lift
$\mathscr S^{(k+1)}\Phi^{(k)} : \mathscr S^{(k+1)}Q^{(k)} \rightarrow \mathscr S^{(k+1)} Q'^{(k)} $
for an isomorphism
$\Phi^{(k)} : Q^{(k)} \rightarrow Q'^{(k)} $:
For $ \zeta^{(k+1)}\in\widehat{\mathscr S}^{(k+1)}Q^{(k)} $, setting
$$ (\widehat {\mathscr S}^{(k+1)}\Phi^{(k)} )(\zeta^{(k+1)})=\Phi^{(k)}_*\circ\zeta^{(k+1)}.
$$
we get  a well-defined map
$\widehat {\mathscr S}^{(k+1)}\Phi^{(k)} : \widehat {\mathscr S}^{(k+1)}Q^{(k)} \rightarrow \widehat {\mathscr S}^{(k+1)} Q'^{(k)} $.
Then, passing to the quotient, we have the canonical lift
$\mathscr S^{(k+1)}\Phi^{(k)} : \mathscr S^{(k+1)}Q^{(k)} \rightarrow \mathscr S^{(k+1)} Q'^{(k)} $. \\


   Now we proceed to the definition of the canonical Pfaff class.

  The  {\it canonical Pfaff class} $[\theta^{(k-1)}]$ of a geometric structure $Q^{(k)} \rightarrow Q^{(k-1)}$  is the equivalence class of the Pfaff form under the action of the group $F^{k+1}GL(E^{(k-1)})$. Recall that  for $z^k \in Q^{(k)}$, there is a filtration preserving isomorphism $\zeta^k: E^{(k-1)} \rightarrow T_{z^{k-1}}Q^{(k-1)}$ with $z^k=\zeta^k/F^{k+1}GL(E^{(k-1)})$, where $z^{k-1}$ is the image of $z^k$ by the projection map $\pi:Q^{(k)} \rightarrow Q^{(k)}$. The composition
$$  \theta^{(k-1)}: T_{z^k}Q^{(k)} \stackrel{\pi_*}{\longrightarrow} T_{z^{k-1}}Q^{(k-1)} \stackrel{(\zeta^k)^{-1}}{\longrightarrow} E^{(k-1)}, 
$$
depends on a choice of $\zeta^k $. The canonical Pfaff class $[\theta^{(k-1)}]$ at $z^k$ is the equivalent class of $\theta^{(k-1)}$ by the action of $F^{k+1}GL(E^{(k-1)})$.

\begin{proposition} \label{prop:isomorphism via canonical class}
Let $Q^{(k)}$ and $\overline{Q}^{(k)}$ be geometric structures of type $(\frak g_-; G_0, \dots, G_k)$. If $\Phi:Q^{(k)} \rightarrow \overline{Q}^{(k)}$ is an isomorphism, then $\Phi$ preserves the canonical Pfaff classes $[\theta^{(k-1)}]$ and $[\overline{\theta}^{(k-1)}]$ of $Q^{(k)}$ and $\overline{Q}^{(k)}$, respectively, that is $\Phi^*[\overline{\theta}^{(k-1)}] =[\theta^{(k-1)}]$. Conversely, if $G_0, \dots, G_k$ are connected, then a diffeomorphism $\Phi:Q^{(k)} \rightarrow \overline{Q}^{(k)}$ such that $\Phi^*[\overline{\theta}^{(k-1)}] =[\theta^{(k-1)}]$ is an isomorphism of the geometric structures.
\end{proposition}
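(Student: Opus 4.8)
The plan is to prove the two implications separately: the first is a direct unwinding of the definition of the canonical lift, while the converse is an induction on $k$ whose only substantive input is the connectedness hypothesis. Suppose first that $\Phi$ is an isomorphism. By Definition~\ref{def:isomorphism of geometric structure of order k} there is an isomorphism $\Phi^{(k-1)}\colon Q^{(k-1)}\to\overline Q^{(k-1)}$ with $\Phi=\mathscr S^{(k)}\Phi^{(k-1)}|_{Q^{(k)}}$, and by construction the lift sends a linear frame $\zeta^k$ to $\Phi^{(k-1)}_*\circ\zeta^k$. I would fix $z\in Q^{(k)}$, choose a lift $\zeta^k\in\widehat{\mathscr S}^{(k)}Q^{(k-1)}$ of $z$, and use $\bar\zeta^k:=\Phi^{(k-1)}_*\circ\zeta^k$ as a lift of $\Phi(z)$. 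Since $\mathscr S^{(k)}\Phi^{(k-1)}$ covers $\Phi^{(k-1)}$ we have $\bar\pi\circ\Phi=\Phi^{(k-1)}\circ\pi$, so for $v\in T_zQ^{(k)}$,
\[
(\Phi^*\overline\theta^{(k-1)})(v)=(\bar\zeta^k)^{-1}(\bar\pi_*\Phi_*v)=(\zeta^k)^{-1}\bigl((\Phi^{(k-1)}_*)^{-1}\Phi^{(k-1)}_*\pi_*v\bigr)=\theta^{(k-1)}(v);
\]
thus $(\Phi^*\overline\theta^{(k-1)})_z$ and $\theta^{(k-1)}_z$ coincide once the lifts are chosen compatibly, whence $\Phi^*[\overline\theta^{(k-1)}]=[\theta^{(k-1)}]$.

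For the converse, assume $G_0,\dots,G_k$ connected and $\Phi^*[\overline\theta^{(k-1)}]=[\theta^{(k-1)}]$, and argue by induction on $k\ge0$. The ambiguity in $\theta^{(k-1)}$ comes from replacing $\zeta^k$ by $\zeta^k b$ with $b\in F^{k+1}\widehat G_k\subset F^{k+1}GL(E^{(k-1)})$, which multiplies $\theta^{(k-1)}$ on the left by $b^{-1}$; as these $b$ are filtration-preserving linear automorphisms of $E^{(k-1)}$, the subspace $\ker\theta^{(k-1)}$ is independent of the representative and equals the vertical bundle of $Q^{(k)}\to Q^{(k-1)}$, and, composing with the projections $E^{(k-1)}\to E^{(i)}$, one likewise recovers from $[\theta^{(k-1)}]$ the vertical bundle of $Q^{(k)}\to Q^{(i)}$ for every $-1\le i\le k-1$ (and, for $i=-1$, the tangential filtration of $M$, via the filtration of $E^{(-1)}=\frak g_-$). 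Because $G_0,\dots,G_k$ are connected the leaves of these vertical distributions are exactly the (connected) fibres of the successive projections, so $\Phi^*[\overline\theta^{(k-1)}]=[\theta^{(k-1)}]$ forces $\Phi_*$ to carry each of these vertical bundles onto the corresponding one for $\overline Q^{(k)}$; hence $\Phi$ descends to diffeomorphisms $\Phi^{(i)}\colon Q^{(i)}\to\overline Q^{(i)}$ for $-1\le i\le k-1$, and in particular $\varphi:=\Phi^{(-1)}\colon M\to M'$ preserves the tangential filtrations, i.e.\ is an isomorphism of filtered manifolds.

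To finish, I would use the identity $\mathrm{pr}_{E^{(k-2)}}\circ\,\theta^{(k-1)}=\pi^*\theta^{(k-2)}$ — immediate from the commuting square relating $\zeta^k$ with its truncation $\zeta^{k-1}$ — together with the surjectivity of $\pi_*\colon T_zQ^{(k)}\to T_{z^{k-1}}Q^{(k-1)}$: projecting $\Phi^*[\overline\theta^{(k-1)}]=[\theta^{(k-1)}]$ along $E^{(k-1)}\to E^{(k-2)}$ gives $(\Phi^{(k-1)})^*[\overline\theta^{(k-2)}]=[\theta^{(k-2)}]$, so by the inductive hypothesis $\Phi^{(k-1)}$ is an isomorphism of geometric structures of order $k-1$ (for $k=0$ this is just the statement about $\varphi$ already obtained). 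Then $\mathscr S^{(k)}\Phi^{(k-1)}$ is defined, and it remains to identify $\Phi$ with its restriction to $Q^{(k)}$: fixing $z\in Q^{(k)}$ with a lift $\zeta^k$ and writing $\Phi(z)=[\bar\zeta^k]\in\overline Q^{(k)}\subset\mathscr S^{(k)}\overline Q^{(k-1)}$, one expands $\Phi^*\overline\theta^{(k-1)}\equiv\theta^{(k-1)}\pmod{F^{k+1}}$ at $z$ using $\overline\theta^{(k-1)}_{\Phi(z)}=(\bar\zeta^k)^{-1}\circ\bar\pi_*$, $\theta^{(k-1)}_z=(\zeta^k)^{-1}\circ\pi_*$ and $\bar\pi_*\circ\Phi_*=\Phi^{(k-1)}_*\circ\pi_*$; surjectivity of $\pi_*$ then yields $\bar\zeta^k\equiv\Phi^{(k-1)}_*\circ\zeta^k$ modulo $F^{k+1}GL(E^{(k-1)})$, that is $\Phi(z)=[\Phi^{(k-1)}_*\circ\zeta^k]=\mathscr S^{(k)}\Phi^{(k-1)}(z)$. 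Since $\Phi$ maps $Q^{(k)}$ onto $\overline Q^{(k)}$, this gives $\mathscr S^{(k)}\Phi^{(k-1)}(Q^{(k)})=\overline Q^{(k)}$ and $\mathscr S^{(k)}\Phi^{(k-1)}|_{Q^{(k)}}=\Phi$, so $\Phi$ is an isomorphism of geometric structures of order $k$.

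I expect the main obstacle to be bookkeeping rather than conceptual: one must keep the $F^{k+1}GL(E^{(k-1)})$-ambiguity of the canonical Pfaff form under control throughout — checking that the kernels and the recovered vertical bundles and tangential filtration are independent of the representative, that $E^{(k-1)}\to E^{(k-2)}$ intertwines the ambiguity groups of $\theta^{(k-1)}$ and $\theta^{(k-2)}$, and that the final congruence pinning down $\bar\zeta^k$ uses nothing beyond data modulo $F^{k+1}$. The connectedness hypothesis enters at a single point, namely the descent of $\Phi$ down the tower $Q^{(k)}\to\cdots\to M$; everything else, as the forward implication illustrates, is natural and compatible with the canonical lift.
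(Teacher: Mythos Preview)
Your proof is correct and follows essentially the same strategy as the paper's: the forward direction is a direct computation (which the paper simply calls ``clear''), and the converse is an induction on $k$ in which one uses that $\ker[\theta^{(k-1)}]$ is the vertical bundle (hence, by connectedness of the $G_i$, $\Phi$ descends to $\Phi^{(k-1)}$), then projects along $E^{(k-1)}\to E^{(k-2)}$ to feed the inductive hypothesis, and finally compares frames to identify $\Phi$ with $\mathscr S^{(k)}\Phi^{(k-1)}|_{Q^{(k)}}$. The only cosmetic difference is that you recover all the descended maps $\Phi^{(i)}$ at once from the filtration data in $[\theta^{(k-1)}]$, whereas the paper descends one step at a time; and in your last congruence ``modulo $F^{k+1}GL(E^{(k-1)})$'' you should note that since both $\bar\zeta^k$ and $\Phi^{(k-1)}_*\circ\zeta^k$ lie in $\widehat{\mathscr S}^{(k)}\overline Q^{(k-1)}$, the relating element automatically lies in the structure group $F^{k+1}\widehat G_k$, which is what is needed to conclude $[\bar\zeta^k]=[\Phi^{(k-1)}_*\circ\zeta^k]$.
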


\begin{proof}
The first statement is clear, i.e., if $\Phi :Q^{(k)} \rightarrow \overline{Q}^{(k)}$ is an isomorphism, then $\Phi $ preserves the canonical classes.
For the second statement, we will   use the induction on $\ell$ ($0 \leq \ell \leq k$) and show  that
if $\Phi^{(\ell)}: Q^{(\ell)} \rightarrow \overline{Q}^{(\ell)}$ satisfies ${\Phi^{(\ell)}}^* [\overline{\theta}^{(\ell-1)}] = [\theta^{(\ell-1)} ]$, then $\Phi^{(\ell)}$ is an isomorphism.

First, let us show that it holds for $\ell=0$. In fact, since the fibers of $\overline{Q}^{(0)} \rightarrow \overline{Q}^{(-1)}$ are leaves of $\overline{\theta}^{(-1)}=0$ and the fibers are connected, there exists $\Phi^{(-1)}: Q^{(-1)} \rightarrow \overline{Q}^{(-1)}$ such that the following diagram is commutative:
\begin{eqnarray*}
\xymatrix{
Q^{(0)} \ar[d]\ar[r]^{\Phi^{(0)}} & \overline{Q}^{(0)} \ar[d] \\
Q^{(-1)} \ar[r]^{\Phi^{(-1)}}  & \overline{Q}^{(-1)}.
}
\end{eqnarray*}
Let $z^{(0)} \in \overline{Q}^{(0)}$, which may be viewed as an isomorphism $z^{(0)}: \frak g_- \rightarrow  T_{x} \overline{Q}^{(-1)}$ of graded Lie algebras. We see that the inverse map $(z^{(0)})^{-1}$ can be identified with the canonical class $[\overline{\theta}^{(-1)} ]_{z^{(0)}}$. Then the assumption that ${\Phi^{(0)}}^*[\overline{\theta}^{(-1)} ] = [\theta^{(-1)} ]$ implies that $\Phi^{(-1)}$ is a filtration preserving map. Moreover, we see that the lift $\mathscr S^{(0)}\Phi^{(-1)}: \mathscr S^{(0)}Q^{(-1)} \rightarrow \mathscr S^{(0)} \overline{Q}^{(-1)}$ coincides with $\Phi^{(0)}$ on $Q^{(0)}$.

Now assuming the statement valid for $\ell-1$, we prove it for $\ell$. By the assumption ${\Phi^{(\ell)}}^*[\overline{\theta}^{(\ell-1)}] =[\theta ^{(\ell-1)}]$, we see again that there exists $\Phi^{(\ell-1)}$ which makes the following diagram commutative:
\begin{eqnarray*}
\xymatrix{
Q^{(\ell)} \ar[d]\ar[r]^{\Phi^{(\ell)}} & \overline{Q}^{(\ell)} \ar[d] \\
Q^{(\ell-1)} \ar[r]^{\Phi^{(\ell-1)}}  & \overline{Q}^{(\ell-1)}.
}
\end{eqnarray*}
Moreover, we see that ${\Phi^{(\ell-1)}}^*[\overline{\theta}^{(\ell-2)}] =[\theta ^{(\ell-2)}]$. Therefore, $\Phi^{(\ell-1)}$ is an isomorphism by induction assumption.

Now take $z^{(\ell)} \in Q^{(\ell)}$ and $\overline{z}^{(\ell)} \in \overline{Q}^{(\ell)}$ with $\overline{z}^{(\ell)} = \Phi^{(\ell)}(z^{(\ell)})$, and let $z^{(\ell-1)}$ and $\overline{z}^{(\ell-1)}$ be their projections on $Q^{(\ell-1)}$ and $\overline{Q}^{(\ell-1)}$, respectively. Consider the following commutative diagram:
\begin{eqnarray*}
\xymatrix{
T_{z^{(\ell-1)}}Q^{(\ell-1)}  \ar[r]^{\Phi^{(\ell-1)}_*} &T_{\overline{z}^{(\ell-1)}}\overline{Q}^{(\ell-1)} \\
E^{(\ell-1)} \ar[u]^{\eta^{(\ell)}} \ar@{=}[r] & E^{(\ell-1)} \ar[u]^{\zeta^{(\ell)}},
}
\end{eqnarray*}
where we choose an $\eta^{(\ell)}$ so that $[\eta^{(\ell)}] =z^{(\ell)}$ and define $\zeta^{(\ell)}$ by the commutative diagram. Then the assumption that ${\Phi^{(\ell)}}^*[\overline{\theta}^{(\ell-1)}] =[\theta ^{(\ell-1)}]$ implies that $[\zeta^{(\ell)}] = \overline{z}^{(\ell)}$. This means that $\overline{z}^{(\ell)} =  \mathscr S^{(\ell)} \Phi^{(\ell-1)} (z^{(\ell)})$. Hence the restriction of  $\mathscr S ^{(\ell)} \Phi^{(\ell-1)}$ to $Q^{(\ell)}$ is $ \Phi^{(\ell)}$, which completes the induction.
\end{proof}




\subsubsection{ }
Now  we will show that there is an $E(\frak g_k)$-valued  1-form $\theta$
canonically defined on the completed frame bundle $\mathscr S Q^{(k)}$  of $Q^{(k)} \stackrel{G_k}{\longrightarrow} Q^{(k-1)}$,   called the {\it canonical Pfaff form} on $\mathscr SQ^{(k)}$.

For each $\ell$ define the canonical Pfaff class $[\theta^{(\ell-1)}]$ of $\mathscr S^{(\ell)}Q^{(k)} \rightarrow \mathscr S^{(\ell-1)}Q^{(k)}$ in a similar way  as we define $[\theta^{(k-1)}]$.
For a tangent vector $X \in T_zQ$, we can assign a sequence $\{v^{(i)} \in E^{(i)}\}$ such that
$$\langle \pi_{E^{(i)}} \theta ^{(j)}, {\pi_{Q^{(j+1)}}}_*X \rangle = v^{(i)}$$
for any representative $\theta^{(j)}$ of the canonical Pfaff class $[\theta^{(j)}]$ of $Q^{(j+1)}$ for any $j$ large enough, where $\pi_{E^{(i)}}$, $\pi_{Q^{(j)}}$ denote the projections on $E^{(i)}$ and $Q^{(j)}$, respectively, and $\pi_{E^{(i)}}v^{(j)} = v^{(i)}$ for all $j \geq i$.
We then set
$$\langle \theta, X \rangle := \lim_i v^{(i)}$$
and thus define a Pfaff form $\theta$ on $\mathscr SQ^{(k)}$ taking values in $E$.

In fact, $\theta$ can be also defined in the following way.
The universal frame bundle   $\mathscr S^{(\ell)}Q^{(k)}$  of order $\ell$ has a tangential filtration regular of type $E^{(\ell)}(\frak g_k)$ and
  $\widehat{\mathscr S}^{(\ell+1)}Q^{(k)}$ is a principal subbundle of the  frame bundle of $\mathscr S^{(\ell)}Q^{(k)}$,  so that  $\widehat{\mathscr S}^{(\ell+1)}Q^{(k)}$ has an $E^{(\ell)}(\frak g_k)$-valued one form, the restriction of the canonical one-form of the  frame bundle on $\mathscr S^{(\ell)}Q^{(k)}$,  
  denoted by $\widehat{\theta}^{(\ell)}(\frak g_k)$ and called the {\it canonical form} of $\widehat{\mathscr S}^{(\ell+1)} Q^{(k)}$.  

For a given $\ell$,  there is a projection map $\mathscr S^{(\ell+m)}Q^{(k)} \rightarrow \widehat{\mathscr S}^{(\ell+1)}Q^{(k)}$ for sufficiently large $m$, so that we have a projection map $\mathscr S Q^{(k)}  \rightarrow \mathscr S^{(\ell+m)} Q^{(k)} \rightarrow \widehat{\mathscr S}^{(\ell+1)}Q^{(k)}$.
 Let $\theta^{(\ell)}(\frak g_k)$ be the pull-back of the canonical form on $\widehat{\mathscr S}^{(\ell+1)}Q^{(k)}$ under the projection map $\mathscr S Q^{(k)}\rightarrow \widehat{\mathscr S}^{(\ell+1)}Q^{(k)}$.

Passing to the projective limit, we set
 $$\theta(\frak g_k) :=\lim_{\leftarrow_{\ell}}  \theta^{(\ell)}(\frak g_k).
  $$
Then $\theta(\frak g_k)$ is an $E(\frak g_k)$-valued one-form on $\mathscr S Q^{(k)}$.

\begin{eqnarray*}
\xymatrix{
\mathscr S Q^{(k)} \ar[d] & \exists \,\, E (\frak g_k)\text{-valued one form }\,  \theta (\frak g_k)\\
\widehat{\mathscr S}^{(\ell+1)}Q^{(k)}  \ar[d] \ar@/^2pc/[dd]^{\widehat{G}^{(\ell+1)}(G_k)}&   \exists \,\, E^{(\ell)}(\frak g_k)\text{-valued one form }\, \widehat{\theta}^{(\ell)}(\frak g_k) \\
\mathscr S^{(\ell)}Q^{(k)} \ar[d]&\\
Q^{(k-1)}&
}
\end{eqnarray*}

\begin{proposition}   \label{prop:canonical forms} 
We use the notations $\mathscr S $, $\theta$, $E$,  $  \overline{G}$, for  $\mathscr SQ^{(k)} $, $\theta(\frak g_k)$, $E(\frak g_k)$,  $G(G_k)$, respectively.
Then the canonical Pfaff form $\theta$ on $\mathscr S$ satisfies the following properties.

\begin{enumerate}
\item $\theta:T_z \mathscr S  \rightarrow E $ is a filtration preserving isomorphism for every $z \in \mathscr S $.
\item $\gr \theta_z:\gr T_{z}M \rightarrow \frak g_-$ is a graded Lie algebra isomorphism for every $z \in M$
 and $\gr \theta_z: \gr V _z  \mathscr S   \rightarrow \gr F^0E  $ is the canonical isomorphism $\widetilde{\cdot}$ for every $z \in \mathscr S $, where $V  \mathscr S  $ denotes the vertical tangent space of $\mathscr S \rightarrow M$.
\item $R_a^* \theta = a^{-1}\theta$ for $a \in \overline{G} $.
\end{enumerate}

\end{proposition}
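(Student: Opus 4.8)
The plan is to establish the three properties level by level in the tower and then pass to the projective limit. By construction $\theta=\theta(\frak g_k)$ is the inverse limit $\theta=\lim_\ell\theta^{(\ell)}(\frak g_k)$, where $\theta^{(\ell)}(\frak g_k)$ is the pull-back to $\mathscr SQ^{(k)}$ of the soldering (canonical) one-form $\widehat\theta^{(\ell)}(\frak g_k)$ of the linear frame bundle of the filtered manifold $\mathscr S^{(\ell)}Q^{(k)}$, whose tangential filtration is regular of type $E^{(\ell)}(\frak g_k)$; concretely, at $z\in\mathscr SQ^{(k)}$ one has $\theta^{(\ell)}_z=(\zeta^{\ell+1}_z)^{-1}\circ(\pi^\ell)_*$, where $\pi^\ell\colon\mathscr SQ^{(k)}\to\mathscr S^{(\ell)}Q^{(k)}$ is the projection and $\zeta^{\ell+1}_z\colon E^{(\ell)}\to T_{z^\ell}\mathscr S^{(\ell)}Q^{(k)}$ is the frame at level $\ell$ determined by $z$. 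I use throughout the convention of Section 2.1 of \cite{M93}, under which $T_z\mathscr SQ^{(k)}=\lim_\ell T_{z^\ell}\mathscr S^{(\ell)}Q^{(k)}$, $E(\frak g_k)=\lim_\ell E^{(\ell)}(\frak g_k)$, and the action of $\overline G$ is the inverse limit of the actions of $G^{(\ell)}(G_k)$; the transition maps on both sides are compatible because, by the commutative diagrams of Definition \ref{def:universal frame bundle}(3)(a), the frame $\zeta^{\ell+1}_z$ truncates compatibly to $\zeta^{\ell}_z$.

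Property (1) then follows formally. Each $(\zeta^{\ell+1}_z)^{-1}\colon T_{z^\ell}\mathscr S^{(\ell)}Q^{(k)}\to E^{(\ell)}$ is a filtration-preserving linear isomorphism, since $\zeta^{\ell+1}_z$ is one by the very definition of the universal linear frame bundle, and these maps form a morphism of the two inverse systems; their inverse limit is therefore again a filtration-preserving isomorphism $\theta_z\colon T_z\mathscr SQ^{(k)}\to E$.

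For (2), note first that the description in (1) identifies the vertical space $V_z\mathscr SQ^{(k)}$ of $\mathscr SQ^{(k)}\to M$ with $F^0T_z\mathscr SQ^{(k)}$, so $\gr\theta_z$ carries $\gr V_z\mathscr SQ^{(k)}$ isomorphically onto $\gr F^0E$. That this isomorphism is the canonical one $\widetilde{\,\cdot\,}$ is immediate from the commutative diagrams of Definition \ref{def:universal frame bundle}(3)(a): there each frame $\zeta^{\ell+1}$ is required to send $A\in\frak g_m\subset E^{(\ell)}$ with $m\ge0$ to the vertical vector $\widetilde A=\frac{d}{dt}|_{t=0}(z\exp\,tA)$, hence $\theta$ returns $A$; summing over $m$ gives the claim. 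The complementary (negative) part of $\gr\theta_z$ is handled at the bottom of the tower: the projection $\mathscr SQ^{(k)}\to M$ identifies the negative part of $\gr T_z\mathscr SQ^{(k)}$ with $\gr T_xM=\gr F_x$, and there $\gr\theta_z$ is nothing but the inverse of the underlying point $z^{(0)}\in Q^{(0)}\subset\mathscr S^{(0)}(M)$ of $z$, which is by definition a graded Lie algebra isomorphism $\frak g_-\to\gr F_x$; no separate check of bracket-compatibility is needed, since that is exactly what membership in $\mathscr S^{(0)}(M)$ encodes, and it passes through every projection of the tower.

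Finally, (3) is a short computation from the definition of the right action. The commutative square defining $\zeta^{\ell+1}\cdot\alpha^{\ell+1}$ (Definition \ref{def:universal frame bundle}(3)(c) and its iterated form for the higher universal frame bundles) reads $\zeta^{\ell+1}\cdot\alpha^{\ell+1}=(R_{a^\ell})_*\circ\zeta^{\ell+1}\circ\alpha^{\ell+1}$ with $a^\ell=[\alpha^\ell]$; inserting this together with the equivariance $(\pi^\ell)_*\circ(R_a)_*=(R_{a^\ell})_*\circ(\pi^\ell)_*$ of the tower projections into $\theta^{(\ell)}_z=(\zeta^{\ell+1}_z)^{-1}\circ(\pi^\ell)_*$ gives $R_a^*\theta^{(\ell)}=(\alpha^{\ell+1})^{-1}\,\theta^{(\ell)}$ at every level, and passing to the limit yields $R_a^*\theta=a^{-1}\theta$ with $a$ acting on $E$ as $\lim_\ell\alpha^{\ell+1}$. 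I expect the real work to lie not in any individual item (1)--(3) but in the bookkeeping of the projective limit: justifying $T_z\mathscr SQ^{(k)}=\lim_\ell T_{z^\ell}\mathscr S^{(\ell)}Q^{(k)}$, checking via the exact sequences of Definition \ref{def:universal frame bundle}(3)(a) that the tangential filtrations at successive levels fit together and grade to $E$, and verifying that the frames $\zeta^{\ell+1}_z$, hence the forms $\theta^{(\ell)}$, are mutually compatible so that the limit is well defined and remains an isomorphism --- in short, the induction on $\ell$ underlying all the filtration and grading assertions.
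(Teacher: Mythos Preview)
Your proposal is correct and follows essentially the same approach as the paper: the paper's proof simply cites Theorem 2.3.1 of \cite{M93} together with the two level-wise identities $\widehat{\theta}^{(\ell)}(\widetilde A) = A/F^{\ell+1}$ and $R_a^* \widehat{\theta}^{(\ell )} =a^{-1}\widehat{\theta}^{(\ell)}$, which are exactly the facts you derive from Definition \ref{def:universal frame bundle}(3)(a),(c) and then pass to the projective limit. You have in effect written out what the paper leaves to the reference.
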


 \begin{proof}

By the same argument as in the proof of Theorem 2.3.1 of \cite{M93},
together with the properties:
 \begin{enumerate}
 \item [(i)] $\widehat{\theta}^{(\ell)}(\widetilde A) = A/F^{\ell+1}$ for $A \in \widehat{\frak g}^{(\ell+1)}(\frak g_k)$;
 \item [(ii)] $R_a^* \widehat{\theta}^{(\ell )} =a^{-1}\widehat{\theta}^{(\ell)}$ for $a \in  \widehat{G}^ {(\ell+1)}(G_k)$,
 \end{enumerate}
 we get the desired results. 
 \end{proof}

\begin{theorem} \label{thm:isomorphism via canonical class of any order}
Let $P^{(k)}$ and $Q^{(k)}$ be geometric structures of order $k$ of type $(\frak g_-,  G_0, G_1, \dots, G_k)$. Set $P^{(\ell)} = \mathscr S^{(\ell)}P^{(k)}$ and $Q^{(\ell)}=\mathscr S^{(\ell)}Q^{(k)}$ for $\ell \geq k+1$.
\begin{enumerate}

\item If there is an isomorphism $\varphi^{(k)}:P^{(k)} \rightarrow Q^{(k)}$ of geometric structures, then it induces
an isomorphism $\mathscr S^{(\ell)}\varphi^{(k)}: \mathscr S^{(\ell)}P^{(k)} \rightarrow \mathscr S^{(\ell)}Q^{(k)}$ of geometric structures, i.e. a diffeomorphism satisfying  $(\mathscr S^{(\ell)}\varphi^{(k)})^*[\theta^{(\ell-1)}]_Q =[\theta^{(\ell-1)}]_P$  for all $\ell =k+1, k+2, \dots, \infty$.
\item  Conversely, assuming that $G_0, G_1, \dots, G_k$ are connected, if 
 there is a diffeomorphism $\Phi^{(i)}: P^{(i)} \rightarrow Q^{(i)}$ for some $i \geq 0$ 
 such that  $(\Phi^{(i)})^* [\theta^{(i-1)}]_Q = [\theta^{(i-1)}]_P$,
then $\Phi^{(i)}$ induces an isomorphism $\varphi^{(i-1)}:P^{(i-1)} \rightarrow Q^{(i-1)}$  such that the canonical lift $\mathscr S^{(i)}\varphi^{(i-1)}: \mathscr S^{(i)}P^{(i-1)} \rightarrow \mathscr S^{(i)} Q^{(i-1)}$ sends $P^{(i)}$ onto $Q^{(i)}$ and its restriction $\mathscr S^{(i)}\varphi^{(i-1)}|_{P^{(i)}}$ to $P^{(i)}$ is $ \Phi^{(i)}$.
 \end{enumerate}

In particular, the equivalence problem of geometric structure $Q^{(k)}$ reduces to the equivalence problem of the absolute parallelism $(\mathscr S Q^{(k)}, \theta)$ on the complete  universal frame bundle $\mathscr S Q^{(k)}$.
\end{theorem}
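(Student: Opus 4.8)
The two statements rest on the functoriality of the universal frame bundle together with Proposition \ref{prop:isomorphism via canonical class}, which already packages the genuinely delicate descent argument. For part (1) the plan is to induct on $\ell\ge k+1$, producing at each stage the canonical lift and verifying preservation of the canonical Pfaff class by a naturality computation. The base case $\ell=k+1$ is immediate: Definition \ref{def:isomorphism of geometric structure of order k}(2) furnishes the canonical lift $\mathscr S^{(k+1)}\varphi^{(k)}$, defined on linear frames by $\zeta\mapsto\varphi^{(k)}_*\circ\zeta$ and then pushed to the quotient. For the inductive step I set $\psi^{(\ell)}:=\mathscr S^{(\ell)}\varphi^{(k)}=\mathscr S^{(\ell)}(\psi^{(\ell-1)})$, the canonical lift of the order-$(\ell-1)$ isomorphism $\psi^{(\ell-1)}$; by construction $\psi^{(\ell)}$ covers $\psi^{(\ell-1)}$, so that $\pi_*\circ\psi^{(\ell)}_*=\psi^{(\ell-1)}_*\circ\pi_*$, and a lifted linear frame over $\psi^{(\ell)}(z)$ is $\eta:=\psi^{(\ell-1)}_*\circ\zeta$ whenever $[\zeta]=z$. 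Then for $X\in T_z\mathscr S^{(\ell)}P^{(k)}$ the very definition of the canonical Pfaff class yields
$$
[\theta^{(\ell-1)}]_Q(\psi^{(\ell)}_*X)
=\eta^{-1}(\pi_*\psi^{(\ell)}_*X)
=\zeta^{-1}(\psi^{(\ell-1)}_*)^{-1}\psi^{(\ell-1)}_*\pi_*X
=\zeta^{-1}(\pi_*X)
=[\theta^{(\ell-1)}]_P(X),
$$
so $(\psi^{(\ell)})^*[\theta^{(\ell-1)}]_Q=[\theta^{(\ell-1)}]_P$. Passing to the projective limit $\mathscr S\varphi^{(k)}=\lim_\ell\psi^{(\ell)}$ and using $\theta=\lim_\ell\theta^{(\ell)}$ settles the case $\ell=\infty$.

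For part (2) the plan is to recognize that $P^{(i)}$ and $Q^{(i)}$ are themselves geometric structures of order $i$, of type $(\mathfrak g_-;G_0,\dots,G_k,G_{k+1}(G_k),\dots,G_i(G_k))$, and to apply Proposition \ref{prop:isomorphism via canonical class} at order $i$. That proposition requires all structure groups to be connected: $G_0,\dots,G_k$ are connected by hypothesis, while each higher group $G_\ell(G_k)$ with $\ell>k$ is connected, being a quotient of a subgroup of the exponential filtration group $F^{\ell}GL(E^{(\ell-1)})=\exp\bigl(\oplus_p\Hom(\mathfrak g_p,\mathfrak g_{p+\ell})\bigr)$, the exponential of a nilpotent Lie algebra. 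Hence the proposition applies and produces an isomorphism $\varphi^{(i-1)}:P^{(i-1)}\to Q^{(i-1)}$ whose canonical lift $\mathscr S^{(i)}\varphi^{(i-1)}$ carries $P^{(i)}$ onto $Q^{(i)}$ and restricts to $\Phi^{(i)}$ on $P^{(i)}$ --- which is precisely the conclusion of part (2) once unwound through Definition \ref{def:isomorphism of geometric structure of order k}(3).

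The concluding reduction follows by combining the two parts. An isomorphism $\varphi^{(k)}:P^{(k)}\to Q^{(k)}$ produces, by part (1) at $\ell=\infty$, a diffeomorphism $\mathscr S\varphi^{(k)}:\mathscr SP^{(k)}\to\mathscr SQ^{(k)}$ with $(\mathscr S\varphi^{(k)})^*\theta_Q=\theta_P$, i.e. an isomorphism of the absolute parallelisms. Conversely, given a $\theta$-preserving diffeomorphism $\Phi:\mathscr SP^{(k)}\to\mathscr SQ^{(k)}$, I would observe that the fibres of each projection $\mathscr S\to P^{(i)}$ are the connected integral leaves of a $\theta$-definable vertical distribution; since $\Phi$ preserves $\theta$ it preserves these leaves and hence descends to a diffeomorphism $\Phi^{(i)}:P^{(i)}\to Q^{(i)}$ with $(\Phi^{(i)})^*[\theta^{(i-1)}]_Q=[\theta^{(i-1)}]_P$, whereupon part (2) at $i=k+1$ delivers the isomorphism $P^{(k)}\cong Q^{(k)}$. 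The main obstacle, I expect, is exactly this descent: whereas the naturality check of part (1) is routine and part (2) is a citation once connectedness is noted, the delicate point is to guarantee that a $\theta$-preserving map of the infinite-dimensional tower is genuinely compatible with each finite-level projection, so that one may step down to a finite order and invoke the proposition --- and it is here that the connectedness of the fibres is indispensable.
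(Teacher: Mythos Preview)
Your proof is correct and follows the same approach as the paper, which simply writes ``Use Proposition \ref{prop:isomorphism via canonical class}.'' You have supplied considerably more detail than the paper does: the explicit naturality computation for part (1), the observation that the universal structure groups $G_\ell(G_k)$ for $\ell>k$ are connected (being exponentials of nilpotent Lie algebras), and the descent sketch for the concluding reduction are all left implicit in the paper but are exactly what one needs to make the one-line citation work.
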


\begin{proof} Use Proposition \ref{prop:isomorphism via canonical class}.
\end{proof}

\subsection{Semi-canonical embeddings}

\begin{proposition} [$\mathcal W$-canonical embedding]  \label{prop: embedding} Let $Q^{(k+1)} \stackrel{G_{k+1}}{\longrightarrow} Q^{(k)}$ be a geometric structure of order $k+1$. A choice of a direct sum decomposition
$$\frak g_{k+1}(\frak g_k) = \frak g_{k+1} \oplus \mathcal W $$
determines 
(filtration preserving) injective homomorphisms
$$G (G_{k+1}) \stackrel{\iota_{\mathcal W} }{\longrightarrow} G (G_k), \qquad E (G_{k+1}) \stackrel{\iota_{\mathcal W} }{\longrightarrow} E (G_k),  $$
and a principal fiber bundle embedding
\begin{eqnarray*}
 \xymatrix{
\mathscr S Q^{(k+1)} \ar[d] \ar[r]^{\iota_{\mathcal W} }   & \mathscr S Q^{(k)}\ar[d]\\
 Q^{(k)} \ar@{=}[r] & Q^{(k)}
 }
 \end{eqnarray*}
satisfying  $\iota_{\mathcal W}^* \theta_{\mathscr SQ^{(k)}} = \iota_{\mathcal W} \circ \theta_{\mathscr S Q^{(k+1)}}$.

\end{proposition}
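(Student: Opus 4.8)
This is an instance of the principle that the universal frame bundle construction commutes with reductions. Indeed, from the identity $\mathscr S^{(\ell)}Q^{(k)}=\mathscr S^{(\ell)}\mathscr S^{(\ell-1)}Q^{(k)}$ one gets $\mathscr S Q^{(k)}=\mathscr S\bigl(\mathscr S^{(k+1)}Q^{(k)}\bigr)$, the completed universal frame bundle of the order $k+1$ geometric structure $R:=\mathscr S^{(k+1)}Q^{(k)}$, inside which $Q^{(k+1)}$ sits as a principal subbundle over $Q^{(k)}$ with structure group $G_{k+1}\subset G_{k+1}(G_k)$. So the plan is: (i) turn the splitting $\frak g_{k+1}(\frak g_k)=\frak g_{k+1}\oplus\mathcal W$ into the algebraic maps $\iota_{\mathcal W}$ on the universal vector spaces and structure groups; (ii) lift frames of $Q^{(k+1)}$ to frames of $R$, by induction on the order, to obtain the bundle embedding; (iii) read off the intertwining of the canonical forms.

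\emph{Step (i): the algebraic maps.} Recall that for $\ell\ge k+1$ the Lie algebra $\overline{\frak g}_{\ell}:=\frak g_{\ell}(\frak g_k)$ is cut out by the Jacobi identity inside $\Hom\bigl(\frak g_-,\frak g_-\oplus\bigoplus_{i=0}^{\ell-1}\overline{\frak g}_i\bigr)_{\ell}\oplus\Hom\bigl(\bigoplus_{i=0}^{\ell-2}\overline{\frak g}_i,\overline{\frak g}_{\ell-1}\bigr)$, where $\overline{\frak g}_i=\frak g_i$ for $0\le i\le k$. Since $\Hom(A,B\oplus C)=\Hom(A,B)\oplus\Hom(A,C)$ and $\Hom(A\oplus A',B)=\Hom(A,B)\oplus\Hom(A',B)$ canonically, the degree $k+1$ splitting $\overline{\frak g}_{k+1}=\frak g_{k+1}\oplus\mathcal W$ propagates inductively to canonical splittings $\overline{\frak g}_{\ell}=\overline{\frak g}_{\ell}(\frak g_{k+1})\oplus\mathcal W_{\ell}$ ($\mathcal W_{k+1}=\mathcal W$), where $\overline{\frak g}_{\ell}(\frak g_{k+1})$ is the space produced by the same recipe from $\frak g_-,\dots,\frak g_{k+1}$; here one uses that the Jacobi (derivation) conditions for $\frak g_k$ restrict to those for $\frak g_{k+1}$ on the ``$\frak g_{k+1}$-part''. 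Writing $\mathcal W^{(\ell)}:=\bigoplus_{i=k+1}^{\ell}\mathcal W_i$, we obtain filtration-preserving inclusions $E^{(\ell)}(\frak g_{k+1})\hookrightarrow E^{(\ell)}(\frak g_k)=E^{(\ell)}(\frak g_{k+1})\oplus\mathcal W^{(\ell)}$ compatible with truncation, hence in the limit $\iota_{\mathcal W}\colon E(G_{k+1})\hookrightarrow E(G_k)$; applying the same extension-by-the-splitting to filtration-preserving automorphisms yields the injective homomorphisms $\widehat G^{(\ell+1)}(G_{k+1})\hookrightarrow\widehat G^{(\ell+1)}(G_k)$, and in the limit $\iota_{\mathcal W}\colon G(G_{k+1})\hookrightarrow G(G_k)$.

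\emph{Step (ii): the bundle embedding.} I would induct on $\ell\ge k+1$, the case $\ell=k+1$ being $Q^{(k+1)}\subset R$. Suppose $\mathscr S^{(\ell)}Q^{(k+1)}$ has been realized inside $\mathscr S^{(\ell)}Q^{(k)}$, compatibly with the projections to $Q^{(k)}$ and equivariantly over $\iota_{\mathcal W}$. Given $z\in\mathscr S^{(\ell)}Q^{(k+1)}$ and $\zeta\in\widehat{\mathscr S}^{(\ell+1)}_{z}Q^{(k+1)}$, i.e.\ a filtration-preserving isomorphism $E^{(\ell)}(\frak g_{k+1})\to T_z\mathscr S^{(\ell)}Q^{(k+1)}$ satisfying the commutative-diagram conditions of Definition \ref{def:universal frame bundle}, extend it to $\widetilde\zeta\colon E^{(\ell)}(\frak g_k)=E^{(\ell)}(\frak g_{k+1})\oplus\mathcal W^{(\ell)}\to T_z\mathscr S^{(\ell)}Q^{(k)}$: on $E^{(\ell)}(\frak g_{k+1})$ let it be $\zeta$ followed by the inclusion $T_z\mathscr S^{(\ell)}Q^{(k+1)}\hookrightarrow T_z\mathscr S^{(\ell)}Q^{(k)}$, and on $\mathcal W^{(\ell)}$ the canonical map $\widetilde{\,\cdot\,}$ onto the vertical directions of $\mathscr S^{(\ell)}Q^{(k)}\to\mathscr S^{(\ell)}Q^{(k+1)}$. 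Using the nested exact sequences defining the tangential filtrations of the two frame bundles, one checks that $\widetilde\zeta$ is again filtration preserving and satisfies the required commutative diagrams, so $\zeta\mapsto\widetilde\zeta$ sends $\widehat{\mathscr S}^{(\ell+1)}Q^{(k+1)}$ into $\widehat{\mathscr S}^{(\ell+1)}Q^{(k)}$; it is equivariant over $\widehat G^{(\ell+1)}(G_{k+1})\hookrightarrow\widehat G^{(\ell+1)}(G_k)$, it descends modulo $F^{\ell+2}GL$ to $\mathscr S^{(\ell+1)}Q^{(k+1)}\hookrightarrow\mathscr S^{(\ell+1)}Q^{(k)}$, and it is compatible with the order-$\ell$ projections. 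Passing to the projective limit gives the principal fiber bundle embedding $\iota_{\mathcal W}\colon\mathscr S Q^{(k+1)}\hookrightarrow\mathscr S Q^{(k)}$ over $Q^{(k)}$.

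\emph{Step (iii) and the main obstacle.} The relation $\iota_{\mathcal W}^{*}\theta_{\mathscr SQ^{(k)}}=\iota_{\mathcal W}\circ\theta_{\mathscr SQ^{(k+1)}}$ is then automatic: by Proposition \ref{prop:canonical forms} the value of $\theta$ on a tangent vector is obtained by applying the inverse of a tautological frame to the relevant projection of that vector, and since $\widetilde\zeta$ extends $\zeta$ precisely along $\iota_{\mathcal W}$, at any point of $\mathscr SQ^{(k+1)}$ the value of $\theta_{\mathscr SQ^{(k)}}$ is the $\iota_{\mathcal W}$-image of the value of $\theta_{\mathscr SQ^{(k+1)}}$. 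I expect the main obstacle to be the bookkeeping behind (i) and (ii): checking that the propagated complements $\mathcal W_{\ell}$ are compatible with the brackets, so that $\iota_{\mathcal W}$ on structure groups is a genuine Lie group homomorphism, and verifying that the extended frame $\widetilde\zeta$ really satisfies the defining conditions of $\widehat{\mathscr S}^{(\ell+1)}Q^{(k)}$ via the nested exact sequences. Once these are in hand, the equivariance, the descent to quotients, the passage to the projective limit, and the intertwining of the canonical forms are routine.
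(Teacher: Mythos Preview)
Your approach is essentially the same as the paper's: both argue by induction on $\ell$, propagating the initial splitting $\frak g_{k+1}(\frak g_k)=\frak g_{k+1}\oplus\mathcal W$ through the recursive formula for $\overline{\frak g}_{\ell}$ to obtain $E^{(\ell)}(\frak g_k)=E^{(\ell)}(\frak g_{k+1})\oplus\mathcal V_\ell$, and both extend a frame $\zeta\in\widehat{\mathscr S}^{(\ell+1)}Q^{(k+1)}$ to one in $\widehat{\mathscr S}^{(\ell+1)}Q^{(k)}$ by composing with the inclusion on the $E^{(\ell)}(\frak g_{k+1})$-part and sending the $\mathcal V_\ell$-part to the vertical via $X\mapsto\widetilde X$. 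One small correction: in your Step~(i) you say $\overline{\frak g}_{\ell}$ is ``cut out by the Jacobi identity'' inside the displayed Hom-space, but in fact for the \emph{universal} frame bundle $\overline{\frak g}_{\ell}$ equals that full Hom-space (no derivation condition is imposed---that only enters for the \emph{normal} reductions), so the splitting propagation is even simpler than you suggest and your argument goes through unchanged.
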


\begin{proof} Fix a complementary subspace $\mathcal W$ such that $\frak g_{k+1}(\frak g_k) = \frak g_{k+1} \oplus \mathcal W $. Note that $\frak g_{k+1}(\frak g_k) = \Hom(\frak g_-, E^{(k)}(\frak g_k))_{k+1} \oplus (\oplus_{i=0}^{k-1} \Hom(\frak g_i, \frak g_k))$.

We claim that for each $\ell \geq k $ there is an embedding $\mathscr S^{(\ell+1)}  Q^{(k +1)}  \stackrel{{\iota^{(\ell+1)}}}{\longrightarrow} \mathscr S^{(\ell+1)} Q^{(k)}$
 such that  the following diagram
\begin{eqnarray*}
 \xymatrix{
      \mathscr S^{(\ell+1)}  Q^{(k +1)} \ar[d] \ar[r]^{\iota^{(\ell+1)}} &\mathscr S^{(\ell+1)}( Q^{(k)} \ar[d]
   \\
  	 \mathscr S^{(\ell)}  Q^{(k +1)}  \ar[r]^{\iota^{(\ell)}} & \mathscr S^{(\ell)} Q^{(k)}
}
 \end{eqnarray*}
is commutative, as well as   splittings:
$$\frak g_{\ell+1}(\frak g_{k }) = \frak g_{\ell+1}(\frak g_{k+1}) \oplus \mathcal W_{\ell+1} \text{ and } E^{(\ell+1)}(\frak g_k) = E^{(\ell+1)}(\frak g_{k+1}) \oplus \mathcal V_{\ell+1}, $$
where $\mathcal V_{\ell+1}= \oplus_{i=k+1}^{\ell+1} \mathcal W_i$,
such that for $\xi^{\ell+1} \in \widehat{\mathscr S}^{(\ell+1)}Q^{(k+1)}$, $\iota^{(\ell)}([\xi^{\ell+1}])(\mathcal V_{\ell+1})$ is vertical with respect to $\mathscr S^{(\ell+1)} Q^{(k)}  \rightarrow Q^{(k)}$. \\

For $\ell =k$, we have  already had
\begin{eqnarray*}
 \xymatrix{
       Q^{(k +1)} \ar[d]^{\frak g_{k+1}} \ar[r]  &\mathscr S^{(k+1)}(Q^{(k)})\ar[d]^{\frak g_{k+1}(\frak g_{k})}
   \\
  	  Q^{(k )}  \ar[r]^{ identity}  &    Q^{(k)}.
}
 \end{eqnarray*}
 and $\frak g_{k+1}(\frak g_k) = \frak g_{k+1} \oplus \mathcal W $, so that we can take $\mathcal W_{k+1}=\mathcal W$.

 Suppose that we have   embeddings  as in the above clam for $\ell  =m \geq k$. We will show that there are such embeddings for $\ell=m +1$.

 \begin{eqnarray*}
 \xymatrix{
      \mathscr S^{(m+2)}  Q^{(k +1)}   \ar[d]^{\frak g_{m+2}(\frak g_{k+1})}  \ar[r]^{\iota^{(m+2)}}  &\mathscr S^{(m+2)} Q^{(k)} \ar[d]^{\frak g_{m+2}(\frak g_{k})}
   \\
  	      \mathscr S^{(m+1)}  Q^{(k +1)}  \ar[d]^{\frak g_{m+1}(\frak g_{k+1})}  \ar[r]^{\iota^{(m+1)}}  &\mathscr S^{(m+1)} Q^{(k)}  \ar[d]^{\frak g_{m+1}(\frak g_{k})}
   \\
   \mathscr S^{(m)} Q^{(k+1 )}   \ar[r]^{\iota^{(m )}}   &    \mathscr S^{(m)} Q^{(k)}.
}
 \end{eqnarray*}

 By the induction assumption there is an embedding $\mathscr S^{(m+1)}  Q^{(k +1)}     \stackrel{\iota^{(m+1)}}{\longrightarrow}  \mathscr S^{(m+1)} Q^{(k)}$ and decompositions $\frak g_{m+1}(\frak g_{k}) = \frak g_{m+1}(\frak g_{k+1}) \oplus \mathcal W_{m+1}$ and $E^{(m+1)}(\frak g_k) =E^{(m+1)}(\frak g_{k+1}) \oplus \mathcal V_{m+1}$, where $\mathcal V_{m+1} =  \oplus_{i=k+1}^{m+1} \mathcal W_i$.
 Note that
 \begin{eqnarray*}
 \frak g_{m+2}(\frak g_{k+1}) &=& \Hom(\frak g_-, E^{(m+1)}(\frak g_{k+1}))_{m+2} \oplus \left(\oplus_{i=0}^{m} \Hom(\frak g_i, \frak g_{m+1}(\frak g_{k+1}) \right) \\
 \frak g_{m+2}(\frak g_{k}) &=&  \Hom(\frak g_-, E^{(m+1)}(\frak g_{k }))_{m+2} \oplus \left(\oplus_{i=0}^{m} \Hom(\frak g_i, \frak g_{m+1}(\frak g_{k }) \right).
 \end{eqnarray*}
 Therefore, there is a subspace $\mathcal W_{m+2}$ such that $$\frak g_{m+2}(\frak g_k) = \frak g_{m+2}(\frak g_{k+1}) \oplus \mathcal W_{m+2}.$$ Since $E^{(m+2)}(\frak g_{k+1}) = E^{(m+1)}(\frak g_{k+1}) \oplus \frak g_{m+2}(\frak g_{k+1})$ and $E^{(m+2)}(\frak g_{k}) = E^{(m+1)}(\frak g_{k}) \oplus \frak g_{m+2}(\frak g_{k})$, we have $$E^{(m+2)}(\frak g_k) = E^{(m+2)}(\frak g_{k+1}) \oplus \mathcal V_{m+2},$$ where $\mathcal V_{m+2} = \mathcal V_{m+1} \oplus \mathcal W_{m+2} =\oplus_{i=k+1}^{m+2} \mathcal W_i$.

 For $\zeta^{m+2} \in  \widehat{\mathscr S}^{(m+2)}  Q^{(k +1)}$ define $\widehat{\iota} (\zeta^{m+2})$ by
 \begin{eqnarray*}
  \widehat{\iota}(\zeta^{m+2})(X)  =\left\{ \begin{array}{ll}
 \iota^{(m+1)}_* \zeta^{m+2}(X) & \text{ for } X \in E^{(m+1)}(\frak g_{k+1})  \\
 \widetilde X & \text{ for } X \in \mathcal V_{m+1}.
 \end{array} \right.
  \end{eqnarray*}
\end{proof}



\subsection{Structure equations and structure functions} $\,$ \label{sect: structure functions}
%
%

Let $Q^{(k)} \stackrel{G_k}{\rightarrow} Q^{(k-1)}$ is a geometric structure of order $k$ and $\mathscr SQ^{(k)}$ be its completed universal frame bundle. We  write   $E(\frak g_k), \theta(\frak g_k), G(G_k), \frak g(\frak g_k)=\oplus \frak g_i(\frak g_k)$ simply as $E, \theta, \overline{G}, \overline{\frak g} =\oplus \overline{\frak g}_i$.

\subsubsection{ }
By Proposition \ref{prop:canonical forms} (1) there is a unique function
$$\gamma: \mathscr S Q^{(k)} \rightarrow \Hom(\wedge ^2E, E)$$
satisfying that
$$d \theta + \frac{1}{2} \gamma(\theta, \theta)=0.$$
We call $\gamma$ the {\it structure function} of $\mathscr S Q^{(k)}$.

\begin{proposition} \label{prop:structure functions under the action} For $z \in \mathscr S Q^{(k)}$, $a \in \overline{G}$  and $X,Y \in E$,
  $$\gamma(za)(X,Y) = a^{-1}\gamma(z)(aX,aY).$$
In other words,
$R_a^* \gamma= \rho(a)^{-1} \gamma$ for $a \in \overline{G}$. Here, the action of $\overline G$ on $\Hom(\wedge^2 E, E)$ is given by
$(\rho(a)\varphi)(X,Y) = a \varphi(a^{-1}X, a^{-1}Y)$ for $a \in \overline G$ and $X,Y \in E$.

\end{proposition}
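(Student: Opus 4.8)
The plan is to derive the transformation law for $\gamma$ directly from the structure equation $d\theta + \frac12\gamma(\theta\wedge\theta)=0$ together with the equivariance of $\theta$ established in Proposition \ref{prop:canonical forms}(3), namely $R_a^*\theta = a^{-1}\theta$ for $a \in \overline G$. Here $a^{-1}\theta$ is to be read as the composition of $\theta$ with the linear action of $a^{-1} \in \overline G$ on $E$; since $\overline G$ acts on $E$ by filtration-preserving linear maps, this is unambiguous. The heart of the matter is that pulling the structure equation back by $R_a$ must reproduce the structure equation itself, and comparing the two expressions forces the stated identity on $\gamma$.

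First I would pull back the structure equation by $R_a$. Because exterior derivative commutes with pullback, $R_a^*(d\theta) = d(R_a^*\theta) = d(a^{-1}\theta) = a^{-1}\,d\theta$, where I use that $a^{-1}$, being a fixed linear endomorphism of the (finite-dimensional approximations of the) vector space $E$, passes through $d$. Similarly $R_a^*\bigl(\gamma(\theta\wedge\theta)\bigr) = (R_a^*\gamma)\bigl((R_a^*\theta)\wedge(R_a^*\theta)\bigr) = (R_a^*\gamma)\bigl((a^{-1}\theta)\wedge(a^{-1}\theta)\bigr)$. Applying $R_a^*$ to $d\theta + \frac12\gamma(\theta\wedge\theta)=0$ therefore yields
\begin{equation*}
a^{-1}\,d\theta + \tfrac12\,(R_a^*\gamma)\bigl((a^{-1}\theta)\wedge(a^{-1}\theta)\bigr) = 0.
\end{equation*}
Next I would substitute $d\theta = -\frac12\gamma(\theta\wedge\theta)$ from the original structure equation into the first term, giving
\begin{equation*}
-\tfrac12\,a^{-1}\gamma(\theta\wedge\theta) + \tfrac12\,(R_a^*\gamma)\bigl((a^{-1}\theta)\wedge(a^{-1}\theta)\bigr) = 0,
\end{equation*}
that is, $(R_a^*\gamma)\bigl((a^{-1}\theta)\wedge(a^{-1}\theta)\bigr) = a^{-1}\gamma(\theta\wedge\theta)$ as $E$-valued $2$-forms on $\mathscr S Q^{(k)}$.

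To extract the pointwise statement, I would evaluate at a point $z \in \mathscr S Q^{(k)}$ on a pair of tangent vectors. By Proposition \ref{prop:canonical forms}(1), $\theta_z \colon T_z \mathscr S \to E$ is an isomorphism, so any $X, Y \in E$ can be written as $X = \theta_z(\xi)$, $Y = \theta_z(\eta)$ for unique $\xi,\eta \in T_z\mathscr S$. Feeding $\xi,\eta$ into the $2$-form identity above, the left side becomes $(R_a^*\gamma)(z)\bigl(a^{-1}\theta_z(\xi),\,a^{-1}\theta_z(\eta)\bigr) = \gamma(za)(z)\bigl(a^{-1}X, a^{-1}Y\bigr)$ — here using that $(R_a^*\gamma)(z) = \gamma(R_a(z)) = \gamma(za)$ by definition of pullback of a function — while the right side is $a^{-1}\gamma(z)(X,Y)$. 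Renaming $a^{-1}X \mapsto X$, $a^{-1}Y \mapsto Y$ (equivalently applying the identity with $a$ replaced so that the arguments become $aX, aY$) gives precisely $\gamma(za)(X,Y) = a^{-1}\gamma(z)(aX,aY)$, and the reformulation $R_a^*\gamma = \rho(a)^{-1}\gamma$ with the stated $\rho$ is just a rewriting of this.

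I do not expect a serious obstacle here; the argument is a standard equivariance computation. The only point requiring a little care is the bookkeeping in the infinite-dimensional setting: $\mathscr S Q^{(k)}$ is a projective limit of finite-dimensional manifolds, and $\gamma$, $\theta$ are defined as limits of their finite-order truncations. One should check that the structure equation and the equivariance $R_a^*\theta = a^{-1}\theta$ hold compatibly at each finite level (which follows from properties (i), (ii) in the proof of Proposition \ref{prop:canonical forms}) so that the limiting identity is legitimate; but this is routine given the conventions recalled from \cite{M93}, and the linear-algebraic core of the proof is exactly the computation above. A secondary minor point is ensuring that $\gamma$ is well defined in the first place — that $d\theta$ lies in the image of $\theta\wedge\theta$, i.e. is horizontal in the appropriate sense — but this is guaranteed by Proposition \ref{prop:canonical forms}(1) and is presumably addressed where $\gamma$ is introduced, so I would simply invoke it.
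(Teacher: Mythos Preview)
Your proposal is correct and follows essentially the same route as the paper: both derive the identity by combining the structure equation with the equivariance $R_a^*\theta = a^{-1}\theta$ from Proposition~\ref{prop:canonical forms}(3), then comparing the two resulting expressions for $d\theta$. The paper's version is simply more terse, writing the structure equation at the point $za$ directly rather than phrasing it via pullback by $R_a$, but the content is identical.
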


\begin{proof} 
It follows from Proposition \ref{prop:canonical forms} (3).
Indeed, $0=d \theta(za) + \frac{1}{2} \gamma(za)(\theta(za), \theta(za))= d(a^{-1}\theta(z )) + \frac{1}{2} \gamma(za)(a^{-1}\theta(z), a^{-1}\theta(z))$ because $\theta(za) = a^{-1}\theta$.
Thus
$$d \theta(z ) + \frac{1}{2} a \gamma(za)(a^{-1}\theta, a^{-1}\theta)=0.$$
Hence $a \gamma(za)(a^{-1} \cdot \,\,,a^{-1}\cdot\,\,) = \gamma(z)(\,\,\,,\,\,\,)$.
\end{proof}

\begin{proposition}[Bianchi identity] \label{prop:properties of structure functions 3} 
$$\gamma(\gamma(\theta, \theta), \theta) = d \gamma (\theta, \theta)$$
\end{proposition}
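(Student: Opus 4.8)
The plan is to differentiate the defining structure equation $d\theta + \tfrac12\gamma(\theta,\theta)=0$ once more and use $d^2=0$. Concretely, I would apply the exterior derivative $d$ to the $2$-form identity $d\theta = -\tfrac12\gamma(\theta\wedge\theta)$ (reading $\gamma(\theta,\theta)$ as the $E$-valued $2$-form $X\wedge Y\mapsto\gamma(\theta(X),\theta(Y))$). On the left we get $d(d\theta)=0$. On the right we must compute $d\bigl(\gamma(\theta\wedge\theta)\bigr)$, which by the Leibniz rule splits into the term where $d$ hits the coefficient function $\gamma$ — producing $d\gamma(\theta\wedge\theta)$ as a $3$-form — and the terms where $d$ hits each factor $\theta$, each contributing $\gamma(d\theta\wedge\theta)=-\tfrac12\gamma(\gamma(\theta\wedge\theta)\wedge\theta)$ after substituting the structure equation again. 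Collecting terms and using the skew-symmetry of $\gamma$ in its two arguments to identify the repeated contributions, the identity $0 = d\gamma(\theta\wedge\theta) - \gamma(\gamma(\theta\wedge\theta)\wedge\theta)$ should fall out, i.e. $\gamma(\gamma(\theta,\theta),\theta)=d\gamma(\theta,\theta)$.

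The steps in order: (i) write the structure equation in $E$-valued form notation and record the conventions for $\gamma(\theta\wedge\theta)$ and for $\gamma(\gamma(\theta\wedge\theta)\wedge\theta)$ as alternations; (ii) apply $d$ and expand via the graded Leibniz rule for $E$-valued forms, being careful about the sign $(-1)^{\deg}$ when $d$ passes the $1$-form $\theta$; (iii) substitute $d\theta=-\tfrac12\gamma(\theta\wedge\theta)$ into the two terms coming from the factors $\theta$; (iv) observe that these two terms coincide (by relabeling the two copies of $\theta$ and using skew-symmetry of $\gamma$), so together they cancel the factor $\tfrac12$ and yield exactly $\gamma(\gamma(\theta\wedge\theta)\wedge\theta)$; (v) conclude from $d^2\theta=0$. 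Since $\theta_z$ is an isomorphism $T_z\mathscr SQ^{(k)}\to E$ for every $z$ (Proposition \ref{prop:canonical forms}(1)), the $3$-form identity is equivalent to the pointwise algebraic identity on $E$, though in fact the statement as written is just the identity of $E$-valued $3$-forms, which is what the computation gives directly.

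The only real obstacle is bookkeeping: getting the combinatorial factors and the alternation signs right, in particular checking that the two "inner" terms genuinely add rather than cancel, and that the notation $\gamma(\gamma(\theta,\theta),\theta)$ in the statement is meant with the outer $\gamma$ alternated over all three slots (this is the standard Jacobi-type bracketing, and it is the correct reading for the $\tfrac12\cdot 2 = 1$ to work out). I would therefore first pin down the alternation conventions explicitly and then let the computation run; no analytic input is needed beyond $d^2=0$ and the structure equation itself, so there is no deeper difficulty hiding here — this is the formal Bianchi identity attached to any absolute parallelism.
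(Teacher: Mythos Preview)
Your proposal is correct and is exactly the paper's approach: differentiate the structure equation $d\theta+\tfrac12\gamma(\theta,\theta)=0$, use $d^2\theta=0$, and substitute $d\theta=-\tfrac12\gamma(\theta,\theta)$ back in. The paper's proof is the one-line version of what you wrote out in detail.
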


\begin{proof}

By differentiating $d\theta +\frac{1}{2} \gamma(\theta, \theta)=0$ and replacing $d\theta$ by $-\frac{1}{2}\gamma(\theta, \theta)$, we get
\begin{eqnarray*}\gamma(\gamma(\theta, \theta), \theta) = d\gamma(\theta, \theta).
\end{eqnarray*}
\end{proof}


\subsubsection{ } \label{sect: structure functions of various degree}
Let us consider various pieces of the structure function $\gamma$ of $\mathscr S Q^{(k)}$, by decomposing the vector space $\Hom(\wedge^2 E,E)$ in which $\gamma$ takes values. 

Let $\pi^c_{ab}$ denote the projection $ \Hom(\wedge ^2 E,E) \rightarrow \Hom(\overline{\frak g}_a \wedge \overline{\frak g}_b, \overline{\frak g}_c)$ as well as the image $\pi^c_{ab} \Hom(\wedge^2E,E)$ (we shall assume the latter for a projection and its image).
We have the natural notion of degree: $\alpha \in \Hom(\wedge^2E,E)$ is {\it of  homogeneous  degree} $r$ if $\alpha(\overline{\frak g}_a \wedge \overline{\frak g}_b) \subset \frak g_{a+ b + r}$ for all $a,b \in \mathbb Z$. But we shall also use a modified degree which is well adapted to our setting:  $\alpha \in \Hom(\wedge^2 E, E)$ is {\it of modified degree} $s$ if $\alpha(\overline{\frak g}_a \wedge \overline{\frak g}_b) \subset \overline{\frak g}_{\widetilde{a} + \widetilde{b} + s}$ for all $a,b \in \mathbb Z$, where we set $\widetilde{a} =\min\{a,-1\}$. Then
$$\pi_{(r)}=\sum_{a,b} \pi^{a+b+r}_{ab}, \qquad \pi_{[s]} = \sum_{a,b} \pi^{\widetilde{a} + \widetilde{b} +s}_{ab}$$
are projections to the elements of homogeneous degree $r$, of modified degree $s$, respectively. Then the image of $\pi_{(r)}$ is $\Hom(\wedge ^2 E,E)_r$.

We define the corresponding filtrations $\{F^i\}$ and $\{\widetilde F^i\}$ by
$$F^i = \sum_{r \geq i } \pi_{(r)}, \qquad \qquad \widetilde{F}^j = \sum_{s \geq j} \pi_{[s]}$$
and we identify
\begin{eqnarray*}
\Hom(\wedge^2 E,E)^{(\ell)}:=\Hom(\wedge^2E,E) /F^{\ell+1} &\text{ with }& \pi^{(\ell)} := \sum_{r \leq \ell} \pi_{(r)} \\
\Hom(\wedge^2E,E)^{[m]}:=\Hom(\wedge^2E,E)/\widetilde{F}^{m+1} &\text{ with }& \pi^{[m]} := \sum_{s \leq m }\pi_{[s]}.
\end{eqnarray*}

Another projection is made by the direct sum decomposition:
$$\Hom(\wedge ^2E,E) = \Hom(\wedge ^2 \frak g_-, E) \oplus \Hom(\frak g_- \wedge E_+, E) \oplus \Hom(\wedge ^2 E_+, E) $$
where $E_+=\bigoplus_{a \geq 0} \overline{\frak g}_a $.
We denote the projections to each component by $\pi_I, \pi_{II}, \pi_{III}$, respectively. Noting that all projections commute, we write
$$\pi_{II} \circ \pi^{[m]} = \pi_{II}^{[m]}, \quad \pi_{II} \circ \pi_{(r)} \circ \pi_{[s]} = \pi_{II(r)[s]}, \text{ etc. }$$

Applying various projections, we define
$$\gamma_{a b}^c=\pi_{ab}^c \circ \gamma , \quad \gamma_I^{[m]}=\pi_I^{[m]}\circ \gamma,\quad \gamma_{II (r)[s]} = \pi_{II(r)[s]} \circ \gamma, \text{ etc.}$$

It being said, we have

\begin{proposition} $\,$ \label{prop:degree of structure function}

\begin{enumerate}
\item $\gamma_{I(d)}=0$ for $d<0$ and $\gamma_{I(0)}$ coincides with the Lie bracket of $\frak g_-$.
\item $\gamma_{II(d)}=0$ for $d <0$ and $\gamma_{II(0)}$ coincides with the action of $\frak g_-$ on $E$.
\item $\gamma^c_{IIIab}=0$ if $a,b \geq 0$ and $c < \max\{a,b\}-1$.
\end{enumerate}

\end{proposition}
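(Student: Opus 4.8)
The plan is to work directly from the defining structure equation $d\theta + \frac12\gamma(\theta,\theta) = 0$ on $\mathscr S Q^{(k)}$, evaluated on pairs of fundamental (constant-coefficient) vector fields for the absolute parallelism $\theta$, and to exploit that in low homogeneous degree the form $\theta$, its exterior derivative, and the Lie-bracket information on $T\mathscr S Q^{(k)}$ are essentially forced by Proposition \ref{prop:canonical forms}. Concretely, for $X,Y\in E$ let $\widehat X,\widehat Y$ denote the vector fields on $\mathscr S Q^{(k)}$ with $\theta(\widehat X)\equiv X$, $\theta(\widehat Y)\equiv Y$; then the structure equation reads
$$
\gamma(z)(X,Y) \;=\; -\,\langle d\theta,\widehat X\wedge\widehat Y\rangle \;=\; \theta\big([\widehat X,\widehat Y]\big),
$$
so each claim becomes a statement about which graded components $[\widehat X,\widehat Y]$ can have. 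First I would record the homogeneity/filtration behaviour: by Proposition \ref{prop:canonical forms}(1), $\theta$ is filtration preserving and by (2) its symbol $\gr\theta$ identifies $\gr T_zM$ with $\mathfrak g_-$ and $\gr V_z\mathscr S$ with $\gr F^0 E$ via $\widetilde{\,\cdot\,}$; this is what pins down the degree-zero pieces.

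For parts (1) and (2) I would argue as follows. The vanishing $\gamma_{I(d)}=0$ and $\gamma_{II(d)}=0$ for $d<0$ is the statement that $\theta$ is filtration preserving in the appropriate sense: if $X\in\mathfrak g_a$, $Y\in\mathfrak g_b$ with $a,b<0$ (resp. $a<0\le b$), then $\widehat X,\widehat Y$ are sections of the tangential filtration of the right order, their bracket lies in the expected filtration piece $F^{a+b}T\mathscr S Q^{(k)}$ (this is where one invokes that $\widehat{\mathscr S}^{(\ell+1)}Q^{(k)}$ sits inside the linear frame bundle of the regular filtered manifold $\mathscr S^{(\ell)}Q^{(k)}$, together with the inductive definition of the filtration on the $T Q^{(i)}$'s), hence $\gamma(X,Y)=\theta([\widehat X,\widehat Y])$ has no component in degree strictly below $a+b$. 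For the degree-zero identifications: $\gamma_{I(0)}$ computes the symbol bracket on $\gr_- T_zM$, which by Proposition \ref{prop:canonical forms}(2) is transported by $\gr\theta$ to the bracket of $\mathfrak g_-$; and $\gamma_{II(0)}$, evaluated on $X\in\mathfrak g_a$ ($a<0$) and a vertical direction $\widetilde A$ with $A\in\mathfrak g_b$ ($b\ge0$), computes $\theta$ of the bracket of a basic vector field with a fundamental vertical field, i.e. $-\mathcal L_{\widetilde A}\theta(\widehat X)$, which by $R$-equivariance ($R_a^*\theta = a^{-1}\theta$, Proposition \ref{prop:canonical forms}(3)) equals $A\cdot X$ in the natural $\mathfrak g_0\oplus\cdots$-module structure on $E$ — that is, the action of $\mathfrak g_-$ on $E$ (here the roles of the two arguments are swapped, but $\Hom(\mathfrak g_-\wedge E_+,E)$ is exactly the relevant summand $\Hom(E_+\wedge E_-,E)$).

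For part (3) — $\gamma^c_{III\,ab}=0$ when $a,b\ge0$ and $c<\max\{a,b\}-1$ — the key input is the structure of the fibre of the universal frame bundle: for $a,b\ge0$ the directions $\widehat X,\widehat Y$ are vertical over $Q^{(k-1)}$, and vertical for the tower of bundles $\mathscr S^{(\ell)}Q^{(k)}\to\cdots\to Q^{(k)}\to Q^{(k-1)}$ up to a controlled order. One should show that the Lie bracket of two such fundamental vertical fields, corresponding to $A\in\overline{\mathfrak g}_a$ and $B\in\overline{\mathfrak g}_b$ with $a,b\ge 0$, can only land (modulo lower-filtration noise coming from the non-linearity of $\theta$) in $\overline{\mathfrak g}_{\ge\max\{a,b\}-1}$; this is ultimately the filtration/commutation property of the structure groups $\widehat G_{\ell+1}$ and their filtrations $F^\ell GL(E^{(\ell)}) = \exp(\oplus_p\Hom(\mathfrak g_p,\mathfrak g_{p+\ell}))$, together with the inductive formula for $\overline{\mathfrak g}_{\ell} = \Hom(\mathfrak g_-,\ldots)_\ell \oplus \Hom(\oplus_{i\le\ell-2}\overline{\mathfrak g}_i,\overline{\mathfrak g}_{\ell-1})$ appearing in the proof of Proposition 2.2. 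The cleanest route is probably: reduce to a fixed finite order $\mathscr S^{(\ell+1)}Q^{(k)}$ with $\ell$ large, use that $\widehat{\mathscr S}^{(\ell+1)}Q^{(k)}$ is a subbundle of a linear frame bundle over the filtered manifold $\mathscr S^{(\ell)}Q^{(k)}$, and transcribe the classical computation of the structure function of a jet-group frame bundle (as in \cite{SS65}), where exactly this "$\max-1$" phenomenon is the statement that the jet group $G^{(m)}$ is filtered so that $[\,F^aG,\,F^bG\,]\subset F^{a+b}$ while the bracket of the $\Hom(\mathfrak g_0,\mathfrak g_m)$-parts only shifts degree down by one.

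The main obstacle I expect is part (3): carefully controlling the "lower-order" contributions to $[\widehat X,\widehat Y]$ that arise because $\theta$ is only filtration preserving (not graded) — one must check that all such corrections still respect the bound $c\ge\max\{a,b\}-1$, which requires being honest about the inductive definition of the filtrations on the $TQ^{(i)}$ and about how the embedding $\mathscr S^{(\ell+1)}Q^{(k)}\hookrightarrow \widehat{\mathscr S}^{(\ell+1)}Q^{(k)}/F^{\ell+2}$ interacts with brackets. Parts (1) and (2), by contrast, are essentially unwinding the definitions together with Proposition \ref{prop:canonical forms}.
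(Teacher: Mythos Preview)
Your plan is sound and close in spirit to the paper's, but the emphases differ in two places worth noting.

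For (1) and (2) the paper does not argue directly from the filtration on $T\mathscr S Q^{(k)}$ as you do; instead it invokes the $\mathcal W$-canonical embedding (Proposition~\ref{prop: embedding}) to pull back along $\iota_{\mathcal W}:\mathscr S Q^{(k)}\hookrightarrow \mathscr S Q^{(0)}$, where the claims are immediate from Proposition~\ref{prop:canonical forms}. Your direct argument via $\gamma(X,Y)=\theta([\widehat X,\widehat Y])$ and the regular filtration on $\mathscr S^{(\ell)}Q^{(k)}$ is equally valid and arguably more self-contained; the paper's reduction is shorter but presupposes the order-$0$ case.

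For (3) you have identified the right algebraic input (the inductive description $\overline{\frak g}_a\subset \Hom(\frak g_-,E^{(a-1)})_a\oplus\Hom(\oplus_{i\le a-2}\overline{\frak g}_i,\overline{\frak g}_{a-1})$), but your proposed route through brackets of two fundamental vertical fields and jet-group filtrations is more laborious than necessary. The paper reuses exactly the Lie-derivative trick you already applied in (2): from $R_a^*\theta=a^{-1}\theta$ one gets $L_{\widetilde A}\theta=-\rho(A)\theta$, and the Cartan formula together with $\theta(\widetilde A)\equiv A\pmod{F^{a+1}}$ (Proposition~\ref{prop:canonical forms}(2)) yields
\[
-\,\widetilde A\lrcorner\, d\theta \;\equiv\; \rho(A)\,\theta \pmod{F^{a+1}}.
\]
This pins down the components $\gamma^c_{a\,\bullet}$ for $c\le a$ in terms of $\rho(A)$ alone, and the description of $\overline{\frak g}_a$ above shows that $\rho(A)$ restricted to $E_+$ lands in $F^{a-1}$; the bound $c\ge\max\{a,b\}-1$ then follows by symmetry in $A,B$. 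So the ``main obstacle'' you anticipate dissolves once you realize the same equivariance argument handles both (2) and (3); there is no need to track brackets of two fundamental fields through the tower.
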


\begin{proof} 

By Proposition \ref{prop: embedding} there is an embedding
$$ (\mathscr SQ^{(k)}, \theta(\frak g_k)) \stackrel{\iota}{\rightarrow} (\mathscr S Q^{(0)}, \theta(\frak g_0)),$$
and  (filtration  preserving)  injective morphisms
$$ E(\frak g_k) \stackrel{\iota}{\rightarrow} E(\frak g_0), \quad G(G_k) \stackrel{\iota}{\rightarrow} G(G_0)$$
such that $\iota^*  \theta (\frak g_0) = \iota \circ \theta(\frak g_k)$.
Since (1) and (2) hold  for $\mathscr S Q^{(0)}$, so do for $\mathscr S Q^{(k)}$.

For the statement (3),
note that $$\overline{\frak g}_{a}  \subset \Hom \left(\frak g_-, \frak g_- \oplus \left( \oplus_{i=0}^{a-1} \overline{\frak g}_i\right)\right)_{a} \oplus \Hom\left( \oplus_{i=0}^{  a-2} \overline{\frak g}_i, \overline{\frak g}_{a-1} \right).$$
%
%
%
  From $R_a^* \theta =a^{-1} \theta$ for $a \in \overline{G}$, it follows that  $L_A \theta = - \rho(A ) \theta$ for $A \in \overline{\frak g}$. Since $L_A \theta = \widetilde{A} \lrcorner d \theta + d(\widetilde{A} \lrcorner \theta) $, we have $-\widetilde{A} \lrcorner d \theta  =  d (\theta(\widetilde{A})) + \rho(A) \theta  $.  By Proposition \ref{prop:canonical forms} (2), for $A \in \overline{\frak g}_{a}$, $\theta (\widetilde{A}) =A \mod F^{a+1}$ and thus $-\widetilde{A} \lrcorner d \theta  =  \rho(A) \theta  \mod F^{a+1}$.
\end{proof}

\subsubsection{ }
On account of Proposition \ref{prop:degree of structure function}, we define a subspace $\check \pi = \ \cHom (\wedge^2E,E)$ by the condition:
$$\pi^{(-1)}_I = \pi^{(-1)}_{II} =0 \text{ and } \pi^c_{IIIab} =0 \text{ for } a,b,c \text{ such that } a,b \geq 0 \text{ and } c< \max\{a,b\}-1.$$
Recall that our group $\overline{G} $ acts on $E$ and it naturally induces an action on $\Hom(\wedge ^2E,E)$, denoted by $\rho$.
It is easy to see that $\overline{G}$ leaves invariant the filtration $\{\widetilde F^j\}$ and the subspace $\cHom(\wedge^2 E,E)$. Hence $\overline{G}$ acts on $\cHom(\wedge^2E,E)/\widetilde{F}^{\ell+1}  =: \cHom(\wedge^2E,E)^{[\ell]}$.

\begin{lemma}

The action of $F^{\ell+1}\overline{G}$ on $\cHom(\wedge^2E,E)^{[\ell]}$ is trivial for $\ell \geq k+1$.
\end{lemma}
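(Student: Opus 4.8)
The claim is about the action $\rho$ of the filtered group $\overline G$ on the truncated space $\cHom(\wedge^2E,E)^{[\ell]} = \cHom(\wedge^2E,E)/\widetilde F^{\ell+1}$, and we want that $F^{\ell+1}\overline G$ acts trivially there for $\ell \ge k+1$. The strategy is to differentiate: it suffices to show that the infinitesimal action $\rho(A)$ of any $A \in F^{\ell+1}\overline{\frak g} = \bigoplus_{p\ge \ell+1}\overline{\frak g}_p$ maps $\cHom(\wedge^2E,E)$ into $\widetilde F^{\ell+1}$, since then the one-parameter subgroups $\exp(tA)$ act trivially modulo $\widetilde F^{\ell+1}$ and these generate $F^{\ell+1}\overline G$ (which is connected, being the image of a nilpotent-type exponential). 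So the whole problem reduces to an estimate on modified degrees.

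\textbf{The degree bookkeeping.} Recall $(\rho(A)\varphi)(X,Y) = A\varphi(X,Y) - \varphi(AX,Y) - \varphi(X,AY)$ for $A \in \overline{\frak g}$ acting on $E$, and that $\varphi \in \cHom(\wedge^2E,E)$ satisfies the constraints defining $\check{\phantom{x}}$: namely $\pi^{(-1)}_I = \pi^{(-1)}_{II}=0$ and $\pi^c_{IIIab}=0$ whenever $a,b\ge 0$ and $c<\max\{a,b\}-1$. Take $A \in \overline{\frak g}_p$ with $p \ge \ell+1 \ge k+2$, so in particular $p\ge 1$, and take homogeneous inputs $X\in\overline{\frak g}_a$, $Y\in\overline{\frak g}_b$. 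I would check term by term that each of the three summands of $(\rho(A)\varphi)(X,Y)$ lands in $\overline{\frak g}_{\widetilde a + \widetilde b + s}$ with $s \ge \ell+1$, i.e. in $\widetilde F^{\ell+1}$. The key inputs are: $A$ acts on $E$ raising modified degree by at least $p$ (more precisely, the action of $\overline{\frak g}_p$ on $\overline{\frak g}_q$ lands in $\overline{\frak g}_{p+q}$, and one must compare $p+q$ with $\widetilde q$ — using $p\ge 1$ one gets $p+\widetilde q \ge \widetilde q + p$ and, when $q<0$ so $\widetilde q = q$, also $p+q \ge \widetilde{p+q}$ since $p\ge 1$); and the $\check{\phantom{x}}$-constraints on $\varphi$ rule out exactly the low-degree pieces that would otherwise spoil the estimate. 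One splits into the cases according to whether $a,b$ are negative or nonnegative, and in the mixed/positive cases invokes the type-III vanishing condition $\pi^c_{IIIab}=0$ for $c<\max\{a,b\}-1$ together with the fact that, e.g., $AX$ has degree $a+p$.

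\textbf{Where the difficulty lies.} The routine-but-delicate part — and the main obstacle — is the bookkeeping in the term $-\varphi(AX,Y)$ (and symmetrically $-\varphi(X,AY)$) when $a$ is negative but $a+p \ge 0$: here the modified degree $\widetilde{a}$ of $X$ jumps from $a$ to $\widetilde{a+p}=\min\{a+p,-1\}$ under $A$, so one cannot simply say "$A$ raises modified degree by $p$". One must instead argue that even in this case $\varphi(AX,Y)$, computed using the modified-degree of $\varphi$ together with the $\check{\phantom{x}}$-constraint when $AX$ and $Y$ are both in $E_+$, still lands in $\widetilde F^{\widetilde a + \widetilde b + \ell + 1}$; the hypothesis $\ell \ge k+1$ is what makes the numerics close. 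Once this case analysis is laid out it is a finite verification; I would organize it as a short lemma-internal computation comparing $\widetilde{a+p} + \widetilde b + (\text{deg }\varphi)$ against $\widetilde a + \widetilde b + \ell + 1$, using $p \ge \ell+1$ and $\widetilde{a+p} \ge \widetilde a + 1$ whenever $p \ge 1$. This last inequality $\widetilde{a+p}\ge\widetilde a + p$ for $p\ge 0$ (proved by checking $a+p\ge 0$ vs $a+p<0$ separately) is really the engine of the estimate.
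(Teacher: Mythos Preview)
Your overall strategy—reduce to the Lie algebra level and split into the three types I/II/III—is exactly the paper's approach, and your diagnosis of the delicate point (the term $\varphi(AX,Y)$ when $a<0$ but $a+p\ge 0$) is accurate. However, the ``engine inequality'' you state at the end is \emph{false}, and if you actually relied on it the proof would collapse.

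Concretely: you claim $\widetilde{a+p}\ge \widetilde a + p$ for $p\ge 0$. Take $a=-1$, $p=2$: then $a+p=1\ge 0$ so $\widetilde{a+p}=-1$, while $\widetilde a + p = -1+2=1$; so $-1\ge 1$ fails. The weaker form $\widetilde{a+p}\ge \widetilde a +1$ for $p\ge 1$ also fails (same example, or $a\ge 0$ with any $p\ge 1$). The modified degree simply does not behave additively once you cross from $\frak g_-$ into $E_+$, and no uniform inequality of this shape will work.

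What actually closes the argument—as you yourself say a few lines earlier—is the $\cHom$ constraint together with a specific property of the $\overline{\frak g}$-action on $E$: for $A\in \overline{\frak g}_p$ with $p\ge \ell+1$ and $Y\in\overline{\frak g}_a$ with $a\ge 0$, one has $A\cdot Y\in F^{\ell}$ (in fact $A\cdot Y\in\overline{\frak g}_{p-1}$ or $0$, from the explicit description $\overline{\frak g}_p\subset \Hom(\frak g_-,E^{(p-1)})_p\oplus\Hom(\oplus_{i=0}^{p-2}\overline{\frak g}_i,\overline{\frak g}_{p-1})$). Combined with the type-III vanishing $\pi^c_{IIIab}=0$ for $c<\max\{a,b\}-1$, this gives $\varphi(AX,Y)\in F^{\ell-1}$ directly in the problematic cases, without any modified-degree inequality. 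The paper's proof does exactly this: for each of the three types it checks that all three summands of $(\rho(A)\varphi)(X,Y)$ lie in a filtration piece $F^c$ with $c>\widetilde a+\widetilde b+m$ for every $m\le\ell$, using these two structural facts rather than an abstract inequality. Drop the claimed inequality and carry out that direct verification; the rest of your plan is sound.
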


\begin{proof}
We may assume that the group $\overline{G} $ is connected. Then it suffices to prove: For $A \in F^{\ell+1}\overline{\frak g}$,
$$\pi_{[m]}(\rho(A)\gamma) =0 \text{ for } m \leq \ell.$$
Let us verify this by showing that its projections by $\pi_I, \pi_{II}, \pi_{III}$ vanishes.

(1) For $X  \in \overline{\frak g}_p$ and $ Y  \in \overline{\frak g}_q$, where $p,q<0$, we have
\begin{eqnarray*}
\pi_{[m]}(\rho(A)\gamma)(X,Y) &=& \pi_{p+q+m}(\rho(A)\gamma)(X,Y) \\
&=& \pi_{p+q+m}\left(A \gamma(X,Y) - \gamma([A,X],Y) - \gamma(X,[A,Y]) \right) \\
&=& 0 \qquad \text{ because } \pi_{p+q+m} F^{p+q+\ell+1}=0 \text{ for } m \leq \ell.
\end{eqnarray*}

(2) For $X \in \overline{\frak g}_p$ and $Y \in \overline{\frak g}_a$, where $p<0$ and $a \geq 0$, we have
\begin{eqnarray*}
(\pi_{[m]}(\rho(A)\gamma))(X,Y) &=& \pi_{p-1+m}(A \gamma(X,Y) - \gamma(A\cdot X,Y) - \gamma(X, A\cdot Y)) \\
&=& 0 \qquad \text{ because } \pi_{p-1+m}(F^{\ell+p} + F^{\ell+p} + F^{\ell+p})=0.
\end{eqnarray*}

(3) For $X \in \overline{\frak g}_a$ and $Y \in \overline{\frak g}_b$, where $a,b \geq 0$, we have
\begin{eqnarray*}
\pi_{[m]}(\rho(A)\gamma)(X,Y)& =& \pi_{-2+m}(\rho(A)\gamma)(X,Y) \\
&=& 0 \qquad \text{ because } \pi_{-2+m}F^{\ell-1}=0.
\end{eqnarray*}
Here, we use the fact that $F^{\ell+1}\overline{\frak g} \cdot E_+ \subset F^{\ell}$.
\end{proof}

Then we have immediately

\begin{proposition}
The structure function $\gamma$ of $\mathscr SQ^{(k)}$ induces a $\overline{G}$-equivariant map
$$\gamma^{[\ell]}: \mathscr S^{(\ell)}Q^{(k)} \rightarrow \cHom(\wedge^2 E, E)^{[\ell]}$$
for $\ell \geq k$.
\end{proposition}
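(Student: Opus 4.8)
The plan is to deduce this Proposition directly from the Lemma immediately preceding it, together with the already-established properties of the structure function $\gamma$ under the group action. First I would recall the three ingredients at hand: (i) by Proposition \ref{prop:structure functions under the action}, $\gamma$ is $\overline G$-equivariant as a function $\mathscr SQ^{(k)} \to \Hom(\wedge^2 E, E)$, i.e. $R_a^*\gamma = \rho(a)^{-1}\gamma$; (ii) by Proposition \ref{prop:degree of structure function}, $\gamma$ actually takes values in the subspace $\cHom(\wedge^2 E, E)$, since $\gamma_{I(-1)} = \gamma_{II(-1)} = 0$ and the vanishing $\gamma^c_{IIIab} = 0$ for $a,b\ge 0$, $c < \max\{a,b\}-1$ is exactly the defining condition of $\cHom$; and (iii) by the Lemma, $F^{\ell+1}\overline G$ acts trivially on $\cHom(\wedge^2 E,E)^{[\ell]}$ for $\ell \ge k+1$.

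The core step is then to show that $\pi^{[\ell]}\circ\gamma$ is constant along the fibers of the projection $\mathscr SQ^{(k)} \to \mathscr S^{(\ell)}Q^{(k)}$, so that it descends to a function on the base $\mathscr S^{(\ell)}Q^{(k)}$. The fiber of $\mathscr SQ^{(k)} \to \mathscr S^{(\ell)}Q^{(k)}$ is precisely the kernel of $G(G_k) \to G^{(\ell)}(G_k)$, which by the construction of the groups $G^{(\ell)}$ as successive quotients by $F^{\ell+2}GL$ is the subgroup $F^{\ell+1}\overline G$ (more precisely, the image of $F^{\ell+1}\overline{\frak g}$ under $\exp$, using connectedness of $\overline G$ as in the Lemma's proof). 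Composing the $\overline G$-equivariance of $\gamma$ with the projection $\pi^{[\ell]} : \cHom(\wedge^2 E,E) \to \cHom(\wedge^2 E,E)^{[\ell]}$ — which is $\overline G$-equivariant because $\overline G$ preserves the filtration $\{\widetilde F^j\}$ — gives that $\gamma^{[\ell]} := \pi^{[\ell]}\circ\gamma$ satisfies $R_a^*\gamma^{[\ell]} = \rho^{[\ell]}(a)^{-1}\gamma^{[\ell]}$. For $a \in F^{\ell+1}\overline G$ the Lemma says $\rho^{[\ell]}(a) = \mathrm{id}$, hence $\gamma^{[\ell]}$ is invariant under $F^{\ell+1}\overline G$, i.e. constant on the fibers of $\mathscr SQ^{(k)} \to \mathscr S^{(\ell)}Q^{(k)}$, and therefore factors through a well-defined map $\gamma^{[\ell]} : \mathscr S^{(\ell)}Q^{(k)} \to \cHom(\wedge^2 E,E)^{[\ell]}$. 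Smoothness of the descended map follows since $\mathscr SQ^{(k)} \to \mathscr S^{(\ell)}Q^{(k)}$ is a surjective submersion (in the pro-manifold sense).

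Finally I would check $\overline G$-equivariance of the descended map: for $a \in \overline G$, the action on the target is through the quotient $\rho^{[\ell]}$ of the $G^{(\ell)}(G_k)$-action, and since $\gamma$ was $\overline G$-equivariant upstairs and the projection $G(G_k)\to G^{(\ell)}(G_k)$ intertwines the two actions, the relation $R_{\bar a}^*\gamma^{[\ell]} = \rho^{[\ell]}(\bar a)^{-1}\gamma^{[\ell]}$ on $\mathscr S^{(\ell)}Q^{(k)}$ is immediate. The only point requiring a little care — and the main (modest) obstacle — is the bookkeeping identifying the fiber of $\mathscr SQ^{(k)}\to\mathscr S^{(\ell)}Q^{(k)}$ with $F^{\ell+1}\overline G$ and making sure the range $\ell \ge k$ in the statement is consistent with the range $\ell\ge k+1$ in the Lemma; for $\ell = k$ the assertion is essentially tautological since $\mathscr S^{(k)}Q^{(k)} = Q^{(k)}$ and one checks $\gamma^{[k]}$ depends only on $Q^{(k)}$ directly, while for $\ell \ge k+1$ the argument above applies verbatim.
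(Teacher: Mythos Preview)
Your proposal is correct and is precisely the argument the paper has in mind: the paper's own proof is the single phrase ``Then we have immediately'' following the Lemma, and you have simply unpacked what is meant by ``immediately'' --- namely that $\overline G$-equivariance of $\gamma$ plus the Lemma's triviality of the $F^{\ell+1}\overline G$-action forces $\gamma^{[\ell]}$ to be constant on fibers and hence to descend equivariantly. Your handling of the edge case $\ell = k$ and the identification of the fiber with $F^{\ell+1}\overline G$ are the right bookkeeping checks.
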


\begin{proposition} \label{prop:isomorphism preserves structure function in the order ell}
If $ \varphi^{(k)}: P^{(k)} \rightarrow  Q^{(k)}$ preserves the canonical Pfaff classes $[\theta^{(k-1)}]_P$ and $[\theta^{(k-1)}]_Q$, then the canonical lift $\mathscr S^{(\ell)}\varphi^{(k)}: \mathscr S^{(\ell)}P^{(k)} \rightarrow \mathscr S^{(\ell)}Q^{(k)}$ preserves the structure functions $\gamma^{[\ell]}_P$ and $\gamma^{[\ell]}_Q$ for any $\ell \geq k+1$.
\end{proposition}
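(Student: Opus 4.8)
The plan is to lift $\varphi^{(k)}$ all the way to the completed universal frame bundles, where the structure function is governed by the genuine absolute parallelism, prove the assertion there, and then descend. Write $\theta_P,\theta_Q$ for the canonical Pfaff forms on $\mathscr SP^{(k)},\mathscr SQ^{(k)}$ and $\gamma_P,\gamma_Q$ for their structure functions. First I would iterate the canonical lift: $\varphi^{(k)}$ has a canonical lift $\mathscr S^{(k+1)}\varphi^{(k)}\colon\mathscr S^{(k+1)}P^{(k)}\to\mathscr S^{(k+1)}Q^{(k)}$, obtained (as recalled just after Definition \ref{def:isomorphism of geometric structure of order k}) from $\zeta^{(k+1)}\mapsto\varphi^{(k)}_*\circ\zeta^{(k+1)}$ on the linear frame bundles by passing to the quotient, and, repeating the prolongation, a compatible family of lifts $\mathscr S^{(\ell)}\varphi^{(k)}$ for all $\ell\ge k+1$. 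Being compatible with the projections $\mathscr S^{(\ell+1)}Q^{(k)}\to\mathscr S^{(\ell)}Q^{(k)}$, these assemble into a map $\Phi\colon\mathscr SP^{(k)}\to\mathscr SQ^{(k)}$ which on the order-$\ell$ quotient reduces to $\mathscr S^{(\ell)}\varphi^{(k)}$.

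The heart of the proof is to show $\Phi^*\theta_Q=\theta_P$, i.e.\ that $\Phi$ is an isomorphism of absolute parallelisms. Here the hypothesis enters through the inductive mechanism already used in the proof of Proposition \ref{prop:isomorphism via canonical class}: since $\varphi^{(k)}$ preserves $[\theta^{(k-1)}]$, it carries the fibres of $Q^{(k)}\to Q^{(k-1)}$ (the leaves of $\theta^{(k-1)}=0$) to fibres and is therefore compatible with the canonical filtrations of the tangent bundles, so that the lift $\widehat{\mathscr S}^{(k+1)}\varphi^{(k)}$ of the linear frame bundles is well defined; naturality of the tautological form then gives $(\widehat{\mathscr S}^{(k+1)}\varphi^{(k)})^*\widehat\theta^{(k)}_Q=\widehat\theta^{(k)}_P$, and passing to the quotient, $\mathscr S^{(k+1)}\varphi^{(k)}$ again preserves $[\theta^{(k)}]$. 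An induction on $\ell$ of exactly the shape used in Proposition \ref{prop:isomorphism via canonical class} gives $(\mathscr S^{(\ell)}\varphi^{(k)})^*[\theta^{(\ell-1)}]_Q=[\theta^{(\ell-1)}]_P$ for every $\ell$, and since in the projective limit the frame ambiguity defining these classes collapses, this is precisely $\Phi^*\theta_Q=\theta_P$. (When $G_0,\dots,G_k$ are connected one may instead quote Theorem \ref{thm:isomorphism via canonical class of any order} directly.)

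The conclusion then follows from the structure equation. Pulling $d\theta_Q+\frac12\gamma_Q(\theta_Q,\theta_Q)=0$ back by $\Phi$ and using $\Phi^*\theta_Q=\theta_P$ together with $\Phi^*(d\theta_Q)=d\theta_P$ and $\Phi^*\bigl(\gamma_Q(\theta_Q,\theta_Q)\bigr)=(\gamma_Q\circ\Phi)(\theta_P,\theta_P)$, one obtains $d\theta_P+\frac12(\gamma_Q\circ\Phi)(\theta_P,\theta_P)=0$; by the uniqueness of the structure function (Proposition \ref{prop:canonical forms}(1)) this forces $\gamma_P=\gamma_Q\circ\Phi$. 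Since, by the preceding proposition, $\gamma^{[\ell]}$ on $\mathscr S^{(\ell)}Q^{(k)}$ is nothing but $\gamma$ factored through the projection $\mathscr SQ^{(k)}\to\mathscr S^{(\ell)}Q^{(k)}$ — which rests on the preceding lemma that $F^{\ell+1}\overline G$ acts trivially on $\cHom(\wedge^2E,E)^{[\ell]}$ — and $\Phi$ covers $\mathscr S^{(\ell)}\varphi^{(k)}$ under these projections, we get $\gamma^{[\ell]}_P=\gamma^{[\ell]}_Q\circ\mathscr S^{(\ell)}\varphi^{(k)}$ for every $\ell\ge k+1$.

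I expect the only genuine obstacle to be the middle step: checking that ``$\varphi^{(k)}$ preserves $[\theta^{(k-1)}]$'' really does propagate level by level to ``$\mathscr S^{(\ell)}\varphi^{(k)}$ preserves $[\theta^{(\ell-1)}]$'' — that is, that the canonical lift of a Pfaff-class-preserving map is again Pfaff-class-preserving — which is the bookkeeping carried out in the proof of Proposition \ref{prop:isomorphism via canonical class}. Everything downstream of that is a formal manipulation of the structure equation together with the uniqueness of $\gamma$.
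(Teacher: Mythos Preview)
Your proposal is correct and follows essentially the same route as the paper: lift $\varphi^{(k)}$ to the completed universal frame bundle, observe that the lift preserves the canonical Pfaff form and hence the full structure function $\gamma$, and then descend to $\gamma^{[\ell]}$ via the preceding proposition. The paper's proof is considerably terser (it simply asserts that the lift to $\mathscr S$ preserves $\gamma$ and hence each $\gamma^{[\ell]}$), whereas you have spelled out the structure-equation argument and the inductive propagation of Pfaff-class preservation, but the underlying logic is the same.
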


\begin{proof}
An isomorphism $ \varphi^{(k)}: P^{(k)} \rightarrow  Q^{(k)}$ lifts to an isomorphism $ \mathscr S^{(\ell)}\varphi^{(k)}:  \mathscr S^{(\ell)}P^{(k)} \rightarrow   \mathscr S^{(\ell)}Q^{(k)}$ for any $\ell \geq k+1$, and thus an isomorphism $ \mathscr S \varphi^{(k)}:  \mathscr S P^{(k)} \rightarrow  \mathscr S  Q^{(k)}$. Therefore, $\mathscr S \varphi^{(k)}$ preserves the structure functions, $\gamma_P$ and  $\gamma_Q$, and thus $\mathscr S^{(\ell)}\varphi^{(k)}$ preserves the structures function $\gamma^{[\ell]}_P$ and $\gamma^{[\ell]}_Q$.
\end{proof}

Since $\gamma^{[k+1]}=\sum_{s \leq k+1} \gamma_{[s]}$, the map  $\gamma^{[k+1]}:\mathscr S^{(k+1)}Q^{(k)} \rightarrow \cHom(\wedge^2 E, E)^{[k+1]}$ induces a function
$$ \gamma_{[k+1]} : \mathscr S^{(k+1)}Q^{(k)} \rightarrow \cHom(\wedge^2 E, E)^{[k+1]},$$
which will play an important role in the following section.


\section{$W$-normal Step   prolongations} \label{sect:normal step prolongation}


\subsection{Proper  geometric structures}

\begin{definition} \label{def:normal geometric structure}
A geometric structure $Q^{(k)}$ of type $(\frak g_-, G_0, \dots, G_k)$ is said to be {\it proper} if $E^{(k)}:=\frak  g_- \oplus \frak g_0 \oplus \dots \oplus \frak g_k$ forms  a truncated transitive graded Lie algebra (TTGLA),  which we denote by $\frak g[k]$.
\end{definition}

Let us explain more precisely what the above definition means. Recall that $(\frak g_-, \frak g_0, \dots, \frak g_k)$ satisfies
\begin{enumerate}
 \item[$\bullet$] $\frak g_0 \subset \frak g_0(\frak g_-)$   and
 \item[$\bullet$] $\frak g_i \subset \frak g_i(\frak g_{i-1})=\left(\oplus_{p<0} \Hom(\frak g_p, \frak g_{p+i})\right) \oplus \left( \oplus_{a=0}^{i-2} \Hom(\frak g_a, \frak g_{i-1})\right)$   for   1 $\leq i \leq k$.
\end{enumerate}

Saying the $E^{(k)}=\frak  g_- \oplus \frak g_0 \oplus \dots \oplus \frak g_k$ becomes a truncated transitive graded Lie algebra requires the following   conditions to be satisfied: for $0 \leq i \leq k$,
\begin{enumerate}
\item $\frak g_i \subset \oplus_{p<0} \Hom(\frak g_p, \frak g_{p+i})$;
\item $\frak g_i \subset Prol\,(\frak g[i-1])_i$, that is, $E^{(i-1)}=\frak  g_- \oplus \frak g_0 \oplus \dots \oplus \frak g_{i-1}$ is a truncated transitive graded Lie algebra $\frak g[i-1]$ and $\frak g_i$ is a subspace of the $i$-th component of $Prol\,(\frak g[i-1]) $;
\item $E^{(i)}=\frak  g_- \oplus \frak g_0 \oplus \dots \oplus \frak g_i$ is closed under the truncated Lie bracket defined in $Trun^{(i)}\,(Prol\, \frak g[i-1])$ and then forms a truncated transitive graded Lie algebra $\frak g[i]$.
\end{enumerate}

Note that a geometric structure $Q^{(0)}$ of order 0 ia always proper. If $Q^{(k)}$ is a $W$-normal prolongation of $Q^{(0)}$ (that we are going to study), then $Q^{(k)}$ is proper.

By (3), $\frak g_i$ ($0 \leq i \leq k$) acts trivially on $E_+^{(i-1)}:= \frak g_0 \oplus \dots \oplus \frak g_{i-1}$ and by (2), $\frak g_i$ is contained in the prolongation of $\frak g_- \oplus \frak g_0 \oplus \dots \oplus \frak g_{i-1}$. \\




 By this observation we have the following.

\begin{proposition} \label{prop:degree of gamma on positive set}  If $Q^{(k)}$ is proper, then the structure function $\gamma$ of the universal frame bundle $\mathscr S Q^{(k)}$  satisfies that
$ \gamma^c_{ab}$ vanishes  for $ 0 \leq a,b  \leq k $ and $ c <\max\{a,b\}.$

\end{proposition}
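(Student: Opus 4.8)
The plan is to re-run the Lie-derivative computation that underlies the proof of Proposition~\ref{prop:degree of structure function}, but now feeding in the one extra fact that normality provides --- that $\frak g_i$ annihilates $E_+^{(i-1)}=\oplus_{a=0}^{i-1}\frak g_a$ for $0\le i\le k$ --- which sharpens the degree bound there by one unit. Since $\gamma$ is skew and $\gamma^c_{ab}$ with $a,b\ge 0$ is a component of $\gamma_{III}$, it suffices to fix $0\le b\le a\le k$ and prove $\gamma^c_{ab}=0$ for $c<a$. Throughout I will use the pointwise identity coming from $R^*_g\theta=g^{-1}\theta$ (Proposition~\ref{prop:canonical forms}(3)), Cartan's formula, and the structure equation $d\theta+\frac12\gamma(\theta,\theta)=0$: for $A\in\overline{\frak g}_a$ and any tangent vector $Y$ of $\mathscr S Q^{(k)}$,
$$\gamma\bigl(\theta(\widetilde A),\theta(Y)\bigr)=\rho(A)\bigl(\theta(Y)\bigr)+Y\bigl(\theta(\widetilde A)\bigr),$$
together with Proposition~\ref{prop:canonical forms}(2), which gives that $g_A:=\theta(\widetilde A)-A$ is valued in $F^{a+1}E$; in particular $Y\bigl(\theta(\widetilde A)\bigr)=Y(g_A)\in F^{a+1}E$.

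\emph{Case $0\le b<a\le k$.} Fix $A\in\overline{\frak g}_a=\frak g_a$ and $B\in\overline{\frak g}_b=\frak g_b$, and apply the identity at a point with $Y=\theta^{-1}(B)$. Since $0\le b\le a-1$ we have $B\in E_+^{(a-1)}$, so $\rho(A)(B)=0$ by normality, and, using $\theta(\widetilde A)=A+g_A$, we get $\gamma(A,B)=-\gamma(g_A,B)+Y(g_A)$ with $Y(g_A)\in F^{a+1}E$. Decomposing $g_A=\sum_{j\ge a+1}(g_A)_j$ with $(g_A)_j\in\overline{\frak g}_j$, the $\overline{\frak g}_c$-component of $\gamma\bigl((g_A)_j,B\bigr)$ is $\gamma^c_{jb}\bigl((g_A)_j,B\bigr)$, which vanishes for $c<j-1$ by Proposition~\ref{prop:degree of structure function}(3) (here $j>b$, so $\max\{j,b\}=j$); and $c<a$ together with $j\ge a+1$ forces $c<j-1$. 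Hence the right-hand side has no $\overline{\frak g}_c$-component for any $c<a$, so $\gamma^c_{ab}=0$ for $c<a$.

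\emph{Case $b=a$.} Here normality is not even needed: I would instead evaluate $d\theta$ on the fundamental vector fields $\widetilde A,\widetilde B$ attached to $A,B\in\overline{\frak g}_a$, using $d\theta(\widetilde A,\widetilde B)=\widetilde A\bigl(\theta(\widetilde B)\bigr)-\widetilde B\bigl(\theta(\widetilde A)\bigr)-\theta\bigl([\widetilde A,\widetilde B]\bigr)$ and $[\widetilde A,\widetilde B]=\pm\widetilde{[A,B]}$ with $[A,B]\in F^{2a}\overline{\frak g}$. The first two terms lie in $F^{a+1}E$ because $\theta(\widetilde A)-A$ and $\theta(\widetilde B)-B$ do, and the last lies in $F^{2a}E\subseteq F^{a+1}E$ when $a\ge 1$ (when $a=0$ it lies in $\overline{\frak g}_0\oplus F^1E$); so $d\theta(\widetilde A,\widetilde B)$ has vanishing $\overline{\frak g}_c$-component for every $c<a$. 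On the other hand the structure equation gives $d\theta(\widetilde A,\widetilde B)=-\gamma\bigl(\theta(\widetilde A),\theta(\widetilde B)\bigr)=-\gamma(A+g_A,B+g_B)$, and the cross terms $\gamma(A,g_B),\gamma(g_A,B),\gamma(g_A,g_B)$ have no $\overline{\frak g}_c$-component for $c<a$ by Proposition~\ref{prop:degree of structure function}(3), since all indices produced by $g_A,g_B$ are $\ge a+1$. Reading off the $\overline{\frak g}_c$-component for $c<a$ yields $\gamma^c_{aa}=0$.

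The only genuine work is the filtration bookkeeping concealed in the repeated phrase ``valued in $F^{a+1}E$'': one must justify, exactly as in the proof of Proposition~\ref{prop:degree of structure function} (which itself appeals to Theorem~2.3.1 of \cite{M93}), that replacing $\theta(\widetilde A)$ by its leading term $A$ and then differentiating produces errors only in $F^{a+1}E$, and one must keep straight the two interacting filtrations on $E$-valued forms. The other point to verify is that the hypothesis $a,b\le k$ is precisely what makes the appeal to Proposition~\ref{prop:degree of structure function}(3) non-circular: every index $j$ produced by $g_A$ or $g_B$ satisfies $j\ge a+1$, hence $j-1\ge a>c$, so the coarser bound already in hand suffices and no new induction is required.
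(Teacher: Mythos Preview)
Your argument is exactly what the paper has in mind --- the paper gives no explicit proof here, only the remark ``By this observation we have the following,'' the observation being that $\frak g_i$ annihilates $E_+^{(i-1)}$ in the normal case; you have written out how feeding this into the Lie-derivative computation behind Proposition~\ref{prop:degree of structure function}(3) sharpens the bound by one unit.

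One technical caveat worth flagging: your key identity $\gamma(\theta(\widetilde A),\theta(Y))=\rho(A)\theta(Y)+Y(\theta(\widetilde A))$ descends from $R_g^*\theta=g^{-1}\theta$ for $g\in\overline G$, so it needs $A$ to lie in the Lie algebra $\overline{\frak g}$ of the structure group of the principal bundle $\mathscr S Q^{(k)}\to Q^{(k-1)}$. But that Lie algebra is $\frak g_k\oplus\overline{\frak g}_{k+1}\oplus\cdots$ (the fibres over $Q^{(k-1)}$ have dimension $\dim E-\dim E^{(k-1)}$), so for $0\le a\le k-1$ there is no fundamental vector field $\widetilde A$ on $\mathscr S Q^{(k)}$ attached to $A\in\frak g_a$, and your identity is not available there. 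The fix is the one the paper already uses for parts (1) and (2) of Proposition~\ref{prop:degree of structure function}: pass through the semi-canonical embedding $\iota_{\mathcal W}\colon\mathscr S Q^{(k)}\hookrightarrow\mathscr S Q^{(a)}$ of Proposition~\ref{prop: embedding}, where $G_a$ \emph{is} part of the structure group, run your argument verbatim there, and pull back; since $\iota_{\mathcal W}$ respects the grading on $E$, the vanishing of $\gamma^c_{ab}$ transfers directly. The paper's own proof of Proposition~\ref{prop:degree of structure function}(3) is tacit on this same point, so you are in good company.
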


 %

\subsection{$W$-normal reductions} \label{sect: definition of kappa and tau} $\,$

Let $Q^{(k)}$ be a proper geometric structure of type $\frak g[k]$, $\mathscr S Q^{(k)}$ the universal frame bundle, and $\gamma$ its structure function.
    Let  $E$, $ \overline{G} $, $ \overline{\frak g} $, $\theta$, $\gamma$, $\gamma^{[k+1]}$, $\gamma_{[k+1]}$  be given as in Section \ref{sect: structure functions}.
     We will often write alternatively $\kappa, \tau, \sigma$ for $\gamma_{I}, \gamma_{II}, \gamma_{III}$.
Then   $\kappa_{[k+1]}$, $\tau_{[k+1]}$, $\sigma_{[k+1]}$ are functions on  $\mathscr S^{(k+1)}Q^{(k)}$  defined as follows.
 $$ \kappa_{[k+1]} =\gamma_{k+1}|_{\wedge^2 \frak g_-}, \qquad  \tau_{[k+1]} =\sum _{\substack{\ell+a=k \\ a \geq 0, \ell \geq 0 }}\gamma_{\ell}|_{\frak g_a \wedge \frak g_-}, \qquad  \sigma_{[k+1]} =\sum_{\substack{\ell+a+b=k-1\\ a,b \geq 0 }}
\gamma_{\ell}|_{\frak g_a \wedge \frak g_b}. $$

\noindent
Recalling the construction of the universal flame bundles, we see easily that Lie algebra $\overline{\frak g}_{k+1}$ of the structure group of the principal bundle  $\mathscr S^{(k+1)}Q^{(k)} \rightarrow Q^{(k)}$ is $$\frak q_{k+1} + \frak r_{k+1} ,$$ where  $  \frak q_{k+1} =\Hom\left(\frak g_-, \frak g_- \oplus (\oplus_{i=0}^{k}\frak g_i)\right)_{k+1}$ and $ \frak r_{k+1}=\Hom\left(\oplus_{i=0}^{k-1} \frak g_i, \frak g_k\right)$ (Section \ref{sect:geometric structure of order k}).
  For $\varphi \in  \frak q_{k+1}  $ and $\psi \in \frak r_{k+1} $ denote by $a_{\varphi}$ and $a_{\psi}$ the   elements  in $\overline{G}_{k+1} $ corresponding to $\varphi$ and $\psi$. Then for $X \in \frak g_p$ ($p <0)$ and $A \in \frak g_a$ ($a \geq 0$),
  \begin{eqnarray*}
  a_{\varphi } X &\equiv& X + \varphi X \mod F^{p+k+2} \\
  a_{\psi} A&\equiv& A +\psi A \mod F^{k+1}.
  \end{eqnarray*}

\begin{proposition} \label{prop:structure equation along fiber} The structure function $\gamma_{[k+1]}:\mathscr S^{(k+1)}Q^{(k)}  \rightarrow \cHom(\wedge^2 E, E)^{[k+1]}$ satisfies that, for $\varphi \in \frak q_{k+1}$ and   $\psi \in \frak r_{k+1}$, we have

\begin{enumerate}
\item  $\kappa_{[k+1]}(z a_{\varphi }) = \kappa_{[k+1]}(z) + \partial \varphi$

    \item
    \begin{enumerate}
    \item $    \tau_{[k+1]}(z a_{\varphi })(A,\,\cdot\,) = \tau_{[k+1]}(z)(A,\,\cdot\,)  $
    \item
         $\tau_{[k+1]}(z a_{ \psi})(A,\,\cdot\,)= \tau_{[k+1]}(z)(A,\,\cdot\,)+ \partial \psi(A)$,  where $A \in \oplus_{i=0}^{k-1} \frak g_i$.

    \end{enumerate}

\end{enumerate}

\end{proposition}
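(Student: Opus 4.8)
The plan is to compute how the structure function $\gamma$ transforms under the right action of the two distinguished one-parameter families of elements $a_{\varphi}$ and $a_{\psi}$ in the fiber of $\mathscr S^{(k+1)}Q^{(k)} \to Q^{(k)}$, and then to read off the claimed formulas by projecting onto the appropriate homogeneous/modified-degree components. The master formula is Proposition \ref{prop:structure functions under the action}: for $a \in \overline G$, $\gamma(za)(X,Y) = a^{-1}\gamma(z)(aX,aY)$. So the entire calculation is bookkeeping with the explicit actions $a_{\varphi}X \equiv X + \varphi X \bmod F^{p+k+2}$ for $X \in \frak g_p$ ($p<0$) and $a_{\psi}A \equiv A + \psi A \bmod F^{k+1}$ for $A \in \frak g_a$ ($a \ge 0$), keeping track of which terms survive after applying $\pi_{[k+1]}$ and the component projections $\pi_I$, $\pi_{II}$.

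First I would prove (1). Take $X \in \frak g_p$, $Y \in \frak g_q$ with $p,q<0$; then $\kappa_{[k+1]}$ is the $\pi_{I[k+1]}$-component, i.e.\ the $\frak g_{p+q+k+1}$-part of $\gamma(z a_\varphi)(X,Y)$. Expand
\[
\gamma(za_\varphi)(X,Y) = a_\varphi^{-1}\,\gamma(z)(a_\varphi X, a_\varphi Y)
= a_\varphi^{-1}\,\gamma(z)(X+\varphi X,\ Y + \varphi Y)
\]
modulo higher filtration. Bilinearity gives four terms. The leading term $a_\varphi^{-1}\gamma(z)(X,Y)$ contributes $\gamma(z)(X,Y)$ plus a correction $-\varphi$ applied to the degree-$(p+q)$ part of $\gamma(z)(X,Y)$; by Proposition \ref{prop:degree of structure function}(1) that degree-$(p+q)$ part is the bracket $[X,Y] \in \frak g_{p+q}$, so this correction is $-\varphi([X,Y])$ in $\frak g_{p+q+k+1}$. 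The cross terms $\gamma(z)(\varphi X, Y)$ and $\gamma(z)(X,\varphi Y)$: here $\varphi X \in \frak g_{p+k+1}\subset E_+$ (since $p+k+1 \ge 0$ is forced by the relevant range), and by Proposition \ref{prop:degree of gamma on positive set}/\ref{prop:degree of structure function}(2) the surviving $\frak g_{p+q+k+1}$-component of $\gamma(z)(\varphi X, Y)$ is exactly the $\frak g_-$-action, namely $[\varphi X, Y] = [\varphi(X),Y]$, and similarly $[X,\varphi(Y)]$. The last term $\gamma(z)(\varphi X, \varphi Y)$ lands in too high a filtration and is killed by $\pi_{[k+1]}$. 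Assembling, the correction is $[X,\varphi(Y)] + [\varphi(X),Y] - \varphi([X,Y]) = (\partial\varphi)(X,Y)$ by the definition of the Spencer coboundary, giving $\kappa_{[k+1]}(za_\varphi) = \kappa_{[k+1]}(z) + \partial\varphi$.

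For (2), the same mechanism applies to $\tau_{[k+1]} = \gamma_{II}^{[k+1]}$, evaluated on pairs $(A, X)$ with $A \in \frak g_a$, $a \ge 0$, and $X \in \frak g_p$, $p<0$. For (2a), since $\varphi$ annihilates $E_+$ we have $a_\varphi A = A$, so only $X$ is perturbed: $\gamma(za_\varphi)(A,X) = a_\varphi^{-1}\gamma(z)(A, X + \varphi X)$; the term $\gamma(z)(A,\varphi X)$ has $\varphi X \in E_+$, so it is of type $\gamma_{III}$, not $\gamma_{II}$, and the leading correction from $a_\varphi^{-1}$ acting on the output also falls outside the $\pi_{II[k+1]}$ slot — so $\tau_{[k+1]}(za_\varphi)(A,\cdot) = \tau_{[k+1]}(z)(A,\cdot)$. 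For (2b), with $\psi \in \frak r_{k+1} = \Hom(\oplus_{i=0}^{k-1}\frak g_i, \frak g_k)$, we have $a_\psi A \equiv A + \psi A$ for $A \in \oplus_{i<k}\frak g_i$ while $a_\psi$ fixes $\frak g_-$ mod the relevant filtration; expanding $a_\psi^{-1}\gamma(z)(A + \psi A, X)$ and projecting, the surviving correction comes from $\gamma(z)(\psi A, X)$ whose relevant component is the $\frak g_-$-action $[\psi A, X]$ plus the output correction $-\psi$ applied to the leading $\frak g_k$-part — and by Proposition \ref{prop:degree of structure function}(2) these combine to $(\partial\psi)(A)$ evaluated at $X$, yielding $\tau_{[k+1]}(za_\psi)(A,\cdot) = \tau_{[k+1]}(z)(A,\cdot) + (\partial\psi)(A)$.

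\emph{Main obstacle.} The conceptual content is light, but the delicate point is the filtration bookkeeping: one must verify at each step that exactly the right truncation $\pi_{[k+1]}$ is taken and that all "error" terms — products of two corrections, or corrections landing one filtration degree too high, or cross terms of the wrong $\pi_{I/II/III}$-type — genuinely vanish after projection. In particular one must use that $F^{k+1}\overline{\frak g}$ acts on $E_+$ raising filtration (as in the Lemma preceding Proposition \ref{prop:isomorphism preserves structure function in the order ell}) and invoke Proposition \ref{prop:degree of structure function} to identify the "principal symbols" $\gamma_{I(0)} = [\,,\,]$, $\gamma_{II(0)} = $ action, which is precisely what makes the correction term equal to the Spencer coboundary rather than some deformed version of it. That identification — that the correction is \emph{exactly} $\partial$ and not merely $\partial$ up to lower-order structure-function-dependent terms — is the heart of the proof, and it is also the algebraic identity that underlies the normality hypothesis (Proposition \ref{prop:degree of gamma on positive set}).
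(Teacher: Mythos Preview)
Your approach is essentially the same as the paper's: expand $\gamma(za)(X,Y)=a^{-1}\gamma(z)(aX,aY)$ using the first-order actions of $a_\varphi$ and $a_\psi$, then project onto $\frak g_{p+q+k+1}$ (for $\kappa$) or $\frak g_{q+k}$ (for $\tau$), invoking Proposition~\ref{prop:degree of structure function} to identify the degree-zero pieces of $\gamma_I,\gamma_{II}$ with brackets. Part (1) is exactly right.

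Two imprecisions in your sketch for part (2) are worth flagging. In (2a), saying that $\gamma(z)(A,\varphi X)$ is ``of type $\gamma_{III}$'' is not the reason it drops out: you are computing the $\frak g_{q+k}$-component of $\gamma(za_\varphi)(A,X)$ for the \emph{fixed} pair $(A,X)$, and the type decomposition refers to the arguments of $\gamma$, not to the expanded terms. What you actually need is that $\gamma(z)(A,\varphi Y)\in F^{\max\{a,q+k+1\}}$ when $q+k+1\ge 0$ (this is where the normality hypothesis, Proposition~\ref{prop:degree of gamma on positive set}, is used---the general Proposition~\ref{prop:degree of structure function}(3) only gives $F^{\max\{a,q+k+1\}-1}$, which is not enough), and $\in F^{a+q+k+1}$ when $q+k+1<0$; in either case the $\frak g_{q+k}$-component vanishes. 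In (2b), there is no ``output correction $-\psi$ applied to the leading $\frak g_k$-part'': since $\psi$ lands in $\frak g_k$ and the target component is $\frak g_{q+k}$ with $q<0$, the action of $a_\psi^{-1}$ on the output contributes nothing at this level, and the entire correction is the single term $[\psi(A),Y]$ coming from $\gamma_{II(0)}(\psi A,Y)$.
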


  \begin{proof}

(1)  Let $X \in \frak g_p$ and $Y \in \frak g_{q}$, where $p<0$ and $q <0$.
  From $\gamma(za_{\varphi })(X,Y) =a_{\varphi}^{-1} \gamma(z)(a_{\varphi } X, a_{\varphi }Y)$ (Proposition \ref{prop:structure functions under the action}) it follows that
 \begin{eqnarray*}
 \gamma(z a_{\varphi })(X,Y) &=&  \gamma(z)(X,Y) + \gamma(z)(X, \varphi Y) + \gamma(z)(\varphi X, Y) + \gamma(z)(\varphi X, \varphi Y) + \dots   \\
  && -\varphi\left( \gamma(z)(X,Y) + \gamma(z)(X, \varphi Y) + \gamma(z)(\varphi X, Y) + \gamma(z)(\varphi X, \varphi Y) + \dots \right) \dots
  \end{eqnarray*}
  By comparing $\frak g_{p+q+k+1}$-component,   we get
  $$ \kappa_{[k+1]}(z a_{\varphi })(X,Y)
   =
  \kappa_{[k+1]}(z)(X,Y) + [X, \varphi Y] + [\varphi X,Y] - \varphi [X,Y].
 $$
  Here, we use that $\gamma_0|_{\wedge^2 \frak g_-} $ and $\gamma_0|_{\frak g_a \wedge \frak g_-}$ are given by the Lie bracket $[\,,\,]$.
  It follows that  $\kappa_{[k+1]}(z a_{\varphi}) =\kappa_{[k+1]}(z) + \partial \varphi$.

  (2) Let   $Y \in \frak g_q$ for some $q <0$ and $A \in \frak g_a$ for some $0 \leq a \leq k-1$.
  %
  As in (1), from $\gamma(z a_{\varphi+\psi})(A,Y) = a_{\varphi+\psi}^{-1} \gamma(z)(a_{\varphi+\psi} A, a_{\varphi+\psi}Y)$ it follows that
  \begin{eqnarray*}
 \gamma(z a_{\varphi })(A,Y) &=&  \gamma(z)(A,Y) + \gamma(z)(A, \varphi Y) + \gamma(z)(\varphi A, Y) + \gamma(z)(\varphi A, \varphi Y) + \dots   \\
   && -\varphi\left( \gamma(z)(A,Y) + \gamma(z)(A, \varphi Y) + \gamma(z)(\varphi A, Y) + \gamma(z)(\varphi A, \varphi Y) + \dots \right) \dots
  \end{eqnarray*}
   By comparing the $\frak g_{q+k}$-component,  
    we get
    \begin{eqnarray*}
    \tau_{[k+1]}(z a_{ \varphi})(A,Y) &=&\tau_{[k+1]}(z)(A,Y)  \\
 \tau_{[k+1]}(z a_{ \psi})(A,Y) &=&\tau_{[k+1]}(z)(A,Y) + [\psi(A),  Y].
 \end{eqnarray*}
 Here, we use Proposition \ref{prop:degree of gamma on positive set} in the first identity. Indeed, $\gamma(z)(A, \varphi Y) $ belongs to $F^{\max\{a, q+k+1\}}$ if $q+k+1 \geq 0$, and to $F^{a+q+k+1}$ if $q+k+1<0$, and thus their $\frak g_{q+k}$-component is zero.
  \end{proof}



Now let us make the key procedure of $W$-normal reduction consisting of $\gamma_{II}$-reduction ($\tau$-reduction) and $\gamma_I$-reduction ($\kappa$-reduction). 

 \begin{proposition}  \label{prop: step prolongation of order k+1}
Let $\frak g[k]=\oplus_{i \leq k} \frak g_i$ be a truncated transitive graded Lie algebra and $ \frak g =\oplus_i \frak g_i$ be its prolongation. Fix subspaces $W^1_k$ and $W^2_{k+1}$ such that
\begin{eqnarray*}
 \Hom (\frak g_-,  \frak g )_{k} &= &W_{k}^1 \oplus \partial  \frak g_{k}\\
 \Hom(\wedge^2\frak g_-,   \frak g )_{k+1} &= &W_{k+1}^2 \oplus \partial \Hom(\frak g_-,  \frak g )_{k+1} .
 \end{eqnarray*}
 Let $Q^{(k)} \stackrel{G_k}{\longrightarrow} Q^{(k-1)}  $ be a proper geometric structure  of type $\frak g [k]$ on a filtered manifold $M$. Let $\mathscr S^{(k+1)}  Q^{(k)}$ be the universal  frame bundle of order $k+1$.  Let $\kappa_{[k+1]}$ and $\tau_{[k+1]}$ be the structure functions of $\mathscr S^{(k+1)} Q^{(k)} $. 
 \begin{enumerate}
 \item {\rm(}$\gamma_{II}$-reduction{\rm)}
 Define a subbundle $\mathscr S_{W^1}^{(k+1)}Q^{(k)}$ of $\mathscr S^{(k+1)} Q^{(k)} $ by
\begin{eqnarray*}
\mathscr S_{W^1}^{(k+1)}Q^{(k)}=\left\{z \in \mathscr S^{(k+1)} Q^{(k)} : \tau_{[k+1]}(z)(A,\,\cdot\,) \in W_k^1 \text{ for any } A \in \oplus_{i=0}^{k-1} \frak g_i \right\}.
\end{eqnarray*}
Then $\mathscr S_{W^1}^{(k+1)}Q^{(k)}\rightarrow Q^{(k)}$ is a principal $\mathfrak Q_{k+1}$-subbundle of $\mathscr S^{(k+1)}Q^{(k)}\rightarrow Q^{(k)}$, where $\mathfrak Q_{k+1}$ is  the maximal    subgroup of $\overline{G}_{k+1}$ whose    Lie algebra is  $\frak q_{k+1}  \subset \overline{\frak g}_{k+1}$.
 \item {\rm(}$\gamma_I$-reduction{\rm)}
 Define a subbundle  $\mathscr S_{W }^{(k+1)}Q^{(k)}$ of  $\mathscr S_{W^1} ^{(k+1)}Q^{(k)}$ by
 \begin{eqnarray*}
\mathscr S_{W}^{(k+1)}Q^{(k)}=\left\{z \in \mathscr S_{W^1} ^{(k+1)}Q^{(k)}: \kappa_{[k+1]}(z) \in W_{k+1}^2  \right\}.
\end{eqnarray*}
Then $\mathscr S_{W}^{(k+1)}Q^{(k)} \rightarrow Q^{(k)}$ is a principal $G_{k+1}$-subbundle of $\mathscr S_{W^1}  ^{(k+1)}Q^{(k)} \rightarrow Q^{(k)}$, where $G_{k+1}$ is the maximal subgroup of $\overline{G}_{k+1}$ whose   Lie algebra is $\frak g_{k+1} \subset \overline{\frak g}_{k+1} $.
 \end{enumerate}
 Consequently, $\mathscr S_W^{(k+1)}Q^{(k)}  \rightarrow Q^{(k)}$ is a proper geometric structure of order $k+1$ of type $\frak g[k+1] = \frak g[k] \oplus \frak g_{k+1}$.
 \end{proposition}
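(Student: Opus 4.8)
The statement asserts two reduction steps, each of the same flavour: take a principal bundle whose fibre group $\overline G_{k+1}$ has Lie algebra $\overline{\mathfrak g}_{k+1}=\mathfrak q_{k+1}\oplus\mathfrak r_{k+1}$, and cut out the locus where an affine-equivariant function lands in a fixed complementary subspace. The plan is to treat each part by the following three-move argument: (i) compute how the relevant component of the structure function transforms along the fibre — this is exactly Proposition~\ref{prop:structure equation along fiber}; (ii) read off from that affine transformation law that the defining condition is preserved precisely by the subgroup whose Lie algebra is the ``$W$-side'' of the splitting, and that the complementary directions act simply transitively on the values; (iii) conclude that the level set is a smooth principal subbundle of the advertised structure group, of the right dimension. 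After both reductions are in place, the last sentence — that $\mathscr S_W^{(k+1)}Q^{(k)}\to Q^{(k)}$ is a normal geometric structure of type $\mathfrak g[k+1]$ — follows by checking the items of Definition~\ref{def:universal frame bundle} and Definition~\ref{def:normal geometric structure}.

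\emph{Part (1), the $\tau$-reduction.} First I would fix $z_0\in\mathscr S^{(k+1)}Q^{(k)}$ over a point of $Q^{(k)}$ and consider the orbit map $a\mapsto z_0a$ of $\overline G_{k+1}$. By Proposition~\ref{prop:structure equation along fiber}(2), along the $\mathfrak q_{k+1}$-directions ($a=a_\varphi$) the map $A\mapsto\tau_{[k+1]}(z)(A,\cdot)$ is unchanged, while along the $\mathfrak r_{k+1}$-directions ($a=a_\psi$) it changes by $\partial\psi$. Since $\partial\colon\Hom(\oplus_{i=0}^{k-1}\mathfrak g_i,\mathfrak g_k)=\mathfrak r_{k+1}\to\Hom(\mathfrak g_-,\mathfrak g)_k$ is injective (this is the transitivity of $\mathfrak g[k+1]$, equivalently the statement that $\partial$ has no kernel in non-negative degree) and its image is $\partial\mathfrak g_k$, the splitting $\Hom(\mathfrak g_-,\mathfrak g)_k=W^1_k\oplus\partial\mathfrak g_k$ shows that on each fibre the $\mathfrak r_{k+1}$-orbit through $z_0$ meets the locus $\{\tau_{[k+1]}(\cdot)(A,\cdot)\in W^1_k\}$ in exactly one point. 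Hence the level set is a slice transverse to the $\mathfrak r_{k+1}$-directions and invariant under $\mathfrak Q_{k+1}$; a standard submersion/implicit-function argument (the composite $\tau_{[k+1]}$ followed by projection onto $\partial\mathfrak g_k$ is a submersion along the fibre) makes $\mathscr S_{W^1}^{(k+1)}Q^{(k)}$ a smooth $\mathfrak Q_{k+1}$-principal subbundle over $Q^{(k)}$.

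\emph{Part (2), the $\kappa$-reduction, and the conclusion.} Now restrict to $\mathscr S_{W^1}^{(k+1)}Q^{(k)}$, whose fibre group $\mathfrak Q_{k+1}$ has Lie algebra $\mathfrak q_{k+1}=\Hom(\mathfrak g_-,\mathfrak g_-\oplus\bigoplus_{i=0}^k\mathfrak g_i)_{k+1}$. By Proposition~\ref{prop:structure equation along fiber}(1), $\kappa_{[k+1]}(za_\varphi)=\kappa_{[k+1]}(z)+\partial\varphi$ for $\varphi\in\mathfrak q_{k+1}$, i.e.\ the orbit map is $\kappa_{[k+1]}(z_0)+\partial(\cdot)$. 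Using the splitting $\Hom(\wedge^2\mathfrak g_-,\mathfrak g)_{k+1}=W^2_{k+1}\oplus\partial\Hom(\mathfrak g_-,\mathfrak g)_{k+1}$ and the fact that $\ker(\partial\colon\mathfrak q_{k+1}\to\Hom(\wedge^2\mathfrak g_-,\mathfrak g)_{k+1})=\mathfrak g_{k+1}$ by the very definition of the prolongation, the fibre of $\mathscr S_W^{(k+1)}Q^{(k)}$ is a single coset of the subgroup $G_{k+1}$ with Lie algebra $\mathfrak g_{k+1}$; again a submersion argument yields a smooth $G_{k+1}$-principal subbundle $\mathscr S_W^{(k+1)}Q^{(k)}\to\mathscr S_{W^1}^{(k+1)}Q^{(k)}$, hence over $Q^{(k)}$ with structure group $G_{k+1}$. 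Finally, composing the tower $\mathscr S_W^{(k+1)}Q^{(k)}\to Q^{(k)}\to\cdots\to Q^{(-1)}=M$ and noting that $\mathscr S_W^{(k+1)}Q^{(k)}$ is by construction a principal subbundle of $\mathscr S^{(k+1)}Q^{(k)}$ with structure group $G_{k+1}$ having Lie algebra $\mathfrak g_{k+1}$, the sequence satisfies the defining conditions (1)(a)--(c) of Definition~\ref{def:universal frame bundle}; and since $\mathfrak g[k+1]=\mathfrak g[k]\oplus\mathfrak g_{k+1}$ is truncated transitive (being a truncation of $\mathrm{Prol}(\mathfrak g[k])$), it is normal in the sense of Definition~\ref{def:normal geometric structure}.

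\emph{Main obstacle.} The essential content is all algebraic and is carried by Proposition~\ref{prop:structure equation along fiber} together with the injectivity of $\partial$ in non-negative degrees; the point that needs care is the \emph{smoothness/manifold} claim: one must verify that the fibrewise affine transformation laws, which a priori only control behaviour along the group orbit, actually imply that the full map $(\kappa_{[k+1]},\tau_{[k+1]})$ is transverse to $W^2_{k+1}\oplus\Hom(E,W^1_k)$ in the ambient bundle, so that the level set is a genuine embedded submanifold and a principal subbundle rather than merely a fibrewise slice. This is where one invokes that along each fibre the relevant projection of the structure function is a submersion (indeed a diffeomorphism onto the complement of $W$), so the preimage is smooth of the expected codimension and carries the reduced group action freely and transitively on fibres; the equivariance needed to globalize is precisely the $\overline G$-equivariance of $\gamma^{[k+1]}$ established before Proposition~\ref{prop:structure equation along fiber}.
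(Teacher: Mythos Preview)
Your proposal is correct and follows essentially the same approach as the paper: both arguments rest on Proposition~\ref{prop:structure equation along fiber} to obtain the affine transformation laws $\tau_{[k+1]}(za_\psi)(A,\cdot)=\tau_{[k+1]}(z)(A,\cdot)+\partial\psi(A)$ and $\kappa_{[k+1]}(za_\varphi)=\kappa_{[k+1]}(z)+\partial\varphi$, then use the chosen splittings together with the injectivity of $\partial$ in non-negative degree (transitivity) to conclude that each fibre meets the level set in a single coset of the reduced group. The only differences are cosmetic: you phrase the argument via transversality/submersion and are explicit about smoothness, whereas the paper argues directly (``nonempty'' plus ``stabilizer computation'') and leaves the submanifold property implicit; also your sentence ``its image is $\partial\mathfrak g_k$'' is a mild abuse (the map $\psi\mapsto(A\mapsto\partial\psi(A))$ has target $\Hom(\bigoplus_{i=0}^{k-1}\mathfrak g_i,\Hom(\mathfrak g_-,\mathfrak g)_k)$ and image $\Hom(\bigoplus_{i=0}^{k-1}\mathfrak g_i,\partial\mathfrak g_k)$), but this is harmless and the argument goes through unchanged.
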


\begin{proof}

(1) For $A \in E_+^{(k-1)}=\oplus_{i=0}^{k-1}\frak g_i$, consider
$$\tau_{[k+1]}(A, \,\cdot\,):\mathscr S^{(k+1)}Q^{(k)} \rightarrow \Hom(   \frak g_-, \frak g)_{k}.$$
Take any $z \in \mathscr S^{(k+1)}Q^{(k)}$ and write
$$\tau_{[k+1]}(z)(A,\,\cdot\,) =\partial \psi(A) + \alpha(A)$$
where $\psi \in \Hom(E_+^{(k-1)}, \frak g_k)$ and $\alpha \in \Hom(E_+^{(k-1)},W^1_k)$.
Then by Proposition \ref{prop:structure equation along fiber} (2)(b), we have
$\tau_{[k+1]}(z a_{-\psi})(A, \,\cdot\,) = \alpha(A) \in W^1_k$. Therefore, $\mathscr S_{W^1}^{(k+1)}Q^{(k)}$ is nonempty.

 By Proposition \ref{prop:structure equation along fiber} (2)(a), if $z $ is an element of $ \mathscr S_{W^1}^{(k+1)}Q^{(k)}$, then so is $z a_{\varphi}$ for any $\varphi \in \frak q_{k+1}$.
 On the other hand,
if both $z$ and $z a_{\psi}$, where $\psi \in \mathfrak r_{k+1}$, are contained in $\mathscr S_{W^1}^{(k+1)}Q^{(k)}$, then,
by Proposition \ref{prop:structure equation along fiber} (2)(b) 
we have $$ -\partial \psi(A)=\tau_{[k+1]}(z)(A, \,\cdot\,)  -\tau_{[k+1]}(z a_{ \psi})(A, \,\cdot\,)\in W_k^1 .$$  
Since $W_k^1 \cap \partial  \frak g_{k}=0$, $\partial \psi(A) $ is zero, and hence $\psi(A)$ is zero. Therefore, $\mathscr S_{W^1}^{(k+1)}Q^{(k)}$ intersects $\{z a_{\psi}: \psi \in \mathfrak r_{k+1}\}$ only at one point.
Therefore, $\mathscr S_{W^1}^{(k+1)}Q^{(k)} \rightarrow Q^{(k)}$ is a principal subbundle of $\mathscr S^{(k+1)}Q^{(k)}\rightarrow Q^{(k)}$ whose structure group has Lie algebra $\frak q_{k+1} $.

(2) Consider
$$\kappa_{[k+1]}:\mathscr S_{W^1}^{(k+1)}Q^{(k)} \rightarrow \Hom(\wedge^2 \frak g_-, \frak g)_{k+1}.$$
Take any $z \in \mathscr S_{W^1}^{(k+1)}Q^{(k)}$ and write
$$\kappa_{[k+1]}(z) = \partial \varphi_{k+1} + \alpha$$
where $\varphi_{k+1} \in \Hom(\frak g_-,\frak g)_{k+1}$ and $\alpha \in W^2_{k+1}$.
Then by Proposition \ref{prop:structure equation along fiber} (1), we have $\kappa_{[k+1]}(z a_{-\varphi_{k+1}}) =\alpha \in W^2_{k+1}$. Therefore, $\mathscr S_{W}^{(k+1)}Q^{(k)}$ is nonempty.

Furthermore, if $z$ and $za_{\varphi}$, where $\varphi \in \mathfrak q_{k+1}$ are contained in $\mathscr S_W^{(k+1)}Q^{(k)}$, then by Proposition \ref{prop:structure equation along fiber} (1), we have   $\partial \varphi =\kappa_{[k+1]}(z a_{\varphi }) - \kappa_{[k+1]}(z)  \in W^2_{k+1} $. Therefore, 
Since $\partial \frak g_{k+1} \cap W^2_{k+1} =0$, $\partial \varphi$ 
is zero. Thus $\varphi $ is contained in the kernel of $\partial :  \frak q_{k+1} \rightarrow \Hom(\wedge^2 \frak g_-, \frak g)_{k+1}$,  which is $\frak g_{k+1}$.
%
%
Consequently,  $\mathscr S_W^{(k+1)}Q^{(k)}$ is a principal  subbundle of $\mathscr S_{W^1}^{(k+1)}(Q^{(k)})$ whose structure group has Lie algebra $\frak g_{k+1} $.
%
%
%
%
\end{proof}

\begin{proposition} \label{prop: step prolongation isomorphism} We use the same notations as in Proposition \ref{prop: step prolongation of order k+1}.

\begin{enumerate}

 \item
 If there is an isomorphism   $\varphi^{(k)}:P^{(k)} \rightarrow Q^{(k)}$ of proper geometric structures $P^{(k)}$ and $Q^{(k)}$, then there is a unique isomorphism $\mathscr S_W^{(k+1)}\varphi^{(k)}:\mathscr S_W^{(k+1)}P^{(k)} \rightarrow \mathscr S^{(k+1)}_WQ^{(k)}$ prolonging $\varphi^{(k)}$.

 \item Conversely, if    $G_0, G_1, \dots, G_k$ are   connected and there is 
     a diffeomorphism $\Phi^{(k+1)}:\mathscr S_W^{(k+1)}P^{(k)} \rightarrow \mathscr S^{(k+1)}_WQ^{(k)}$ such that $(\Phi^{(k+1)})^* [\theta^{(k)}]_Q = [\theta^{(k)}]_P$, then there exists an isomorphism $\varphi^{(k)}:P^{(k)} \rightarrow Q^{(k)}$ such that
     the restriction   of the lift $\mathscr S^{(k+1)}\varphi^{(k)}$ to $\mathscr S_W^{(k+1)}P^{(k)}$ is $\Phi^{(k+1)}$.  

\end{enumerate}

\end{proposition}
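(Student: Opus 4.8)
The plan is to deduce both assertions from three facts already established: Proposition~\ref{prop: step prolongation of order k+1}, which says that $\mathscr S_W^{(k+1)}P^{(k)}$ and $\mathscr S_W^{(k+1)}Q^{(k)}$ are themselves normal geometric structures of order $k+1$ of type $(\frak g_-,G_0,\dots,G_{k+1})$ whose order-$k$ truncations are $P^{(k)}$ and $Q^{(k)}$; Proposition~\ref{prop:isomorphism preserves structure function in the order ell}, which says that the canonical lift of a morphism preserving the canonical Pfaff classes preserves the truncated structure function; and Proposition~\ref{prop:isomorphism via canonical class}, which upgrades a Pfaff-class--preserving diffeomorphism to an isomorphism once the relevant structure groups are connected. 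The whole argument is then a matter of matching the defining conditions of the $W$-reduction against the structure functions known to be transported, and of unwinding the inductive Definition~\ref{def:isomorphism of geometric structure of order k} of an isomorphism.

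For part (1), I would first form the canonical lift $\mathscr S^{(k+1)}\varphi^{(k)}\colon\mathscr S^{(k+1)}P^{(k)}\to\mathscr S^{(k+1)}Q^{(k)}$, which is an isomorphism of order-$(k+1)$ geometric structures by Definition~\ref{def:isomorphism of geometric structure of order k} and Theorem~\ref{thm:isomorphism via canonical class of any order}(1). Since an isomorphism preserves the canonical Pfaff classes (Proposition~\ref{prop:isomorphism via canonical class}), Proposition~\ref{prop:isomorphism preserves structure function in the order ell} gives that $\mathscr S^{(k+1)}\varphi^{(k)}$ intertwines $\gamma^{[k+1]}_P$ with $\gamma^{[k+1]}_Q$, hence their components $\kappa_{[k+1]}$ and $\tau_{[k+1]}$. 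Because $\mathscr S_W^{(k+1)}P^{(k)}$ is cut out of $\mathscr S^{(k+1)}P^{(k)}$ exactly by the pointwise conditions $\tau_{[k+1]}(z)(A,\,\cdot\,)\in W^1_k$ for all $A\in\oplus_{i=0}^{k-1}\frak g_i$ and $\kappa_{[k+1]}(z)\in W^2_{k+1}$, and the same over $Q^{(k)}$, the lift carries $\mathscr S_W^{(k+1)}P^{(k)}$ bijectively onto $\mathscr S_W^{(k+1)}Q^{(k)}$; its restriction, which covers $\varphi^{(k)}$, is an isomorphism of order-$(k+1)$ geometric structures by Definition~\ref{def:isomorphism of geometric structure of order k}(3). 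Uniqueness is then automatic: by the same definition any isomorphism prolonging $\varphi^{(k)}$ must be the restriction of $\mathscr S^{(k+1)}\varphi^{(k)}$, which is determined by $\varphi^{(k)}$ through $\zeta\mapsto\varphi^{(k)}_*\circ\zeta$.

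For part (2), I would regard $\mathscr S_W^{(k+1)}P^{(k)}$ and $\mathscr S_W^{(k+1)}Q^{(k)}$ as normal geometric structures of order $k+1$ (Proposition~\ref{prop: step prolongation of order k+1}); their canonical Pfaff classes over the truncations $P^{(k)}$, $Q^{(k)}$ are precisely the classes $[\theta^{(k)}]_P$, $[\theta^{(k)}]_Q$ of the hypothesis, being the restrictions to the reduced subbundles of the canonical Pfaff classes of $\mathscr S^{(k+1)}P^{(k)}\to P^{(k)}$ and $\mathscr S^{(k+1)}Q^{(k)}\to Q^{(k)}$. The structure group $G_{k+1}$ produced by the $W$-reduction is connected --- it is a connected unipotent group with nilpotent Lie algebra $\frak g_{k+1}$ --- so, together with the hypothesis that $G_0,\dots,G_k$ are connected, Proposition~\ref{prop:isomorphism via canonical class} applies with $k$ replaced by $k+1$ and yields that the Pfaff-class--preserving diffeomorphism $\Phi^{(k+1)}$ is an isomorphism of the order-$(k+1)$ geometric structures. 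By Definition~\ref{def:isomorphism of geometric structure of order k}(3) this is equivalent to the existence of an isomorphism $\varphi^{(k)}\colon P^{(k)}\to Q^{(k)}$ of the order-$k$ structures whose canonical lift $\mathscr S^{(k+1)}\varphi^{(k)}$ maps $\mathscr S_W^{(k+1)}P^{(k)}$ onto $\mathscr S_W^{(k+1)}Q^{(k)}$ and restricts there to $\Phi^{(k+1)}$, which is exactly the assertion.

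The step I expect to need the most care is the one folded into the last use of Proposition~\ref{prop:isomorphism via canonical class}: the passage from ``$\Phi^{(k+1)}$ preserves $[\theta^{(k)}]$'' to ``$\Phi^{(k+1)}$ descends to a map $P^{(k)}\to Q^{(k)}$'', which rests on connectedness of the fibres of $\mathscr S_W^{(k+1)}P^{(k)}\to P^{(k)}$, equivalently of $G_{k+1}$. Everything else is bookkeeping: identifying the inequalities defining the $W$-reduction with the components of the truncated structure function that the canonical lift is already known to preserve (part (1)), and reading off the inductive definition of an isomorphism of higher order geometric structures (both parts).
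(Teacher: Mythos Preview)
Your proposal is correct and follows essentially the same approach as the paper: for (1) you form the canonical lift $\mathscr S^{(k+1)}\varphi^{(k)}$, invoke Proposition~\ref{prop:isomorphism preserves structure function in the order ell} to transport $\gamma^{[k+1]}$ and hence the defining conditions of the $W$-reduction, and for (2) you reduce to Proposition~\ref{prop:isomorphism via canonical class}. Your version is in fact more detailed than the paper's---in particular your explicit check that $G_{k+1}$ is connected (so that Proposition~\ref{prop:isomorphism via canonical class} applies at order $k+1$) fills in a point the paper leaves implicit.
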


\begin{proof}
(1) If  $\varphi^{(k)}:P^{(k)} \rightarrow Q^{(k)}$ is  an isomorphism   of proper geometric structures $P^{(k)}$ and $Q^{(k)}$, then there is an isomorphism $\mathscr S ^{(k+1)}\varphi^{(k)}:\mathscr S ^{(k+1)}P^{(k)} \rightarrow \mathscr S^{(k+1)} Q^{(k)}$ prolonging $\varphi^{(k)}$. Hence $ \mathscr S^{(k+1)}\varphi^{(k)} $ preserves the canonical Pfaff classes $[\theta^{(k)}]_P$ and $[\theta^{(k)}]_Q$ of $\mathscr S ^{(k+1)}P^{(k)}$ and $\mathscr S^{(k+1)} Q^{(k)}$,   and thus preserves their structure functions $\gamma^{[k+1]}_P$ and $\gamma^{[k+1]}_Q$ (Proposition \ref{prop:isomorphism preserves structure function in the order ell}).  
Now that  $\mathscr S_W^{(k+1)}P^{(k)}$ ($\mathscr S_W^{(k+1)}Q^{(k)}$, respectively) is defined by the condition  that $\kappa_{ P, [k+1]}(z) \in W^2_{k+1}$ and $\tau_{ P, [k+1]}(z)(A, \,\cdot\,) \in W^1_{k}$ ($\kappa_{ Q, [k+1]}(z) \in W^2_{k+1}$ and $\tau_{Q, [k+1]}(z)(A, \,\cdot\,) \in W^1_{k}$, respectively),  $\mathscr S^{(k+1)}\varphi^{(k)}$ maps $\mathscr S_W^{(k+1)}P^{(k)}$ onto $\mathscr S_W^{(k+1)}Q^{(k)}$ isomorphically.

(2) follows from Proposition \ref{prop:isomorphism via canonical class}.
\end{proof}

 \begin{theorem} \label{thm:W normal prolongation}
 Let $\frak g[k]$ be a truncated transitive graded Lie algebra and $ \frak g $ be its prolongation.
 Fix a set of  subspaces $W=\{W^1_{\ell}, W^2_{\ell+1}\}_{\ell \geq k}$ such that
\begin{eqnarray*}
 \Hom (\frak g_-,  \frak g )_{\ell} &= &W_{\ell}^1 \oplus \partial  \frak g_{\ell}\\
 \Hom(\wedge^2\frak g_-,   \frak g )_{\ell+1} &= &W_{\ell+1}^2 \oplus \partial \Hom(\frak g_-,  \frak g )_{\ell+1} .
 \end{eqnarray*}
  Then to every proper geometric structure $Q^{(k)}$ of type $\frak g[k]$ there is canonically associated a series of proper geometric structures $\{\mathscr S_W^{(\ell)}Q^{(k)}\}_{\ell = k+1, \dots, \infty}$  of type $\frak g[\ell]$  which is uniquely determined by the condition that
 $\mathscr S_W^{(\ell)}Q^{(k)}$ is a maximal proper geometric structure prolonging $Q^{(k)}$ whose structure function $\gamma^{[\ell]}$ satisfies
 $$\kappa_{[\ell ]}(z) \in W^{2 }_{\ell} \text{ and } \tau_{[\ell ]}(z) \in \Hom(E_+^{(\ell-1)}, W^{1 }_{\ell-1}). $$

 Furthermore, we have the following.

 \begin{enumerate}
 \item 
 If there is an isomorphism   $\varphi^{(k)}:P^{(k)} \rightarrow Q^{(k)}$ of proper geometric structures $P^{(k)}$ and $Q^{(k)}$, then there is a unique isomorphism $\mathscr S_W^{(\ell)}\varphi^{(k)}:\mathscr S_W^{(\ell)}P^{(k)} \rightarrow \mathscr S^{(\ell)}_WQ^{(k)}$ prolonging $\varphi^{(k)}$ for $\ell=k+1, \dots, \infty$.

 \item Conversely, assume that $G_0, G_1, \dots, G_k$ are   connected. For $\ell=k+1, \dots, \infty$, if there is 
     a diffeomorphism $\Phi^{(\ell)}:\mathscr S_W^{(\ell)}P^{(k)} \rightarrow \mathscr S^{(\ell)}_WQ^{(k)}$ such that $(\Phi^{(\ell)})^* [\theta^{(\ell-1)}]_Q = [\theta^{(\ell-1)}]_P$, then there exists an isomorphism $\varphi^{(k)}:P^{(k)} \rightarrow Q^{(k)}$ such that
     the restriction  of the lift $\mathscr S^{(\ell)}\varphi^{(k)}$  to $\mathscr S_W^{(\ell)}P^{(k)}$ is $\Phi^{(\ell)}$.  

 \end{enumerate}

 Therefore, the equivalence problem of the proper geometric structure $Q^{(k)}$ of type $\frak g[k]$ reduces to that of the absolute parallelism of $(\mathscr S_WQ^{(k)}, \theta)$.
 \end{theorem}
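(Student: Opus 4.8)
\emph{Proof strategy.} The plan is to run an induction on $\ell\ge k$ whose single inductive step is Proposition~\ref{prop: step prolongation of order k+1}, and then to pass to the projective limit. For the base case one sets $\mathscr S_W^{(k)}Q^{(k)}:=Q^{(k)}$, which by hypothesis is a normal geometric structure of type $\frak g[k]$. For the inductive step, assume $\mathscr S_W^{(\ell)}Q^{(k)}$ has been built and is a normal geometric structure of type $\frak g[\ell]$. Since $\frak g$ is transitive, its truncation $\frak g[\ell]$ again has $\frak g$ as its prolongation, and by hypothesis the subspaces $W^1_\ell,W^2_{\ell+1}$ realise the required splittings; hence Proposition~\ref{prop: step prolongation of order k+1} (with $k$ replaced by $\ell$) applies to $\mathscr S_W^{(\ell)}Q^{(k)}$ and yields, via the $\gamma_{II}$-reduction followed by the $\gamma_{I}$-reduction of its universal frame bundle $\mathscr S^{(\ell+1)}\mathscr S_W^{(\ell)}Q^{(k)}$, the bundle we call $\mathscr S_W^{(\ell+1)}Q^{(k)}$. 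By that proposition it is a normal geometric structure of order $\ell+1$ of type $\frak g[\ell+1]=\frak g[\ell]\oplus\frak g_{\ell+1}$, its structure group now having Lie algebra exactly $\frak g_{\ell+1}\subset\overline{\frak g}_{\ell+1}$.

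Next I would establish the characterisation. Using the $\mathcal W$-canonical embeddings of Proposition~\ref{prop: embedding} one identifies each $\mathscr S_W^{(\ell)}Q^{(k)}$ with a subbundle of the universal frame bundle $\mathscr S^{(\ell)}Q^{(k)}$ compatibly with the canonical Pfaff forms, hence (by Proposition~\ref{prop:canonical forms}) with the structure functions; under this identification, and by the defining conditions of the two reductions, $\mathscr S_W^{(\ell)}Q^{(k)}$ is precisely the locus in $\mathscr S^{(\ell)}Q^{(k)}$ where $\kappa_{[j]}(z)\in W^2_j$ and $\tau_{[j]}(z)\in\Hom(E_+^{(j-1)},W^1_{j-1})$ for $k+1\le j\le\ell$. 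Maximality and uniqueness then follow by induction from the one-step statement: given any normal geometric structure $P^{(\ell)}$ prolonging $Q^{(k)}$ with structure function subject to these conditions, the description of the fibre action in Proposition~\ref{prop:structure equation along fiber}, together with $W^1_j\cap\partial\frak g_j=0$, $W^2_{j+1}\cap\partial\Hom(\frak g_-,\frak g)_{j+1}=0$, and the facts that $\partial$ is injective on $\frak r_{j+1}$ and has kernel $\frak g_{j+1}$ on $\frak q_{j+1}$, forces $P^{(\ell)}\subset\mathscr S_W^{(\ell)}Q^{(k)}$, with equality iff $P^{(\ell)}$ is maximal. Passing to the limit $\mathscr S_WQ^{(k)}=\lim_\ell\mathscr S_W^{(\ell)}Q^{(k)}$, the iterated embeddings of Proposition~\ref{prop: embedding} show that the restriction to $\mathscr S_WQ^{(k)}$ of the canonical Pfaff form of $\mathscr SQ^{(k)}$ takes values in $\bigoplus_p\frak g_p=\frak g$, giving a $\frak g$-valued $1$-form $\theta=\theta_{\mathscr S_W}$, and Proposition~\ref{prop:canonical forms}(1) shows it is an absolute parallelism.

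For the two functoriality statements I would iterate Proposition~\ref{prop: step prolongation isomorphism}, in complete parallel with Theorem~\ref{thm:isomorphism via canonical class of any order}. For (1): an isomorphism $\varphi^{(k)}$ lifts by Proposition~\ref{prop: step prolongation isomorphism}(1) to an isomorphism $\mathscr S_W^{(k+1)}\varphi^{(k)}$ of the normal geometric structures $\mathscr S_W^{(k+1)}P^{(k)},\mathscr S_W^{(k+1)}Q^{(k)}$ of type $\frak g[k+1]$; applying the proposition again to these higher-order structures, and so on, produces the compatible lifts $\mathscr S_W^{(\ell)}\varphi^{(k)}$ for all $\ell$ and, in the limit, $\mathscr S_W\varphi^{(k)}$. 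For (2): the structure groups $G_{k+1},G_{k+2},\dots$ are vector groups and hence connected, so together with the hypothesis that $G_0,\dots,G_k$ are connected the whole tower has connected structure groups; a diffeomorphism $\Phi^{(\ell)}$ with $(\Phi^{(\ell)})^*[\theta^{(\ell-1)}]_Q=[\theta^{(\ell-1)}]_P$ then descends by Proposition~\ref{prop: step prolongation isomorphism}(2), one order at a time, to an isomorphism $\varphi^{(k)}:P^{(k)}\to Q^{(k)}$ whose lift restricts to $\Phi^{(\ell)}$ (the case $\ell=\infty$ being recovered by restriction to each finite order). Combining (1) and (2) gives the concluding assertion: $P^{(k)}$ and $Q^{(k)}$ are equivalent if and only if there is a diffeomorphism $\mathscr S_WP^{(k)}\to\mathscr S_WQ^{(k)}$ carrying $\theta_P$ to $\theta_Q$, since any such diffeomorphism restricts to each $\mathscr S_W^{(\ell)}$ and there preserves $[\theta^{(\ell-1)}]$.

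The hard part is not a single estimate but the bookkeeping in the middle step: one must keep straight which ambient bundle one is working in and verify that the structure function of $\mathscr S_W^{(\ell)}Q^{(k)}$ computed intrinsically (through its own universal frame bundle, as in Proposition~\ref{prop: step prolongation of order k+1}) agrees, in the degrees that matter, with the restriction of $\gamma$ from $\mathscr SQ^{(k)}$ under the embeddings of Proposition~\ref{prop: embedding}, so that the conditions $\kappa_{[\ell]}\in W^2_\ell$ and $\tau_{[\ell]}\in\Hom(E_+^{(\ell-1)},W^1_{\ell-1})$ are both meaningful and mutually consistent up the tower, and to make the maximality clause fully precise.
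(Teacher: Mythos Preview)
Your proposal is correct and follows essentially the same approach as the paper: the construction is obtained by applying Proposition~\ref{prop: step prolongation of order k+1} inductively, and statements (1) and (2) follow by iterating Proposition~\ref{prop: step prolongation isomorphism}. You have spelled out more of the bookkeeping (the role of the $\mathcal W$-canonical embeddings, the maximality argument via Proposition~\ref{prop:structure equation along fiber}, and the connectedness of the higher structure groups) than the paper's very terse proof does, but the underlying strategy is identical.
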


 We call $\mathscr S^{(\ell)}_W Q^{(k)} $ the $W$-{\it normal step prolongation} of   $Q^{(k)}$  {\it of order} $\ell$ for $\ell \geq k+1$. We denote the projective limit $ \lim_{\leftarrow \ell} \mathscr S^{(\ell)}_W Q^{(k)}$ by $\mathscr S_W Q^{(k)} $ and call it the $W$-{\it normal complete step  prolongation} of $Q^{(k)}$.

%

 \begin{proof}
Apply Proposition \ref{prop: step prolongation of order k+1} inductively to get $\mathscr S_W^{(\ell)}Q^{(k)}$ for $\ell \geq k+1$ such that
$$\kappa_{[\ell ]}(z) \in W^{2 }_{\ell} \text{ and } \tau_{[\ell ]}(z) \in \Hom(E_+^{(\ell-1)}, W^{1 }_{\ell-1}). $$
The statements (1) and (2) follows from Proposition \ref{prop: step prolongation isomorphism}.
 \end{proof}



\subsection{Proper geometric structure of finite type}

\begin{definition}
A proper geometric structure $Q^{(k)}$ of type $\frak g[k]=\frak g_- \oplus \frak g_0 \oplus \dots \oplus \frak g_k$ ($k \geq 0$) is said to be {\it of finite type} if the prolongation  $Prol\, (\frak g[k])$ is finite dimensional, that is, there exists an integer $k_0 \geq 0$ such that $\frak g_i=0$ for $i >k_0$, where we set $Prol\, (\frak g[k]) =\oplus \frak g_i$.
\end{definition}

\begin{corollary} \label{cor:finite dimensional absolute parallelism}
The equivalence problem of the proper geometric structure $Q^{(k)}$ of finite type reduces to that of the absolute parallelism $(\mathscr S_WQ^{(k)}, \theta)$ of finite dimension.

\end{corollary}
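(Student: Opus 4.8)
The plan is to read the corollary off from Theorem~\ref{thm:W normal prolongation} together with the finite type hypothesis; the only point that needs an argument is that the complete $W$-normal step prolongation $\mathscr S_W Q^{(k)}$, which in general is infinite dimensional, collapses to a finite dimensional manifold once the prolongation $\frak g = Prol(\frak g[k])$ is finite dimensional.

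First I would invoke the last assertion of Theorem~\ref{thm:W normal prolongation}: the equivalence problem of a normal geometric structure $Q^{(k)}$ of type $\frak g[k]$ is equivalent to the equivalence problem of the absolute parallelism $(\mathscr S_W Q^{(k)}, \theta)$ on its complete step prolongation. This already yields the ``reduces to'' half of the statement, and makes no use of finiteness.

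It then remains to verify that $\mathscr S_W Q^{(k)}$ is finite dimensional. By definition of finite type there is an integer $k_0$, which we may assume satisfies $k_0 \ge \max\{k,0\}$, such that $\frak g_\ell = 0$ for all $\ell > k_0$. Applying Proposition~\ref{prop: step prolongation of order k+1} inductively, exactly as in the proof of Theorem~\ref{thm:W normal prolongation}, I would observe that for each $\ell > k_0$ the projection $\mathscr S_W^{(\ell)}Q^{(k)} \to \mathscr S_W^{(\ell-1)}Q^{(k)}$ is a principal fiber bundle whose structure group $G_\ell$ has Lie algebra $\frak g_\ell = 0$; since $G_\ell$ is the vector group $\frak g_\ell$, it is trivial, so this projection is a diffeomorphism. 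Hence the tower $\{\mathscr S_W^{(\ell)}Q^{(k)}\}_{\ell \ge k_0}$ stabilizes and the projective limit satisfies $\mathscr S_W Q^{(k)} \cong \mathscr S_W^{(k_0)}Q^{(k)}$, a finite dimensional manifold with
$$\dim \mathscr S_W Q^{(k)} = \dim M + \sum_{i=0}^{k_0}\dim \frak g_i = \dim \frak g_- + \sum_{i \ge 0}\dim \frak g_i = \dim \frak g,$$
using that $M$ is a filtered manifold of type $\frak g_-$, so $\dim M = \dim \frak g_-$.

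The only possible obstacle is a bookkeeping one: one must be certain that, once $\frak g_\ell$ vanishes, the $\gamma_{II}$- and $\gamma_I$-reductions of Proposition~\ref{prop: step prolongation of order k+1} still produce a nonempty bundle and that its structure group is genuinely trivial rather than merely discrete. Both are contained in that proposition --- nonemptiness of the successive reductions is established in its proof, and the structure group of the $\gamma_I$-reduction is identified there as the subgroup of $\overline{G}_\ell$ with Lie algebra $\frak g_\ell$, i.e. the vector group $\frak g_\ell$. So this is a routine check rather than a genuine difficulty, and the corollary follows immediately.
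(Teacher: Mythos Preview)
Your proof is correct and is exactly the argument the paper has in mind: the corollary is stated without proof because it follows immediately from Theorem~\ref{thm:W normal prolongation} once one observes that for $\ell>k_0$ the structure group $G_\ell$ is the vector group $\frak g_\ell=0$, so the tower of $W$-normal step prolongations stabilizes at a finite stage. Your dimension count $\dim \mathscr S_WQ^{(k)}=\dim\frak g$ matches the statement of Theorem~I in the Introduction.
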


\begin{corollary} \label{cor:finite dimensional Lie group}
The automorphism group $\Aut(Q^{(k)})$ of a proper geometric structure $Q^{(k)}$ of finite type is a finite dimensional Lie group.  The dimension of $\Aut(Q^{(k)})$ is less than or equal to the dimension of $  Prol\,(\frak g[k])$.

\end{corollary}

It was Cartan who invented the ingenious idea to study the equivalence problem of geometric structures through prolongation and reduction of bundles of frames adapted to geometric structures. It seems that he had a very general and deep ideas, which were, however, of heuristic nature, and carried applications in various concrete problems. Afterwards, the theoretical aspects were developed, to certain extent, rigorously in modern mathematics. A modern formulation of the step prolongation is due to Singer and Sternberg (\cite{SS65}) and the generalization to the nilpotent geometry is due to Tanaka (\cite{T70}). Their main results were Corollary \ref{cor:finite dimensional absolute parallelism} and Corollary \ref{cor:finite dimensional Lie group}.



\section{Fundamental identities} \label{sec:inductive formula} 

From now on, we will  write alternatively $\kappa, \tau, \sigma$ for $\gamma_{I}, \gamma_{II}, \gamma_{III}$.
We denote by the same notation their restrictions to $\mathscr S_W^{(k+1)}Q^{(k)}$.
Recall that we introduce the following notation to keep the information on the degree of structure functions.

\begin{definition}
Set $\tau_{(\ell)[k+1]}$ to be the component of $\tau$ of degree $\ell$ and modified degree $k+1$, so that  $$\tau_{(\ell)[k+1]}:= \gamma_{\ell}|_{\frak g_{k-\ell}\wedge \frak g_-}$$ for $0 \leq \ell \leq k$.
We say that $\tau_{[k+1]}$ is {\it flat} if  $\tau_{(\ell)[k+1]}$ is zero for all $\ell >0$, so that $\tau_{[k+1]} =  \tau_{(0)[k+1]}$.

Similarly, we  set $\sigma_{(\ell)[k+1]}$ to be the component of $\sigma$ of degree $\ell$ and modified degree $k+1$, so that
$$\sigma_{(\ell)[k+1]}:=\sum_{ i+j = k-1-\ell}\gamma_{\ell}|_{\frak g_i \wedge \frak g_j}$$
for any $\ell \in \mathbb Z$.
We say that $\sigma_{[k+1]}$ is {\it flat} if $\sigma_{(\ell)[k+1]} $ is zero for any $\ell \not=0$ and  $ \sigma_{(0)[k+1]}$ is given by the Lie bracket $[\,,\,]$.

We define
\begin{eqnarray*}
\kappa^{[k+1]}&:=&\sum _{0 \leq i \leq k+1} \kappa_{[i]}\\
\tau^{[k+1]}&:=&\sum_{1 \leq i \leq k+1} \tau_{[i]} \\
\sigma^{[k+1]}&:=&\sum_{2 \leq i \leq k+1}\sigma_{[i]}.
\end{eqnarray*}
We say that $\tau^{[k+1]}$ ($\sigma^{[k+1]}$, respectively) is {\it flat} if $\tau_{[i]}$ ($\sigma_{[i]}$,  respectively) is flat for all $i \leq k+1$.
\end{definition}

In this section we will prove a recursive formula for $\kappa_{[k]}, \tau_{[k]}$, and $\sigma_{[k]}$. 


\begin{theorem}[Fundamental identities] $\,$ \label{thm:fundamental identities}

\begin{enumerate}
\item For $X \in \frak g_x$, $Y \in \frak g_y$, and $Z \in \frak g_z$,   where $x,y,z <0$, and for a nonnegative integer $k$, we have
    \begin{eqnarray*}
    &&(\partial \kappa_{[k]})(X,Y,Z) \\
    &=& \mathfrak S_{X,Y,Z} \left\{ \sum_{\substack{d_1+ d_2 =k\\ d_1, d_2 >0}} \left\{\kappa_{[d_1]}(\kappa_{[d_2]}(X,Y)_-,Z) + \tau_{(d_1)[k+x+y+1]}(\kappa_{[d_2]}(X,Y)_+, Z) \right\} - D_X \kappa_{[k+x]}(Y,Z)\right\}.
    \end{eqnarray*}
    \vskip 10 pt

\item For $X \in \frak g_x$, $Y \in \frak g_y$, and $A \in \frak g_a$, where $x,y<0$ and $a \geq 0$, and for  $k=d+a+1$,  where $d$ is a nonnegative  integer,   we have
    \begin{eqnarray*}
    && (\partial \tau_{(d)[k]}(A,\,\cdot\,))(X,Y) \\
    &=&  ( D_A \kappa_{[k-1]} + \rho(A) \kappa_{[d]} )(X,Y) - \mathscr A_{X,Y} \tau_{(d)[k+x]}([A,X]_+, Y) +[\kappa_{[d]}(X,Y)_+,A] -\sigma_0(\kappa_{[d]}(X,Y)_+,A) \\
    && - \sum_{\substack{\delta_1 +\delta_2=d\\-a \leq \delta_1 <d \text{ and } \delta_1 \not=0 \\0 < \delta_2 \leq d+a =k-1}} \left\{ \tau_{(d_1)[k-1]}(\kappa_{[\delta_2]}(X,Y)_-,A) + \sigma_{(\delta_1)[k+x+y+1]}(\kappa_{[\delta_2]}(X,Y)_+,A)\right\} \\
    && + \,\,\mathscr A_{X,Y}\sum_{\substack{ d_1+d_2=d\\d_1,d_2 >0}} \left\{ \kappa_{[d_1]}(\tau_{(d_2)[k-d_1]}(A,X)_-,Y) + \tau_{(d_1)[k+x]}(\tau_{(d_2)[k-d_1]}(A,X)_+,Y)\right\} \\
    &&+ \,\,\mathscr A_{X,Y} D_Y \tau_{(d+y)[k+y]}(A,X).
    \end{eqnarray*}
\vskip 10 pt

\item For $A_a \in \frak g_a$, $B \in \frak g_b$, $X \in \frak g_x$, where $a,b \geq 0$ and $x<0$, and for $k=d+a+b+2$, where     $d $ is an integer,  we have
    \begin{eqnarray*}
    &&[\sigma_{(d)[k]}(A,B),X] \\
    &=& \mathscr A_{A,B} \sum_{\substack{d_1+d_2=d \\0 \leq d_2 \leq d+a 
    }}
    \left\{ \sigma_{(d_1)[k+x-d_2]}(A, \tau_{(d_2)[k-1-(d_1+a)]}(B,X)_+) \right. \qquad \qquad \qquad \qquad \qquad\\[-15 pt]
     && \qquad \qquad \qquad \qquad \qquad \qquad\qquad \qquad \left.+ \,\, \tau_{(d_1)[k-1-(d_2+b)]}(A, \tau_{(d_2)[k-1-(d_1+a)]}(B,X)_-)\right\} \\[10 pt]
    &&+ \sum_{\substack{\delta_1+ \delta_2 = d \\0 < \delta_1 \leq d+ \min(a,b)}} \tau_{(\delta_1)[k-1]}(X, \sigma_{(\delta_2)[k-\delta_1]}(A,B)) \\
    && +\,\, D_X\sigma_{(d+x)[k+x]}(A,B) + \mathscr A_{A,B} D_A \tau_{(d+a)[k-1]}(B,X).
    \end{eqnarray*}
    \vskip 10 pt

\item For $A  \in \frak g_a$, $B  \in \frak g_b$, $C  \in \frak g_c$, where $a,b,c \geq 0$, and    for $k=d+a+b+c+3$, where $d$ is an integer,  we have
    \begin{eqnarray*}
    \mathfrak S_{A,B,C}\left\{ \sum_{\substack{d_1 + d_2=d \\-\min\{a,b\} \leq d_2 \leq d+c }} \sigma_{(d_1)[k-1]}(\sigma_{(d_2)[k-1-c-d_1]}(A,B),C) -D_C \sigma_{(d+c)[k-1]}(A,B)\right\} =0.
    \end{eqnarray*}


\end{enumerate}

\end{theorem}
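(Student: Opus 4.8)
The plan is to derive all four fundamental identities from a single ``master'' Bianchi relation, and to observe that part (4) is the one case in which the $\kappa$-- and $\tau$--parts of $\gamma$ disappear entirely, leaving a purely $\sigma$/$D\sigma$ statement.

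First I would put the Bianchi identity of Proposition \ref{prop:properties of structure functions 3} into pointwise form. On $\mathscr S_W Q^{(k)}$ the canonical form $\theta$ is an absolute parallelism (Proposition \ref{prop:canonical forms}); for $u\in E$ let $\widehat u$ be the vector field with $\theta(\widehat u)=u$, and for an $E$-- or $\Hom(\wedge^2E,E)$--valued function $f$ set $D_uf:=\widehat uf$. Reapplying $d$ to $d\theta=-\tfrac12\gamma(\theta,\theta)$ gives $d\bigl(\gamma(\theta,\theta)\bigr)=0$, and a direct computation from the structure equation gives $\theta([\widehat u,\widehat v])=\gamma(u,v)$; evaluating the resulting $3$--form on $\widehat u,\widehat v,\widehat w$ yields the master relation
$$
\mathfrak S_{u,v,w}\bigl(\gamma(\gamma(u,v),w)-D_w\gamma(u,v)\bigr)=0 \qquad\text{for all }u,v,w\in E .
$$
Specializing the degrees of $u,v,w$ in the four possible combinations and projecting onto the appropriate graded pieces produces the four parts of Theorem \ref{thm:fundamental identities}; I describe only the last.

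Now take $u=A\in\frak g_a$, $v=B\in\frak g_b$, $w=C\in\frak g_c$ with $a,b,c\ge 0$, so that $A,B,C\in E_+$. Since $\kappa=\gamma_I$ is supported on $\wedge^2\frak g_-$ and $\tau=\gamma_{II}$ on $\frak g_-\wedge E_+$, we have $\gamma(A,B)=\sigma(A,B)$. The decisive point --- and the only place where normality is used --- is that $\sigma(A,B)$ again lies in $E_+$: applying Proposition \ref{prop:degree of gamma on positive set} at each prolongation level so as to cover all nonnegative degrees, $\gamma^c_{ab}=0$ whenever $a,b\ge 0$ and $c<\max\{a,b\}$, so every nonzero component $\sigma_{(d_2)}(A,B)$ has degree $a+b+d_2\ge\max\{a,b\}\ge 0$, which forces $d_2\ge-\min\{a,b\}$. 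Consequently $\gamma(\sigma(A,B),C)=\sigma(\sigma(A,B),C)$ as well, and the master relation collapses to $\mathfrak S_{A,B,C}\bigl(\sigma(\sigma(A,B),C)-D_C\sigma(A,B)\bigr)=0$.

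It remains to read off the $\frak g_{a+b+c+d}$--component, which (all arguments having nonnegative degree) is the component of modified degree $k-1=a+b+c+d+2$. The covariant derivative does not change the target vector space, so only $\sigma_{(d+c)}(A,B)$ contributes to the $\frak g_{a+b+c+d}$--part of $D_C\sigma(A,B)$, giving the term $D_C\sigma_{(d+c)[k-1]}(A,B)$; and the $\frak g_{a+b+c+d}$--part of $\sigma(\sigma(A,B),C)$, expanded bilinearly, is $\sum_{d_1+d_2=d}\sigma_{(d_1)}\bigl(\sigma_{(d_2)}(A,B),C\bigr)$, where the summand survives exactly when the inner factor is nonzero ($d_2\ge-\min\{a,b\}$, as above) and the outer factor is nonzero, which by Proposition \ref{prop:degree of gamma on positive set} requires $d_1=d-d_2\ge-c$, i.e.\ $d_2\le d+c$ --- precisely the asserted index range. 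Computing the modified degrees on each factor yields the labels $[k-1]$ and $[k-1-c-d_1]$, and since all the components and their $D_C$--derivatives are functions on a finite--order prolongation $\mathscr S_W^{(\ell)}Q^{(k)}$ (the descent property already used for $\gamma^{[\ell]}$), the identity holds there. The main obstacle is not conceptual but is exactly this bookkeeping: making sure no $\kappa$-- or $\tau$--contribution re-enters (which rests entirely on normality via Proposition \ref{prop:degree of gamma on positive set}) and that the homogeneous-- and modified--degree indices come out precisely as stated.
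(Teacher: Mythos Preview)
Your approach is essentially the same as the paper's: both start from the pointwise Bianchi identity $\mathfrak S_{u,v,w}\bigl(\gamma(\gamma(u,v),w)-D_w\gamma(u,v)\bigr)=0$, split into the four cases according to the signs of the degrees of $u,v,w$, and then extract the appropriate homogeneous component, with the index ranges coming from the degree bounds on $\sigma$. Your choice to justify the bound $d_2\ge -\min\{a,b\}$ via Proposition~\ref{prop:degree of gamma on positive set} applied to each $\mathscr S_W^{(\ell)}Q^{(k)}$ (normality at every level) is in fact slightly sharper than the paper's citation of Proposition~\ref{prop:degree of structure function}(3), and it matches the stated range exactly.
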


Here, $\frak S_{X,Y,Z}F(X,Y,Z)$ denotes the cyclic sum $F(X,Y,Z) + F(Y,Z,X) + F(Z,X,Y)$ and $\mathscr A_{X,Y}G(X,Y)$ denotes the alternating sum $G(X,Y) - G(Y,X)$. Also, we define  $\kappa(\,,\,)_+:=\max\{\kappa(\,,\,), 0\}$ and $\kappa(\,,\,)_- := \min\{\kappa(\,,\,), 0\}$, so that $\kappa(\,,\,) = \kappa(\,,\,)_+ + \kappa(\,,\,)_-$,  and define  $\tau(\,,\,)_{\pm}$ and $ \sigma(\,,\,)_{\pm}$ similarly.

\begin{proof}
 %
 Recall the Bianchi identity:
$$\gamma \circ \gamma - D \gamma =0.$$

(1)  Let $X \in \frak g_x$, $Y \in \frak g_y$ and $Z \in \frak g_z$ with $x,y,z <0$. The $\frak g_{k+x+y+z}$-component of $(\gamma \circ \gamma - D \gamma)(X,Y,Z)$, where $k$ is any positive integer, is
\begin{eqnarray*}
&&\mathfrak S_{X,Y,Z}\left\{ \sum_{ i+j=k }  \gamma_i (\gamma_j (X,Y),Z) -  \sum_{-i+j=k}D_X \gamma_j (Y,Z) \right\} \\[10 pt]
&=&  \mathfrak S_{X,Y,Z}\{\gamma_0 (\gamma_k (X,Y),Z)+ \gamma_k (\gamma_0(X,Y),Z)\} \\
&& +\,\,\mathfrak S_{X,Y,Z} \left\{\sum_{\substack{i+j=k\\[2 pt] 0<i,j <k }}  \gamma_i (\gamma_j(X,Y),Z)  -  \sum_{-i+j=k}D_X \gamma_j (Y,Z)\right\}  .
\end{eqnarray*}
The 1st line is
\begin{eqnarray*}
 \mathfrak S_{X,Y,Z}\left\{\gamma_0(\gamma_k(X,Y),Z) + \gamma_k(\gamma_0(X,Y),Z)  \right\}
  &=& \mathfrak S_{X,Y,Z}\left\{[\gamma_k(X,Y),Z] +  \gamma_k([X,Y],Z)  \right\} \\
  &=& -(\partial \kappa_{[k]})(X,Y,Z)
\end{eqnarray*}
and the 2nd line is
\begin{eqnarray*}
\mathfrak S_{X,Y,Z} \left\{ \sum_{\substack{d_1+ d_2 =k\\ d_1, d_2 >0}} \left\{\kappa_{d_1}(\kappa_{d_2}(X,Y)_-,Z) + \tau_{(d_1)[k+x+y+1]}(\kappa_{d_2}(X,Y)_+, Z) \right\} - D_X \kappa_{k+x}(Y,Z) \right\},
\end{eqnarray*}
from which we get the desired identity. \\

(2) Let $X \in \frak g_x$, $Y \in \frak g_y$ and $A \in \frak g_a$ with $x,y <0$ and $a \geq 0$. The $\frak g_{d+x+y+a}$-component of $(\gamma \circ \gamma - D \gamma)(X,Y,A)$, where $d$ is any integer, is
\begin{eqnarray*}
&& \sum_{\delta_1 + \delta_2 =d}\gamma_{\delta_1}(\gamma_{\delta_2}(X,Y),A) + \mathscr A_{X,Y} \sum_{d_1+d_2 =d} \gamma_{d_1}(\gamma_{d_2}(A,X),Y) \\
&& + \left\{ D_A \gamma_{d+a}(X,Y) + \mathscr A_{X,Y} D_Y \gamma_{d+y} (A,X) \right\}  \\ [10 pt]
 &=& \sum_{\substack{\delta_1+ \delta_2=d\\\delta_1\delta_2 =0}} \gamma_{\delta_1}(\gamma_{\delta_2}(X,Y),A)  + \mathscr A_{X,Y} \sum_{\substack{d_1+ d_2=d\\d_1d_2=0}}  \gamma_{d_1}(\gamma_{d_2}(A,X),Y)    \\
 && + \sum_{\substack{\delta_1+ \delta_2=d\\\delta_1\delta_2 \not=0}} \gamma_{\delta_1}(\gamma_{\delta_2}(X,Y),A)  + \mathscr A_{X,Y} \sum_{\substack{d_1+ d_2=d\\d_1d_2\not=0}}  \gamma_{d_1}(\gamma_{d_2}(A,X),Y)    \\
 &&+ \,\, \left\{ D_A \gamma_{d+a}(X,Y) + \mathscr A_{X,Y} D_Y \gamma_{d+y} (A,X) \right\}.
\end{eqnarray*}
Each line can be written as follows.
\begin{eqnarray*}
\bullet \text{ The 1st line}&=& \gamma_0(\gamma_d(X,Y),A) + \gamma_d([X,Y],A) + \mathscr A_{X,Y} [\gamma_d(A,X),Y] + \gamma_d([A,X],Y) \\
&=& \partial \tau_{(d) [\delta_1^{\sharp}]} (A, \,\cdot\,)(X,Y) -(\rho(A)\kappa_{[d]})(X,Y) + \sigma_0(\kappa_{[d]}(X,Y)_+,A) - [\kappa_{[d]}(X,Y)_+, A] \\
&& + \mathscr A_{X,Y} \tau_{(d)[d_1^{\sharp}]}([A,X]_+,Y),
\end{eqnarray*}
where $\delta_1^{\sharp} =d+a+1 =k$ and $d_1^{\sharp} = d+ a+x+1 = k+x$. Here, we use the formula
$$(\rho(A) \kappa_{[k-a-1]})(X,Y)=[A, \kappa_{[k-a-1]}(X,Y)] - \kappa_{[k-a-1]}([A,X]_{-},Y) - \kappa_{[k-a-1]}(X, [A,Y]_{-} ) .$$

\begin{eqnarray*}
 \bullet  \text{ The 2nd line} &=& \sum _{\substack{\delta_1+ \delta_2=d\\-a \leq \delta_1 <0 \text{ and } \delta_1 \not=0\\(0 <\delta_2 \leq d+z=k-1, \delta_2 \not=d)}}
\gamma_{\delta_1}(\gamma_{\delta_2}(X,Y),A)
+ \mathscr A_{X,Y} \sum_{\substack{d_1+ d_2=d\\0 < d_1 <d\\(0 < d_2 < d)}}  \gamma_{d_1}(\gamma_{d_2}(A,X),Y)    \\
&=& \sum _{\substack{\delta_1+ \delta_2=d\\-a \leq \delta_1 <d \text{ and } \delta_1 \not=0}} \left\{ \tau_{(\delta_1)[\delta_1^{\sharp}]}(\kappa_{\delta_2}(X,Y)_-,A) +\sigma_{(\delta_1)[\delta_1^{2\sharp}]}(\kappa_{\delta_2}(X,Y)_+,A) \right\} \\
&& + \mathscr A_{X,Y} \sum_{\substack{d_1 +d_2 =d \\d_1, d_2 >0}}\left\{ \kappa_{d_1}(\tau_{(d_2)[d_2^{\sharp}]}(A,X)_-, Y) + \tau_{(d_1)[d_1^{\sharp}]}(\tau_{(d_2)[d_2^{\sharp}]}(A,X)_+,Y) \right\},
\end{eqnarray*}
where $\delta_1^{\sharp} = \delta_1 +a <d + a=k-1$, $\delta_1^{2\sharp}=\delta_1 + \delta_2 + x +y+ z+2 < k+x+y+1$, $d_2^{\sharp} = d_2 + a +1 <k$.
\begin{eqnarray*}
\bullet \text{ The 3rd line} \,\,= \,\, D_A \kappa_{k-1}(X,Y) + \mathscr A_{X,Y} D_Y \tau_{(d+y)[(d+y)^{\sharp}]}(A,X), \qquad \qquad \qquad
\end{eqnarray*}
where $(d+y)^{\sharp} =d+y+a+1 = k+y$.

Taking the sum of all terms in three lines, we get the identity (2). \\

(3)
Let $A \in \frak g_a, B \in \frak g_b$ and $X \in \frak g_x$, where $a,b \geq 0$ and $x <0$.
For $d \in \mathbb Z$, set   $k=d+a+b+2 $. The $\frak g_{d+a+b+x}$-component of $(\gamma \circ \gamma - D \gamma)(A,B,X)$ is
\begin{eqnarray*}
&& \sum_{\delta_1 + \delta_2 =d}\gamma_{\delta_1}(\gamma_{\delta_2}(A,B),X) + \mathscr A_{A,B} \sum_{d_1+d_2 =d} \gamma_{d_1}(\gamma_{d_2}(B,X),A) \\
&& + \left\{ D_X \gamma_{d+x}(A,B) + \mathscr A_{A,B} D_B \gamma_{d+b} (X,A) \right\}   \\ [10 pt]
  &=& \sum_{\substack{\delta_1+ \delta_2=d\\\delta_1  =0}} \gamma_{\delta_1}(\gamma_{\delta_2}(A,B),X) 
     \\
  && + \sum_{\substack{\delta_1+ \delta_2=d\\\delta_1  \not=0}} \gamma_{\delta_1}(\gamma_{\delta_2}(A,B),X)  + \mathscr A_{A,B} \sum_{ d_1+ d_2=d}  \gamma_{d_1}(\gamma_{d_2}(B,X),A)    \\
  &&+ \,\, \left\{ D_X \gamma_{d+x}(A,B) + \mathscr A_{A,B} D_B\gamma_{d+b} (X,A) \right\}.
\end{eqnarray*}

Each line can be written as follows.
\begin{eqnarray*}
\bullet \text{ The 1st line} 
& =& [\gamma_d(A,B),X]  \\
&=& [\sigma_{(d)[k]}(A,B), X], \text{  where }
 d +b+1=   k-1-a <k  \\
%
\bullet \text{ The 2nd line} &=&  \sum_{\substack{\delta_1+ \delta_2 = d \\0 < \delta_1 \leq d+ \min(a,b)}} \tau_{(\delta_1)[k-1]}(  \sigma_{(\delta_2)[k-\delta_1]}(A,B), X) \\
&& + \mathscr A_{A,B} \sum_{\substack{d_1+d_2=d \\0 \leq  d_2 \leq d+a 
    }}
    \left\{ \sigma_{(d_1)[k+x]}(  \tau_{(d_2)[k-1-(d_1+a)]}(B,X)_+,A) \right. \qquad \qquad \qquad \qquad \qquad\\[-15 pt]
     && \qquad \qquad \qquad \qquad \qquad \qquad\qquad \qquad \left.+ \,\, \tau_{(d_1)[k-1-(d_2+b)]}( \tau_{(d_2)[k-1-(d_1+a)]}(B,X)_-, A)\right\} \\[10 pt]
%
\bullet \text{ The 3rd line} &=& D_X\sigma_{(d+x)[k+x]}(A,B) + \mathscr A_{A,B} D_A \tau_{(d+a)[k-1]}(B,X).
\end{eqnarray*}
Here, we use Proposition \ref{prop:degree of structure function} to get the bounds $0 < \delta_1 \leq d+ \min(a,b)$ and $ 0 \leq  d_2 \leq d+a$. For example,
\begin{enumerate}
\item [$\bullet$] if $-\delta_2=\delta_1 -d >\min(a,b)$, then  $\sigma_{(\delta_2)[k-\delta_1]}(A,B)=0$ by Proposition \ref{prop:degree of structure function} (3). 
\item [$\bullet$] if $-d_1=d_2 -d >a$, then $\sigma_{(d_1)[k+x]}(\,\cdot\,, A) =0$ by Proposition \ref{prop:degree of structure function} (3) and $\tau_{(d_1)[k-1-(d_2+b)]}(\,\cdot\,, A) =0$ by Proposition \ref{prop:degree of structure function} (2). In fact, the latter vanishes if $-d_1>0$.

\end{enumerate}

\noindent
This completes the proof of Theorem \ref{thm:fundamental identities} (3). \\

  (4) Use
$$\mathfrak S_{A,B,C} \left\{\sum_{\substack{d_1 + d_2 =d\\-min\{a,b\} \leq d_2 \leq d+c}} \sigma_{(d_1)[d_1^{\sharp}]}(\sigma_{(d_2)[d_2^{\sharp}]}(A,B),C) - D_C \sigma_{(\delta)[\delta^{\sharp}]}(A,B) \right\}=0.$$
Now for $d \in \mathbb Z$, set $k=d+a+b+c+3$. Put $u=a+b+d_2$ and $w=a+b+c+d$.
Then $d_1 = w - u -c \geq -c$ and  $d_2 = u-a-b \geq -\min\{a,b\}$.
%
Furthermore, we have
\begin{eqnarray*}
 d_1^{\sharp} &=& w+2 =d+a+b+c+2 \\
  &=& k-1 \\
 d_2^{\sharp} &=& u+2 = d_2 +a +b+2 \\
 &=& k-d_1-c-1 \\
 &=& k-1- (c+d_1) \\
 &\leq & k-1 \\
 \delta &=& w-a-b \\
 &=& d+c \\
 \delta^{\sharp}&=& w+2 \\
 &=&k-1.
\end{eqnarray*}
This completes the proof of Theorem \ref{thm:fundamental identities}.
\end{proof}

Let us see more concretely the fundamental identities for lower orders. \\

\noindent {\bf Order 0.} We see that $\kappa_0$ is the bracket of $\frak g_-$ and the identity (1) turns out to be
$$\partial \kappa_0=0$$
which says that $\kappa_0$ satisfies the Jacobi identity.

Nest recall that $\tau_{[k]}$ is the $\Hom(\frak g_+, \Hom(\frak g_-, \frak g)_{k-1})$-component of $\gamma$ where $\frak g_+= \oplus _{i \geq 0} \frak g_i$, so that the identity (2) is trivial. The structure function $\sigma_{[k]}$ is the $\Hom(\wedge \frak g_+, \frak g_{k-1})$-component of $\gamma$, and therefore $\sigma_{[0]}=0$ and the identity (3) is trivial. \\

\noindent{\bf Order 1.} We have
$$\partial \kappa_{1}=0$$
because the term $D\kappa$ vanishes in this case.

The identity (2) becomes
$$\partial \tau_{[1]}(A_0, \,\cdot\,) = D_{A_0}\kappa_0 + \rho(A_0)\kappa_0 \text{ for } A_0 \in \frak g_0.$$
Therefore, $\partial \tau_{[1]}(A_0, \,\cdot\,) =0$. We see also that the right hand side of the above identity vanishes. We see also that $\sigma_{[1]}=0$. \\

\noindent {\bf Order 2.} In this case we have
$$\partial \kappa_2 = \kappa_1 \circ \kappa_1 - \sum_{p<0} D_{\pi_p}\kappa_{2+p}  \qquad \qquad (*1) $$
where
\begin{eqnarray*}
\kappa_1 \circ \kappa_1 (X,Y,Z)&:=& \mathfrak S_{X,Y,Z} \kappa_1(\kappa_1(X,Y),Z) \\
(D_{\pi_p}\kappa_{2+p} )(X,Y,Z)&:=& \mathfrak S_{X,Y,Z}D_{\pi_pX} \kappa_{2+p}(Y,Z).
\end{eqnarray*}

For $\tau_{[2]}$ we have
$$\partial \tau_{[2]}(A_0, \,\cdot\,) =D_{A_0}\kappa_1 + \rho(A_0)\kappa_1 \text{ for } A_0 \in \frak g_0.  \qquad \qquad (*2) $$
We have also
$$\partial \tau_{[2]}(A_1, \,\cdot\,) = D_{A_1}\kappa_1 + \rho(A_1)\kappa_0 \text{ for } A_1 \in \frak g_1.$$
But $\tau_{[2]}(A_1, X)=[A_1,X]$ and thus the both sides vanish. \\

We remark that the identities $(*1)$ and $(*2)$ appeared already in \cite{CII2} in his style. Singer-Sternberg gave its interpretation in \cite{SS65} especially when the structure function is constant. \\

Theorem \ref{thm:fundamental identities} has many important implications. We give here as corollaries some of them which follow immediately from it and will be used in later sections.

 \begin{corollary}  \label{cor:vanishing of kappa} $\,$

 \begin{enumerate}
  \item $\partial \kappa_{[1]}=0$.
  \item

 For $k \geq 2$, 
 if $\kappa^{[k-1]}$ is flat,
 then $\partial \kappa_{[k]}=0$ and $\partial  \tau _{[k]}(A,\,\cdot\,)=0$ for $A \in \frak g_a$ where $0 \leq a \leq k$.
      \end{enumerate}

 \end{corollary}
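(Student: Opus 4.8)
The plan is to read both assertions directly off the fundamental identities of Theorem~\ref{thm:fundamental identities}, specialized to the flat hypothesis and organized by degree.

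For part~(1), I would apply identity~(1) of Theorem~\ref{thm:fundamental identities} with $k=1$. The double sum over $d_1+d_2=1$ with $d_1,d_2>0$ is empty, and the remaining contribution $\mathfrak S_{X,Y,Z}\bigl(-D_X\kappa_{[1+x]}(Y,Z)\bigr)$ vanishes: for $x<0$ we have $1+x\le0$, and $\kappa_{[0]}$ is the constant Lie bracket of $\mathfrak g_-$ by Proposition~\ref{prop:degree of structure function}(1), so $D_X\kappa_{[0]}=0$, while $\kappa_{[j]}=0$ for $j<0$. Hence $\partial\kappa_{[1]}=0$. For the equality $\partial\kappa_{[k]}=0$ in part~(2), I would apply identity~(1) with the given $k\ge2$: flatness of $\kappa^{[k-1]}$ gives $\kappa_{[i]}=0$ for $1\le i\le k-1$, and every monomial on the right of identity~(1) contains either such a factor $\kappa_{[d_2]}$ or the term $D_X\kappa_{[k+x]}$ with $k+x\le k-1$ (which vanishes exactly as above), so the whole right-hand side is $0$.

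For the identity $\partial\tau_{[k]}(A,\cdot)=0$ in part~(2) I would split on $a=\deg A$. If $a=k$, then $\tau_{[k]}(A,\cdot)$ is the homogeneous-degree-$(-1)$ part of $\gamma|_{\mathfrak g_k\wedge\mathfrak g_-}$, hence $0$ by Proposition~\ref{prop:degree of structure function}(2). If $a=k-1$, then $\tau_{[k]}(A,\cdot)=\tau_{(0)[k]}(A,\cdot)$ is, up to sign, the tautological cochain $X\mapsto[A,X]$ by Proposition~\ref{prop:degree of structure function}(2), whose Spencer coboundary vanishes identically by the Jacobi identity of the prolongation $\mathfrak g$. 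If $0\le a\le k-2$, put $d=k-1-a\ge1$ and apply identity~(2); flatness of $\kappa^{[k-1]}$ annihilates $D_A\kappa_{[k-1]}$, $\rho(A)\kappa_{[d]}$, $[\kappa_{[d]}(X,Y)_+,A]$, $\sigma_0(\kappa_{[d]}(X,Y)_+,A)$, and every summand of the two double sums carrying a factor $\kappa_{[\delta_2]}$ or $\kappa_{[d_1]}$. What is left is a combination of $\tau$-components of strictly smaller modified degree together with a term $\mathscr A_{X,Y}D_Y\tau_{(d+y)[k+y]}(A,X)$; this last term vanishes termwise, for $d+y<0$ by Proposition~\ref{prop:degree of structure function}(2) and for $d+y=0$ because $\tau_{(0)}$ is the constant tautological action. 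The remaining lower-order terms I would kill by an induction on $k$: flatness of $\kappa^{[k-1]}$ entails flatness of $\kappa^{[k'-1]}$ for each $k'<k$, so the inductive form of the corollary, fed back into identity~(2) at those lower orders and combined with the normalization built into the $W$-reduction, forces the relevant positive-degree pieces $\tau_{(d')[j]}$ with $d'\ge1$, $j<k$, to vanish.

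The main obstacle is precisely this last cancellation --- verifying that the residual ``pure $\tau$'' terms of identity~(2) vanish for $0\le a\le k-2$. It is a bookkeeping argument in the homogeneous and modified gradings, and the cleanest route seems to be to strengthen the induction so as to prove, simultaneously with the corollary, that whenever $\kappa^{[k-1]}$ is flat every positive-degree component $\tau_{(d')[j]}$ with $d'\ge1$, $j\le k$, vanishes; once that is in hand the only $\tau$-piece surviving in identity~(2) is the constant tautological one, and its coboundary vanishes by Jacobi just as in the case $a=k-1$.
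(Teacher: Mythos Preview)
Your reading of parts~(1) and of $\partial\kappa_{[k]}=0$ is exactly the paper's: identity~(1) of Theorem~\ref{thm:fundamental identities} collapses immediately under $\kappa^{[k-1]}$ flat, and the paper's one-line proof ``It follows from Theorem~\ref{thm:fundamental identities} (1) and (2)'' says no more than that. Likewise your treatment of $a=k-1,k$ is correct.

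Where you go beyond the paper is in isolating the residual pure-$\tau$ terms in identity~(2) for $0\le a\le k-2$; the paper's one-liner suppresses this entirely. Your proposed resolution---strengthening the induction to ``$\kappa^{[k-1]}$ flat $\Rightarrow$ $\tau_{(d')[j]}=0$ for all $d'\ge1$, $j\le k$''---is correct and is in fact how the paper \emph{uses} the corollary later (Proposition~\ref{prop: curvature vanishing} runs exactly this joint induction, feeding $\tau^{[k]}$ flat back in before invoking Corollary~\ref{cor:vanishing of kappa} at level $k+1$). Two points to tighten: first, your termwise argument for $D_Y\tau_{(d+y)[k+y]}$ only covers $d+y\le0$; when $d\ge2$ and $y=-1$ one has $d+y\ge1$, and that case belongs with the other lower-order $\tau$'s under the induction, not under ``vanishes termwise''. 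Second, to pass from $\partial\tau_{[j]}(A,\cdot)=0$ and the $W$-normalization $\tau_{[j]}(A,\cdot)\in W^1_{j-1}$ to $\tau_{(d')[j]}=0$ you need $\Ker\partial\cap W^1_{j-1}=0$, i.e.\ $H^1_{j-1}(\mathfrak g_-,\mathfrak g)=0$. This holds automatically when the base structure is $Q^{(0)}$ (then $\mathfrak g=\mathrm{Prol}(\mathfrak g_-\oplus\mathfrak g_0)$ so $H^1_+=0$), which is the only context in which the paper actually applies the corollary; for a higher-order base $Q^{(k_0)}$ the inductive step is only guaranteed for $j>k_0+1$, and one should read the corollary with that restriction (or with the auxiliary hypothesis $\tau^{[k-1]}$ flat, cf.\ Corollary~\ref{cor:formula of tau}).
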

 \begin{proof}
It follows from  Theorem \ref{thm:fundamental identities} (1) and (2).
  \end{proof}

\begin{corollary} \label{cor: tau flat implies sigma flat}
 If the structure function $\tau^{[k-1]}$ is flat for some integer $k$, then so is $\sigma^{[k]}$.
\end{corollary}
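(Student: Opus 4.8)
The plan is to prove, by induction on the modified degree $i$ in the range $0\le i\le k$, that $\sigma_{[i]}$ is flat; since $\sigma^{[k]}=\sum_{2\le i\le k}\sigma_{[i]}$, this is exactly the assertion. The only substantial tool is the fundamental identity Theorem~\ref{thm:fundamental identities}(3), read with its modified-degree parameter set equal to $i$: its left-hand side is then $[\sigma_{(d)[i]}(A,B),X]$ for $A\in\frak g_a$, $B\in\frak g_b$ ($a,b\ge 0$, $a+b=i-2-d$) and $X\in\frak g_-$, and since $\sigma_{(d)[i]}$ takes its values in $\frak g_{i-2}$, transitivity of the prolongation $\frak g$ means that for $i\ge 2$ knowing $[\sigma_{(d)[i]}(A,B),X]$ for all $X\in\frak g_-$ determines $\sigma_{(d)[i]}(A,B)$. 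For the base cases $i\le 2$, Proposition~\ref{prop:degree of gamma on positive set} (normality) forces $\gamma_d|_{\frak g_p\wedge\frak g_q}=0$ whenever $\max\{p,q\}>i-2$, which kills every component of $\sigma_{[0]}$ and $\sigma_{[1]}$ and every component of $\sigma_{[2]}$ except $\sigma_{(0)[2]}|_{\frak g_0\wedge\frak g_0}=[\,,\,]$; hence $\sigma_{[0]}=\sigma_{[1]}=0$ and $\sigma_{[2]}$ is flat.

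Now fix $i$ with $3\le i\le k$, assume $\sigma_{[j]}$ is flat for $j<i$, and expand the right-hand side of Theorem~\ref{thm:fundamental identities}(3). All $\sigma$-components occurring there have modified degree $\le i-1$, hence are flat by the induction hypothesis; all $\tau$-components occurring there have modified degree $\le k-1$, hence are flat by hypothesis; and by Proposition~\ref{prop:degree of structure function}(2), $\tau_{(\ell)}=0$ for $\ell<0$. Therefore: the sum $\sum\tau_{(\delta_1)[i-1]}(X,\sigma_{(\delta_2)}(A,B))$ vanishes, its indices all having $\delta_1>0$; in the double sum $\mathscr A_{A,B}\sum\{\sigma_{(d_1)}(A,\tau_{(d_2)}(B,X)_+)+\tau_{(d_1)}(A,\tau_{(d_2)}(B,X)_-)\}$ the factor $\tau_{(d_2)}(B,X)$, which has $d_2\ge 0$, forces $d_2=0$ and hence $d_1=d$, with $\tau_{(0)}(B,X)$ equal up to sign to the module action $[B,X]$, so the only surviving terms are $\mathscr A_{A,B}\sigma_{(d)[i+x]}(A,[B,X])$ and $\mathscr A_{A,B}\tau_{(d)}(A,[B,X])$, both of which vanish when $d\ne 0$; and the covariant-derivative terms $D_X\sigma_{(d+x)[i+x]}(A,B)$ and $\mathscr A_{A,B}D_A\tau_{(d+a)[k-1]}(B,X)$ have vanishing argument unless $d+x=0$, respectively $d+a=0$, i.e.\ unless the argument is the constant structure-tensor piece. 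Consequently, for $d\ne 0$ the whole right-hand side reduces to (covariant derivatives of) these constant pieces, which one checks cancel, so $[\sigma_{(d)[i]}(A,B),X]=0$ and hence $\sigma_{(d)[i]}=0$; for $d=0$ the surviving terms reconstitute $[[A,B],X]$, so $\sigma_{(0)[i]}=[\,,\,]$. Either way $\sigma_{[i]}$ is flat, and the induction closes.

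The step I expect to be delicate is the treatment of the covariant-derivative terms $D_X\sigma_{(0)}(A,B)$ and $D_A\tau_{(0)}(B,X)$: one has to write out $D$ explicitly, observe that the genuine derivative of a constant function vanishes, and then see that the remaining connection-correction terms are themselves built from the already-identified constant pieces, so that they cancel when $d\ne 0$ and assemble into the value $[[A,B],X]$ demanded by the Jacobi identity and the module axiom when $d=0$. Everything else is a purely combinatorial, degree-indexed collapse driven by the flatness of $\tau^{[k-1]}$ and, inductively, of $\sigma^{[i-1]}$.
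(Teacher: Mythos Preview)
Your approach is essentially the paper's: induction on the modified degree, using the third fundamental identity (Theorem~\ref{thm:fundamental identities}(3)) together with transitivity of $\frak g$ to pin down $\sigma_{(d)[i]}(A,B)$ from its brackets with $\frak g_-$. Two small points. First, in your base case you \emph{assert} $\sigma_{(0)[2]}|_{\frak g_0\wedge\frak g_0}=[\,,\,]$, but Proposition~\ref{prop:degree of gamma on positive set} only kills the other components; it does not identify this one. The paper handles $i=2$ exactly as you handle $i\ge 3$, by plugging $d=0$, $a=b=0$ into Theorem~\ref{thm:fundamental identities}(3) to get $[\sigma_{(0)[2]}(A,B),X]=[A,[B,X]]-[B,[A,X]]=[[A,B],X]$; you could equally well just start your induction at $i=2$. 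Second, your worry about the covariant-derivative terms is unnecessary: once $\sigma^{[i-1]}$ and $\tau^{[k-1]}$ are flat, $\sigma_{(d+x)[i+x]}(A,B)$ and $\tau_{(d+a)[i-1]}(B,X)$ are either identically zero or equal to the constant functions $[A,B]$, $[B,X]$, so $D_X$ and $D_A$ annihilate them outright---there is no ``connection correction'' to cancel.
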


\begin{proof}
Note first that $\sigma^{[1]}$ is flat and $\sigma_{[2]} = \sigma_{(0)[2]}$.
Applying Theorem \ref{thm:fundamental identities} (3) for $d=0$ and $a=b=0$, we have
$$[\sigma_{(0)[2]}(A,B),X] = [A,[B,X]] - [B, [A,X]] =[[A,B],X].$$
Thus $\sigma_{(0)[2]}(A,B) =[A,B]$, which shows that $\sigma^{[2]}$ is flat.

Now assume that $k \geq 3$ and $\tau^{[k+1]}$ is flat. Then by induction, we deduce that $\sigma^{[k-1]}$ is flat.

Let $A \in \frak g_a$, $B \in \frak g_b$ and $X \in \frak g_x$ with $a,b \geq 0$ and $x <0$ and put $d=k-(a+b+2)$, to which we apply Theorem \ref{thm:fundamental identities} (3). Then we see that $[\sigma_{(d)[k]}(A,B),X]$ is written as a sum of the following forms of terms:
$$\sigma_{(d_1)[d_1^{\sharp}]} \circ\tau_{(d_2)[d_2^{\sharp}]},  \quad \tau_{(d_1)[{d_1^{\sharp}}']} \circ  \tau_{(d_2)[{d_2^{\sharp}}']}, \quad \tau_{(\delta_1)[\delta_1^{\sharp}]}\circ \sigma_{(\delta_2)}$$
where $d_1+d_2=\delta_1 +\delta_2 =d$ and $\delta_1 >0$ and $d_1^{\sharp}, d_2^{\sharp},  {d_1^{\sharp}}', {d_2^{\sharp}}', \delta_1^{\sharp} <k$, as well as vanishing terms:
$$D_X\sigma_{[m]}, \quad  D_A\tau_{[i]}, \quad  D_B \tau_{[j]},$$
where $i,j,m <k$.

Among them non-vanishing terms can occur only from $\sigma_{(0)}\circ  \tau_{(0)}$ and $\tau_{(0)} \circ \sigma_{(0)}$, from which it follows that
$$\sigma_{(d)[k]}(A,B) =0 \text{ if } d \not=0$$
and
$$[\sigma_{(0)[k]}(A,B),X] = [A,[B,X]] -[B,[A,X]], $$
that is, $\sigma_{(0)[k]}(A,B) = [A,B]$. Therefore, $\sigma^{[k]}$ is flat.
\end{proof}

\begin{corollary} \label{cor:formula of tau}

If $\tau^{[k-1]}$ is flat,
then
\begin{eqnarray*}
 \partial \tau_{[k]}(A,\,\cdot\,))(X,Y) =  \left(\rho(A)\kappa_{[k-a-1]}  + D_A \kappa_{[k-1]} \right)(X,Y)
 \end{eqnarray*}
 for $A \in \frak g_a$, $X \in \frak g_x$, $v \in \frak g_y$ with $ 0 \leq a \leq k-2$ and $x,y <0$.

\end{corollary}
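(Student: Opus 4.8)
The plan is to read the formula off from the fundamental identity in Theorem \ref{thm:fundamental identities}(2), by checking that under the flatness hypothesis every term on its right-hand side except $(D_A\kappa_{[k-1]}+\rho(A)\kappa_{[d]})(X,Y)$ vanishes. First I would note that for $A\in\frak g_a$ the cochain $\tau_{[k]}(A,\,\cdot\,)$ has a single nonzero homogeneous component, namely $\tau_{(d)[k]}(A,\,\cdot\,)$ with $d=k-a-1$, so that $\partial\tau_{[k]}(A,\,\cdot\,)=\partial\tau_{(d)[k]}(A,\,\cdot\,)$; the hypothesis $a\le k-2$ forces $d\ge 1$. Thus Theorem \ref{thm:fundamental identities}(2) applies with this value of $d$, and its leading term is already $(D_A\kappa_{[k-1]}+\rho(A)\kappa_{[d]})(X,Y)=(D_A\kappa_{[k-1]}+\rho(A)\kappa_{[k-a-1]})(X,Y)$, the claimed right-hand side.

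Next I would collect the vanishing facts. By Corollary \ref{cor: tau flat implies sigma flat}, flatness of $\tau^{[k-1]}$ yields flatness of $\sigma^{[k]}$; combined with Proposition \ref{prop:degree of structure function} this gives: $\tau_{(e)[j]}=0$ for $e\neq 0$, $j\le k-1$; $\sigma_{(e)[j]}=0$ for $e\neq 0$, $j\le k$, while $\sigma_{(0)[j]}$ is the Lie bracket for $j\le k$; the degree-zero part $\gamma_{II(0)}=\tau_{(0)}$ is the constant $\frak g_-$-action on $E$, hence annihilated by every $D_{\bullet}$; and $\gamma_{II}$ of negative homogeneous degree vanishes.

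I would then eliminate the remaining terms of Theorem \ref{thm:fundamental identities}(2) one by one. The term $\tau_{(d)[k+x]}([A,X]_+,Y)$ vanishes since $d\ge 1$ and $k+x\le k-1$. In $[\kappa_{[d]}(X,Y)_+,A]-\sigma_0(\kappa_{[d]}(X,Y)_+,A)$ one checks, using $x,y<0$ and $d=k-a-1$, that $\kappa_{[d]}(X,Y)_+\in\frak g_c$ with $c+a+2=x+y+k+1\le k-1$, so $\sigma_0$ there coincides with the bracket and the two terms cancel. In the sum over $\delta_1+\delta_2=d$ (with $\delta_1\neq 0$), each summand carries a factor $\tau_{(\delta_1)[k-1]}$ or $\sigma_{(\delta_1)[k+x+y+1]}$ of nonzero homogeneous degree and modified degree $\le k$, hence $0$. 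In the sum over $d_1+d_2=d$ with $d_1,d_2>0$, every summand carries $\tau_{(d_2)[k-d_1]}$ with $d_2>0$ and $k-d_1\le k-1$, hence $0$. Finally, in $D_Y\tau_{(d+y)[k+y]}(A,X)$ and its $X\leftrightarrow Y$ partner, either $d+y\neq 0$ and $\tau_{(d+y)[k+y]}=0$ (by flatness, since $k+y\le k-1$, or by negativity of the degree), or $d+y=0$ and the surviving $\tau_{(0)[k-d]}(A,X)$ is the constant bracket, killed by $D_Y$. Collecting everything, the right-hand side of Theorem \ref{thm:fundamental identities}(2) reduces to $(D_A\kappa_{[k-1]}+\rho(A)\kappa_{[k-a-1]})(X,Y)$, which is the assertion.

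The one delicate point I anticipate is the modified-degree bookkeeping, above all confirming that $\sigma_0(\kappa_{[d]}(X,Y)_+,A)$ lies in modified degree $\le k$ so that $\sigma^{[k]}$-flatness applies, and treating the borderline case $d+y=0$ in the last pair of terms. These verifications are routine but need to be carried out with care.
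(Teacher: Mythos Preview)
Your proof is correct and follows exactly the same route as the paper, which simply cites Theorem \ref{thm:fundamental identities}(2) together with Corollary \ref{cor: tau flat implies sigma flat}; you have carefully spelled out the term-by-term vanishing that the paper leaves implicit. The bookkeeping you flag as delicate (the modified degree of $\sigma_0(\kappa_{[d]}(X,Y)_+,A)$ and the case $d+y=0$) is handled correctly.
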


\begin{proof}
By Theorem \ref{thm:fundamental identities} (2) and 
 %
Corollary \ref{cor: tau flat implies sigma flat} we get the desired result.
\end{proof}

\begin{corollary}  \label{cor:simple observation} Let $k \geq 2$. If $\tau^{[k-1]}$ is constant, then
\begin{enumerate}
\item $\sigma^{[k]}$ is constant;
\item $\sigma^{[k]}_{(d)}=0$ for $d <0$;
\item $\sigma^{[k]}_{(0)}(A,B) = [A,B]$ for $A \in \frak g_a, B \in \frak g_b$, where $a \geq 0, b \geq 0$ and $a+b \leq k-2$.
\end{enumerate}

\end{corollary}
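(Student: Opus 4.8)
The plan is to derive all three statements from the third fundamental identity, Theorem~\ref{thm:fundamental identities}(3), together with the transitivity of the prolongation $\frak g$, by induction on the modified degree. Three simple facts will be used throughout: (i) by Proposition~\ref{prop:degree of structure function}(2), $\tau_{(\ell)}=\gamma_{II(\ell)}$ vanishes for $\ell<0$, while $\gamma_{II(0)}$ is the natural action of $\frak g_-$; (ii) since the Bianchi identity reads $\gamma\circ\gamma=D\gamma$ (Proposition~\ref{prop:properties of structure functions 3}), with $D$ the directional derivative along the absolute parallelism of $\mathscr S_WQ^{(k)}$, the operator $D$ annihilates every structure-function component that is a constant function; (iii) by transitivity, a vector $Z\in\bigoplus_{p\ge 0}\frak g_p$ is uniquely determined by the family $\{[Z,X]:X\in\frak g_-\}$, so, since $\sigma_{(d)[m]}(A,B)$ lies in $\frak g_{m-2}$ with $m\ge 2$, its constancy or its vanishing can be read off from that of $X\mapsto[\sigma_{(d)[m]}(A,B),X]$.

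For (1) I would argue by strong induction on the modified degree $m$, $2\le m\le k$. Fix $A\in\frak g_a$, $B\in\frak g_b$, $X\in\frak g_x$ with $a,b\ge 0$, $x<0$, and $m=d+a+b+2$, and apply Theorem~\ref{thm:fundamental identities}(3). On the right-hand side, every $\tau$-factor has modified degree at most $m-1\le k-1$, hence is constant by hypothesis; every $\sigma$-factor has modified degree strictly less than $m$, hence is constant by the induction hypothesis; and the two explicit $D$-terms, being covariant derivatives of such constant components, vanish by (ii). Therefore $[\sigma_{(d)[m]}(A,B),X]$ is a sum of compositions of constant components, hence a constant function, and (iii) gives the constancy of $\sigma_{(d)[m]}(A,B)$; ranging over $d$ and $m\le k$ this proves (1).

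For (2) and (3) I would run a parallel induction on $m$, now exploiting the vanishing of certain pieces. If $d<0$, I inspect each summand on the right-hand side of Theorem~\ref{thm:fundamental identities}(3): in the $\mathscr A_{A,B}$-double sum the constraint $0\le d_2\le d+a$ forces $d_1=d-d_2\le d<0$, so the factor $\sigma_{(d_1)}$ (respectively $\tau_{(d_1)}$) vanishes by the induction hypothesis of (2) (respectively by (i)); in $\sum\tau_{(\delta_1)}(X,\sigma_{(\delta_2)}(A,B))$ one has $\delta_1>0$, hence $\delta_2=d-\delta_1<0$ and $\sigma_{(\delta_2)}=0$ by induction; the $D$-terms vanish because $\sigma_{(d+x)[m+x]}=0$ ($d+x<0$, $m+x<m$) and $\tau_{(d+a)[m-1]}$ is constant. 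Hence $[\sigma_{(d)[m]}(A,B),X]=0$ for all $X$, and $\sigma_{(d)[m]}=0$ by (iii); this is (2). For $d=0$ with $a+b=m-2\le k-2$, the same inspection shows that only the $d_1=d_2=0$ part of the double sum survives; there $\tau_{(0)[m-1-a]}(B,X)$ is the natural action $[B,X]$, and then $\sigma_{(0)[m+x]}(A,[B,X])=[A,[B,X]]$ by the induction hypothesis of (3) (the two arguments have degrees summing to exactly $(m+x)-2$) in the case $[B,X]\in\bigoplus_{p\ge 0}\frak g_p$, while $\tau_{(0)}(A,[B,X])=[A,[B,X]]$ in the case $[B,X]\in\frak g_-$; together with the alternation $\mathscr A_{A,B}$ and the Jacobi identity in $\frak g$ this gives $[\sigma_{(0)[m]}(A,B),X]=[[A,B],X]$ for all $X\in\frak g_-$, so $\sigma_{(0)[m]}(A,B)=[A,B]$ by (iii); this is (3). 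The base cases $m=2$ are verified directly, the $\sigma$-factors of smaller modified degree being simply absent, and the right-hand side reducing to $0$ (for (2)) or to $[A,[B,X]]-[B,[A,X]]$ (for (3)), exactly as in the proof of Corollary~\ref{cor: tau flat implies sigma flat}.

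The point requiring care is the degree bookkeeping in statement (3): one must keep track of both the homogeneous and the modified degree of every factor in Theorem~\ref{thm:fundamental identities}(3), handle correctly the splittings $(\,\cdot\,)_{\pm}$ according to whether $[B,X]$ lies in $\frak g_-$ or in $\bigoplus_{p\ge 0}\frak g_p$, and verify that the surviving contributions reassemble \emph{precisely} into $[[A,B],X]$ with the correct signs. Once this is carried out, assertions (2) and (3) are exactly the portions of the flatness conclusion of Corollary~\ref{cor: tau flat implies sigma flat} that survive when $\tau^{[k-1]}$ is only assumed constant rather than flat; the remaining components $\sigma_{(d)}$ with $d>0$ need not vanish, which is why (1) asserts only the constancy — and not the flatness — of $\sigma^{[k]}$.
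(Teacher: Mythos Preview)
Your proof is correct and follows essentially the same route as the paper: both derive all three assertions from the third fundamental identity (Theorem~\ref{thm:fundamental identities}(3)) by induction, using transitivity of $\frak g$ to pass from the constancy/vanishing of $[\sigma_{(d)[m]}(A,B),X]$ to that of $\sigma_{(d)[m]}(A,B)$. The only organizational difference is that the paper treats (2) via a double induction (on $d$ and on the modified degree) and then handles (3) ``by a similar argument,'' whereas you run a single induction on the modified degree and prove (2) and (3) simultaneously; this is a minor rearrangement, not a different idea.
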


\begin{proof}
The statement (1) follows immediately from the third fundamental identity (Theorem \ref{thm:fundamental identities} (3)). The assertion (2) can be verified by induction as follows. Let $d$ be a negative integer and assume that
\begin{enumerate}
\item [(i)] $\sigma^{[k]}_{d'} =0$ for $d' <d$
\item [(ii)] $\sigma^{[k']}_{(d)} =0$ for $k'<k$.
\end{enumerate}

Now let $a,b$ be non-negative integer such that $k=d+a+b+2$. Let $A \in \frak g_a$ and $B \in \frak g_b$ and $X \in \frak g_x$ ($x<0$). Then by Theorem \ref{thm:fundamental identities} (3), we have
 \begin{eqnarray*}
    &&[\sigma_{(d)[k]}(A,B),X] \\
    &=& \mathscr A_{A,B} \sum_{\substack{d_1+d_2=d \\0 \leq d_2 \leq d+a 
    }}
    \left\{ \sigma_{(d_1)[k+x-d_2]}(A, \tau_{(d_2)[k-1-(d_1+a)]}(B,X)_+) \right. \qquad \qquad \qquad \qquad \qquad\\[-15 pt]
     && \qquad \qquad \qquad \qquad \qquad \qquad\qquad \qquad \left.+ \,\, \tau_{(d_1)[k-1-(d_2+b)]}(A, \tau_{(d_2)[k-1-(d_1+a)]}(B,X)_-)\right\} \\[10 pt]
    &&+ \sum_{\substack{\delta_1+ \delta_2 = d \\0 < \delta_1 \leq d+ \min(a,b)}} \tau_{(\delta_1)[k-1]}(X, \sigma_{(\delta_2)[k-\delta_1]}(A,B)) \\
    && +\,\, D_X\sigma_{(d+x)[k+x]}(A,B) + \mathscr A_{A,B} D_A \tau_{(d+a)[k-1]}(B,X).
    \end{eqnarray*}
By the inductive assumption (i),
\begin{eqnarray*}
\sigma_{(\delta_2)}(A,B) &=& \sigma_{(\delta_2)[\delta_2+a+b+2]}(A,B) \\
&=& \sigma_{(\delta_2)[k-\delta_1]}(A,B)  =
  0 \text{ for } \delta_2<d \\
\sigma_{(d+x)}(A,B) &=& \sigma_{(d+x)[k+x]}=0.
\end{eqnarray*}
Therefore, the 2nd and the 3rd terms of the right hand side of the above identity vanish.

For the 1st term, we have
\begin{eqnarray*}
(\tau_{d_1} + \sigma_{d_1})(\tau_{d_2}(A,X), B) &=& \sigma_{d_1}(\tau_{d_2}(A,X), B) \\
&=& \sigma_{d_1[d_1+d_2+a+b+x]}(\tau_{d_2}(A,X),B) \\
&=& \sigma_{(d)[k-2+x]}([A,X], B) \text{ by (i)} \\
&=& 0 \text{ by (ii)}.
\end{eqnarray*}
Hence the 1st term vanishes.

The last term is
\begin{eqnarray*}
D_A\tau_{d+a}(B,X) &=& D_A\tau_{(d+a)[d+a+b+1]}(B,X) \\
&=& D_A\tau_{(d+a)[k-1]}(B,X).
\end{eqnarray*}
But by the assumption  $\tau^{[k-1]}$ is constant, therefore, the last term also vanishes. Hence $\sigma_{(d)}^{[k]}=0$, which proves (2).

By a similar argument we have
\begin{eqnarray*}
[\sigma^{[k]}_0(A,B), X] =\sigma_0^{[k]}([A,X], B) + \sigma^{(k)}_0(A, [B,X]),
\end{eqnarray*}
from which follows the assertion (3).
\end{proof}

\begin{remark}
In general $\sigma_d^{[k]}$ does not necessarily vanish even for $d <0$. We have $\gamma_{11}^1=$ and $\gamma_{22}^2=0$ but $\gamma_{21}^2$ does not always vanish.
\end{remark}






%










\section{Invariants} \label{sec:Invariants}

On the basis of Section \ref{sect:normal step prolongation} and Section \ref{sec:inductive formula}, we show how to obtain
the invariants of an arbitrarily given proper geometric structure.

\subsection{Fundamental system of invariants} \label{sec:complete system of invariants} $\,$

Let $Q^{(k)}$ be a proper geometric structure of type $(\frak g_-, G_0, \cdots, G_k)$. Choose complementary subspaces $W=\{W^1_{\ell}, W^2_{\ell +1}\}_{\ell \geq k}$ as in Section \ref{sect:normal step prolongation} and let $\mathscr S_{W}Q^{(k)}$ be the $W$-normal complete prolongation of  $Q^{(k)}$ and $\gamma$ be its structure function, which is a $\Hom(\wedge^2E, E)$-valued function on $\mathscr S_WQ^{(k)}$. Define $\Hom(\otimes^mE, \Hom(\wedge^2 E, E))$-valued function $D^m \gamma$ by
\begin{eqnarray*}
D^0\gamma &=& \gamma \\
d(D^{m-1}\gamma) &=& (D^m\gamma) \circ \theta \text{ for } m >0.
\end{eqnarray*}
Then $\{D^m\gamma\}_{m \geq 0}$ forms a set of invariants of $Q^{(k)}$, that is, if two proper geometric structures $Q^{(k)}$ and $\overline{Q}^{(k)}$ are isomorphic, then $D^m\gamma(z) = \overline{D}^m \overline{\gamma}(\overline{z})$ for any $z \in \mathscr S_WQ^{(k)}$ and for the corresponding point $\overline{z} \in \mathscr S_W\overline{Q}^{(k)}$.

 Let us call $\{D^m\gamma\}_{m \geq 0}$ the {\it complete system of invariants} of $Q^{(k)}$ determined by the $W$-normal prolongation since these invariants $\{D^m\gamma\}_{m \geq 0}$ completely determine the formal equivalence class of geometric structure $ Q^{(k)}$ (Theorem \ref{thm:Theorem 6.1 formal equivalence}).


Now Theorem \ref{thm:fundamental identities}  gives a recursive formula  how to determine $\gamma$. To see it more closely, we give the following.

 \begin{definition}
Let $Q^{(\ell)}$ be a proper geometric structure of type $\frak g[\ell]$ and let $\frak g$ be the prolongation of $\frak g[\ell]$. We say that
$Q^{(\ell)}$ is {\it quasi-involutive} if
$$H^1_r(\frak g_-, \frak g) =0 \text{ and } H^2_{r+1}(\frak g_-, \frak g)=0 \text{ for } r \geq \ell.$$
 \end{definition}

By Theorem \ref{vanishing cohomology of higher degree} there is an integer $\nu$ ($\geq k$) such that $\mathscr S_W^{(\ell)}Q^{(k)}$ is quasi-involutive for $\ell \geq \nu$.

By the fundamental identities (Theorem \ref{thm:fundamental identities}), if $\ell > \nu $, then $\kappa_{[\ell]}$, $\tau_{[\ell]}$, $\sigma_{[\ell]}$ and all their covariant derivatives
are uniquely determined by $\{\kappa_{[j]}, \tau_{[j]}, \sigma_{[j]}; j \leq \nu\}$ and their covariant derivatives
through polynomial functions determined only by the graded Lie algebra $\mathfrak g$.
Indeed, 
 if  $H^2_{\ell}(\frak g_-, \frak g)=0$,   then the restriction of $\partial$ to $W_{\ell}^2$  is injective. Since $\kappa_{[\ell]}$ has its value in $W_{\ell}^2$, $\partial \kappa_{[\ell]}$ determines $\kappa_{[\ell]}$. Similarly, if  $H^1_{\ell-1}(\frak g_-, \frak g)=0$,  then $\partial \tau_{[\ell]}(A,\,\cdot\,)$ determines  $\tau_{[\ell]}(A,\,\cdot\,)$.

 In this regard, we say that $\{ D^m\gamma^{[\nu]}\}_{m \geq 0}$ 
 forms {\it a fundamental system of invariants} of
 $Q^{(k)}$.

More specifically, we define the {\it set of essential invariants} of $Q^{(k)}$ as the  set
\begin{eqnarray*}
 \{D^m \tau_{[i]}: i \in I^1, m \geq 0 \}
 \cup \{D^m \kappa_i: i \in I^2, m \geq 0\}
\end{eqnarray*}
where
\begin{eqnarray*}
I^{1} &:=& \{ i \in \mathbb Z_{\geq 0} : H^1_{i-1} (\frak g_-,\frak g) \not=0 \} \\
I^{2}&:=& \{ i \in \mathbb Z_{\geq 0} : H^2_i (\frak g_-, \frak g) \not=0 \}.
\end{eqnarray*}

Summarizing the above discussion we have the following.

\begin{theorem} \label{thm:complete system of invariants}
Let $Q^{(k)}$ be a proper geometric structure and $\mathscr S_WQ^{(k)}$ its $W$-normal prolongation. If $\mathscr S_W^{(\ell)}Q^{(k)}$ is quasi-involutive,
then $\{D^m\gamma^{[\ell]} \}_{m\geq 0}$ forms a fundamental system of invariants of $Q^{(k)} $.
 Furthermore, the set of essential invariants of $Q^{(k)}$ forms a fundamental system of invariants of $Q^{(k)}$.
\end{theorem}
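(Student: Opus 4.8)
The plan is to deduce both assertions from the three ingredients already established: the $W$-normal step prolongation (Theorem \ref{thm:W normal prolongation}), the finitude of the generalized Spencer cohomology (Theorem \ref{vanishing cohomology of higher degree}), and the fundamental identities (Theorem \ref{thm:fundamental identities}). First I would recall that by Theorem \ref{thm:W normal prolongation} the equivalence problem of $Q^{(k)}$ reduces to that of the absolute parallelism $(\mathscr S_W Q^{(k)},\theta)$, so that the full invariant is the structure function $\gamma$ together with all its covariant derivatives $\{D^m\gamma\}_{m\ge 0}$; this is precisely the classical statement that an absolute parallelism is determined up to isomorphism by its structure function and iterated derivatives. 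Hence it suffices to show that, once $\mathscr S_W^{(\ell)}Q^{(k)}$ is quasi-involutive, the finite truncation $\{D^m\gamma^{[\ell]}\}_{m\ge 0}$ already determines all of $\{D^m\gamma\}_{m\ge 0}$, and then to refine this further to the essential invariants.

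The key step for the first assertion is an induction on the modified degree. By Theorem \ref{vanishing cohomology of higher degree} choose $\ell$ (which may be taken $\ge k$) with $H^1_r(\frak g_-,\frak g)=H^2_{r+1}(\frak g_-,\frak g)=0$ for $r\ge \ell$, i.e. $\mathscr S_W^{(\ell)}Q^{(k)}$ is quasi-involutive. Suppose inductively that $\kappa_{[j]},\tau_{[j]},\sigma_{[j]}$ and all their covariant derivatives are known for $j<m$, where $m>\ell$; I must produce $\kappa_{[m]},\tau_{[m]},\sigma_{[m]}$ and their derivatives. For $\kappa_{[m]}$: fundamental identity (1) expresses $\partial\kappa_{[m]}$ as a universal polynomial in the already-known lower-order data. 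Because $\kappa_{[m]}$ takes values in the fixed complement $W^2_{m}$ and $H^2_m(\frak g_-,\frak g)=0$, the restriction $\partial|_{W^2_m}$ is injective, so $\partial\kappa_{[m]}$ recovers $\kappa_{[m]}$. The same argument applied to identity (2), using $\tau_{[m]}(A,\cdot)\in W^1_{m-1}$ and the injectivity of $\partial|_{W^1_{m-1}}$ coming from $H^1_{m-1}(\frak g_-,\frak g)=0$, recovers $\tau_{[m]}$; and then $\sigma_{[m]}$ is forced by identity (3) (indeed by Corollary \ref{cor: tau flat implies sigma flat}-type reasoning, since $\sigma$ is governed by $\tau$). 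Finally, covariant derivatives of order $m$ are handled by differentiating the identities and invoking the same cohomological injectivity; alternatively one observes that $D\kappa_{[m]}$, $D\tau_{[m]}$ appear again as values in the appropriate $W$-complements of the trivially extended Spencer complex, so the same vanishing applies. This closes the induction and proves $\{D^m\gamma^{[\ell]}\}_{m\ge 0}$ is a fundamental system of invariants.

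For the refinement to the essential invariants, the point is that the only genuinely new information at a given degree $i$ resides in $\kappa_{[i]}$ when $H^2_i(\frak g_-,\frak g)\ne 0$ and in $\tau_{[i]}(A,\cdot)$ when $H^1_{i-1}(\frak g_-,\frak g)\ne 0$: outside these degrees the same argument as above (injectivity of $\partial$ on the relevant complement) shows that $\kappa_{[i]}$, respectively $\tau_{[i]}$, is already determined by strictly lower-order data through the fundamental identities, hence contributes nothing new; and $\sigma$ never contributes independently because of Corollary \ref{cor: tau flat implies sigma flat} and its non-flat generalization via identity (3), which expresses every $\sigma_{[i]}$ in terms of $\tau$'s and $\kappa$'s of order $<i$ together with covariant derivatives. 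Since by Theorem \ref{vanishing cohomology of higher degree} the sets $I^1,I^2$ are finite, the collection $\{D^m\tau_{[i]}:i\in I^1,m\ge 0\}\cup\{D^m\kappa_{[i]}:i\in I^2,m\ge 0\}$ is what remains, and by the above it generates all of $\gamma$ and its derivatives recursively; together with the reduction of Theorem \ref{thm:W normal prolongation} this yields the claim.

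The main obstacle I anticipate is the bookkeeping in the inductive step: one must verify carefully that in each fundamental identity the right-hand side involves only structure functions (and their covariant derivatives) of \emph{strictly smaller} modified degree — this is visible in the degree constraints in the sums of Theorem \ref{thm:fundamental identities}, but making it airtight requires tracking the superscripts $[\,\cdot\,]$ precisely — and that the covariant-derivative terms $D^m$ can be absorbed by passing to the trivially extended Spencer cochain complex mentioned just after Theorem \ref{thm:fundamental identities}, so that the injectivity of $\partial$ on the $W$-complements continues to apply after differentiation. Once this degree reckoning is in place, the cohomological input is exactly the quasi-involutivity hypothesis plus the finitude theorem, and the rest is formal.
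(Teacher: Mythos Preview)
Your proposal is correct and follows essentially the same approach as the paper: the paper's argument is precisely the cohomological injectivity of $\partial$ on the complements $W^2_{m}$ and $W^1_{m-1}$ (coming from the quasi-involutivity $H^2_m=H^1_{m-1}=0$) combined with the fundamental identities, which express $\partial\kappa_{[m]}$ and $\partial\tau_{[m]}(A,\cdot)$ in terms of strictly lower modified-degree data, so that an induction on modified degree recovers all of $\gamma$ from $\gamma^{[\ell]}$; the refinement to essential invariants then drops those degrees where the same injectivity already holds. Your identification of the main bookkeeping point---that the right-hand sides of the identities involve only strictly smaller modified degrees---is exactly what the paper relies on (and what the explicit index constraints in Theorem \ref{thm:fundamental identities} guarantee).
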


We remark   that, though
$\kappa _{\ell}, \tau_{[\ell]} $ are functions on
$\mathscr S^{(\ell )}_W Q^{(k)} $,
their covariant derivatives
$ D^i \kappa _{\ell},  D^i\tau_{[\ell]} $
are functions on
 $\mathscr S^{(\ell  + \mu i)}_W Q^{(k)} $
 and
the essential invariants
$\{
D^i \kappa _{\ell},  D^i\tau_{[\ell]}
\} $
should be regarded as functions on
$\mathscr S_W Q^{(k)}$.

Let us see what the above theorem says, in particular, for a geometric structure
$Q^{(0)} $ of order 0.

\begin{corollary}
Let $Q^{(0)}$ be a geometric structure of order 0.
If $\mathscr S_W^{(\ell)}Q^{(0)}$ is quasi-involutive, then $$ \kappa_1, \cdots, \kappa _{\ell}$$
form  a fundamental system of invariants.
\end{corollary}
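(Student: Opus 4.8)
The plan is to specialize Theorem~\ref{thm:complete system of invariants} to the case $k=0$ and to keep track of which components of the structure function $\gamma^{[\ell]}$ survive. First I would recall that for a geometric structure $Q^{(0)}$ of order $0$, the truncated graded Lie algebra is $\frak g[0]=\frak g_-\oplus\frak g_0$, and by the discussion in Section~\ref{sect: structure functions of various degree} together with Proposition~\ref{prop:degree of structure function}, the structure function $\gamma$ decomposes into $\kappa$, $\tau$, $\sigma$. The point of order $0$ is that $\tau_{[k]}$ is, by definition, the $\Hom(\frak g_+,\Hom(\frak g_-,\frak g)_{k-1})$-component of $\gamma$, and when we start from order $0$ this component is already fixed: as explained in the paragraph ``Order 0'' after Theorem~\ref{thm:fundamental identities}, $\tau_{[k]}$ is constant (it is just the action of $\frak g_+$ on $\frak g$ induced by the bracket), and similarly $\sigma_{[k]}$ is constant and given by the bracket, with $\sigma_{[0]}=0$. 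Consequently, in the $W$-normal prolongation $\mathscr S_WQ^{(0)}$, the only non-constant pieces of $\gamma^{[\ell]}$ are the curvature pieces $\kappa_1,\dots,\kappa_\ell$ (note $\kappa_0$ is the bracket of $\frak g_-$, hence constant, by Proposition~\ref{prop:degree of structure function}(1)).

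Next I would invoke Theorem~\ref{thm:complete system of invariants} directly: if $\mathscr S_W^{(\ell)}Q^{(0)}$ is quasi-involutive, then $\{D^m\gamma^{[\ell]}\}_{m\ge 0}$ is a fundamental system of invariants, and moreover the set of essential invariants
$$\{D^m\tau_{[i]}:i\in I^1,\ m\ge 0\}\cup\{D^m\kappa_i:i\in I^2,\ m\ge 0\}$$
is a fundamental system. The task then reduces to showing that, for a structure of order $0$, the $\tau$-part of the essential invariants carries no information and the $\kappa$-part can be cut down to $\kappa_1,\dots,\kappa_\ell$. For the $\tau$-part: since $\tau^{[\ell]}$ is flat and constant for a structure built by $W$-normal prolongation from order $0$, each $D^m\tau_{[i]}$ is a fixed element determined by $\frak g$ alone, hence contributes nothing as an invariant distinguishing structures; so it may be dropped. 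For the $\kappa$-part: since $\mathscr S_W^{(\ell)}Q^{(0)}$ is quasi-involutive we have $H^2_r(\frak g_-,\frak g)=0$ for $r\ge\ell$, hence $I^2\subset\{1,\dots,\ell\}$ (we also use $\kappa_0$ constant to exclude $i=0$); therefore the essential $\kappa$-invariants are among $\{D^m\kappa_i:1\le i\le \ell\}$. Finally, by the fundamental identities (Theorem~\ref{thm:fundamental identities}(1)) together with $\tau$ and $\sigma$ being constant, each covariant derivative $D^m\kappa_i$ for $m\ge 1$ is already determined, through universal polynomial expressions in $\frak g$, by $\kappa_1,\dots,\kappa_\ell$ (the recursion for $\partial\kappa_{[k]}$ expresses higher and differentiated pieces in terms of lower $\kappa_{[i]}$ and the constant data); and since $H^2_\ell(\frak g_-,\frak g)=0$ makes $\partial$ injective on $W^2_\ell$, each $\kappa_{[\ell']}$ for $\ell'>\ell$ is in turn determined by its $\partial$-image, hence by $\kappa_1,\dots,\kappa_\ell$. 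Thus $\kappa_1,\dots,\kappa_\ell$ already generate the whole fundamental system.

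The one delicate point, and the step I expect to require the most care, is making precise the claim that the differentiated invariants $D^m\kappa_i$ are genuinely redundant given $\kappa_1,\dots,\kappa_\ell$: the fundamental identities of Theorem~\ref{thm:fundamental identities} express $\partial\kappa_{[k]}$ (not $\kappa_{[k]}$ itself, nor $D^m\kappa_i$ directly) in terms of lower-order data, so one must combine them with (i) the injectivity of $\partial$ on the complement $W^2_k$ in the quasi-involutive range, to recover $\kappa_{[k]}$ from $\partial\kappa_{[k]}$, and (ii) an induction on the modified degree to handle the covariant derivatives — exactly the argument already sketched in the paragraph preceding Theorem~\ref{thm:complete system of invariants}. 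Once that bookkeeping is in place, the corollary follows; I would present it as a short deduction citing Theorem~\ref{thm:complete system of invariants} and the ``Order 0'' computations, rather than redoing the cohomological estimates.
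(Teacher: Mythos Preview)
Your overall strategy---specialize Theorem~\ref{thm:complete system of invariants} and show that only the $\kappa_i$ survive---is the right one, and matches the paper. But your justification for dropping the $\tau$-part contains a genuine error.

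You claim that for the $W$-normal prolongation of an order-$0$ structure, $\tau^{[\ell]}$ is automatically flat and constant, citing the paragraph labeled ``Order 0'' after Theorem~\ref{thm:fundamental identities}. That paragraph is not about geometric structures of order $0$; it is about the fundamental identities at \emph{modified degree} $0$ (i.e.\ what $\kappa_{[0]},\tau_{[0]},\sigma_{[0]}$ look like). There is no general statement in the paper that $\tau$ is constant for $\mathscr S_W Q^{(0)}$; indeed, Section~\ref{sect:Cartan connection} is largely devoted to analyzing \emph{when} $\tau$ is flat (Theorem~\ref{thm:vanishing of tau implies}, Theorem~\ref{thm:Cartan connection}), which would be vacuous if it were automatic. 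So your mechanism for eliminating the $\tau$-invariants is not valid.

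The paper's argument is both shorter and cleaner: since $\frak g$ is the prolongation of $\frak g[0]=\frak g_-\oplus\frak g_0$, one has $H^1_r(\frak g_-,\frak g)=0$ for all $r>0$ (this is exactly the characterization of prolongation recalled in Section~1.2). Hence $I^1\subset\{0,1\}$, so the only $\tau$-type essential invariants are $\tau_{[0]}$ and $\tau_{[1]}$. But $\tau_{[0]}=0$ and $\tau_{[1]}$ (together with $\kappa_0$) is just the bracket of $\frak g$, hence carries no geometric information. That is the whole argument for the $\tau$-part.

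A second, smaller point: you spend effort arguing that the covariant derivatives $D^m\kappa_i$ are redundant. This is not needed. In the paper's terminology, ``$\kappa_1,\dots,\kappa_\ell$ form a fundamental system of invariants'' means the same thing as for the essential invariants---the system consists of these functions \emph{together with} their covariant derivatives. You do not have to eliminate the $D^m$.
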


\begin{proof}
 In this case it holds that
$H^1_r (\mathfrak g _-, \mathfrak g) = 0 $
for $r > 0$. Therefore the   essential invariants are  composed of
$$ \kappa_0, \kappa_1, \dots , \kappa_{\ell} , \tau _{[0]}, \tau_{[1]} $$
and their covariant derivatives.
Now  $\tau_{[0]} = 0$
and $\kappa_0, \tau_{[1]} $
comes from the bracket of the graded Lie algebra $\mathfrak g$.
Therefore non-trivial invariants can appear only from  $ \kappa_1, \cdots, \kappa _{\ell}$.
\end{proof}

This is a well-known phenomenon  for   the case of Cartan connection. It is remarkable that the same folds for the step prolongation  $\mathscr S _W Q^{(0)}$.


\subsection{Involutive geometric structures} $\,$

Again from Theorem \ref{thm:fundamental identities} we have the following important consequence:

\begin{proposition}
Let $Q^{(k)}$ be a proper quasi-involutive geometric structure and $Q$ be $W$-normal complete step prolongation of $Q^{(k)}$. If the structure function $\gamma^{[k]}$ is constant, then the structure function $\gamma$ is constant.
\end{proposition}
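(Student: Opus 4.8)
The plan is to run an induction on the modified degree $\ell \ge k$, showing that each homogeneous piece $\kappa_{[\ell]}$, $\tau_{[\ell]}$, $\sigma_{[\ell]}$ of the structure function is constant on $\mathscr S_WQ^{(k)}$, and moreover that all their covariant derivatives $D^m\kappa_{[\ell]}$, $D^m\tau_{[\ell]}$, $D^m\sigma_{[\ell]}$ vanish for $m \ge 1$. The base of the induction is the hypothesis: $\gamma^{[k]}$ constant means precisely that $\kappa_{[i]}$, $\tau_{[i]}$, $\sigma_{[i]}$ are constant for all $i \le k$, hence their first (and higher) covariant derivatives vanish for $i \le k$.

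For the inductive step, suppose $\gamma^{[\ell-1]}$ is constant with vanishing covariant derivatives, $\ell > k$. I would feed this into the fundamental identities of Theorem \ref{thm:fundamental identities}. Identity (1) expresses $\partial\kappa_{[\ell]}$ as a polynomial $\Psi_{I[\ell]}$ in the $\kappa_{[i]}, \tau_{[i]}$ with $i < \ell$ and their covariant derivatives; by the inductive hypothesis every such ingredient is a constant, so $\partial\kappa_{[\ell]}$ is a constant function on $\mathscr S_WQ^{(k)}$. Since $Q^{(k)}$ is quasi-involutive, $H^2_\ell(\frak g_-,\frak g)=0$ for $\ell \ge k$, so $\partial$ restricted to $W^2_\ell$ is injective; as $\kappa_{[\ell]}$ takes values in $W^2_\ell$ by construction of the $W$-normal reduction, $\kappa_{[\ell]}$ is itself constant. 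The same argument applied to identity (2), using $H^1_{\ell-1}(\frak g_-,\frak g)=0$ and the injectivity of $\partial$ on $W^1_{\ell-1}$, shows $\tau_{[\ell]}(A,\,\cdot\,)$ is constant for each $A$; and identity (3) (or Corollary \ref{cor: tau flat implies sigma flat}-type reasoning, now with constancy in place of flatness) gives that $\sigma_{[\ell]}$ is constant. Once $\gamma_{[\ell]}$ is constant, $d\gamma_{[\ell]} = 0$, so $D\gamma_{[\ell]}\circ\theta = 0$, and since $\theta$ is an absolute parallelism (Proposition \ref{prop:canonical forms}(1)), $D\gamma_{[\ell]} = 0$; inductively all higher $D^m\gamma_{[\ell]}$ vanish as well. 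This closes the induction, and since every homogeneous component of $\gamma$ is constant, $\gamma$ is constant.

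The main obstacle I anticipate is bookkeeping rather than conceptual: one must check that the polynomial $\Psi_{X[\ell]}$ on the right-hand side of each fundamental identity genuinely involves only strictly lower modified-degree pieces $\gamma_{Y[i]}$ with $i < \ell$ together with their covariant derivatives, and never $\gamma_{[\ell]}$ itself or covariant derivatives of $\gamma_{[\ell]}$ — otherwise the induction would be circular. Reading off the degree constraints in the summation ranges of Theorem \ref{thm:fundamental identities} (the conditions like $d_1,d_2>0$, $d_1+d_2=k$, etc.) confirms this, but it deserves a careful word. A secondary point is the interaction with covariant derivatives: $D_X\kappa_{[k+x]}$ appears in identity (1) with $x<0$, so it is a covariant derivative of a \emph{lower} modified-degree component, hence vanishes by the inductive hypothesis — one should note explicitly that the shift $k \mapsto k+x$ strictly decreases modified degree. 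With these two observations in place the argument is routine.
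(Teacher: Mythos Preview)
Your proposal is correct and is essentially the paper's own argument, spelled out in full. The paper treats this proposition as an immediate consequence of the fundamental identities (Theorem~\ref{thm:fundamental identities}), with the mechanism already explained in Section~\ref{sec:complete system of invariants}: when $H^2_\ell=0$ the map $\partial|_{W^2_\ell}$ is injective so $\partial\kappa_{[\ell]}$ determines $\kappa_{[\ell]}$, and likewise $H^1_{\ell-1}=0$ makes $\partial|_{W^1_{\ell-1}}$ injective so $\partial\tau_{[\ell]}(A,\cdot)$ determines $\tau_{[\ell]}(A,\cdot)$; for $\sigma_{[\ell]}$ the paper invokes the same identity you point to (Corollary~\ref{cor:simple observation}, not Corollary~\ref{cor: tau flat implies sigma flat}, is the precise reference: constancy of $\tau^{[\ell-1]}$ forces constancy of $\sigma^{[\ell]}$). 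One small slip: quasi-involutivity of $Q^{(k)}$ gives $H^2_\ell=0$ only for $\ell\ge k+1$, not $\ell\ge k$ as you wrote, but since your induction step runs over $\ell>k$ this does not affect the argument.
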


\begin{definition} \label{def:involutive}
A proper geometric structure $Q^{(k)}$ is called {\it involutive} if

\begin{enumerate}
\item $Q^{(k)}$ is quasi-involutive.
\item The structure function $\gamma^{[k]}$ is constant.
\end{enumerate}
\end{definition}


If $Q^{(k)}$ is involutive, we have seen that the structure function $\gamma$ of $Q$ is constant, but not only this, we see by Corollary \ref{cor:simple observation} that $\sigma_{(d)}=0$ for $d<0$ and that $\tau_{(0)}$ and $\sigma_{(0)}$ coincide with the bracket of the graded Lie algebra $ \frak g$. Then we see that $(\frak g, \gamma)$, where $\gamma \in \Hom(\wedge^2 \frak g, \frak g)$ is the structure function of $Q$, becomes a transitive filtered Lie algebra $L$ such that $\gr L \cong \frak g$.
To summarize,  we have the following proposition.

\begin{proposition}
If a proper geometric structure $Q^{(k)}$ of type $\frak g[k]$ is   involutive, then the structure function $\gamma$ of the $W$-normal complete step prolongation $Q$ of  $Q^{(k)}$ defines on the prolongation $\frak g$ of $\frak g[k]$ a structure of transitive filtered Lie algebra $L=(\frak g, \gamma)$ such that $\gr L \simeq \frak g$.
\end{proposition}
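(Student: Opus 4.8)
The goal is to show that for an involutive normal geometric structure $Q^{(k)}$ of type $\frak g[k]$, the structure function $\gamma$ of the $W$-normal complete step prolongation $Q$ defines on the prolongation $\frak g = Prol(\frak g[k])$ the structure of a transitive filtered Lie algebra $L = (\frak g, \gamma)$ with $\gr L \simeq \frak g$. The plan is to assemble several facts already established in the excerpt and verify the three defining conditions (plus transitivity and completeness) of a transitive filtered Lie algebra directly on the graded vector space $\frak g$ equipped with the bracket $\gamma$.

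First I would recall that by the Proposition immediately preceding the statement, since $Q^{(k)}$ is quasi-involutive and $\gamma^{[k]}$ is constant, the full structure function $\gamma$ is constant; hence $\gamma$ is a single element of $\cHom(\wedge^2 E, E)$, and via $\theta_z\colon T_z\mathscr S \to E$ we identify $E$ with $\frak g$ as a graded vector space. Then I would invoke Corollary \ref{cor:simple observation}: since $\tau^{[\ell]}$ is (a fortiori) constant for all $\ell$ (because $\gamma$ is), we get $\sigma_{(d)} = 0$ for $d<0$ and $\sigma_{(0)}(A,B) = [A,B]$ on $\frak g_+$; together with Proposition \ref{prop:degree of structure function}(1),(2), which give $\kappa_{(0)} = $ bracket of $\frak g_-$ and $\tau_{(0)} = $ action of $\frak g_-$ on $E$, this shows that the degree-zero (in the modified sense, the lowest) part of $\gamma$ is exactly the bracket of the graded Lie algebra $\frak g$. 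Thus $\gamma$ defines a bracket on $\frak g$ whose associated graded with respect to the filtration $\{F^p\frak g\}$ is the original graded bracket of $\frak g$; this already yields condition (1) ($[F^p L, F^q L]\subset F^{p+q}L$) and the isomorphism $\gr L \simeq \frak g$, while condition (2) ($\dim L/F^0 L = \dim \frak g_- <\infty$) and condition (3) ($\cap_p F^p L = 0$) are immediate from the grading of $\frak g$.

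The substantive points are: (i) $\gamma$ satisfies the Jacobi identity, so $(\frak g,\gamma)$ is genuinely a Lie algebra; (ii) transitivity (condition (4)); and (iii) completeness (condition (5)). For (i), I would use the Bianchi identity of Proposition \ref{prop:properties of structure functions 3}, $\gamma(\gamma(\theta,\theta),\theta) = d\gamma(\theta,\theta)$: since $\gamma$ is constant, $d\gamma(\theta,\theta) = 0$, so $\gamma(\gamma(\theta,\theta),\theta)=0$, which is precisely the Jacobi identity $\mathfrak S_{X,Y,Z}\,\gamma(\gamma(X,Y),Z) = 0$ for all $X,Y,Z\in E\cong\frak g$. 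For (iii), completeness with respect to the $F$-adic topology is automatic because $\frak g = \bigoplus_p \frak g_p$ with each $F^p\frak g = \prod_{q\ge p}\frak g_q$ (in the completed sense used throughout for $\mathscr S Q^{(k)}$), i.e. $\frak g$ is already the projective limit $\lim_\ell \frak g/F^{\ell+1}\frak g$. The hard part will be transitivity: I must show that if $i\ge 0$, $X\in F^i L = \bigoplus_{p\ge i}\frak g_p$ and $\gamma(X, F^a L)\subset F^{i+a+1}L$ for all $a<0$, then $X\in F^{i+1}L$. Writing $X = X_i + (\text{higher})$ with $X_i\in\frak g_i$, the hypothesis forces the leading term: $\gamma_{(0)}(X_i, v) = 0$ for all $v\in\frak g_-$, i.e. $[X_i, \frak g_-]=0$ in $\frak g$ (using that the modified-degree-zero part of $\gamma$ is the $\frak g$-bracket, as established above); by the transitivity axiom (3) of the transitive graded Lie algebra $\frak g$ this gives $X_i = 0$, so $X\in F^{i+1}L$. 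I would double-check the bookkeeping here: the lowest-degree component of $\gamma(X,v)$ for $v\in\frak g_a$ with $a<0$ lives in $\frak g_{i+a}$ and equals $[X_i,v]$, and the condition $\gamma(X,F^aL)\subset F^{i+a+1}L$ says exactly this component vanishes, for every $a<0$, hence $[X_i,\frak g_-]=0$. This closes the argument, and we conclude $L=(\frak g,\gamma)$ is a transitive filtered Lie algebra with $\gr L\simeq\frak g$.
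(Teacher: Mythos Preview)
Your proof is correct and follows essentially the same approach as the paper: the paper's argument (given in the paragraph immediately preceding the proposition) invokes the constancy of $\gamma$ from the previous proposition and then Corollary \ref{cor:simple observation} to conclude that $\sigma_{(d)}=0$ for $d<0$ and that $\tau_{(0)}$, $\sigma_{(0)}$ coincide with the bracket of $\frak g$, after which it simply asserts that $(\frak g,\gamma)$ is a transitive filtered Lie algebra with $\gr L\cong\frak g$. You reproduce this reasoning and, in addition, fill in the details the paper leaves implicit---the Jacobi identity via the Bianchi identity with $d\gamma=0$, the transitivity condition via the leading-term argument reducing to transitivity of the graded Lie algebra $\frak g$, and completeness from the projective-limit description of $E$---so your argument is a faithful and somewhat more explicit version of the paper's own.
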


For a proper geometric structure $Q$, let $\mathrm{LocAut}(Q)$ be the Lie pseudo-group of all local isomorphisms $\varphi$ of $Q$, $\varphi$ being defined on an open set of the base space $M$ of $Q$, and let $\mathrm{InfAut}(Q)$ be the Lie algebra sheaf on $M$ of all infinitesimal automorphisms $X$ of $Q$, $X$ being a local vector field on $M$ such that $\mathrm{Exp} X \in \mathrm{LocAut}(Q)$.

If $Q$ is analytic and $\gamma$ is constant, then $\mathrm{LocAut}(Q)$ and $\mathrm{InfAut}(Q)$ are transitive on $M$ provided that $M$ is connected (Theorem \ref{thm:involutive local equivalence}), so that   each stalk $\mathcal L_x$ of $\mathrm{InfAut}(Q)$ at $x \in M$  is isomorphic to each other. We see that the completion $L_x$ of $\mathcal L_x$ with respect to the natural filtration $\{\mathcal L^p_x\}$ turns out to be   a transitive filtered Lie algebra, which we call the {\it formal algebra} of $\mathrm{InfAut}(Q)$. 


\subsection{Transitive models of geometric structures} $\,$

In the previous subsection we have seen that an involutive geometric structure (or a geometric structure of constant structure function) yields a transitive filtered Lie algebra. In this subsection we  consider the converse.

Let $(L, \{L^p\})$ be a transitive filtered Lie algebra and $\frak g=\oplus \frak g_p$ its associated graded Lie algebra $\gr L$. Choose complementary subspaces $\{\underline{\frak g}_p\}_{p \in \mathbb Z}$ such that
$$L^p = \underline{\frak g}_p \oplus L^{p+1},$$
and then define  $\underline{\gamma} \in \Hom(\wedge^2 \underline{\frak g}, \underline{\frak g})$ by
$$ \underline{\gamma}(X,Y): =[X,Y] \text{ for } X,Y \in \underline{\frak g}=\oplus \underline{\frak g}_p.$$
Then identifying $\underline{\frak g}$ with $\frak g$, we define $\gamma \in \Hom(\wedge^2 \frak g, \frak g)$ to be the bilinear map corresponding to $\underline{\gamma}$.

\begin{theorem} [The 3rd fundamental theorem of Lie] \label{thm:Cartan-Lie} Let $L$ be a transitive filtered Lie algebra and $\frak g=\gr L$ and $\gamma \in \Hom(\wedge^2 \frak g,\frak g)$ as above. Then there exists   an involutive geometric structure  $Q_L$ with constant structure function equal to $\gamma$ such that the (formal) Lie algebra of $\mathrm{InfAut}(Q_L)$ is $L$

\end{theorem}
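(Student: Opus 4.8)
\textbf{Proof proposal for Theorem \ref{thm:Cartan-Lie}.}

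The plan is to build the geometric structure $Q_L$ directly as a principal prolongation tower over a flat model, and then verify that its structure function is exactly $\gamma$ and that its infinitesimal automorphisms recover $L$. First I would produce the base geometric structure of order $0$. Since $\frak g[0] = \frak g_- \oplus \frak g_0$ is a truncated transitive graded Lie algebra (condition (2), (3) of the definition hold because $L$ is transitive filtered), and $\frak g = \gr L = Prol(\frak g[0] ?)$ need not hold in general, so I instead work with a sufficiently high truncation: choose $k$ large enough (using Theorem \ref{vanishing cohomology of higher degree}) so that $\frak g$ is the prolongation of $\frak g[k]$. Let $G_0, \dots, G_k$ be connected Lie groups with Lie algebras $\frak g_0, \dots, \frak g_k$ (take simply connected ones, or the relevant subgroups of $F^iGL(E^{(i-1)})/F^{i+1}$). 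I would take as the underlying filtered manifold $M$ a neighbourhood of the origin in the nilpotent Lie group $N$ with Lie algebra $\frak g_-$, equipped with its standard left-invariant filtration, so that $(M,F)$ has constant symbol $\frak g_-$; then define $Q^{(k)}$ to be the appropriate $G_0 \cdots G_k$-reduction of $\mathscr S^{(k)}$ determined by the frame coming from the left-invariant trivialisation. This $Q^{(k)}$ is a normal geometric structure of type $\frak g[k]$.

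Next I would form the $W$-normal complete step prolongation $\mathscr S_W Q^{(k)} =: Q_L$ using Theorem \ref{thm:W normal prolongation}, where the complementary subspaces $W = \{W^1_\ell, W^2_{\ell+1}\}$ are chosen once and for all for $\frak g$. The key claim is that for this particular $Q^{(k)}$ the truncated structure function $\gamma^{[k]}$ is constant and equal to the restriction of the given $\gamma \in \Hom(\wedge^2\frak g,\frak g)$. This is where the homogeneity/normalisation has to be matched carefully: the left-invariant structure on $N$ has $\kappa_0 = $ bracket of $\frak g_-$ and all higher homogeneous components of $\gamma^{[k]}$ vanishing, i.e. $\gamma^{[k]}$ is flat, hence constant; but to get the structure function equal to the prescribed $\gamma$ rather than merely the flat one, I would instead start not from $N$ but from the homogeneous space associated with the transitive filtered Lie algebra $L$ itself. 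Concretely: let $\bar G$ be a (local) Lie group with Lie algebra $L$ and $\bar G^0$ the subgroup with Lie algebra $L^0$; set $M = \bar G/\bar G^0$, with the filtration induced by $\{L^p\}$. Then $L$ acts on $M$ by infinitesimal automorphisms, and the Maurer--Cartan form of $\bar G$, after the splitting $\{\underline{\frak g}_p\}$, gives exactly the absolute parallelism on (a reduction of the frame bundle tower over) $M$ whose structure function is $\underline\gamma \cong \gamma$. So the real content is: \emph{the canonical $\frak g$-valued $1$-form on this tower coincides with the Maurer--Cartan form of $\bar G$ transported via the splitting}, and its structure equation $d\theta + \tfrac12[\theta,\theta] = 0$ rewritten in the split basis is precisely $d\theta + \tfrac12\gamma(\theta\wedge\theta)=0$.

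Then I would check involutivity: $Q^{(k)}$ is quasi-involutive for $k$ large (Theorem \ref{vanishing cohomology of higher degree}), and $\gamma^{[k]}$ is constant by the previous step, so by Definition \ref{def:involutive} $Q^{(k)}$ is involutive; by the Proposition preceding Definition \ref{def:involutive}, the structure function $\gamma$ of the full prolongation $Q_L = \mathscr S_W Q^{(k)}$ is constant, and by Corollary \ref{cor:simple observation} its components $\sigma_{(d)}$ vanish for $d<0$ while $\tau_{(0)},\sigma_{(0)}$ are the bracket — so $(\frak g,\gamma)$ is a transitive filtered Lie algebra isomorphic to $L$, matching the construction. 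Finally, to identify $\mathrm{InfAut}(Q_L)$: the left-invariant vector fields on $\bar G$ descend to infinitesimal automorphisms of the absolute parallelism $(\mathscr S_W Q^{(k)}, \theta)$ because $\theta$ is left-invariant; conversely an infinitesimal automorphism of $(Q_L,\theta)$ preserves $\theta$ hence is determined by its value at a point and integrates (using the constancy of $\gamma$, i.e. $d\theta$ is expressed through $\theta$) to an element of $L$ — this is the standard argument that the automorphism algebra of a flat/constant absolute parallelism is the structure algebra itself, carried out at the formal level so that completions match. I expect the main obstacle to be the bookkeeping in the second paragraph: verifying that the \emph{canonically} defined Pfaff form on the universal frame tower, after the $W$-normal reduction, is genuinely the Maurer--Cartan form in the chosen splitting — i.e. that the inductive normalisation conditions $\kappa_{[\ell]}\in W^2_\ell$, $\tau_{[\ell]}\in \Hom(E,W^1_{\ell-1})$ are automatically satisfied by the Maurer--Cartan frame, which requires choosing the splittings $\{\underline{\frak g}_p\}$ and $W$ compatibly and invoking the uniqueness clause of Theorem \ref{thm:W normal prolongation}.
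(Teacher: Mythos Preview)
Your proposal has a genuine gap that makes it circular. In the second paragraph you write ``let $\bar G$ be a (local) Lie group with Lie algebra $L$ and $\bar G^0$ the subgroup with Lie algebra $L^0$; set $M = \bar G/\bar G^0$.'' But $L$ is in general an \emph{infinite-dimensional} transitive filtered Lie algebra, and the existence of any such $\bar G$ is exactly the content of the theorem you are trying to prove. The paper says this explicitly just after the statement: Lie's third theorem is well-known in finite dimensions, but ``in the infinite dimensional case, however, we have to state it locally as in the above theorem because we have no satisfactory formulation of infinite dimensional Lie groups.'' So invoking a local Lie group $\bar G$ with Lie algebra $L$ assumes the conclusion. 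Your first attempt (the nilpotent group $N$) correctly produces only the flat model, as you noticed; your second attempt begs the question.

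The paper's route is genuinely different and supplies the missing existence input from analysis rather than algebra. It first replaces the filtration $\{L^p\}$ by an auxiliary depth-one filtration $\{\overline L^p\}$ on the same $L$, then applies Cartan's classical Theorem (C) --- an existence result proved via the Cartan--K\"ahler theorem --- to the triple $(E^{(k-1)},\frak g_k,\gamma^{(k-1)})$ to produce a finite-dimensional manifold $U^{(k)}$ carrying the required $1$-forms. From there it passes to the base $M$ via the induced fibration, and constructs $Q^{(0)}, Q^{(1)},\ldots$ as \emph{orbits} of the transitive pseudo-group $\mathrm{LocAut}(U^{(k)},\theta^{(k-1)})$ inside the successive universal frame bundles, finally step-prolonging to $Q^{(\infty)}$. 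The essential point is that the hard existence step is handled by Cartan--K\"ahler (a PDE argument), not by positing a group; the bookkeeping you flagged about matching $W$-normalisations is secondary to this.
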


This is a local version of Lie's third fundamental theorem that for a Lie algebra $\frak g$ there exists a Lie group $G$ corresponding to $\frak g$, which is well-known if $\frak g$ is finite dimensional.
In the infinite dimensional case, however, we have to state it locally as in the above theorem because we have no satisfactory formulation of infinite dimensional Lie groups. The involutive geometric structure plays a role of local Lie group.

In \cite{C04}  and then in \cite{CII2}, Cartan proved the following theorem in a classical form: \\

\noindent {\bf Theorem (C).} {\it Let $(V,\frak g_0, c)$ be a triple consisting of a finite dimensional vector space $V$, a Lie subalgebra $\frak g_0 \subset \frak{gl}(V)$ and $c \in \Hom(\wedge^2 V, V)$ which satisfy
\begin{enumerate}
\item[(i)] $c \circ c \in \partial \Hom(\wedge^2 V, \frak g_0)$
\item[(ii)] $\rho(A)c \in \partial \Hom(V, \frak g_0)$ for $A \in \frak g_0$
\item[(iii)] $\frak g_0$ is involutive, that is, $H_r^1(V, \frak g_0) =H_{r+1}^2(V, \frak g_0) =0$ for $r >0$,
\end{enumerate}
where the coboundary operator $\partial$ and the cohomology group $H^p_i(V,\frak g_0)$ are those defined by the Spencer complex associated to the prolongation of $(V, \frak g_0)$. Then there exists an analytic manifold $\widetilde M$ equipped with a $V$-valued 1-form $\theta$ and $\frak g$-valued 1-form $\pi$ on $\widetilde M$ satisfying
\begin{enumerate}
\item $d \theta + \frac{1}{2} c(\theta, \theta) + \pi \wedge \theta =0$
\item $(\theta + \pi)_z: T_z \widetilde M \rightarrow V \oplus \frak g$ is an isomorphism for all $z \in \widetilde M$.
\end{enumerate}
}
\vskip 10 pt

The above formulation of Theorem (C) and its rigorous proof based on Cartan-K\"ahler Theorem, may be found in Part 1 of \cite{M69}. It is also proved there Theorem \ref{thm:Cartan-Lie} in the case where the depth of the filtered Lie algebra $(L, \{L^p\})$ is one as follows.

Let $k$ be an integer such that
$$H^1_r((\gr L)_-, \gr L) = H^2_{r+1}((\gr L)_-, \gr L) =0 \text{ for } r >k.$$
Let $\frak g $ be $\gr L$ and  $E^{(k-1)}$ be $\frak g/F^k$. Denote by $\gamma^{(k-1)} \in \Hom(\wedge ^2E^{(k-1)}, E^{(k-1)})$  the truncated bracket of $L$ with respect to an identification of $L/F^k$ with $E^{(k-1)}$.
Then $(E^{(k-1)}, \frak g_k, \gamma^{(k-1)})$ satisfies the assumptions (i) -- (iii) of Theorem (C).
  By Theorem (C), there exist  a manifold $U^{(k)}$ equipped with an $E^{(k)}$-valued 1-form $\theta^{(k-1)}$ and a $\frak g_k$-valued 1-form $\omega_k$ satisfying the conditions (1) and (2) in the theorem.
  The filtration of $E^{(k-1)}$ then induces a filtration of $U^{(k)}$ and it is shown that $U^{(k)}$ can be realized as an open set of an involutive geometric structure of order $k$. \\

\begin{proof} [Proof of Theorem  \ref{thm:Cartan-Lie}]
  For a transitive filtered Lie algebra $(L, \{L^p\} )$ of arbitrary depth, we define a filtration $\{\overline{L}^p\}$ of $L$ of depth one by setting
  \begin{enumerate}
  \item[$\bullet$] $\overline{L}^p = L$ for $p<0$ and $\overline{L}^0 = L^0$;
  \item [$\bullet$] for $p >0$, $\overline{L}^{p+1} = \{ X \in \overline{L}^p: [X,L] \subset \overline{L}^p \}$.
  \end{enumerate}
Then we see that
\begin{enumerate}
\item [(a)] $[\overline{L}^p, \overline{L}^q] \subset \overline{L}^{p+q}$;
\item [(b)] for any $L^p$ there exists $q$ such that $L^p \supset \overline{L}^q$;
\item [(c)] for any $\overline{L}^q$ there exists $p$ such that $\overline{L}^q \supset L^p$.
\end{enumerate}
Thus $(L, \{\overline{L}^p\})$ is a transitive filtered Lie algebra of depth one.
Applying  the result in the above to $(L, \overline{L}^p)$, we get  $(U^{(k)}, \theta^{(k-1)}, \omega_k)$.

Let $\mathrm{LocAut}(U^{(k)}, \theta^{(k-1)})$ be the pseudo-group of local diffeomorphisms of $U^{(k)}$ which preserve $\theta^{(k-1)}$.
Note that it is transitive on $U^{(k)}$ by Theorem \ref{thm:involutive local equivalence}.
On account of the structure equation for $\theta$ and the filtration on $U^{(k)}$, we may assume that there is a fibration $U^{(k)} \stackrel{\pi}{\longrightarrow} M$ with $\dim M = \dim L/L^0$ and $\mathrm{LocAut}(Q^{(k)}, \theta^{(k-1)})$ to $M$ to define a pseudo-group $\mathcal P$ on $M$ isomorphic to $\mathrm{LocAut}(U^{(k)}, \theta^{(k-1)})$.
Moreover we see that there is a filtration $\{F^p\}$ on $M$ invariant by $\mathcal P$ defined from $L^p$ ($p \leq 0$).

Now consider $\mathscr S^{(0)}(M,F)$ and lift $\mathcal P$ to $\mathscr S^{(0)}(M,F)$ to get a pseudo-group $\mathcal P_{\mathscr S^{(0)}}$ on $\mathscr S^{(0)}(M,F)$. Then take one orbit $Q^{(0)}$ and let $\mathcal P_{Q^{(0)}}$ be the restriction of $\mathcal P_{\mathscr S^{(0)}}$ to $Q^{(0)}$. Next consider $\mathscr S^{(1)}Q^{(0)}$ and the lift $\mathcal P_{\mathscr S^{(1)}}$ of $\mathcal P_{Q^{(0)}}$ and take an orbit $Q^{(1)}$. Repeating this process, we obtain an involutive geometric structure $Q^{(\ell)}$ after a finite number of steps. Then by step prolongation of $Q^{(\ell)}$ we get an involutive geometric structure $Q^{(\infty)} = \mathscr S_WQ^{(\ell)}$.

In each of the above construction we can choose an orbit $Q^{(i)}$ so that the structure function of $Q^{(\infty)}$ coincides with the prescribed $\gamma$.
This completes the proof of Theorem  \ref{thm:Cartan-Lie}.
\end{proof}

The involutive geometric structure $Q_L=Q^{(\infty)}$ thus constructed may be called the {\it transitive model corresponding to} $(L, \gamma)$. Then by Theorem \ref{thm:involutive local equivalence} which we will prove in Section \ref{sect:equivalence},  we have:

\begin{theorem} The transitive model $Q_L$ is uniquely determined up to local analytic isomorphism.
\end{theorem}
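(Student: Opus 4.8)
The statement asserts uniqueness up to local analytic isomorphism of the transitive model $Q_L$ attached to a transitive filtered Lie algebra $(L,\gamma)$. The plan is to reduce this to the already-announced local equivalence theorem for involutive geometric structures (Theorem~\ref{thm:involutive local equivalence}), which says precisely that if two normal geometric structures are involutive and their structure functions agree (in the appropriate sense), then they are locally analytically isomorphic. So the first step is to recall that, by construction, $Q_L = Q^{(\infty)} = \mathscr S_W Q^{(\ell)}$ is an involutive geometric structure whose structure function $\gamma$ is \emph{constant} and equal to the prescribed bilinear map coming from $(L,\gamma)$; this was built into the last paragraph of the proof of Theorem~\ref{thm:Cartan-Lie}. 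In particular $\gamma^{[k]}$ is constant for every $k$, so the involutivity hypothesis of Definition~\ref{def:involutive} is met.

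The second step is to argue that the local analytic isomorphism class of $Q_L$ does not depend on the choices made in its construction, namely (i) the choice of complementary subspaces $\{\underline{\frak g}_p\}$ used to define $\underline\gamma$, (ii) the choice of the integer $k$ beyond which the Spencer cohomology vanishes, (iii) the choice of the manifold $(U^{(k)},\theta^{(k-1)},\omega_k)$ furnished by Theorem~(C), and (iv) the successive choices of orbits $Q^{(0)},Q^{(1)},\dots,Q^{(\ell)}$ in the step-by-step lifting procedure. For (i): two different splittings $\{\underline{\frak g}_p\}$ and $\{\underline{\frak g}_p'\}$ produce bilinear maps $\gamma$ and $\gamma'$ on $\frak g$ that are related by an automorphism of the filtered vector space $\frak g$ induced by an element of $F^1 GL$, hence by a change of frame on the completed frame bundle; thus the corresponding $W$-normal step prolongations carry the \emph{same} structure function after this identification, and one invokes Theorem~\ref{thm:isomorphism via canonical class of any order} (equivalence of the geometric structure reduces to equivalence of the absolute parallelism) together with Proposition~\ref{prop:structure functions under the action}. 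For (iv): any two orbits of the lifted pseudo-group are interchanged by an element of the pseudo-group itself (the lift $\mathcal P_{\mathscr S^{(i)}}$ acts, and two orbits chosen so as to realize the same $\gamma$ are in the same $\overline G$-orbit), so the resulting geometric structures are isomorphic; one carries this through inductively in $i$.

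The third and decisive step is to handle two \emph{a priori} different transitive models $Q_L$ and $Q_L'$, both with constant structure function equal to the given $\gamma$. Here one applies Theorem~\ref{thm:involutive local equivalence} directly: both are normal, both are quasi-involutive (the cohomology vanishing for $r>k$ was used to choose $k$), both have constant structure function, and by hypothesis these constant structure functions coincide as elements of $\cHom(\wedge^2 E,E)$. Hence there is an isomorphism of the absolute parallelisms $(\mathscr S_W Q_L,\theta)$ and $(\mathscr S_W Q_L',\theta)$ near corresponding points, which by Theorem~\ref{thm:W normal prolongation} descends to a local analytic isomorphism of $Q_L$ onto $Q_L'$. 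Combining with Step~2, any two instances of the construction produce locally analytically isomorphic results, which is the assertion.

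The main obstacle I anticipate is \emph{not} Step~3 — that is essentially a citation of Theorem~\ref{thm:involutive local equivalence} — but the bookkeeping in Step~2, specifically verifying that the constant structure function $\gamma$ of $Q_L$ genuinely does not change under the splitting choice (i) and the orbit choices (iv). The subtlety is that different choices move one around within a fiber of a frame bundle, and one must check that the \emph{value} of the structure function, viewed in the quotient space $\cHom(\wedge^2 E,E)^{[\ell]}$ modulo the $\overline G$-action, is unchanged; this is where Proposition~\ref{prop:structure functions under the action} and the $W$-normality conditions (which pin down a unique representative in each $\overline G$-orbit via the complementary subspaces $W$) do the real work. Once one fixes the same $W$ throughout, the $W$-normal reduction kills exactly the ambiguity, and the argument closes.
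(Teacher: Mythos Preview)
Your approach is correct and, at its core, identical to the paper's: the decisive step is your Step~3, the direct application of Theorem~\ref{thm:involutive local equivalence} to two involutive structures with the same constant structure function~$\gamma$. The paper's proof consists of exactly that citation and nothing more.

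Your Step~2 is more than the paper does and more than is needed. The construction in Theorem~\ref{thm:Cartan-Lie} already arranges that the structure function of $Q_L$ equals the \emph{prescribed} $\gamma$ (last sentence before the statement of the theorem: ``we can choose an orbit $Q^{(i)}$ so that the structure function of $Q^{(\infty)}$ coincides with the prescribed~$\gamma$''). Hence any two instances of the construction, whatever intermediate choices were made, produce involutive structures with the \emph{same} constant structure function, and Theorem~\ref{thm:involutive local equivalence} applies immediately. There is no separate bookkeeping to do for choices (ii)--(iv). As for (i), the splitting choice: the transitive model is defined as corresponding to the pair $(L,\gamma)$, not to $L$ alone, so the theorem as stated does not claim independence from the splitting; your worry there addresses a stronger statement than what is asserted.
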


\section{Equivalence problems} \label{sect:equivalence}

In the previous section, we have studied how to obtain the invariants of a geometric structure. In this section we study the converse, that is, we consider whether the invariants that we have found in  Section \ref{sec:Invariants} is sufficient to determine the equivalence.


%

\subsection{Formal equivalence} $\,$

 Let $X$ be a differentiable or analytic manifold and $(x^1, \dots, x^n)$ be a local coordinate system. Given a differentiable function $f$ in a neighborhood of a point $p \in X$, the series of derivatives $\left\{ \frac{\partial^{i_1 + \dots + i_n} f}{\partial (x^1)^{i_1} \dots \partial (x^n)^{i_n}} (p)\right\}$ determines a {\it formal structure at} $p$, which we denote by $f[[X,p]]$ or $f[[p]]$. 

Let $Y$ be another manifold with a coordinate system $(y^1, \dots, y^n)$ and $h[[Y,q]]$ be a formal structure at $q$. We say $f[[X,p]]$ and $h[[Y,q]]$ are {\it formally equivalent} and write $f[[X,p]] \cong h[[Y,q]]$ if there exists a formal (power series) invertible map $\varphi:X \rightarrow Y$ given by a formal power series $x^i=\varphi^i(y^1, \dots, y^n)$ ($i=1, \dots, n$) such that $\varphi^*h[[Y,q]] = f[[X,p]]$.

Let $Q^{(k)}$ be a geometric structure of type $\frak g[k]$. 
Denote by $Q^{(\ell)}$ ($\ell >k$) and $Q$ the $W$-normal step prolongation $\mathscr S^{(\ell)}_W Q^{(k)} $ and $\mathscr S_W Q^{(k)} $.
%
Then the formal structure $f[[ z]]$ at $z \in Q$ may be given by the series of covariant derivatives evaluated at $z$:
$$\{ (D^if)(z), i=0,1,2, \dots \}. $$

Furthermore, all the relations that the derivatives $(D^if)(z)$ should satisfy are uniquely determined from $\{ (D^i\gamma)(z)\}$. Indeed, let $\{\theta^1, \dots, \theta ^n\}$ be an absolute parallelism with
$$d \theta^k + \frac{1}{2} \sum_{i,j} \gamma^k_{ij} \theta^i \wedge \theta^j =0, $$
and, for a function $f$, define
\begin{eqnarray*}
df&=& \sum_i f_i \theta^i \\
df_i &=& \sum_{i,j} f_{ij} \theta^j \qquad etc.
\end{eqnarray*}
Then all the relations among $f_{ij \dots k}$ come  from $d^2f=0, d^2f_i=0, ...$.

From $d^2f=0$ it follows
$$f_{ij} - f_{ji} =\sum_k \gamma^k_{ij} f_k.$$
Similarly all the relations among $f_{ij\dots k}$ are determined by $\gamma^k_{ij}$ and their derivatives.

\begin{theorem}\label{thm:Theorem 6.1 formal equivalence}
Let $Q^{(k)}$ and $\overline{Q}^{(k)}$ be proper geometric structures of the same type, and let $\mathscr S_WQ^{(k)}$ and $\mathscr S_W\overline{Q}^{(k)}$ be their $W$-normal complete prolongations. 
Let $(z, \overline{z}) \in \mathscr S_WQ^{(k)} \times \mathscr S_W\overline{Q}^{(k)}$.  If $(D^i \gamma)(z) = \overline{D}^i \overline{\gamma}(\overline{z})$ for all $i \geq 0$, then there exists a formal isomorphism
$$\Phi^{(k)}:[[Q^{(k)}, z^{(k)}]] \rightarrow [[\overline{Q}^{(k)}, \overline{z}^{(k)}]]$$
where $(z^{(k)}, \overline{z}^{(k)}) \in Q^{(k)} \times \overline{Q}^{(k)}$ is the projection of $(z, \overline{z})$.
\end{theorem}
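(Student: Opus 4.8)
The plan is to build the formal isomorphism $\Phi^{(k)}$ by first constructing a formal isomorphism $\Phi$ between the completed step prolongations $\mathscr S_W Q^{(k)}$ and $\mathscr S_W \overline{Q}^{(k)}$ at the level of the absolute parallelisms $(\mathscr S_W Q^{(k)},\theta)$ and $(\mathscr S_W\overline{Q}^{(k)},\overline\theta)$, and then pushing it down to the base level via Theorem \ref{thm:W normal prolongation}. So the heart of the matter is the following statement about absolute parallelisms: if two manifolds carry absolute parallelisms whose structure functions agree, together with all covariant derivatives, at a pair of points $(z,\overline z)$, then the germs of the two parallelisms are formally equivalent at $(z,\overline z)$ by a formal map sending $\theta$ to $\overline\theta$. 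This is the classical fact underlying Sternberg's formal Frobenius-type argument; I would invoke or reprove it here, adapting it to the projective-limit setting as in Section 2.1 of \cite{M93}.

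First I would set up formal coordinates adapted to the parallelism: choose a frame $\{\theta^1,\dots,\theta^n,\dots\}$ (infinitely many, indexed by a basis of $E$) on $\mathscr S_W Q^{(k)}$ near $z$, and correspondingly on $\mathscr S_W\overline{Q}^{(k)}$ near $\overline z$, with structure equations $d\theta^a + \tfrac12\sum \gamma^a_{bc}\theta^b\wedge\theta^c = 0$. The candidate formal map $\Phi$ is determined by requiring $\Phi^*\overline\theta^a = \theta^a$ for all $a$; the point is that the Taylor coefficients of $\Phi$ at $z$ are then uniquely and consistently determined by the hypothesis $(D^i\gamma)(z)=\overline D^i\overline\gamma(\overline z)$. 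Concretely, as the discussion preceding the theorem records, from $d^2f=0$ one gets $f_{ij}-f_{ji}=\sum_k\gamma^k_{ij}f_k$ and inductively all relations among the iterated covariant derivatives $f_{ij\cdots k}$ are polynomial expressions in the $\gamma^k_{ij}$ and their covariant derivatives; applying this to the coordinate functions themselves produces the formal power series for $\Phi$, and the integrability ($d^2=0$) conditions that must hold for this series to define a genuine formal map are exactly the Bianchi-type identities $\gamma\circ\gamma - D\gamma = 0$ (Proposition \ref{prop:properties of structure functions 3}), which hold on both sides by construction. Since the structure functions and all their derivatives match at the base points, the series built on the $\mathscr S_W Q^{(k)}$ side and the one built on the $\mathscr S_W\overline{Q}^{(k)}$ side are compatible, yielding the formal equivalence $\Phi:[[\mathscr S_W Q^{(k)},z]]\to[[\mathscr S_W\overline{Q}^{(k)},\overline z]]$ with $\Phi^*\overline\theta=\theta$.

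Next I would descend. By Theorem \ref{thm:W normal prolongation} (or more precisely the non-analytic, formal analogue of its part (2), which holds by the same fiber-by-fiber argument once one works with formal germs and connected structure groups), a formal diffeomorphism of the step prolongations preserving the canonical Pfaff form $\theta$ — equivalently, preserving all the canonical Pfaff classes $[\theta^{(\ell-1)}]$ — induces a formal isomorphism $\varphi^{(k)}:[[Q^{(k)},z^{(k)}]]\to[[\overline{Q}^{(k)},\overline z^{(k)}]]$ of the geometric structures, whose canonical lift restricts to $\Phi$; this $\varphi^{(k)}$ is the desired $\Phi^{(k)}$. Here one uses that the fibers of each $\mathscr S_W^{(\ell)}Q^{(k)}\to\mathscr S_W^{(\ell-1)}Q^{(k)}$ are the (formal) leaves of $\theta^{(\ell-1)}=0$, so that $\Phi$ respects every projection in the tower, exactly as in the proof of Proposition \ref{prop:isomorphism via canonical class}.

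I expect the main obstacle to be a technical, not a conceptual, one: making the formal-power-series bookkeeping rigorous in the infinite-dimensional projective-limit setting. One must check that the iterated covariant derivatives $(D^i f)(z)$ for coordinate functions only involve finitely many of the $\theta^a$ at each order (so that truncating at $\mathscr S_W^{(\ell)}Q^{(k)}$ for $\ell$ large captures the order-$i$ jet), that the polynomial recursions determining the Taylor coefficients of $\Phi$ terminate at each order, and that the resulting formal map is filtration-preserving so that it genuinely descends through the whole tower. All of this is available via the finite-dimensionality of each $\mathscr S_W^{(\ell)}Q^{(k)}$ and the convention of Section 2.1 of \cite{M93}, but it is the step that requires care. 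The analytic convergence question does not arise here, since the theorem asserts only formal equivalence.
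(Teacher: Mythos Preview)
Your proposal is correct and follows essentially the same two-step architecture as the paper: first build a formal isomorphism at the level of the absolute parallelisms $(\mathscr S_W Q^{(k)},\theta)$ and $(\mathscr S_W\overline Q^{(k)},\overline\theta)$, then descend to $Q^{(k)}$ via the categorical equivalence of Theorem~\ref{thm:W normal prolongation}/Proposition~\ref{prop:isomorphism via canonical class}.

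The one place where the paper's execution is cleaner than yours is the construction of the formal isomorphism at the top. Rather than solving directly for the Taylor coefficients of $\Phi$ subject to $\Phi^*\overline\theta=\theta$ (which, as you note, requires bookkeeping with the polynomial recursions coming from $d^2=0$), the paper packages the argument as follows. It defines a map
\[
\varepsilon:\ f\in[[Q,z]]\ \longmapsto\ \bigl((D^i f)(z)\bigr)_{i\ge0}\ \in\ \textstyle\bigotimes E^*,
\]
observes that the image $(\otimes E^*)_\gamma$ is cut out precisely by the relations coming from $\{(D^i\gamma)(z)\}$, and does the same on the $\overline Q$ side. The hypothesis $(D^i\gamma)(z)=(\overline D^i\overline\gamma)(\overline z)$ forces $(\otimes E^*)_\gamma=(\otimes E^*)_{\overline\gamma}$, so $\Psi:=\overline\varepsilon^{-1}\circ\varepsilon$ is a ring isomorphism $[[Q,z]]\to[[\overline Q,\overline z]]$; one then checks $\Psi^*\overline\theta=\theta$ a posteriori using coordinates. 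This sidesteps the explicit recursion and the ``termination at each order'' issue you flag, since $\varepsilon$ is manifestly well defined and the relations on both sides coincide by hypothesis. Your approach and the paper's are logically equivalent, but the $\varepsilon$-trick makes the infinite-dimensional bookkeeping essentially invisible.
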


\begin{proof}
Let $Q$ denote $\mathscr S_WQ^{(k)}$ and $[[Q, z]]$ the formal power series ring of $Q$. Define
$$\varepsilon:f \in [[Q,z]] \mapsto   \left((D^if)(z)\right)_{i \geq 0}  \in
\otimes E^*, $$ 
where $E$ is the vector space in which $\theta$ takes values.
All the relations that the derivatives $(D^if)(z)$ should satisfy are uniquely determined from $\{ (D^i\gamma)(z)\}$  and so is the image of $\varepsilon$, which we denote by $(\otimes E^*)_{\gamma}$.

With a similar notation for $\mathscr S_W\overline{Q}^{(k)}$, we have an isomorphism
$$\overline{\varepsilon}:[[\overline{Q}, \overline{z}]] \rightarrow (\otimes E^*)_{\overline{\gamma}}.$$
But, by the assumption that $(D^i \gamma)(z) = (\overline{D}^i\overline{\gamma})(\overline{z})$ for $i \geq 0$, we see that
$(\otimes E^*)_{\gamma} = (\otimes E^*)_{\overline{\gamma}}$.

Now define an isomorphism $\Psi$ by the following commutative diagram:
\begin{eqnarray*}
\xymatrix{
[[Q,z]] \ar[r]^{\Psi} \ar[d]^{\varepsilon} & [[\overline{Q}, \overline{z}]] \ar[d]^{\overline{\varepsilon}} \\
(\otimes E^*)_{\gamma} \ar[r]^{id} & (\otimes E^*)_{\overline{\gamma}}.
}
\end{eqnarray*}
Then the ring isomorphism satisfies
$$\Psi^* \overline{\theta}= \theta.$$

Take a coordinate system $x=(x^1, x^2, \dots)$ of $Q$ and let $\overline{x}$ be the corresponding coordinate system of $\overline{Q}$. Then
$$dx = (Dx)\circ \theta \text{ and } d \overline{x} = (\overline{D} \overline{x}) \circ \overline{\theta}$$
and $dx = \Phi^*d \overline{x}$ and $Dx = \Phi^*\overline {D}\overline{x}$. Furthermore,  $Dx$ and $\overline{D}\overline{x}$ are invertible. Hence we conclude $\Psi^* \overline{\theta} = \theta$.
Thus we have a formal isomorphism
$$\Psi:[[\mathscr S_WQ^{(k)}, \theta, z]] \rightarrow [[\mathscr S_W\overline{Q}^{(k)}, \overline{\theta}, \overline{z}]].$$

On the other hand, we have the following categorical isomorphisms:
$$(Q^{(k)}, geom. str.) \simeq (\mathscr S_WQ^{(k)}, geom. str.) \simeq (\mathscr S_W Q^{(k)}, \theta).$$
Therefore, the formal isomorphism $\Psi$ induces a formal isomorphism
$$\Psi: [[Q^{(k)}, geom. str., z^{(k)}]] \rightarrow [[\overline{Q}^{(k)}, geom. str., \overline{z}^{(k)}]].$$
\end{proof}

\begin{corollary}
Let $Q^{(k)}$ and $\overline{Q}^{(k)}$ be proper geometric structures of the same type, and let $\mathscr S_WQ^{(k)}$ and $\mathscr S_W\overline{Q}^{(k)}$ be their $W$-normal complete prolongations. 
If $ \chi(z) = \overline{\chi}(\overline{z})$
for any essential invariant $\chi $ of $ Q^{(k)}$ and the corresponding one $ \overline{\chi}$ of $ \overline {Q}^{(k)}$
for a pair $(z, \overline{z}) \in \mathscr S_WQ^{(k)} \times \mathscr S_W\overline{Q}^{(k)}$,
then there exists a formal isomorphism
$$\Phi^{(k)}:[[Q^{(k)}, z^{(k)}]] \rightarrow [[\overline{Q}^{(k)}, \overline{z}^{(k)}]]$$
where $(z^{(k)}, \overline{z}^{(k)}) \in Q^{(k)} \times \overline{Q}^{(k)}$ is the projection of $(z, \overline{z})$.
\end{corollary}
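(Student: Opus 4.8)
The plan is to deduce this Corollary directly from Theorem \ref{thm:Theorem 6.1 formal equivalence} together with the structure of the fundamental system of invariants established in Section \ref{sec:complete system of invariants}. The key point is that Theorem \ref{thm:Theorem 6.1 formal equivalence} requires the full data $\{(D^i\gamma)(z)\}_{i\ge 0}$ to agree at the two base points, whereas the Corollary only hypothesizes agreement of the essential invariants $\chi$. So the entire content of the proof is the implication: if the essential invariants of $Q^{(k)}$ and $\overline Q^{(k)}$ agree at $(z,\overline z)$, then in fact all the $(D^i\gamma)(z)$ agree at $(z,\overline z)$.

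First I would recall that, by Theorem \ref{vanishing cohomology of higher degree} (Theorem of finitude) applied to the prolongation $\frak g$ of $\frak g[k]$, there is an integer $\nu\ge k$ such that $H^1_r(\frak g_-,\frak g)=H^2_{r+1}(\frak g_-,\frak g)=0$ for $r\ge\nu$, i.e. $\mathscr S_W^{(\nu)}Q^{(k)}$ is quasi-involutive; and that $I^1,I^2$ as in Theorem \ref{thm:complete system of invariants} are finite sets. By the fundamental identities (Theorem \ref{thm:fundamental identities}), combined with the discussion preceding Theorem \ref{thm:complete system of invariants}: for $\ell>\nu$ the functions $\kappa_{[\ell]},\tau_{[\ell]},\sigma_{[\ell]}$ and all their covariant derivatives are expressed as universal polynomials (depending only on $\frak g$) in the lower-order pieces $\{\kappa_{[j]},\tau_{[j]},\sigma_{[j]};\,j\le\nu\}$ and their covariant derivatives; and within the range $j\le\nu$, whenever $H^2_j=0$ the map $\partial$ is injective on $W^2_j$ so $\partial\kappa_{[j]}$ determines $\kappa_{[j]}$ (hence $\kappa_{[j]}$ is a universal polynomial in strictly lower data), and similarly $\tau_{[j]}(A,\cdot)$ is determined by $\partial\tau_{[j]}(A,\cdot)$ whenever $H^1_{j-1}=0$. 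Since $\sigma$ is by definition the $\Hom(\wedge^2 E_+,E)$-component of $\gamma$, one also checks via Theorem \ref{thm:fundamental identities} (3)--(4) that all pieces of $\sigma$ are universal polynomials in the $\kappa$'s and $\tau$'s and their covariant derivatives. Iterating this reduction downward, every $(D^i\gamma)(z)$ is a fixed universal polynomial expression (the same polynomial for $Q^{(k)}$ and $\overline Q^{(k)}$, since it depends only on $\frak g[k]$) in the values $\{(D^m\kappa_{[i]})(z):i\in I^2\}\cup\{(D^m\tau_{[i]})(z):i\in I^1\}$, i.e. in the essential invariants evaluated at $z$.

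Granting this, the proof concludes as follows. Let $(z,\overline z)\in\mathscr S_WQ^{(k)}\times\mathscr S_W\overline Q^{(k)}$ be a pair at which every essential invariant of $Q^{(k)}$ equals the corresponding essential invariant of $\overline Q^{(k)}$. By the previous paragraph, for each $i\ge 0$ the value $(D^i\gamma)(z)$ is obtained from the essential invariants at $z$ by a universal polynomial, and $(\overline D^i\overline\gamma)(\overline z)$ is obtained from the essential invariants at $\overline z$ by the \emph{same} universal polynomial; hence $(D^i\gamma)(z)=(\overline D^i\overline\gamma)(\overline z)$ for all $i\ge 0$. Now apply Theorem \ref{thm:Theorem 6.1 formal equivalence} to obtain a formal isomorphism
$$\Phi^{(k)}:[[Q^{(k)},z^{(k)}]]\rightarrow[[\overline Q^{(k)},\overline z^{(k)}]],$$
where $(z^{(k)},\overline z^{(k)})$ is the projection of $(z,\overline z)$, which is exactly the assertion.

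The main obstacle, and the step requiring the most care, is making precise and rigorous the claim that every $(D^i\gamma)(z)$ is a universal polynomial in the essential invariants. One must track carefully the interplay of the three families of identities in Theorem \ref{thm:fundamental identities}, the finiteness of $I^1$ and $I^2$, the injectivity of $\partial$ on the complementary subspaces $W^1_\ell,W^2_{\ell+1}$ in the quasi-involutive range, and the fact that $\tau_{(0)}$ and $\sigma_{(0)}$ are forced to be the Lie bracket of $\frak g$ (so contribute no genuine invariants) — this is essentially a careful bookkeeping argument, of the type already carried out in the proof of Theorem \ref{thm:complete system of invariants}, and indeed the cleanest exposition simply invokes that theorem: the essential invariants form a fundamental system of invariants, meaning precisely that they determine $\{(D^i\gamma)(z)\}_{i\ge0}$, so the hypothesis of Theorem \ref{thm:Theorem 6.1 formal equivalence} is met.
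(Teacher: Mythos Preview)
Your proposal is correct and matches the paper's intended argument. The paper gives no explicit proof for this corollary, treating it as immediate from Theorem \ref{thm:complete system of invariants} (the essential invariants form a fundamental system, hence determine all $(D^i\gamma)(z)$ through universal polynomials depending only on $\frak g$) combined with Theorem \ref{thm:Theorem 6.1 formal equivalence}; your write-up spells out precisely this deduction.
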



\subsection{Involutive geometric structures} $\,$

The quasi-involutivity is not enough to solve the equivalence problem for geometric structure of infinite type. It is    the involutive geometric structure that we can solve the equivalence problem perfectly.

\begin{theorem} \label{thm:involutive local equivalence}
Let $Q^{(k)}$ and $\overline{Q}^{(k)}$ be proper geometric structures of type $\frak g[k]$ with structure functions $\gamma^{[k]} $ and $\overline{\gamma}^{[k]} $. Assume that $Q^{(k)}$ and $\overline{Q}^{(k)}$ are involutive and $\gamma^{[k]}  =\overline{\gamma}^{[k]} $.
\begin{enumerate}
\item If, moreover, $\frak g(k)$ is of finite type, then in the $C^{\infty}$-category, there exists a $C^{\infty}$-local isomorphism $(Q^{(k)}, z^{(k)}_Q) \rightarrow (\overline{Q}^{(k)}, z^{(k)}_{\overline{Q}})$ for any $(z^{(k)}_Q, z^{(k)}_{\overline{Q}}) \in Q^{(k)} \times \overline{Q}^{(k)}$.

\item In the analytic category, there exists an analytic local isomorphism $(Q^{(k)},z^{(k)}_Q) \rightarrow (\overline{Q}^{(k)}, z^{(k)}_{\overline{Q}})$ for any $(z^{(k)}_Q, z^{(k)}_{\overline{Q}}) \in Q^{(k)} \times \overline{Q}^{(k)}$.
\end{enumerate}
\end{theorem}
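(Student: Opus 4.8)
The plan is to reduce the statement to one about absolute parallelisms with constant structure function and then to integrate: by the Frobenius theorem in the finite type case and by the Cartan--K\"ahler theorem in the analytic case. First I would pass to the $W$-normal complete step prolongations. Write $N=\mathscr S_WQ^{(k)}$ and $\overline N=\mathscr S_W\overline Q^{(k)}$, with canonical forms $\theta,\overline\theta$ and structure functions $\gamma,\overline\gamma$. Being involutive, $Q^{(k)}$ and $\overline Q^{(k)}$ are quasi-involutive with constant $\gamma^{[k]}=\overline\gamma^{[k]}$; by the Proposition preceding Definition \ref{def:involutive} the full structure functions $\gamma,\overline\gamma$ are then constant on $N,\overline N$. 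Moreover, under quasi-involutivity the fundamental identities (Theorem \ref{thm:fundamental identities}), together with Corollary \ref{cor:simple observation}, express $\kappa_{[\ell]},\tau_{[\ell]},\sigma_{[\ell]}$ for $\ell>k$ recursively through $\partial$ and universal polynomials in the lower order components (the covariant derivative terms all vanish because everything is constant); since $H^2_{\ell}(\frak g_-,\frak g)=0$ and $H^1_{\ell-1}(\frak g_-,\frak g)=0$, $\partial$ is injective on the subspaces $W^2_\ell,W^1_{\ell-1}$ in which $\kappa_{[\ell]},\tau_{[\ell]}$ take values, so $\gamma$ is uniquely determined by $\gamma^{[k]}$. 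Hence $\gamma=\overline\gamma$ as a single constant element of $\cHom(\wedge^2E,E)$. By Theorem \ref{thm:W normal prolongation}(2) any local isomorphism of the parallelisms $(N,\theta)\to(\overline N,\overline\theta)$ over the given base points descends to the desired local isomorphism $Q^{(k)}\to\overline Q^{(k)}$ (only the germ near the base points is needed, so the connectedness of $G_0,\dots,G_k$ causes no difficulty). It thus remains to construct such a $\varphi$.

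In the finite type case $N,\overline N$ are finite dimensional (Corollary \ref{cor:finite dimensional absolute parallelism}). On $N\times\overline N$ consider the Pfaff system generated by the $E$-valued $1$-form $\eta:=\theta-\overline\theta$ (pullbacks along the projections understood). Since $\gamma=\overline\gamma$ is constant, $d\eta=-\tfrac12\gamma(\theta,\theta)+\tfrac12\gamma(\overline\theta,\overline\theta)=-\tfrac12\gamma(\eta,\theta+\overline\theta)$, which is $\equiv 0\pmod{\eta}$; hence the system is Frobenius integrable. The integral manifold through the chosen pair of points has dimension $\dim N$ and, since $\theta$ and $\overline\theta$ are coframes, projects as a local diffeomorphism onto each factor; it is therefore the graph of a $C^\infty$ local diffeomorphism $\varphi\colon N\to\overline N$ with $\varphi^*\overline\theta=\theta$. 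This proves (1).

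For the analytic statement (2), when $\frak g$ is of infinite type $N\times\overline N$ is an infinite dimensional (pro-finite) manifold and the Frobenius theorem does not apply directly; instead I would integrate the same exterior differential system $\{\eta=0\}$ by the Cartan--K\"ahler theorem, in the form developed in \cite{M69} and \cite{M93}. By the finitude theorem (Theorem \ref{vanishing cohomology of higher degree}) there is an order past which every step of the step prolongation is purely algebraic (no freedom in $\mathscr S^{(\ell+1)}_W$ nor in $\gamma_{[\ell+1]}$), so the integration problem already lives on a finite dimensional truncation $\mathscr S^{(\nu)}_WQ^{(k)}\times\mathscr S^{(\nu)}_W\overline Q^{(k)}$; there the successive tableaux of $\{\eta=0\}$ are controlled by the Spencer groups $H^1_\ell(\frak g_-,\frak g)$ and $H^2_{\ell+1}(\frak g_-,\frak g)$ for $\ell\ge k$, all of which vanish by quasi-involutivity, so the system is in involution. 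Cartan--K\"ahler then produces an analytic integral manifold through the prescribed point, i.e.\ the graph of an analytic local isomorphism of the truncations, which after canonical prolongation (Theorem \ref{thm:W normal prolongation}) and descent yields the required analytic local isomorphism of $Q^{(k)}$ with $\overline Q^{(k)}$. Alternatively, Theorem \ref{thm:Theorem 6.1 formal equivalence} already supplies a formal isomorphism at the base points, and Cartan--K\"ahler serves to upgrade it to a convergent one.

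The main obstacle is this last analytic, infinite type case: one has to verify that the equivalence system is genuinely in involution in the Cartan--K\"ahler sense — which is exactly where the vanishing $H^1_\ell(\frak g_-,\frak g)=H^2_{\ell+1}(\frak g_-,\frak g)=0$ $(\ell\ge k)$ intervenes — and to justify carefully that it suffices to integrate up to a bounded order, the remainder of the prolongation tower being forced algebraically and contributing nothing to the integrability analysis. The finite type case, by contrast, is routine once the reduction to a parallelism with constant structure function is in hand.
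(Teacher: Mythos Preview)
Your treatment of the finite type case (1) is essentially the paper's: reduce to the complete $W$-normal prolongation, use that $\gamma=\overline\gamma$ is constant, and invoke the second fundamental theorem of Lie (Frobenius on the graph system $\theta-\overline\theta=0$).

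For the analytic infinite type case (2), the paper takes a different and more concrete route than the EDS $\{\eta=0\}$ on the prolongation bundles. It sets up the equivalence problem as a PDE system on the \emph{base} manifolds: working with weighted jets $\mathscr J^{(\ell)}(M\times\overline M)$, it defines $\mathcal R^{(\ell)}\subset\mathscr J^{(\ell)}(M\times\overline M)$ by the condition that $\mathscr S^{(\ell)}\varphi$ carry $Q^{(\ell)}$ into $\overline Q^{(\ell)}$, exhibits a bijection $\mathcal R^{(\ell)}_x\cong\overline Q^{(\ell)}$ for $\ell\ge k$, and then uses the formal equivalence result (Theorem~\ref{thm:Theorem 6.1 formal equivalence}) to show every $u^\ell\in\mathcal R^{(\ell)}$ is strongly prolongable. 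A generalized Cartan--K\"ahler theorem in the nilpotent setting (\cite{M90}) then produces a formal Gevrey solution, and a separate analyticity theorem for vector fields satisfying the H\"ormander condition upgrades it to an analytic one. The paper also notes an alternative proof via geometric reduction to trivial filtration (depth~1), referring to \cite{M93}.

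One point in your sketch is inaccurate: the finitude theorem gives vanishing of $H^1_\ell,H^2_{\ell+1}$ past some order, not triviality of $\frak g_{\ell+1}$. In the infinite type case each $\mathscr S^{(\ell+1)}_W\to\mathscr S^{(\ell)}_W$ still has a nontrivial fibre $G_{\ell+1}$, so there \emph{is} freedom in $\mathscr S^{(\ell+1)}_W$; what is determined is only the structure function $\gamma_{[\ell+1]}$. Your intuition that involution is a finite-order condition is correct, but the stated reason is not. More substantively, the classical Cartan--K\"ahler theorem does not apply directly when the underlying filtration has depth $>1$; the paper addresses this explicitly (either reduce to depth~1, or use the nilpotent Cartan--K\"ahler of \cite{M90}, the latter requiring that $\frak g_-$ be generated by $\frak g_{-1}$), whereas your proposal leaves this point unresolved.
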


\begin{proof}
The assertion (1) is easily obtained by applying Frobenius Theorem. In fact, it follows immediately from the second fundamental theorem of Lie: Two absolute parallelism $(Q, \theta )$ and $(\overline{Q}, \overline{\theta})$ with constant structure functions $\gamma $ and $\overline{\gamma}$ are locally isomorphic if and only if $\gamma  = \overline{\gamma} $.

The assertion (2) is more involved because we have to deal with general system of partial differential equations.

In the case where the filtration of the base manifold is trivial, it was first proved by Cartan (\cite{C04}, \cite{C05}, \cite{C08}) by using nowadays called Cartan-K\"ahler theorem, and rigorously settled by Singer-Sternberg (\cite{SS65}).

In the case of general filtered manifold of depth $>1$, we have two proofs: One is to reduce the proof to the case of trivial  filtration by geometric consideration. This kind of proof was given in Theorem 3.6.2 of  \cite{M93}. 

 Another proof uses, instead of the classical Cartan-K\"ahler theorem, its nontrivial generalization to a setting of nilpotent analysis (\cite{M90}, Theorem 3.3 of \cite{M02}).
 This proof has the advantage that it makes distinct and clear geometric,
algebraic, and analytic essences which are mixed in the classical Cartan -K\"ahler theorem.
%

However, because of the nature of this theory, we have to impose an additional assumption that the base filtered manifold satisfy the H\"ormander condition,  that is, $\frak g_-$ is generated by $\frak g_{-1}$.

 Now let us give the second proof.






Let $Q$ and $\overline{Q}$ be geometric structures on filtered manifolds $(M,F)$ and $(\overline{M}, \overline{F})$ respectively. Assume that they are of the same type $\frak g = \oplus \frak g_i$ and both involutive at order $k$ having the same structure constants $\gamma^{[k]}=\overline{\gamma}^{[k]}$.

Let $M \times \overline{M} \rightarrow M$ be a fibred manifold, whose local smooth section $\sigma: x \in U \mapsto (x,\varphi(x)) \in U \times \overline{U}$ is identified with a smooth map $U \rightarrow \overline{U}$, where $U$, $\overline{U}$ are open sets of $M$, $\overline{M}$, respectively.

Let $\mathscr J^{(\ell)}(M \times \overline{M})$ denote the set of all weighted $\ell$-jets $j^{\ell}_x\varphi$ of local smooth maps $\varphi: M \rightarrow \overline{M}$ with respect to the filtered manifolds $(M,F)$ and $(\overline{M}, \overline{F})$.
Let $\Gamma^0(M\times \overline{M})$ denote the set of local sections of $M \times \overline{M} \rightarrow M$ consisting of local diffeomorphisms
$$\varphi: U (\subset M) \rightarrow \overline{U}(\subset \overline{M}).$$

Recall that  if  $\varphi \in \Gamma^0(M \times \overline{M})$ preserves the filtrations, that is, $\varphi_*F^p \subset \overline{F}^p$, then it induces the lift
$$\mathscr S^{(\ell)}\varphi: \mathscr S^{(\ell)}(M)|_U \rightarrow \mathscr S^{(\ell)}(\overline{M})|_{\overline{U}}. $$
Consider the equation
$$(\dag) \quad \quad \quad \quad \quad (\mathscr S^{(\ell)}\varphi)(Q^{(\ell)}) \subset \overline{Q}^{(\ell)},$$
where we regard $Q^{(\ell)}$ as a submanifold of $\mathscr S^{(\ell)}(M)$ and $\overline{Q}^{(\ell)}$ as a submanifold of $\mathscr S^{(\ell)}(\overline{M})$ by fixing the complementary subspaces necessary to the embeddings commonly to $Q$ and $\overline{Q}$.

Since the relation $(\dag)$ depends only on $\ell$-jet $j^{(\ell)}\varphi$ of $\varphi$, it defines a submanifold
$$\mathcal R^{(\ell)} \subset \mathscr J^{(\ell)}(M \times \overline{M}),$$ which may be  a system of differential equations for $\varphi$ to define an  isomorphism  $\mathscr S^{(\ell)}\varphi: Q^{(\ell)} \rightarrow \overline{Q}^{(\ell)}$.
If $\mathscr S^{(\ell)}\varphi:(Q^{(\ell)}, z^{\ell}) \rightarrow (\overline{Q}^{(\ell)}, \overline{z}^{\ell})$ is a local or formal isomorphism, then $j^{\ell}_x \varphi $ belongs to $\mathcal R^{(\ell)}_x$, where $x \in M$ is the projection of $z^{\ell}$.

If we denote by $p^{(\ell)}\mathcal R^{(k)}$ ($\ell \geq k$) the prolongation of $\mathcal R^{(k)}$ to $\mathscr J^{(\ell)}(M\times \overline{M})$, we see that $p^{(\ell)}\mathcal R^{(k)} \supset \mathcal R^{(\ell)}$. Moreover, since $Q^{(\ell)}$ and $\overline{Q}^{(\ell)}$ are prolongations of $Q^{(k)}$ and  $\overline{Q}^{(k)}$, respectively, we see that $p^{(\ell)}\mathcal R^{(k)} = \mathcal R^{(\ell)}$ for $\ell \geq k$.

Now take $z \in Q$ and set $z^{\ell}$ and $x$ its projections to $Q^{(\ell)}$ and $M$. Define a map
$$\mathcal R^{(\ell)}_{x} \stackrel{\iota}{\longrightarrow}\overline{Q}^{(\ell)}$$
by $\iota(j^{\ell}_x\varphi) = (\mathscr S^{(\ell)}\varphi)(z^{\ell})$. Then clearly $\iota$ is injective. Furthermore, $\iota$ is surjective for $\ell \geq k $. Indeed, since $Q^{(k)}$ and $\overline{Q}^{(k)}$ are involutive with the same structure constant, for any $\ell \geq k$ and $\overline{z}^{\ell} \in \overline{Q}^{\ell}$, there exist $\overline{z} \in \overline{Q}$ which projects to $\overline{z}^{\ell}$ and  formal isomorphisms $\mathscr S \varphi : (Q, z) \rightarrow (\overline{Q}, \overline{z})$ and $\mathscr S ^{(\ell)}\varphi : (Q^{(\ell)}, z^{\ell}) \rightarrow (\overline{Q}^{(\ell)}, \overline{z}^{\ell})$  (Theorem \ref{thm:Theorem 6.1 formal equivalence}) which make the following diagram commutative:   
\begin{eqnarray*}
\xymatrix{
(Q, z) \ar[r]^{\mathscr S \varphi} \ar[d] & (\overline{Q}, \overline{z}) \ar[d] \\
(Q^{(\ell)}, z^{\ell}) \ar[r]^{\mathscr S^{(\ell)}\varphi} & (\overline{Q}^{(\ell)}, \overline{z}^{\ell})
}
\end{eqnarray*}
with  $(\mathscr S^{(\ell)}\varphi)(z^{\ell}) = \overline{z}^{\ell}$.

We say, according to Malgrange, that a $u^k \in \mathcal R^{(k)}$ is {\it strongly prolongable} if for $\ell \geq k$, $u^{\ell} \in \mathcal R^{(\ell)}$ projects to $u^k$, then there is $u^{\ell+1} \in \mathcal R^{(\ell+1)}$ which projects to $u^{\ell}$.
Since $\iota(\mathcal R^{(\ell)}) = \overline{Q}^{(\ell)}$ and  $\overline{Q}^{(\ell+1)}  \rightarrow \overline{Q}^{(\ell)}$ is surjective for $\ell \geq k$, for our $\mathcal R$, any $u^{\ell} \in \mathcal R^{(\ell)}$ ($\ell \geq k$) is strongly prolongable.

Now by a generalized Cartan-K\"ahler Theorem (\cite{M90}) if the given data are all analytic, then for any $\ell \geq k$ and $u^{\ell} \in \mathcal R^{\ell}_{x}$, there exists a formal solution $\varphi \in \mathcal R_{x}$ which projects to $u^{\ell}$ and satisfies the formal Geverey estimate, that is, $\varphi$ is a formal map $(M,x) \rightarrow (\overline{M}, \overline{x})$ satisfying the following estimate: There exist $C$, $\rho >0$ such that
$$|(X_I \varphi)(0) | \leq C w(I)! \rho^{w(I)} \text{ for all } I$$
where $\{X_1, \dots, X_m\}$ is an admissible local basis of the filtered vector bundle $(TM, F)$ and
$X_I = X_{i_1} \dots X_{i_r}$ for $I=(i_1, \dots, i_r)$, $r \geq 0$, $1 \leq i_1, \dots, i_r \leq n$ and $w(I)$ is the associated weight of $I$.

Then under the assumption that $\frak g_-$ is generated by $\frak g_{-1}$, we conclude that the formal Gevrey map $\varphi:(M,x) \rightarrow (M, \overline{x})$ is analytic by virtue of Theorem 2 in \cite{M95} 
 which may be stated equivalently in the following form:

\begin{theorem} \label{thm:estimate}
Let $\{X_1, \dots, X_r\}$ be independent analytic vector fields defined in a neighborhood of the origin $0$ in $\mathbb R^n$ or $\mathbb C^n$ satisfying the H\"ormander condition, that is,
$$X_1, \dots, X_r, [X_i,X_j], [[X_i,X_j],X_k], \dots$$ generate the tangent space. If $F$ is a formal function at $0$ satisfying:
$$\exists C, \rho >0 \text{ such that } |(X_{i_1} \dots X_{i_{\ell}}F)(0)| \leq C \ell ! \rho^{\ell} \text{ for all } \ell >0, 1 \leq i_1, \dots, i_{\ell} \leq r,$$
then $F$ is analytic.
\end{theorem}

This completes the proof of Theorem \ref{thm:involutive local equivalence}.
\end{proof}

\begin{remark}
The equivalence problem is very delicate in the $C^{\infty}$-category for the geometric structure of infinite type. A general theory for $C^{\infty}$-integrability was initiated by Kumpera-Spencer and Goldschmidt (cf. \cite{KS72}, \cite{G81}).  As shown by J. Conn (\cite{Co77}), Theorem \ref{thm:involutive local equivalence} (2) does not hold in general in the $C^{\infty}$-category for a geometric structure of infinite type.
\end{remark}

\subsection{Analytic geometric structures of finite type} $\,$

We have the following notable theorem which applies to all geometric structures of finite type in the analytic category.

\begin{theorem} \label{thm: analytic and of finite type}
Let $Q^{(k)}$ and $\overline{Q}^{(k)}$ be analytic proper geometric structures of the same type and of finite type. If $(D^i\gamma)(z) = (\overline{D}^i\overline{\gamma})(\overline{z})$ for all $i \geq 0$ for a pair $(z, \overline{z}) \in \mathscr S_WQ^{(k)} \times \mathscr S_W\overline{Q}^{(k)}$, then there exists an  analytic  local isomorphism
$$\varphi^{(k)}:(Q^{(k)}, z^{(k)}) \rightarrow (\overline{Q}^{(k)}, \overline{z}^{(k)}).$$
\end{theorem}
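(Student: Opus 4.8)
The plan is to reduce Theorem~\ref{thm: analytic and of finite type} to the already–established machinery: the formal equivalence (Theorem~\ref{thm:Theorem 6.1 formal equivalence}), the finite-dimensionality of the $W$-normal step prolongation for structures of finite type (Corollary~\ref{cor:finite dimensional absolute parallelism}), and the reduction of the equivalence problem to that of an absolute parallelism (Theorem~\ref{thm:W normal prolongation}). First I would pass to the $W$-normal complete step prolongations $Q := \mathscr S_W Q^{(k)}$ and $\overline Q := \mathscr S_W \overline Q^{(k)}$, which by hypothesis are \emph{finite-dimensional} analytic manifolds carrying analytic absolute parallelisms $(Q,\theta)$ and $(\overline Q,\overline\theta)$ with analytic structure functions $\gamma$, $\overline\gamma$. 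By Theorem~\ref{thm:W normal prolongation} it suffices to produce an analytic local isomorphism of these absolute parallelisms sending $z$ to $\overline z$ and inducing the lift of an isomorphism downstairs; the last clause of that theorem (with $G_0,\dots,G_k$ — here we need them connected, or we pass to identity components, or simply invoke the version of Proposition~\ref{prop:isomorphism via canonical class} that only needs the Pfaff classes to match) then yields the desired $\varphi^{(k)}$.

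The core step is the following classical fact for absolute parallelisms on \emph{finite-dimensional} analytic manifolds: if $(Q,\theta)$ and $(\overline Q,\overline\theta)$ are analytic absolute parallelisms with $\dim Q = \dim\overline Q$, and if at points $z\in Q$, $\overline z\in\overline Q$ one has $(D^i\gamma)(z) = (\overline D^i\overline\gamma)(\overline z)$ for all $i\ge 0$, then there is an analytic local diffeomorphism $\Phi$ from a neighbourhood of $z$ to a neighbourhood of $\overline z$ with $\Phi^*\overline\theta = \theta$ and $\Phi(z)=\overline z$. The argument is the standard one: consider the graph problem on $Q\times\overline Q$ with the $1$-form $\theta - \overline\theta$ (pulled back appropriately), whose zero locus is the candidate graph; the distribution annihilated by the components $\theta^j - \overline\theta^j$ on $Q\times\overline Q$ is involutive precisely because both structure functions satisfy the Maurer–Cartan–type structure equation $d\theta + \tfrac12\gamma(\theta\wedge\theta)=0$, so $d(\theta^j-\overline\theta^j)$ lies in the ideal generated by the $\theta^i-\overline\theta^i$ as soon as the $\gamma^j_{ik}$ agree \emph{as functions}. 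The hypothesis $(D^i\gamma)(z) = (\overline D^i\overline\gamma)(\overline z)$ for all $i$ is exactly what guarantees, via the formal isomorphism of Theorem~\ref{thm:Theorem 6.1 formal equivalence}, that $\gamma$ and $\overline\gamma$ pulled to the graph agree; then analyticity plus the Frobenius theorem (in the analytic category) integrates the involutive distribution to the sought analytic leaf through $(z,\overline z)$, which is the graph of $\Phi$. This is where finiteness of type is essential: it makes $Q$, $\overline Q$ genuine finite-dimensional manifolds on which the analytic Frobenius theorem applies directly, bypassing the Cartan–Kähler machinery (and the Hörmander-type side conditions) needed in Theorem~\ref{thm:involutive local equivalence}(2) for the infinite type case.

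Concretely, the steps in order: (i) form $Q,\overline Q$ and record that they are finite-dimensional analytic by Corollary~\ref{cor:finite dimensional absolute parallelism}; (ii) invoke Theorem~\ref{thm:Theorem 6.1 formal equivalence} to get the formal isomorphism $\Psi:[[Q,z]]\to[[\overline Q,\overline z]]$ with $\Psi^*\overline\theta=\theta$, so in particular $\gamma$ and $\overline\gamma$ have the same formal expansion after this identification; (iii) on $Q\times\overline Q$ consider the Pfaffian system $\{\theta^j-\overline\theta^j\}$, check via the two structure equations that it is (analytically) completely integrable in a neighbourhood of $(z,\overline z)$, using that the structure functions agree along the formal solution and hence — by analyticity and the uniqueness in the Frobenius/Cauchy–Kovalevskaya package — along the actual integral leaf; (iv) let $\Phi$ be the analytic local diffeomorphism whose graph is the leaf through $(z,\overline z)$; it satisfies $\Phi^*\overline\theta=\theta$, $\Phi(z)=\overline z$; (v) descend: since $\Phi$ preserves the canonical Pfaff forms it preserves the canonical Pfaff classes of all the intermediate bundles, so by Theorem~\ref{thm:W normal prolongation}(2) (applied in the finite-type setting, replacing $G_i$ by identity components if necessary, exactly as in Corollary~\ref{cor:finite dimensional absolute parallelism}) $\Phi$ is the lift of an analytic local isomorphism $\varphi^{(k)}:(Q^{(k)},z^{(k)})\to(\overline Q^{(k)},\overline z^{(k)})$.

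The main obstacle I anticipate is step (iii): turning the \emph{formal} coincidence of the structure functions (which Theorem~\ref{thm:Theorem 6.1 formal equivalence} gives for free) into genuine \emph{analytic} integrability of the Pfaffian system on $Q\times\overline Q$. One has to be careful that the analytic Frobenius theorem is being applied to a system whose coefficients are analytic functions of \emph{both} factors, and that the integral leaf through $(z,\overline z)$ genuinely projects diffeomorphically to each factor near the basepoint — this last point is where the matching of dimensions (finite type!) and the fact that $\theta$, $\overline\theta$ are absolute parallelisms (so the $\theta^j-\overline\theta^j$ are independent transverse to any such graph) enter. Once integrability and the graph property are secured, the descent in step (v) is routine given the results already proved in the paper. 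It would also be worth remarking, as in the treatment of Theorem~\ref{thm:involutive local equivalence}(1), that in the $C^\infty$ category the same argument goes through verbatim when the structure functions are merely \emph{constant}, since then the classical (smooth) Frobenius theorem suffices; the analyticity hypothesis is only needed because here $\gamma$ need not be constant.
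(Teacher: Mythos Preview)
There is a genuine gap at step (iii), and you correctly identify it yourself without resolving it. The Pfaffian system $\{\theta^j-\overline\theta^j\}$ on $Q\times\overline Q$ is \emph{not} involutive in a neighbourhood of $(z,\overline z)$ when the structure functions are not constant. Computing modulo the ideal, $d(\theta^j-\overline\theta^j)\equiv -\tfrac12\bigl(\gamma^j_{ik}(p)-\overline\gamma^j_{ik}(\overline p)\bigr)\theta^i\wedge\theta^k$, and the obstruction $\gamma(p)-\overline\gamma(\overline p)$ is a genuine analytic function on $Q\times\overline Q$ that has no reason to vanish identically near $(z,\overline z)$; the hypothesis only says its full jet vanishes at the single point $(z,\overline z)$ \emph{after} the formal identification $\Psi$, which is circular for this purpose. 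Your phrase ``the structure functions agree along the formal solution and hence \dots\ along the actual integral leaf'' assumes the leaf exists in order to construct it. The analytic Frobenius theorem has no version that integrates a distribution which is involutive only formally at a point.

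The paper's proof takes a completely different route and avoids this trap: it does not attempt to integrate anything on $Q\times\overline Q$. Instead it shows directly that the formal ring isomorphism $\Psi:[[Q,z]]\to[[\overline Q,\overline z]]$ of Theorem~\ref{thm:Theorem 6.1 formal equivalence} is induced by a convergent (hence analytic) map. The key input is Proposition~\ref{prop:weighted estimate}: a formal function $f$ at $0$ is convergent if and only if there exist $C,\rho>0$ with $|(X_{i_1}\cdots X_{i_\ell}f)(0)|\le C\,\ell!\,\rho^\ell$ for all $\ell$. Since $\Psi$ is constructed precisely so that $(D^if)(z)=(\overline D^i\overline f)(\overline z)$ for $f=\Psi^*\overline f$, any convergent $\overline f$ satisfies the estimate at $\overline z$, hence $f$ satisfies the same estimate at $z$, hence $f$ is convergent. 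Thus $\Psi$ maps convergent power series to convergent power series and is the germ of an analytic isomorphism with $\Psi^*\overline\theta=\theta$; the descent to $Q^{(k)}$ is then immediate. This convergence argument is the essential idea you are missing.
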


Theorem \ref{thm: analytic and of finite type}  follows from  
Theorem \ref{thm:Theorem 6.1 formal equivalence} and the following fact.

\begin{proposition} \label{prop:weighted estimate}
Let $X_1, \dots, X_n$ be $n$-independent analytic vector fields defined on a neighborhood of $0$ in $\mathbb R^n$ or $\mathbb C^n$. A formal function $f$ at $0$ is convergent in a neighborhood of $0$ if and only if there exists positive constants $C$ and $\rho$ such that
$$ |(X_{i_1} X_{i_2}\dots X_{i_{\ell}} f)(0)| \leq C \ell!\rho^{\ell} \text{ for } \ell=0,1,2, \dots, (i_1, i_2, \dots, i_{\ell}) \in \{1,2,\dots, n\}^{\ell}.$$
\end{proposition}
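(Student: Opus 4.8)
`\textbf{Proof proposal for Proposition~\ref{prop:weighted estimate}.}`

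\textbf{Proof proposal for Proposition~\ref{prop:weighted estimate}.}
The ``only if'' direction is a Cauchy‑estimate exercise which I would dispatch first. After complexifying --- the real‑analytic case reduces to the holomorphic one, since the $X_j$ extend to holomorphic vector fields on a complexification and the iterated derivatives at $0$ are unchanged --- suppose $f$ is holomorphic and bounded by $M$ on a polydisc $\{|x_k|<r\}$ on which the coefficients $a_j^k$ of $X_j=\sum_k a_j^k\,\partial/\partial x_k$ are holomorphic and bounded by $A$. Iterating the one‑variable Cauchy inequality $\|\partial g/\partial x_k\|_{r'}\le\|g\|_{r''}/(r''-r')$ (with $\|\cdot\|_\rho$ the sup over the polydisc of radius $\rho$) along the shrinking radii $r_s=r(1-s/2\ell)$, for which $r_{s-1}-r_s=r/2\ell$, one obtains $\|X_{i_1}\cdots X_{i_\ell}f\|_{r_\ell}\le M(2nA\ell/r)^\ell$; since $\ell^\ell\le e^\ell\,\ell!$ and $0$ lies in the polydisc of radius $r_\ell=r/2$, this gives $|(X_{i_1}\cdots X_{i_\ell}f)(0)|\le C\,\ell!\,\rho^\ell$ with $C=M$, $\rho=2neA/r$, uniformly in the multi‑index.

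For the substantive ``if'' direction the plan is to straighten the frame $X_1,\dots,X_n$ out by a single analytic change of variables built from its flows. For $v\in\mathbb C^n$ near $0$ put $V_v=\sum_j v_jX_j$, a constant‑coefficient combination, and let $E(v)=\exp(V_v)\cdot 0$ be its time‑one flow from the origin; write $\|v\|_1=\sum_j|v_j|$. Since $V_v$ depends holomorphically (indeed linearly) on $v$ and the $X_j$ are analytic, $E$ is analytic near $0$ with $E(0)=0$ and $dE_0\colon w\mapsto\sum_j w_jX_j(0)$ an isomorphism (the $X_j(0)$ being independent), hence an analytic local diffeomorphism. Set $h=f\circ E$, a formal power series in $v$, and write $h=\sum_\alpha h_\alpha v^\alpha$ with homogeneous parts $P_\ell(v)=\sum_{|\alpha|=\ell}h_\alpha v^\alpha$. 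Because $E(sv)=\exp(sV_v)\cdot 0$, the formal curve $g_v(s):=h(sv)$ equals $f\bigl(\exp(sV_v)\cdot0\bigr)$ by associativity of formal substitution, so by the defining property of the flow $g_v^{(\ell)}(0)=(V_v^{\,\ell}f)(0)=\sum_{i_1,\dots,i_\ell}v_{i_1}\cdots v_{i_\ell}\,(X_{i_1}\cdots X_{i_\ell}f)(0)$; the hypothesis then yields $\ell!\,|P_\ell(v)|=|g_v^{(\ell)}(0)|\le C\,\ell!\,(\rho\|v\|_1)^\ell$, i.e.\ $\sup_{\|v\|_1\le 1}|P_\ell|\le C\rho^\ell$. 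A Cauchy integral over the torus $\{|v_k|=1/n\}$, which lies inside $\{\|v\|_1\le 1\}$, gives $|h_\alpha|\le C(n\rho)^{|\alpha|}$, so $h$ converges near $v=0$. Since $E$ is an analytic local diffeomorphism and $f\circ E=h$ formally, we get $f=h\circ E^{-1}$ both formally and as a convergent series, so $f$ converges near $0$.

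The main obstacle, and the place needing the most care, is exactly the passage from ``the restrictions of $h$ to the lines through $0$ converge with a radius bound uniform in the direction'' to ``$h$ converges jointly''; the argument above handles it by reading the line estimates as sup‑norm bounds on the homogeneous components $P_\ell$ and inverting by Cauchy's formula, and one must check the inversion constant grows only geometrically in $\ell$ (it is $n^\ell$, from the polydisc of radius $1/n$ fitting inside the $\|v\|_1\le1$ ball) and that complexification in the real case is harmless. The remaining points --- that $g_v=h\circ(s\mapsto sv)$, that $g_v^{(\ell)}(0)=(V_v^{\,\ell}f)(0)$, and that $f=h\circ E^{-1}$ as formal series --- are routine identities of formal substitution and flows but should be recorded explicitly. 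Finally I would note that this is the ``full frame'' (unweighted) counterpart of Theorem~\ref{thm:estimate}, and in fact follows from it since $n$ independent vector fields trivially satisfy the H\"ormander condition; the direct flow argument above is, however, much shorter and self‑contained.
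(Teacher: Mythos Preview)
Your argument is correct and takes a genuinely different route from the paper's. The paper works entirely on the algebraic/combinatorial level: writing the coordinate frame as $D=AX$ with $A$ analytic, it expands $D^ku=\sum_i\Phi^k_iX^iu$ via the recursion $\Phi^k_i=\Phi^{k-1}_{i-1}A+D\Phi^{k-1}_i$, interprets $\Phi^k_i$ as a sum over paths in a staircase graph, and controls each path contribution using a multiplicative norm $|F|_r=\sum_k\sup_{|\alpha|=k}\frac{\alpha!}{|\alpha|!}|f_\alpha|r^\alpha$ (for which $|FG|_r\le|F|_r|G|_r$ and $|DF_k|_r\le\frac{k}{r_{\min}}|F_k|_r$). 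This yields $|\Phi^k_i(0)|\le C_1\,k(k-1)\cdots(i+1)\rho_1^k$ directly, and the convergence estimate for $D^ku(0)$ follows.

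Your approach is geometric: the exponential chart $E(v)=\exp(\sum v_jX_j)\cdot 0$ straightens the frame in one stroke, reducing the problem to showing that a formal series $h=f\circ E$ whose restrictions to lines through $0$ satisfy uniform one-variable Cauchy bounds is itself convergent --- which you settle cleanly by bounding the homogeneous components on the $\|v\|_1\le 1$ ball and extracting coefficients on the inscribed polytorus. This is shorter and more conceptual; it also makes transparent why the constants degrade only geometrically in $\ell$. The paper's method, by contrast, never invokes flows or the implicit function theorem and stays purely within formal/analytic power-series manipulations, which is perhaps why it is billed as ``elementary''; it also isolates an explicit transfer operator $\Phi^k_i$ between the two frames that could be reused in weighted settings. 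Both proofs note, as you do, that the statement is the full-rank special case of Theorem~\ref{thm:estimate}.
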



This proposition is a special case of Theorem \ref{thm:estimate}.
 We, however, give an elementary proof in {\bf Appendix} at the end of this paper.

\begin{proof} [Proof of Theorem \ref{thm: analytic and of finite type}]
We keep the same notation as in the proof of Theorem \ref{thm:Theorem 6.1 formal equivalence}. If $(D^i\gamma)(z) = (\overline{D}^i\overline{\gamma})(\overline{z})$ for $(z, \overline{z}) \in \mathscr S_WQ^{(k)} \times \mathscr S_W\overline{Q}^{(k)}$, then, by the proof of  Theorem \ref{thm:Theorem 6.1 formal equivalence}, there is a ring isomorphism $$\Psi:[[Q, z]] \rightarrow [[\overline{Q}, \overline{z}]]$$ such that $\Psi^*\overline{\theta} = \theta$.   Now we assume that $Q$ and $\overline{Q}$ are finite dimensional analytic manifolds.

Suppose that $\overline f \in [[\overline{Q}, \overline{z}]]$ is convergent at $\overline{z}$. Then by Proposition \ref{prop:weighted estimate}, $\{(D^i \overline{f})(\overline{z})\}$ satisfies the estimate in this proposition. Then so does $\{(D^if)(z) \}$, where $f = \Psi^*\overline{f}$. This implies that $\Psi$ is a local analytic isomorphism
$$\Psi:(\mathscr S_WQ^{(k)}, z) \rightarrow (\mathscr S_W\overline{Q}^{(k)}, \overline{z})$$
satisfying $\Psi^*\overline{\theta} =\theta$, from which immediately follows the theorem.
\end{proof}

\begin{corollary}
Let $Q^{(k)}$ and $\overline{Q}^{(k)}$ be analytic proper geometric structures of the same type and of finite type.
If $ \chi(z) = \overline{\chi}(\overline{z})$
for any essential invariant $\chi $ of $ Q^{(k)}$ and the corresponding one $ \overline{\chi}$ of $ \overline {Q}^{(k)}$
for a pair $(z, \overline{z}) \in \mathscr S_WQ^{(k)} \times \mathscr S_W\overline{Q}^{(k)}$,
 then there exists an analytic local isomorphism
$$\varphi^{(k)}:(Q^{(k)}, z^{(k)}) \rightarrow (\overline{Q}^{(k)}, \overline{z}^{(k)}).$$
\end{corollary}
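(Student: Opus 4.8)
The plan is to deduce this corollary from Theorem \ref{thm: analytic and of finite type}. That theorem yields an analytic local isomorphism as soon as one knows $(D^i\gamma)(z) = (\overline D^i\overline\gamma)(\overline z)$ for all $i \geq 0$, so I would reduce the statement to showing that the agreement of the essential invariants at the pair $(z,\overline z)$ already forces this full agreement of the structure function together with all its covariant derivatives; this implication is the one underlying the corollary to Theorem \ref{thm:Theorem 6.1 formal equivalence}.

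First I would collect the finiteness ingredients coming from the finite type hypothesis: the prolongation $\frak g = Prol\,(\frak g[k])$ is finite dimensional, so $\frak g_i = 0$ for $i$ beyond some $k_0$; consequently the index sets $I^1, I^2$ of Theorem \ref{thm:complete system of invariants} are finite, the step prolongation $\mathscr S^{(\ell)}_WQ^{(k)}$ is quasi-involutive for $\ell$ large (Theorem \ref{vanishing cohomology of higher degree}), and $\mathscr S_WQ^{(k)}$ is finite dimensional (Corollary \ref{cor:finite dimensional absolute parallelism}) --- the setting in which Theorem \ref{thm: analytic and of finite type} applies.

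The heart of the argument is the reconstruction of $\gamma$ from the essential invariants, carried out as in the discussion preceding Theorem \ref{thm:complete system of invariants}. By the fundamental identities (Theorem \ref{thm:fundamental identities}), $\partial\kappa_{[\ell]}$ and $\partial\tau_{[\ell]}(A,\,\cdot\,)$ are universal polynomial expressions, depending only on $\frak g$, in the components $\kappa_{[j]}, \tau_{[j]}, \sigma_{[j]}$ of strictly lower modified degree and their covariant derivatives, while the $\sigma_{[\ell]}$ are themselves determined by the $\tau$'s (Theorem \ref{thm:fundamental identities}(3) and Corollary \ref{cor: tau flat implies sigma flat}). I would then induct on the modified degree $\ell$: if $\ell \in I^2$ then $\kappa_{[\ell]}$ is one of the essential invariants and $\kappa_{[\ell]}(z) = \overline\kappa_{[\ell]}(\overline z)$ by hypothesis, while if $H^2_\ell(\frak g_-,\frak g) = 0$ the restriction of $\partial$ to $W^2_\ell$ is injective and the inductively determined value $\partial\kappa_{[\ell]}(z)$ pins down $\kappa_{[\ell]}(z)$; symmetrically, according to whether $\ell-1 \in I^1$, each $\tau_{[\ell]}(z)$ is either an essential invariant or is recovered from $\partial\tau_{[\ell]}(\,\cdot\,)$ via injectivity of $\partial$ on $W^1_{\ell-1}$, and then each $\sigma_{[\ell]}(z)$ is determined. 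Iterating the covariant derivative on the fundamental identities (equivalently, exploiting $d^2 = 0$ as in the proof of Theorem \ref{thm:Theorem 6.1 formal equivalence}) and running the same induction on the pair (number of derivatives, modified degree), one finds that every $(D^m\kappa_{[\ell]})(z)$, $(D^m\tau_{[\ell]})(z)$, $(D^m\sigma_{[\ell]})(z)$ is determined --- directly by the hypothesis when it is an essential invariant, otherwise by the universal polynomials from strictly lower data. Since $\gamma = \kappa + \tau + \sigma$ and each summand splits into its modified-degree pieces, this gives $(D^i\gamma)(z) = (\overline D^i\overline\gamma)(\overline z)$ for all $i \geq 0$, and Theorem \ref{thm: analytic and of finite type} then finishes the proof.

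I expect the main obstacle to be the bookkeeping in this induction: one must verify that, at the step for $\gamma_{[\ell]}$ and for each of its $D$-iterates, the right-hand sides of the fundamental identities and of their differentiated forms involve only data of strictly lower modified degree (or of the same degree with fewer derivatives), so that the induction is genuinely well founded. This is the content that makes $\{D^m\gamma^{[\nu]}\}$ a fundamental system in Theorem \ref{thm:complete system of invariants}, and it rests on the degree estimates of Proposition \ref{prop:degree of structure function} and on the finiteness of $I^1, I^2$. Once that is in place, the corollary is a direct concatenation of Theorem \ref{thm:complete system of invariants}, Theorem \ref{thm:fundamental identities}, and Theorem \ref{thm: analytic and of finite type}.
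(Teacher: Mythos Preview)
Your proposal is correct and follows exactly the route the paper intends: the corollary is stated without proof because it is meant to follow immediately from Theorem \ref{thm: analytic and of finite type} together with the fact (Theorem \ref{thm:complete system of invariants} and the discussion preceding it) that the essential invariants determine all $(D^i\gamma)(z)$ via the fundamental identities. Your detailed bookkeeping merely spells out what the paper leaves implicit.
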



\section{Cartan connections from the view-point of step prolongation} \label{sect:Cartan connection}

In this section
  we investigate conditions for our $W$-normal complete prolongation $\mathscr S_W Q^{(0)}$ with the canonical Pfaff form $\theta$ to become a Cartan connection of type $G/G^0$.

\subsection{L'espace g\'en\'eralis\'e} $\,$

In 1922 Cartan introduced the notion of espace g\'en\'eralis\'e as a curved version of Klein geometry (\cite{C22}). It has been playing an important role in geometry, especially in the equivalence problem of geometric structures. The following formulation is a standard one.

\begin{definition} 
A {\it Cartan connection of type} $L/L^0$ is a principal $L^0$-bundle $P$ on a manifold $M$ with a $\frak l$-valued 1-form $\theta$ on $P$ satisfying the following properties.
\begin{enumerate}
\item $\theta: T_zP \rightarrow \frak l$ is an isomorphism for all $z  \in P$
\item $R_a^* \theta = \mathrm{Ad} (a)^{-1}\theta$ for $a \in L^0$
\item $\theta(\widetilde A) =A $ for $A \in \frak l^0$.
\end{enumerate}
\end{definition}

In the above definition it is not the Lie group $L$ but the Lie algebra $\frak l$ that actually appear, so that we may refer to it a Cartan connection of type $(\frak l, L^0)$.
%
%
We may slightly weaken the condition on $\frak l$ not assuming that $\frak l$ has a Lie algebra structure.


 \begin{definition}
 Let $L^0$ be a filtered Lie algebra and $E$ an $L^0$-module which contains the Lie algebra $\frak l^0$ of $L^0$ as an $L^0$-adjoint submodule.  We call $(E, L^0)$ a {\it transitive filtered pre-Lie algebra} if the module $E$ is endowed with a filtration $\{F^pE\}_{p \in \mathbb Z}$ such that
\begin{enumerate}
\item $F^pE \supset F^{p+1}E$, $F^0E = \frak l^0$
\item $\rho(F^i\frak l^0) F^pE \subset F^{i+p}E$ for $i \geq 0$ and $p \in \mathbb Z$
\item $\gr E$ is a transitive graded Lie algebra
\end{enumerate}


\end{definition}

\begin{definition} Let $(E,L^0)$ be a transitive filtered pre-Lie algebra.
A {\it pre-Cartan connection of type} $(E, L^0)$ is a principal $L^0$-bundle $P$ on a manifold $M$ with a $E$-valued 1-form $\theta$ on $P$ satisfying the following properties.
\begin{enumerate}
\item $\theta: T_zP \rightarrow E$ is an isomorphism for all $z  \in P$
\item $R_a^* \theta = \rho(a)^{-1}\theta$ for $a \in L^0$
\item $\theta(\widetilde A) =A $ for $A \in \frak l^0$.
\end{enumerate}

 \end{definition}

Note that if there is a pre-Cartan connection $\overset{\circ}{P}$ of type $(E, L^0)$ which has a constant structure function $\overset{\circ}{\gamma}$, then $\overset{\circ}{\gamma}$ defines a Lie algebra structure on $E$ extending that of $\frak l^0$, so that any pre-Cartan connection $P$ of type $(E, L^0)$ is a Cartan connection.





\subsection{Principal geometric structures and principal prolongations} \label{sect:principal geometric structure} $\,$


In this subsection we define a principal geometric structure and its universal frame bundle, and introduce the category of the principal geometric structures
which is a little more restrictive than that of geometric structures introduced in Section \ref{sect:geometric structure of order k}, and which is well adapted to study the Cartan connections.

 To do this we have only to replace ``step wise principal fiber bundle" in Definition \ref{def:universal frame bundle}
  by ``principal fiber bundle".

\begin{definition} \label{def: universal principal frame bundles}
    A {\it principal geometric structure  $\breve{Q}^{(k)}$ of order $k$}
    and its {\it universal principal frame bundle $\mathscr R^{(k+1)}\breve{Q}^{(k)}$ of order $k+1$} are defined inductively for $k \geq -1$ by the following properties.

\begin{enumerate}

\item
 A principal geometric structure $\breve{Q}^{(k)}$
 of order $k \geq -1$   (of type $(\frak g_-, \breve{G}^{(k)})$) on a filtered manifold $(M,F)$ (of type $ \frak g_-$)
  is a principal fiber bundle over $M$, of which the structure group $\breve{G}^{(k)}$
  is endowed with a filtration 
  $$ 1 =F^{k+1}\breve{G}^{(k)}\subset F^k\breve{G}^{(k)} \subset\cdots\subset F^0\breve{G}^{(k)}=\breve{G}^{(k)}$$

 consisting of closed normal subgroups  satisfying: 

\begin{enumerate}
\item
A principal geometric structure $\breve{Q}^{(-1)}$
 of order $-1$
 on $(M,F)$
 is the filtered manifold $(M,F)$ itself.
\item
If $k \geq 0$,  $\breve{Q}^{(i-1)}:=\breve{Q}^{(k)}/F^{i+1}$ is a principal geometric structure of order $i-1$ (of type $(\frak g_-, \breve{G}^{(i-1)}:=\breve{G}^{(k)}/F^{i+1})$) on $(M,F)$ for $0 \leq i \leq k$.
\item
If $k \geq 0$, $\breve{Q}^{(i)} \rightarrow M $ is a principal subbundle of the universal principal frame bundle
 $\mathscr R^{(i)}\breve{Q}^{(i-1)}\rightarrow M $ of $ \breve{Q}^{(i-1)}$ of order $i$  for $0\leq i \leq k$.

\end{enumerate}


 \item
To every principal geometric structure $\breve{Q}^{(k)}\stackrel{\breve{G}^{(k)}}
             {\rightarrow }
             M$ of order $ k \geq 0 $ there is associated a principal fiber bundle
$\widehat{\mathscr R}^{(k+1)}\breve{Q}^{(k)} \stackrel{\widehat G^{(k+1)} }{\rightarrow} M$ which is called the universal linear frame bundle of $\breve{Q}^{(k)}$.
Its structure group $\widehat {\breve G}^{(k+1)} $ is endowed with a filtration $\{F^{\ell} \}$.
The quotient
\begin{eqnarray*}
{\mathscr R}^{(k+1)}\breve{Q}^{(k)}&:  =&\widehat{\mathscr R}^{\,(k+1)}\breve{Q}^{(k)} /F^{k+2}
\end{eqnarray*}
  is a principal fiber bundle on $M$  with structure group
\begin{eqnarray*}
\breve G^{(k+1)} &: =&\widehat {\breve G}^{(k+1)}/F^{k+2},
\end{eqnarray*}
 which is called the universal frame bundle of $\breve  Q^{(k)}$ of order $ k+1 $.


\item
The universal linear frame bundle $\widehat{\mathscr R}^{(k+1)}\breve{Q}^{(k)}$   for a principal geometric structure $\breve{Q}^{(k)}$ of order $k\geq -1$ on $M$  is
  defined as follows.

  For (principal) geometric structure of order $-1$ $\breve Q^{(-1)} ={\bf Q}^{(-1)}$, we set
  $$\widehat{\mathscr R}^{(0)} \breve Q^{(-1)} =\widehat{\mathscr S}^{(0)}  {\bf Q}^{(-1)} $$

  For $k\geq 0$ it is defined as follows:

\begin{enumerate}

\item
The bundle $\widehat{\mathscr R}^{(k+1)}\breve{Q}^{(k)}$.
  We set
$$\breve E^{(-1)}:= \frak g_- \text{ and } \breve{E}^{(k)}:= \frak g_- \oplus \breve{\frak g}^{(k)}\text{ if } k \geq 0,$$
where $\breve{\frak g}^{(k)}$ is the Lie algebra of $\breve{G}^{(k)}$.
We regard $\breve E^{(k)}$ as a filtered vector space.
We note also that the tangent space $T_{z^k}\breve Q^{(k)}$ has a canonical filtration
$\{F^{\ell}T_{z^k}\breve{Q}^{(k)}\}_{\ell \in \mathbb Z}$.

The fiber $\widehat{\mathscr R}^{(k+1)}_{z^k}$ over $z^k$ for $z^k \in \breve Q^{(k)}$,  is the set of all filtration preserving isomorphisms
  $$\zeta^{k+1}:\breve E ^{(k)} \rightarrow T_{z^k} \breve{Q}^{(k)}$$
  which satisfy the following conditions:


  $\zeta^{k+1}(A) = \widetilde{A}_{z^k} \text{ for } A \in \breve{\frak g}^{(k)}  $ and
   $[\zeta^{k}]=z^{k}$,
  where $\zeta^k$ is the truncation of $\zeta^{k+1}$, defined by the following commutative diagram
  \begin{eqnarray*}
  \xymatrix{
  \breve E^{(k)} \ar[d]^{\zeta^{k+1}} \ar[r] & \breve E^{(k-1)} \ar[d]^{\zeta^k}   \\
   T_{z^k}\breve{Q}^{(k)}   \ar[r] & T_{z^{k-1}}\breve Q^{(k-1)}  ,
  }
  \end{eqnarray*}
and $z^{k-1}$ is the image of $z^k$ by the projection map $\breve Q^{(k)} \rightarrow \breve Q^{(k-1)}$.

\item
 The structure group $\widehat{\breve G}^{(k+1)} $  of $\widehat{\mathscr R}^{(k+1)}\breve{Q}^{(k)}$
     consists of all filtration preserving linear isomorphisms $\alpha^{k+1}$ which make the following diagram commutative:
      \begin{eqnarray*}
 \xymatrix{
  \breve E^{(k)} \ar[d]^{\alpha^{k+1}} \ar[r] & \breve E^{(k-1)} \ar[d]^{\alpha^k}   \\
  \breve E^{(k)}   \ar[r] & \breve E^{(k-1)}
 }
 \end{eqnarray*}
with  $[\alpha^k] =a^k \in \breve{G}^{(k)}$, and which satisfy
$$\alpha^{k+1}(A) = Ad(a^k)A \qquad \text{ for } A \in \breve{\frak g}^{(k)}.$$

\item The action of $\widehat{\breve G}^{ k+1  } $ on $\widehat{\mathscr R}^{(k+1)}(\breve{Q}^{(k)})$.
    For $\alpha^{k+1} \in \widehat{\breve G}^{ (k+1)  } $ and $\zeta^{k+1} \in \widehat{\mathscr R}^{(k+1)}(\breve Q^{(k)})$ we define the right action $\zeta^{k+1}\cdot \alpha^{k+1}$ by the following commutative diagram:
\begin{eqnarray*}
 \xymatrix{
 \breve E^{(k)} \ar[d]^{\alpha^{k+1}}\ar[r]^{\zeta^{k+1}\cdot \alpha^{k+1}} & T_{z^ka^k }\breve Q^{(k)} \\
 \breve E^{(k)} \ar[r]^{\zeta^{k+1}}& T_{z^{k}}\breve Q^{(k)} \ar[u]_{ (R_{a^k})_* }
 }
 \end{eqnarray*}
where $z^{k} =[\zeta^{k}]$ is the projection  of $\zeta^{k+1}$   to $\breve  Q^{(k)}$ and $a^k=[\alpha^k]$ similarly.  


\end{enumerate}

\end{enumerate}
\end{definition}


\noindent {\bf Notations.}
In   Definition \ref{def: universal principal frame bundles}, the filtered vector space $\breve{E}^{(k)}$ and the filtered group $\breve G^{(k+1)}$ depend on the structure group $\breve {G}^{(k)}$ of $\breve Q^{(k)}$. When we need to emphasize it, we write  $\breve{E}^{(k)}(\breve {\frak g}^{(k)})$ and  $\breve G^{(k+1)}(\breve{G}^{(k)})$ for $\breve{E}^{(k)}$ and  $\breve G^{(k+1)}$. \\



Let $\breve{Q}^{(k)} \rightarrow M$ be a principal geometric structure of order $k$.
By setting for $\ell >k$
$$\mathscr R^{(\ell)}(\breve Q^{(k)}) = \mathscr R^{(\ell)}(\mathscr R^{(\ell-1)}\breve Q^{(k)}), \qquad \breve {G}^{(\ell)}(\breve G^{(k)})=\breve G^{(\ell)}(\breve G^{(\ell-1)}(\breve G^{(k)}))$$
and by passing to the limit
$$\mathscr R \breve{Q}^{(k)} = \lim_{\leftarrow \ell} \mathscr R^{(\ell)}\breve Q^{(\ell)}, \qquad \breve G(\breve G^{(k)}) =\lim_{\leftarrow \ell} \breve{G}^{(\ell)}(\breve G^{(k)}),$$
we obtain a principal fiber bundle $\mathscr R \breve Q^{(k)} \stackrel{\breve G(\breve G^{(k)})}{\longrightarrow} (M,F)$ called the {\it complete universal principal frame bundle} of $\breve Q^{(k)}$, which is endowed with an absolute parallelism defined by the  canonical form $\breve{\theta}$.

The canonical form $\breve{\theta}$ is an $\breve E  (\breve {\frak g}^{(k)})$($=\frak g_- \oplus \breve {\frak g} (\breve {\frak g}^{(k)})$)-valued 1-form on $\mathscr R \breve Q^{(k)}$. Write $ \breve E(\breve{\frak g}^{(k)})$ simply as $\breve E$. We then have the structure function
$$\breve {\gamma}:\mathscr R \breve{Q}^{(k)} \rightarrow \Hom(\wedge ^2 \breve E, \breve E)$$ defined by $d \breve{\theta} + \frac{1}{2} \breve{\gamma}(\breve{\theta}, \breve{\theta})=0$.

The canonical form $\breve{\theta}$ and the structure function $\breve{\gamma}$ of $\mathscr R\breve Q^{(k)}$ enjoy the same property as those of $\mathscr S Q^{(k)}$. More rigidly, we have
$$R_a^* \breve {\theta} = \rho(a)^{-1} \breve{\theta} \qquad \text{ for } a \in \breve{G} (\breve G^{(k)})$$
and
$$\breve {\gamma}(A,X) = \rho(A)X \qquad \text{ for } A \in \breve {\frak g}  (\breve{\frak g}^{(k)}) \text{ and } X \in \breve E (\breve {\frak g}^{(k)})$$
where $\rho$ denotes the action of $\breve G (\breve G^{(k)})$ or $\breve{\mathfrak g} (\breve{\frak g}^{(k)})$ on $\breve{E} (\breve{\frak g}^{(k)})$.
We have thus seen the similarity and the difference between $\{Q^{(k)}, \mathscr SQ^{(k)}\}$ and $\{\breve Q^{(k)}, \mathscr R \breve Q^{(k)}\}$.



\begin{remark}

The notions of principal geometric structure and universal principal frame bundle were introduced  in \cite{M83} and \cite{M93}
 under different terminologies as non-commutative frame bundle, Cartan bundle, or tower,  and there played a fundamental role
 to study the equivalence problem of geometric structures and, in particular, to construct Cartan connections.

It is in order to study the step prolongation that we have introduced in this paper in Section \ref{sect:geometric structure of order k} the larger category of geometric structures by extending the category of principal geometric structures.

This distinction and similarity of two categories will then make clear the relation between step prolongation and Cartan connection
as shown in the next sub section.

%
\end{remark}

Parallel to Definition \ref{def:normal geometric structure}, we give the following:

\begin{definition}
A principal geometric structure $\breve{Q}^{(k)} \stackrel{\breve{G}^{(k)}}{\longrightarrow}  M $ of type $(\frak g_-,  \breve{G}^{(k)})$ is called {\it proper} if $\frak g_- \oplus \gr \breve{\frak g}^{(k)}$ has a compatible structure of a transitive truncated graded Lie algebra, where $\gr \breve{\frak g}^{(k)}$ is the graded Lie algebra  associated with the Lie algebra $\breve{\frak g}^{(k)}$ of $\breve G^{(k)}$.
\end{definition}

\subsection{The structure function $\tau$ and Cartan connections} $\,$


In this subsection we first show that a principal geometric structure can be viewed as a geometric structure whose structure function $\tau$ is constant.

To see this more precisely, let $\breve{Q}^{(k)} \stackrel{\breve{G}^{(k)}}{\longrightarrow}M$ be a principal geometric structure. For $0 \leq i \leq k$, let $\breve{\frak g}^{(i)}$ be the Lie algebra of $\breve{G}^{(i)}=\breve{G}^{(k)}/F^{i+1}$ and set $\frak g^{(i)}$($=\oplus_{a=0}^i \frak g_a$)$=\gr \breve{\frak g}^{(i)}$, $\breve{E}^{(i)} = \frak g_- \oplus \breve{\frak g}^{(i)}$ and $E^{(i)}=\frak g_- \oplus \frak g^{(i)}$.

Choose complementary subspace $\{W^1_i\}_{1 \leq i \leq k}$ such that
$$\Hom(\frak g_-, E^{(i-1)})_i = \frak g_i \oplus W^1_i.$$
Then the choice of $\{W^1_i\}$ determines in a natural manner identifications
$$
\phi_W^{(i)}: \frak g ^{(i)} \stackrel{\cong}{\longrightarrow} \breve{\frak g}^{(i)} \text{ and hence } \phi_W^{(i)} :E^{(i)} \stackrel{\cong}{\longrightarrow} \breve{E}^{(i)}$$
and further constants $\overset{\circ}{\tau}_{[i+1]} \in \Hom(\frak g^{(i)}, W^1_i)$.



Indeed, the construction can be done by induction as follows.
Suppose that we have the isomorphism $\phi_W^{(i-1)}: \frak g^{(i-1)} \rightarrow \breve{\frak g}^{(i-1)}$ and consider the following diagram:
\begin{eqnarray*}
\xymatrix{
& 0 \ar[d] & 0 \ar[d] & & \\
0 \ar[r] &\quad \frak g_i\quad  \ar[r] \ar[d] & \quad \breve{\frak g}^{(i)} \quad \ar[r] \ar[d]^{\iota_{\mathscr R}} &\quad  \breve{\frak g}^{(i-1)}\quad     \ar@{=}[d] \ar[r] & 0  \\
0 \ar[r]&\quad  \Hom(\frak g_-, \breve E^{(i-1)})_i \quad \ar[d]^{\pi_W} \ar[r] &\quad \breve{\frak g}_{\mathscr R}^{(i)}\quad     \ar[r] &\quad \breve{\frak g}^{(i-1)} \quad \ar[r] & 0 \\
& W_i^1 \ar[d] &&& \\
&0,&&&
}
\end{eqnarray*}
where the first column is induced from the isomorphism $\phi^{(i-1)}_W: E ^{(i-1)} \rightarrow  \breve{E }^{(i-1)}$ and the split exact sequence:
$$0 \rightarrow \frak g_i \rightarrow \Hom(\frak g_-, E^{(i-1)})_i \rightarrow W_i^1\rightarrow 0 $$
and the second column is the inclusion map $\iota_{\mathscr R}:\breve{\frak g}^{(i)} \hookrightarrow \breve{\frak g}^{(i)}_{\mathscr R}:=\breve{\frak g}^{(i)}(\breve{\frak g}^{(i-1)})$.

Recalling the definition of the principal prolongation $\breve{\frak g}_{\mathscr R}^{(i)}$ of $\breve{\frak g}^{(i-1)}$ and taking into account the isomorphism $\phi_W^{(i-1)}:E ^{(i-1)} \rightarrow  \breve{E }^{(i-1)}$, we see that the second row has a natural splitting (as vector spaces).
Since the first column and the second row split so does the first row, which then yields an isomorphism $\phi^{(i)}_W:\frak g ^{(i)} \rightarrow  \breve{\frak g} ^{(i)}$.

A diagram chasing gives a well-defined map
$$w_{[i+1]}:\breve{\frak g}^{(i-1)} \rightarrow W^1_i$$
such that
$$A + w_{[i+1]}(A) \in \breve{\frak g}^{(i)} \qquad \text{ for }  A \in \breve{\frak g}^{(i-1)}$$
according to the identifications:
$$\breve{\frak g}^{(i)} \subset \breve{\frak g}_{\mathscr R}^{(i)} = \breve{\frak g}^{(i-1)} \oplus W_i^1 \oplus \frak g_i.$$
We then define $\overset{\circ}{\tau}_{[i+1]}:\frak g^{(i)} \rightarrow \Hom(\frak g_-, E^{(i-1)})_i$
by
$$\overset{\circ}{\tau}_{[i+1]}|_{\frak g^{(i-1)}} = w_{[i+1]}\circ \phi^{(i-1)}_W, \qquad \overset{\circ}{\tau}_{[i+1]}|_{\frak g_i} = id_{\frak g_i}.$$

The notation being as above, we have

  \begin{proposition} \label{prop:W-normal and principal prolongation}
  Let $\breve{Q}^{(k)} \stackrel{\breve{G}^{(k)}}{\longrightarrow} M$ be a  principal geometric structure of type $(\frak g_-, \breve{G}^{(k)})$. Then a choice of  $\{W^1_i\}_{1 \leq i \leq k}$  determines a natural identification of $\breve {Q}^{(k)}$ with a geometric structure  $Q^{(k)}$ with the structure function $\tau^{[k]} =  \overset{\circ}{\tau} ^{[k]}$ as follows:
    There exists a  geometric structure
  $$Q^{(i)} \stackrel{G_i}{\longrightarrow} Q^{(i-1)} \longrightarrow \dots \longrightarrow Q^{(0)} \rightarrow M$$
  of type $(\frak g_-, G_0, \dots, G_i)$
  and bundle isomorphisms $\iota_W^{(i)}:\breve{Q}^{(i)} \rightarrow Q^{(i)}$ and $\mathscr R \iota ^{(i)}_W:\mathscr R^{(i+1)}\breve{Q}^{(i)} \rightarrow \mathscr S_{W^1}^{(i+1)}Q^{(i)}$   for $0 \leq i \leq k  $ such that
  \begin{enumerate}
  \item the structure function $\tau_{[i+1]}$ of $\mathscr S_{W^1}^{(i+1)}Q^{(i)}$ equals to $\overset{\circ} {\tau}_{[i+1]} $,
      and
  \item the following diagram is commutative
  \begin{eqnarray*}
   \xymatrix{
   \breve{Q}^{(i+1)} \ar[r]^{\iota_W^{(i+1)}} \ar@{^{(}->}[dr] & Q^{(i+1)}\ar@{^{(}->}[dr] & & \\
   & \mathscr R^{(i+1)}\breve{Q}^{(i)} \ar[r]^{\mathscr R \iota^{(i)}_W} \ar[d] & \mathscr S_{W^1}^{(i+1)} Q^{(i)} \ar@{^{(}->}[r]\ar[d] & \mathscr S^{(i+1)}Q^{(i)} \\
   &  \breve{Q}^{(i)} \ar[r]^{\iota_W^{(i)}} & Q^{(i)}. &
   }
  \end{eqnarray*}
  Here, we ignore the first row if $i=k$.

  \end{enumerate}

  \end{proposition}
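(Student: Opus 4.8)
The proof proceeds by induction on $i$, building simultaneously the geometric structures $Q^{(i)}$, the structure groups $G_i$, the bundle isomorphisms $\iota_W^{(i)}\colon \breve Q^{(i)}\to Q^{(i)}$ and $\mathscr R\iota_W^{(i)}\colon \mathscr R^{(i+1)}\breve Q^{(i)}\to \mathscr S_{W^1}^{(i+1)}Q^{(i)}$, while verifying at each stage that the structure function $\tau_{[i+1]}$ of the constructed $\mathscr S_{W^1}^{(i+1)}Q^{(i)}$ equals the algebraically defined constant $\overset{\circ}{\tau}_{[i+1]}$. The base case $i=-1$ is trivial: $\breve Q^{(-1)}=Q^{(-1)}=(M,F)$ and the identification of the order-$0$ universal frame bundles is built into Definition \ref{def: universal principal frame bundles}(3). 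For the inductive step, I would first use the isomorphism $\phi_W^{(i)}\colon \frak g^{(i)}\to \breve{\frak g}^{(i)}$ (together with $\iota_W^{(i)}$) to define $G_i$ as the Lie group $\breve G_i := F^i\breve G^{(i)}/F^{i+1}\breve G^{(i)}$ transported to a subgroup of $F^i GL(E^{(i-1)})/F^{i+1}$, so that $Q^{(i)}\to Q^{(i-1)}$ becomes a $G_i$-principal subbundle of $\mathscr S^{(i)}Q^{(i-1)}$ (via $\mathscr R\iota_W^{(i-1)}$ restricted to $\breve Q^{(i)}\subset \mathscr R^{(i)}\breve Q^{(i-1)}$). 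This gives the sequence $Q^{(i)}\stackrel{G_i}{\to}Q^{(i-1)}\to\cdots\to M$ and the embedding $\iota_W^{(i)}$.

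The heart of the argument is the identification of $\mathscr R^{(i+1)}\breve Q^{(i)}$ inside $\mathscr S_{W^1}^{(i+1)}Q^{(i)}$. Here I would exploit the definitions directly: a frame $\zeta^{i+1}\in\widehat{\mathscr R}^{(i+1)}_{z^i}\breve Q^{(i)}$ is a filtration-preserving isomorphism $\breve E^{(i)}\to T_{z^i}\breve Q^{(i)}$ with $\zeta^{i+1}(A)=\widetilde A$ for $A\in\breve{\frak g}^{(i)}$; composing with $(\iota_W^{(i)})_*$ and precomposing with $\phi_W^{(i)}$, one obtains a frame in $\widehat{\mathscr S}^{(i+1)}_{[\iota_W^{(i)}]}Q^{(i)}$. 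The condition $\zeta^{i+1}(A)=\widetilde A$ — i.e. that the frame is "horizontal along the fibre direction $\breve{\frak g}^{(i)}$" — translates, under the splitting $\Hom(\frak g_-, E^{(i-1)})_i = \frak g_i\oplus W^1_i$ used to embed $\breve{\frak g}^{(i)}$ into $\breve{\frak g}^{(i)}_{\mathscr R}=\breve{\frak g}^{(i-1)}\oplus W^1_i\oplus\frak g_i$, precisely into the statement that the corresponding $\tau_{[i+1]}$-value of the frame equals $w_{[i+1]}\circ\phi_W^{(i-1)}$ on $\frak g^{(i-1)}$ and the identity on $\frak g_i$, which is exactly $\overset{\circ}{\tau}_{[i+1]}$. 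Thus the image of $\mathscr R^{(i+1)}\breve Q^{(i)}$ under the natural inclusion lands in the locus $\{\tau_{[i+1]}=\overset{\circ}{\tau}_{[i+1]}\}\subset\mathscr S^{(i+1)}_{W^1}Q^{(i)}$; conversely, a dimension/structure-group count (the fibre of $\mathscr R^{(i+1)}\breve Q^{(i)}\to\breve Q^{(i)}$ has the same dimension as the fibre of $\mathscr S^{(i+1)}_{W^1}Q^{(i)}\to Q^{(i)}$, namely $\dim\frak q_{i+1}$, by Proposition \ref{prop: step prolongation of order k+1}(1) and the inductive description of $\breve{\frak g}^{(i)}_{\mathscr R}$) shows the inclusion is onto, giving the bundle isomorphism $\mathscr R\iota_W^{(i)}$. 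The commutativity of the displayed diagram is then automatic from the naturality of all the maps involved (the lift operation $\mathscr S^{(i+1)}(-)$ and $\mathscr R^{(i+1)}(-)$ both being defined by "push-forward of frames", cf. Definition \ref{def:isomorphism of geometric structure of order k}).

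Finally, to close the induction I must check the compatibility of $\tau_{[i+1]}$ with the constant $\overset{\circ}{\tau}_{[i+1]}$ not merely set-theoretically but as the genuine structure function: this uses Proposition \ref{prop:structure equation along fiber}(2), which describes how $\tau_{[i+1]}$ transforms along the fibre of $\mathscr S^{(i+1)}Q^{(i)}\to Q^{(i)}$, together with the fact that $\overset{\circ}{\tau}_{[i+1]}$ is, by its very construction via the diagram chase, the unique $W^1_i$-valued cocycle representative forced by the requirement $A+w_{[i+1]}(A)\in\breve{\frak g}^{(i)}$. The main obstacle I anticipate is bookkeeping: one must carefully match three different descriptions of the relevant spaces — the principal prolongation $\breve{\frak g}^{(i)}_{\mathscr R}=\Hom(\frak g_-,\breve E^{(i-1)})_i\rtimes(\text{lower part})$, the "geometric" prolongation $\overline{\frak g}_{i+1}=\frak q_{i+1}+\frak r_{i+1}$ from Section \ref{sect: definition of kappa and tau}, and the splittings induced by $\{W^1_i\}$ — and verify that $\phi_W^{(i)}$ intertwines the group actions (the $Ad$-action on the principal side versus the action $\rho$ on the frame-bundle side), so that $\mathscr R\iota_W^{(i)}$ is equivariant. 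This is essentially a compatibility-of-identifications check and should go through, but it requires writing the diagram in the proposition's statement out one level deeper and tracking each arrow; I would handle it by first fixing, once and for all, the identification $\breve{\frak g}^{(i)}_{\mathscr R}\cong\overline{\frak g}_{i+1}\,/\,(\text{the }\frak r\text{-part irrelevant here})$ coming from the universal property, and then deriving everything else as a consequence.
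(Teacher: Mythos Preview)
Your proposal is correct and follows essentially the same approach as the paper: define $\mathscr R\iota_W^{(i)}$ by sending a frame $\breve\zeta^{i+1}\colon \breve E^{(i)}\to T_\bullet\breve Q^{(i)}$ to $(\iota_W^{(i)})_*\circ\breve\zeta^{i+1}\circ\phi_W^{(i)}$, and then identify the image with the locus $\{\tau_{[i+1]}=\overset{\circ}{\tau}_{[i+1]}\}=\mathscr S_{W^1}^{(i+1)}Q^{(i)}$. The paper's proof is considerably terser---it simply writes down the commutative square defining $\zeta^{i+1}$ and asserts the coincidence of the image with $\mathscr S_{W^1}^{(i+1)}Q^{(i)}$---whereas you spell out why the principal condition $\zeta^{i+1}(A)=\widetilde A$ is equivalent to $\tau_{[i+1]}=\overset{\circ}{\tau}_{[i+1]}$, and you add the dimension/equivariance checks that the paper leaves implicit.
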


  \begin{proof}
 The isomorphism $\phi_W^{(i)}: E^{(i)} \rightarrow \breve{E}^{(i)}$ induces a map $\mathscr R \iota^{(i)}_W: \mathscr R^{(i+1)}\breve{Q}^{(i)} \rightarrow \mathscr S ^{(i+1)}Q^{(i)}$ as follows. Let $\breve{z}^{i+1} =[\breve{\zeta}^{i+1}] \in \mathscr R^{(i+1)}\breve Q^{(i)}$. Define $z^{i+1} =[\zeta^{i+1}] \in \mathscr S^{(i+1)}Q^{(i)}$ by
  \begin{eqnarray*}
  \xymatrix{
  \breve{E}^{(i )} \ar[r]^{\breve{\zeta}^{i+1}} & T_{\bullet} \breve{Q}^{(i)} \ar[d]^{{\iota_W^{(i)}}_*} \\
  E^{(i )} \ar[u]^{\phi_W^{(i)}} \ar[r]^{\zeta^{i+1}} & T_{\bullet} Q^{(i)},
  }
  \end{eqnarray*}
  and  take the reduction $\mathscr S^{(i+1)}_{W^1} Q^{(i)}$ of $\mathscr S^{(i+1)}Q^{(i)}$ as the inverse image of $\overset{\circ}{\tau}_{[i+1]} $. Then $\mathscr S_{W^1}^{(i+1)}Q^{(i)}$ coincides exactly with the image of $\mathscr R^{(i+1)}\breve{Q}^{(i)}$.
%
  \end{proof}

\begin{theorem} \label{thm: constant tau implies pre Cartan}
Let $Q$ be a proper geometric structure of type $(\frak g_-, G_0, \dots, G_k, \dots)$. If the structure function $\tau$ of $Q$ is constant and the structure groups $G_i$ are connected for $i\geq 0$, then $Q$ is a principal proper geometric structure and $(Q, \theta)$ is a pre-Cartan connection.
\end{theorem}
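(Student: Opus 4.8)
The plan is to reverse the construction in Proposition \ref{prop:W-normal and principal prolongation}: there we started with a principal geometric structure and exhibited it, after choosing complements $\{W^1_i\}$, as a geometric structure with prescribed constant $\tau^{[k]}$; here we are handed a normal geometric structure $Q$ with $\tau$ \emph{constant} and must recognize the tower of bundles $Q^{(i)}\to Q^{(i-1)}$ as in fact being a tower of \emph{principal} bundles over $M$, i.e. each $Q^{(i)}\to M$ is principal with an appropriate filtered structure group $\breve G^{(i)}$. First I would set up the induction on $i$: assume $Q^{(i-1)}\to M$ has been identified with a principal geometric structure $\breve Q^{(i-1)}$ of order $i-1$, with structure group $\breve G^{(i-1)}$ whose filtration quotients recover $G_0,\dots,G_{i-1}$ and whose associated graded Lie algebra is $\frak g^{(i-1)}=\frak g_0\oplus\cdots\oplus\frak g_{i-1}$ (this uses normality). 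The base case $i=0$ is immediate since $\breve Q^{(-1)}=\bf Q^{(-1)}=(M,F)$ and $Q^{(0)}=\breve Q^{(0)}$ tautologically.

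The inductive step is the heart of the matter. By definition $Q^{(i)}\subset\mathscr S^{(i)}Q^{(i-1)}$ is a $G_i$-subbundle over $Q^{(i-1)}$; I must promote this to a principal bundle over $M$. The mechanism is exactly that constancy of $\tau$ forces the relevant pieces of the structure function to be those of a Lie algebra, so that the fiber of $Q^{(i)}\to M$ acquires a group structure. Concretely: $\tau_{[i+1]}$ being constant and equal to its flat part $\tau_{(0)[i+1]}$ (invoke Corollary \ref{cor: tau flat implies sigma flat} and Corollary \ref{cor:simple observation} to also get $\sigma$ constant, with $\tau_{(0)},\sigma_{(0)}$ given by brackets) means that the canonical form $\theta$ restricted along the fibers of $Q^{(i)}\to M$ satisfies a Maurer--Cartan-type equation with constant coefficients. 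I would argue that this identifies each fiber $Q^{(i)}_x$ with a fixed filtered Lie group $\breve G^{(i)}$ — namely the one whose Lie algebra $\breve{\frak g}^{(i)}$ is $\frak g_-^{\perp}$-complement data assembled from $\frak g^{(i)}$ via the constant $\tau$, exactly reversing the diagram chase producing $\phi_W^{(i)}$ and $\overset{\circ}{\tau}_{[i+1]}$ in Proposition \ref{prop:W-normal and principal prolongation}. Connectedness of the $G_i$ is used here to integrate the infinitesimal action (the vertical subalgebra acting with constant structure constants) to a genuine right $\breve G^{(i)}$-action, and to conclude that the reduction determined by the constant $\tau$ is precisely a principal subbundle.

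Once the tower is known to be principal, the statement that $(Q,\theta)$ is a pre-Cartan connection is comparatively formal. Passing to the completed universal frame bundle $\mathscr S Q=\mathscr R\breve Q$ (the identification of $\mathscr S_{W^1}^{(i+1)}Q^{(i)}$ with $\mathscr R^{(i+1)}\breve Q^{(i)}$ from Proposition \ref{prop:W-normal and principal prolongation}, now run in reverse), the canonical form $\theta$ is an $E$-valued $1$-form on a principal $L^0$-bundle over $M$, where $L^0$ is the inverse limit of the $\breve G^{(i)}$ and $E=\frak g_-\oplus\frak l^0$. Properties (1) and (2) in the definition of pre-Cartan connection are Proposition \ref{prop:canonical forms}(1) and (3), and property (3), $\theta(\widetilde A)=A$ for $A\in\frak l^0$, follows from Proposition \ref{prop:canonical forms}(2) together with the fact that on a principal (as opposed to merely step-wise principal) bundle the fundamental vector fields of the \emph{whole} structure group are available — this is exactly what the principality established above provides, and it is what fails for a general geometric structure. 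One must also check that $(E,L^0)$ is a transitive filtered pre-Lie algebra, i.e. that $\gr E$ is a transitive graded Lie algebra; but $\gr E=\frak g$ is the prolongation of $\frak g[k]$, which is transitive by hypothesis of normality and Proposition \ref{prop:degree of gamma on positive set} and following.

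\textbf{Main obstacle.} The delicate point is the inductive step: showing that constancy of $\tau$ alone — without constancy of $\kappa$ — suffices to make each $Q^{(i)}\to M$ principal. The subtlety is that $\tau$ controls precisely the "vertical $\times$ horizontal" block $\Hom(E_+\wedge E_-,E)$ of the structure function, which is the block governing how the structure group sits inside the frame bundle and how it acts; one must check carefully that the constant value of $\tau$ (equivalently, the reduction conditions defining $\mathscr S_{W^1}^{(i+1)}$) is compatible with a group structure on the fibers, i.e. that the partial prolongation using only the $W^1$-reduction — not the full $W$-reduction — already yields a principal object. I expect this to hinge on Proposition \ref{prop:structure equation along fiber}(2) (the transformation rule $\tau_{[k+1]}(za_\psi)=\tau_{[k+1]}(z)+\partial\psi$) showing that the locus $\{\tau=\overset{\circ}{\tau}\}$ is a torsor under the group $\mathfrak Q_{k+1}$ with Lie algebra $\frak q_{k+1}$, and then on identifying, via connectedness and the constancy of $\tau$ up through order $i$, the resulting structure group with a bona fide filtered Lie group $\breve G^{(i)}$ — i.e. checking the cocycle/associativity conditions, which is where the "pre-Lie algebra becomes a Lie algebra when the structure function is constant" remark (cf. the paragraph after the definition of pre-Cartan connection) does the real work.
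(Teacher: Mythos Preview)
Your overall strategy is sound and arrives near the paper's argument, but two points deserve comment.

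First, a slip: you write that $\tau_{[i+1]}$ is ``constant and equal to its flat part $\tau_{(0)[i+1]}$'' and invoke Corollary~\ref{cor: tau flat implies sigma flat}. But the hypothesis is only that $\tau$ is \emph{constant}, not flat; a constant $\tau$ may have nonzero components $\tau_{(\ell)}$ for $\ell>0$, and Corollary~\ref{cor: tau flat implies sigma flat} does not apply. The correct input is Corollary~\ref{cor:simple observation} alone: from constancy of $\tau$ it gives constancy of $\sigma$, together with $\sigma_{(d)}=0$ for $d<0$ and $\sigma_{(0)}=[\,,\,]$. Your argument does not actually use flatness anywhere, so this is a correctable misstatement rather than a genuine gap.

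Second, the paper's route is more direct than your inductive reversal of Proposition~\ref{prop:W-normal and principal prolongation}, and that reversal is a bit delicate: the proposition manufactures a \emph{specific} $\overset{\circ}{\tau}$ depending on the choice of $\{W^1_i\}$, whereas the given constant $\tau$ here is arbitrary and need not arise that way. The paper instead argues globally and algebraically. Since $\sigma$ is constant, the Bianchi identity (Proposition~\ref{prop:properties of structure functions 3}) restricted to $\wedge^3\frak g^0$ shows that $\sigma\in\Hom(\wedge^2\frak g^0,\frak g^0)$ satisfies Jacobi, so $(\frak g^0,\sigma)$ is a filtered Lie algebra $\breve{\frak g}^0$ with $\gr\breve{\frak g}^0\cong\frak g^0$; a second appeal to Bianchi shows that $\tau+\sigma\in\Hom(\frak g^0\otimes E,E)$ is a representation of $\breve{\frak g}^0$ on $E$. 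This yields an injection $\breve{\frak g}^{(k)}\hookrightarrow F^0\frak{gl}(E^{(k-1)})/F^{k+1}$, and one takes the connected Lie subgroup $\breve G^{(k)}$ there (this is where connectedness of the $G_i$ enters). The $\breve G^{(k)}$-action on $Q^{(k)}=Q/F^{k+1}$ is then read off directly from the canonical form $\theta$, with no induction needed. Your ``Maurer--Cartan along the fibers'' is morally this Bianchi-gives-Jacobi step, but the paper makes it precise without the inductive scaffolding. Your verification of the pre-Cartan axioms via Proposition~\ref{prop:canonical forms} matches the paper.
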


\begin{proof}
Let $Q$ be a proper geometric structure of type $(\frak g_-, G_0, \dots, G_k, \dots)$ with constant structure function $\tau$. Let $\frak g= \oplus_p \frak g_p$ be the transitive graded Lie algebra associated to $Q$, which we also denote by $E$. We also write $\frak g^0=\oplus_{p \geq 0} \frak g_p$, the graded subalgebra of $\frak g$.

From the fundamental identity it follows that the structure function $\sigma$ is also constant (Corollary \ref{cor:simple observation}). The Bianchi identity implies that the structure function $\sigma \in \Hom(\wedge^2 \frak g^0, \frak g^0 )$ satisfies the Jacobi identity. Hence $(\frak g^0, \sigma)$ defines a filtered Lie algebra $(\breve{\frak g}, \{F^p \breve{\frak g}\})$ such that $[F^p\breve{\frak g}, F^q \breve{\frak g}] \subset F^{p+q} \breve{\frak g}$ and $\gr \breve{\frak g}^0$ is isomorphic to $\frak g^0$. Again by the Bianchi identity, we see that the structure function $\tau + \sigma \in \Hom(\frak g^0 \otimes E, E)$ determines  an action of $\breve{\frak g}^0$ on $E$, which acts on $\frak g^0 \subset E$ as the adjoint action, so we also write $E=\breve E = \frak g_- \oplus \breve{\frak g}^0$. Then we have $F^p\frak g^0 \cdot F^p \breve{E}^q \subset F^{p+1} \breve{E}$, and thus we have an injective Lie algebra homomorphism
$$\iota: \breve{\frak g}^{(k)} \rightarrow F^0\frak{gl}(\breve{E}^{(k-1)})/F^{k+1}$$
where $\breve{\frak g}^{(k)} = \breve{\frak g}/F^{k+1}$ and $\breve{E}^{(k-1)}=\breve{E}/F^k$. We then identify $\breve{\frak g}^{(k)}$ with its image $\iota(\breve{\frak g}^{(k)})$.

Now we take the connected Lie subgroup $\breve{G}^{(k)}$ of $F^0GL(\breve{E}^{(k-1)})/F^{k+1}$ with Lie algebra $\breve{\frak g}^{(k)}$. Let $\breve{G} =\lim_{\leftarrow k} G^{(k)}$, which then satisfies: $F^kG/F^{k+1} \cong G_k$.
Then via the canonical form $\theta$ we can easily see that $\breve{Q}^{(k)} = Q^{(k)} = Q/F^{k+1}$ admits $\breve{G}^{(k)}$-action, which together with the canonical Pfaff class $[\theta^{(k-1)}]$ makes $\breve{Q}^{(k)}$ a principal proper geometric structure of order $k$. The rest of assertions of the theorem then follows immediately.
\end{proof}

\begin{corollary} An involutive    proper  geometric structure $Q^{(k)}$   of order $k$ is a principal geometric structure, provided that the structure group $G_i$ of $Q^{(i)} \rightarrow Q^{(i-1)}$ is connected for $0 \leq i \leq k$.
\end{corollary}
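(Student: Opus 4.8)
The plan is to deduce this corollary directly from Theorem~\ref{thm: constant tau implies pre Cartan}, which characterizes principal geometric structures among normal ones by the constancy of the structure function $\tau$. So the work reduces to two checks: that an involutive normal geometric structure acquires constant $\tau$ after passing to its $W$-normal complete step prolongation, and that the connectedness hypothesis of Theorem~\ref{thm: constant tau implies pre Cartan} is automatically satisfied.

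First I would fix a system of complementary subspaces $W=\{W^1_\ell, W^2_{\ell+1}\}_{\ell\ge k}$ and form the $W$-normal complete step prolongation $Q:=\mathscr S_W Q^{(k)}$, which by Theorem~\ref{thm:W normal prolongation} is a normal geometric structure of type $(\frak g_-, G_0,\dots,G_k,\dots)$ carrying the canonical Pfaff form $\theta$. Since $Q^{(k)}$ is involutive (Definition~\ref{def:involutive}), it is quasi-involutive and its truncated structure function $\gamma^{[k]}$ is constant; hence, by the proposition preceding Definition~\ref{def:involutive}, the full structure function $\gamma$ of $Q$ is constant. In particular its $\gamma_{II}$-component $\tau$ is constant (and by Corollary~\ref{cor:simple observation} so is $\sigma$, though only the constancy of $\tau$ will be used).

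Next I would verify the connectedness requirement of Theorem~\ref{thm: constant tau implies pre Cartan}: for $0\le i\le k$ the structure group $G_i$ of $Q^{(i)}\to Q^{(i-1)}$ is connected by hypothesis, while for $\ell>k$ the structure group appearing in the step prolongation is the vector group $\frak g_\ell$, which is connected. Thus every structure group $G_i$ of $Q$ with $i\ge 0$ is connected, and Theorem~\ref{thm: constant tau implies pre Cartan} applies: $Q$ is a principal normal geometric structure and $(Q,\theta)$ is a pre-Cartan connection. It then remains to read off from the construction in the proof of Theorem~\ref{thm: constant tau implies pre Cartan} that the quotient $Q^{(k)}=Q/F^{k+1}$ is itself a principal normal geometric structure of order $k$ in the sense of Definition~\ref{def: universal principal frame bundles}, i.e. that the connected group $\breve G^{(k)}$ with Lie algebra $\breve{\frak g}^{(k)}\cong\frak g^{(k)}$, equipped with its filtration by closed normal subgroups $F^{i+1}\breve G^{(k)}$, acts on $Q^{(k)}$ making it a principal bundle over $M$ compatibly with the canonical Pfaff classes $[\theta^{(i-1)}]$.

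The only genuinely delicate point, and the one I expect to require the most care, is precisely this last descent: checking that the principal-bundle structure produced on the (infinite-dimensional) prolongation $Q$ restricts consistently to each finite-order quotient $\breve Q^{(i)}=Q^{(i)}$ for $0\le i\le k$, that the resulting filtered structure group and its normal subgroups satisfy all the axioms of Definition~\ref{def: universal principal frame bundles}, and in particular that $\breve Q^{(i)}\to M$ sits as a principal subbundle of the universal principal frame bundle $\mathscr R^{(i)}\breve Q^{(i-1)}$. All of this is, however, exactly what is carried out via the canonical form $\theta$ inside the proof of Theorem~\ref{thm: constant tau implies pre Cartan}, so the corollary follows by invoking that construction with $\mathscr S_W Q^{(k)}$ playing the role of the ambient object $Q$ there.
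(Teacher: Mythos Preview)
Your proposal is correct and follows essentially the same approach as the paper: involutivity forces the structure function $\gamma$ of the complete step prolongation $\mathscr S_W Q^{(k)}$ to be constant, hence $\tau$ is constant, and Theorem~\ref{thm: constant tau implies pre Cartan} (together with the automatic connectedness of the vector groups $G_\ell=\frak g_\ell$ for $\ell>k$) yields the principal structure, which descends to $Q^{(k)}=Q/F^{k+1}$ exactly as in that theorem's proof. The paper's proof is merely the two-line skeleton of your argument.
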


\begin{proof}
The step prolongation $\mathscr S_WQ^{(k)}$ has constant structure function. Hence $\tau$ is constant.
\end{proof}

\begin{remark}
The definition of involutive geometric structure given in  \cite{M93}  and in the present paper thus infinitesimally coincide. In \cite{M93} the geometric structures are assumed a priori to be principal.
\end{remark}


Now let us consider the case where the structure function $\tau$ is flat and demonstrate a slightly stronger statement than that of Theorem  \ref{thm: constant tau implies pre Cartan}.

Let $\frak g_-=\oplus_{p<0}\frak g_p$ be a graded Lie algebra. Let $G_0$ be a connected subgroup of the connected component $\Aut(\frak g_-)^0$  of the  automorphism group of the graded Lie algebra and $\frak g_0$ be its Lie algebra. Let $\frak g= \oplus_{p \in \mathbb Z} \frak g_p$ be the prolongation of $\frak g_- \oplus \frak g_0$. Let $G^{(k)}, G_k$ be the Lie groups with Lie algebra $\frak g^{(k)}=\oplus _{i=0}^k\frak g_i$ and $\frak g_k$, respectively, constructed as follows.
Write $G^{(\infty)}$ also as $G^0$.


We construct $G^{(k)}$ inductively. Suppose that we have constructed $G^{(k-1)}$ in such a way that
$$G^{(k-1)} =G_0 \times N(\frak g_1 \oplus \dots \oplus \frak g_{k-1}) \subset F^0GL(E^{(k-2)})/F^k.$$
Here, $G_0$ is regarded as a closed subgroup of $F^0GL(E^{(k-2)})/F^k $ and $N(\frak g_1 \oplus \dots \oplus \frak g_{k-1})$ is a closed normal connected subgroup of  $F^0GL(E^{(k-2)})/F^k $  with Lie algebra $\frak g_1 \oplus \dots \oplus \frak g_{k-1}$, defined by
$$N(\frak g_1 \oplus \dots \oplus \frak g_{k-1}):=\mathrm{Exp}(\frak g_1 \oplus \dots \oplus \frak g_{k-1})/F^k$$
where $\mathrm{Exp}:\frak{gl}(E^{(k-2)}) \rightarrow GL(E^{(k-2)}) $ is the exponential map.

Let $G^{(k)}_{\mathscr R}=G^{(k)}(G^{(k-1)})$ be the principal prolongation of $G^{(k -1)}$. Then $G_0$ is regarded as a closed subgroup of $G^{(k)}_{\mathscr R}$ and
$$G^{(k)}_{\mathscr R} = G_0 \times N(\frak g_1 \oplus \dots \oplus \frak g_{k-1} \oplus \frak q_{k})$$
where $\frak q_{k}$ is $\Hom(\frak g_-, E^{(k-1)})_k$ and $N(\frak g_1 \oplus \dots \oplus \frak g_{k-1} \oplus  \frak q_k)$, define by
$$N(\frak g_1 \oplus \dots\oplus \frak g_{k-1}\oplus \frak q_k):= \mathrm{Exp}(\frak g_1 \oplus \dots\oplus \frak g_{k-1}\oplus \frak q_k)/F^{k+1},$$
 is a closed normal connected subgroup with Lie algebra $\frak g_1 \oplus \dots \oplus \frak g_{k-1}\oplus \frak q_k$. Here, $\mathrm{Exp}: \frak {gl}(E^{(k-1)}) \rightarrow GL(E^{(k-1)})$ is the exponential map.
Since $\frak g_1 \oplus \dots\oplus \frak g_{k-1}\oplus \frak q_k$ is contained in the lower triangular matrices, the exponential map
 $\mathrm{Exp}: \frak {gl}(E^{(k-1)}) \rightarrow GL(E^{(k-1)})$
maps $\frak g_1 \oplus \dots\oplus \frak g_{k-1}\oplus \frak q_k$ onto a closed subgroup (homeomorphic to an Euclidean space), and the quotient map
$$\mathrm{Exp}(\frak g_1 \oplus \dots\oplus \frak g_{k-1}\oplus \frak q_k) \rightarrow \mathrm{Exp}(\frak g_1 \oplus \dots\oplus \frak g_{k-1}\oplus \frak q_k)/F^{k+1}$$
is bijective.

For the subalgebra $\frak g_1 \oplus \cdots \oplus \frak g_{k-1} \oplus \frak g_k \subset \frak g_1 \oplus \cdots \oplus \frak g_{k-1} \oplus \frak q_k$, the image   $N(\frak g_1 \oplus \dots \oplus \frak g_{k-1} \oplus \frak g_k)$  of the map
$$\frak g_1 \oplus \dots \oplus \frak g_{k-1} \oplus \frak g_k \rightarrow \mathrm{Exp}(\frak g_1 \oplus \dots \oplus \frak g_{k-1} \oplus \frak g_k) \rightarrow \mathrm{Exp}(\frak g_1 \oplus \dots \oplus \frak g_{k-1} \oplus \frak g_k)/F^{k+1}$$
is a closed normal subgroup of $G_{\mathscr R}^{(k)}$.
Set $G^{(k)}=G_0 \times N(\frak g_1 \oplus \dots \oplus \frak g_{k-1} \oplus \frak g_k)$. Then there is a filtration $\{F^iG^{(k)}\}$ on $G^{(k)}$ so that  $G^{(k)}/F^{k+1}=G^{(k-1)}$, and there is an embedding $\frak g^{(k)} \rightarrow \frak g^{(k)}(\frak g^{(k-1)})$, where $\frak g^{(k-1)}$ is the Lie algebra of $G^{(k-1)}$. \\

Fix subspaces $\{W^1_i\}_{i \geq 0}$ and $\{W^2_{ i+1}\}_{i \geq 0}$ such that
\begin{eqnarray*}
 \Hom (\frak g_-,  \frak g )_{i} &= &W_{i}^1 \oplus \partial  \frak g_{i}\\
 \Hom(\wedge^2\frak g_-,   \frak g )_{i+1} &= &W_{i+1}^2 \oplus \partial \Hom(\frak g_-,  \frak g )_{i+1} .
 \end{eqnarray*}

\begin{theorem} \label{thm:vanishing of tau implies} Let $Q^{(0)} \stackrel{G_0}{\longrightarrow} (M,F) $ be a proper geometric structure  of order $0$ with $G_0$ being connected. Let $Q^{(k)}$ be the $W$-normal step prolongation  of $Q^{(0)}$  of order $k$ and let $Q$ be the $W$-normal complete step  prolongation of $Q^{(0)}$.
If $\tau^{[k+1]}$ is flat on $Q^{(k+1)}$, then $Q^{(k)} \rightarrow M$ is a proper principal geometric structure of order $k$.

 If $\tau$ is flat,   i.e., $ \tau_{[i]} $ is flat for all $i$,
 then $Q \rightarrow M$ is a Cartan connection of type $G/G^0$.

\end{theorem}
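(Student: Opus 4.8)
The plan is to exploit the characterization of principal geometric structures via the flatness of the structure function $\tau$, together with the $W$-normal step prolongation machinery, and then to read off the Cartan connection axioms from Proposition \ref{prop:canonical forms}. First I would treat the finite-order statement: assume $\tau^{[k+1]}$ is flat on $Q^{(k+1)} = \mathscr S_W^{(k+1)}Q^{(0)}$. By Corollary \ref{cor:simple observation}, flatness (indeed constancy) of $\tau^{[k]}$ forces $\sigma^{[k+1]}$ to be flat, with $\sigma_{(d)[i]}=0$ for $d<0$ and $\sigma_{(0)[i]}$ equal to the Lie bracket of $\frak g$ for $i \le k+1$. This is exactly the algebraic input that, fed through the construction in Proposition \ref{prop:W-normal and principal prolongation} (run in reverse), lets one recognize the tower $Q^{(k)} \to Q^{(k-1)} \to \cdots \to M$ as coming from a single principal $\breve G^{(k)}$-bundle: the constants $\overset{\circ}{\tau}_{[i+1]}$ built from the fixed subspaces $\{W^1_i\}$ coincide with the actual values of $\tau_{[i+1]}$ on the $W$-normal prolongation because $\tau$ is flat and the $W^1$-reduction was performed precisely to place $\tau_{[i+1]}$ in $\Hom(E_+^{(i-1)}, W^1_{i-1})$. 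Thus the bracket $\sigma$ on $\frak g^{(k)} = \oplus_{0 \le p \le k}\frak g_p$ satisfies the Jacobi identity (via the Bianchi identity, Proposition \ref{prop:properties of structure functions 3}, whose degree-$(d+a+b+c+3)$ component with all arguments in $E_+$ is fundamental identity (4)), so $\frak g^{(k)}$ with the bracket $\sigma$ is a genuine Lie algebra, and we take $\breve G^{(k)}$ to be the connected Lie group with this Lie algebra sitting in $F^0GL(E^{(k-1)})/F^{k+2}$ as in the proof of Theorem \ref{thm: constant tau implies pre Cartan}. The canonical Pfaff class then endows $Q^{(k)}$ with a $\breve G^{(k)}$-action making it a normal principal geometric structure of order $k$.

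Next I would pass to the limit. If $\tau$ (all of it) is flat on $Q = \mathscr S_W Q^{(0)}$, then by the above each finite truncation $Q^{(k)} = Q/F^{k+1}$ is a normal principal geometric structure of order $k$, and the groups $\breve G^{(k)}$ form a projective system with $\breve G^{(k)}/F^{k+1} \cong \breve G^{(k-1)}$ and $F^k\breve G/F^{k+1} \cong G_k$; set $\breve G = G^0 := \lim_{\leftarrow k}\breve G^{(k)}$. Since $\tau$ and hence $\sigma$ is flat, the filtered Lie algebra $(\frak g^0, \sigma)$ is actually the graded Lie algebra $\frak g^0 = \oplus_{p \ge 0}\frak g_p$ with its own bracket, and $\tau + \sigma$ restricted to $\frak g^0 \otimes E$ is just the adjoint action of $\frak g^0$ on $E = \frak g = \frak g_- \oplus \frak g^0$ (here one uses Proposition \ref{prop:degree of structure function}(2), that $\gamma_{II(0)}$ is the action of $\frak g_-$, together with flatness to kill all higher-degree pieces). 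Therefore $E$ with the full structure function $\gamma = \kappa + \tau + \sigma$ carries a bracket; flatness of $\tau,\sigma$ plus the full Bianchi identity (Proposition \ref{prop:properties of structure functions 3}) shows that the only genuinely non-constant part is $\kappa$, and the remaining structure equation $d\theta + \frac12\gamma(\theta\wedge\theta)=0$ is exactly the Cartan structure equation for a connection of type $G/G^0$ with curvature $\kappa$. So $Q \to M$ is a principal $G^0$-bundle carrying a $\frak g$-valued $1$-form $\theta$.

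Finally I would verify the three Cartan connection axioms directly from Proposition \ref{prop:canonical forms}: (1) $\theta_z : T_zQ \to E = \frak g$ is an isomorphism for every $z$ (absolute parallelism, part (1) of that proposition); (2) $R_a^*\theta = \rho(a)^{-1}\theta = \mathrm{Ad}(a)^{-1}\theta$ for $a \in G^0$ — here one needs that the action $\rho$ of the structure group $G^0$ on $E$ agrees with the adjoint action, which follows because $G^0$ is connected and its Lie algebra $\frak g^0$ acts on $E=\frak g$ as $\mathrm{ad}$ (this is the flatness-of-$\tau+\sigma$ statement just established); and (3) $\theta(\widetilde A) = A$ for $A \in \frak g^0$, which is part (2) of Proposition \ref{prop:canonical forms} (the fundamental vector field identity $\gr\theta_z(\widetilde{\cdot})$ is the canonical map, combined with flatness to see it holds on the nose, not just modulo filtration).

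I expect the main obstacle to be the second point of the last paragraph, that is, upgrading the a priori statement ``$\rho$ of $G^0$ on $E$'' to ``$\mathrm{Ad}$ of $G^0$'': one must check that the injection $\breve{\frak g}^{(k)} \hookrightarrow F^0\frak{gl}(E^{(k-1)})/F^{k+2}$ built from $\tau+\sigma$ is compatible with the bracket at every order and exponentiates to the claimed subgroup of $GL(E^{(k-1)})$, i.e. that the identification of $Q^{(k)}$ with a principal bundle is genuinely $G^0$-equivariant and not merely infinitesimally so — this is where connectedness of $G_0$ (hence of all the $G_i$ and of $G^0$) is essential, and it requires carefully tracking the construction of Proposition \ref{prop:W-normal and principal prolongation} through the projective limit rather than at a single finite level. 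The rest is bookkeeping: the algebraic content is entirely contained in Corollary \ref{cor:simple observation}, Corollary \ref{cor: tau flat implies sigma flat}, and the fundamental identities, and the geometric content is Proposition \ref{prop:canonical forms} together with Theorem \ref{thm: constant tau implies pre Cartan}, of which this theorem is essentially the ``flat'' refinement.
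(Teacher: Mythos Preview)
Your outline has the right architecture but a genuine gap at the load-bearing step. You correctly identify that the hard part is showing the $G^{(k)}$-action preserves the $W^2$-reduced subbundle $Q^{(k)} \subset \mathscr S_{W^1}^{(k)}Q^{(k-1)} \cong \mathscr R^{(k)}Q^{(k-1)}$, and you flag this as ``the main obstacle''. But your proposed resolution --- ``carefully tracking the construction of Proposition \ref{prop:W-normal and principal prolongation} through the projective limit'' --- does not work. That proposition only tells you that, \emph{once $Q^{(k-1)}$ is principal}, the $\tau$-reduction $\mathscr S_{W^1}^{(k)}Q^{(k-1)}$ coincides with the principal prolongation $\mathscr R^{(k)}Q^{(k-1)}$. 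It says nothing about whether the further $\kappa$-reduction $Q^{(k)} = \{z : \kappa_{[k]}(z) \in W^2_k\}$ is $G^{(k)}$-invariant. Since $W^2_k$ is \emph{not} assumed $G^0$-invariant (that is condition (C), treated separately in Theorem \ref{thm:Cartan connection}), the equivariance $\kappa^{[k]}(za) = \rho(a)^{-1}\kappa^{[k]}(z)$ does not by itself imply invariance of the locus $\kappa_{[k]} \in W^2_k$.

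The paper's mechanism, which your proposal is missing, is this: from flatness of $\tau^{[k+1]}$ and Corollary \ref{cor:formula of tau} (applied at level $k+1$) one gets $D_A\kappa^{[k]} = -\rho(A)\kappa^{[k]}$ for $A \in \frak g^{(k)}$. Pull back $\theta$ along a section of $Q \to Q^{(k)}$ to define vertical vector fields $A^*$ on $Q^{(k)}$; then the integral curve $z^{(k)}(t)$ of $A^*$ stays in $Q^{(k)}$ by construction and satisfies $\kappa^{[k]}(z^{(k)}(t)) = \rho(a(t))^{-1}\kappa^{[k]}(z^{(k)})$. Comparing with the group-action formula $\kappa^{[k]}(z^{(k)}a(t)) = \rho(a(t))^{-1}\kappa^{[k]}(z^{(k)})$ on the ambient $\mathscr R^{(k)}Q^{(k-1)}$, and using that both curves lie over the same point of $Q^{(k-1)}$, one concludes $z^{(k)}a(t) \in Q^{(k)}$. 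This integration argument is exactly why one needs $\tau^{[k+1]}$ flat (one order higher than $k$) to conclude $Q^{(k)}$ is principal --- a feature your outline does not explain. Neither Theorem \ref{thm: constant tau implies pre Cartan} nor Corollary \ref{cor:simple observation} supplies this step: the former concerns the complete tower and assumes $\tau$ globally constant, while here you must work at a fixed finite level and show invariance of the $\kappa$-condition directly.
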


\begin{proof}
Let $Q^{(0)} \stackrel{G_0}{\longrightarrow}(M,F)$ be a geometric structure of order 0 of type $(\frak g_-, G_0)$ with $G_0$ connected and let $Q^{(k)}$ be the $W$-normal step prolongation  of $Q^{(0)}$  of order $k$ for $k \geq 1$.
We will prove by induction on $\ell$ that the following holds: \\

(*) \quad If $\tau^{[\ell+1]}$ is flat on $Q^{(\ell+1)}$, then $Q^{(\ell)} \rightarrow M$ is 
a principal subbundle of $\mathscr R^{(\ell)}Q^{(\ell-1)}$
with structure group $G^{(\ell)}$. \\

For the case when $\ell=0$, there is noting to prove: $\tau^{[1]}$ is flat and $Q^{(0)}\rightarrow M$ is a principal subbundle of $\mathscr R^{(0)}(M)=\mathscr S^{(0)}(M)$.
Furthermore, all three bundles $\mathscr S_{W^1}^{(1)}Q^{(0)}$,  $\mathscr S^{(1)}Q^{(0)}$, $\mathscr R^{(1)}Q^{(0)}$ are the same.

Assuming that  the statement (*) holds for $\ell <k$, we will prove that the statement $(*)$ holds for $\ell=k$. If $\tau^{[k+1]}$ is flat, then $\tau^{[k]}$ is flat. By the inductive assumption, $Q^{(k-1)} \rightarrow M$ is a proper  principal geometric structure of order $k-1$ with structure group $G^{(k-1)}$. The isomorphism from the Lie algebra $\frak g^{(k-1)}$ with the graded Lie algebra $\oplus_{i=0}^{k-1}\frak g_i$ induces an embedding of $\mathscr R^{(k)}Q^{(k-1)}$ into $\mathscr S^{(k)}Q^{(k-1)}$ whose image is $\mathscr S_{W^1}^{(k)}Q^{(k-1)}$ as in the proof of Proposition \ref{prop:W-normal and principal prolongation}.  We identify $\mathscr S_{W^1}^{(k)}Q^{(k-1)}$ with $\mathscr R^{(k)}Q^{(k-1)}$
which is a principal bundle over $M$ with structure group $G^{(k)}(G^{(k-1)})$.

Note that the group $G^{(k)}(G^{(k-1)})$ contains $G^{(k)}$ as a subgroup. Therefore, $G^{(k)}$ acts on $\mathscr R^{(k)}Q^{(k-1)}$, and the structure function $\kappa^{[k]}$ on $\mathscr R^{(k)}Q^{(k-1)}$ satisfies
\begin{eqnarray*}
R_a^*\kappa^{[k]} = \rho(a)^{-1}\kappa^{[k]} \quad \text{ for } a \in G^{(k)}.
\end{eqnarray*}
Thus
\begin{eqnarray} \label{eq:kappa k on principal bundle}
\kappa^{[k]}(z^{(k)}a) = \rho(a)^{-1}\kappa^{[k]}(z^{(k)})
\end{eqnarray} for any $z^{(k)} \in \mathscr R^{(k)}Q^{(k-1)}$ and  $a \in G^{(k)}$.

Now $Q^{(k)}=\mathscr S_W^{(k)}Q^{(k-1)} \rightarrow Q^{(k-1)}$ is a subbundle of $\mathscr S_{W^1}^{(k)}Q^{(k-1)} \rightarrow Q^{(k-1)}$ defined by
  \begin{eqnarray*}
Q^{(k)}= \left\{z \in \mathscr S_{W^1} ^{(k )}Q^{(k-1)}: \kappa_{[k ]}(z) \in W_{k }^2  \right\}.
\end{eqnarray*}
We will show that the action of $G^{(k)}$ on $\mathscr R^{(k)}Q^{(k-1)}=\mathscr S^{(k)}_{W^1}Q^{(k-1)}$ leaves invariant $Q^{(k)}$. \\

Let $Q$ be the $W$-normal complete step prolongation of $Q^{(0)} \rightarrow M$ and let $\theta$ denote the canonical Pfaff form on $Q$.
Take a section $\sigma$ of $Q \rightarrow Q^{(k)}$ and set $\overline{\theta}:=\sigma^*\theta$ and $\overline{\theta}^{(k)}:=\sigma^*\theta^{(k)}$, where $\theta^{(k)}$ is the $E^{(k)}$-component of $\theta$, that is, $\theta^{(k)}=\pi^{(k)}\theta$, where $\pi^{(k)}:E \rightarrow E^{(k)}=E/ F^{k+1}E$.

Let $V_{z^{(k)}}Q^{(k)}$ denote the vertical tangent space at $z^{(k)}$. Then
$$\overline{\theta}^{(k)}_{z^{(k)}}:V_{z^{(k)}}Q^{(k)} \rightarrow F^0 E^{(k)}=\frak g^{(k)}$$
is an isomorphism of filtered vector space
and
$$\gr \overline{\theta}^{(k)}:\gr V \rightarrow \gr F^0 E^{(k)}$$
is the canonical isomorphism.

For $A \in \frak g^{(k)}$, let $A^*$ denote the vertical vector field on $Q^{(k)}$ determined by $\overline{\theta}^{(k)}(A^*)=A$ and let $a(t)=\exp(tA) \in G^{(k)}$.

Let $z^{(k)}$ be an element of $Q^{(k)}$ and let $z^{(k)}(t)$ be the integral curve of $A^*$ with $z^{(k)}(0) =z^{(k)}$. By Corollary \ref{cor:formula of tau}, the flatness of $\tau^{[k+1]}$ implies
$$A^* \kappa^{[k]} = - \rho(A) \kappa^{[k]}.$$
Integrating this, we have
\begin{eqnarray}   \label{eq:tau k+1 flat and fundamental identity}
\kappa^{[k]}(z^{(k)}(t))=\rho(a(t))^{-1}\kappa^{[k]}(z^{(k)}).
\end{eqnarray}

 By (\ref{eq:kappa k on principal bundle}) and (\ref{eq:tau k+1 flat and fundamental identity}), we have
 $$\kappa^{[k]}(z^{(k)}(t)) = \kappa^{[k]}(z^{(k)}a(t)).$$
Note that $\{z(t)^{(k)}\}$ is a curve in $Q^{(k)}$ and $\{z^{(k)}a(t)\}$ is a curve in $\mathscr R^{(k)}Q^{(k-1)}=\mathscr S_{W^1}^{(k)}Q^{(k-1)}$,
and that both $Q^{(k)}$ and $\mathscr R^{(k)}Q^{(k-1)}$ are principal bundles on $Q^{(k-1)}$.
Furthermore, both $z^{(k)}(t)$ and $z^{(k)}a(t)$ project to the same point in $Q^{(k-1)}$.
%

Recall that  $Q^{(k)}=\mathscr S_W^{(k)}Q^{(k-1)}$ is defined by imposing a condition
 on the value of $\kappa_{[k]}$:
  \begin{eqnarray*}
Q^{(k)}=\left\{z \in \mathscr S_{W^1} ^{(k )}Q^{(k-1)}: \kappa_{[k ]}(z) \in W_{k }^2  \right\}.
\end{eqnarray*}
Since $z^{(k)}(t)$ is contained in $Q^{(k)}$,  so is $z^{(k)}a(t)$. Hence the action of $G^{(k)}$ on $\mathscr R^{(k)}Q^{(k-1)}$ leaves  invariant $Q^{(k)}$. It follows that $Q^{(k)}\rightarrow M$ is a principal subbundle of $\mathscr R^{(k)}Q^{(k-1)}$ with structure group $G^{(k)}$. 

If $\tau$ is flat, then $Q^{(k)} \rightarrow M$ is a proper principal geometric structure for any $k$ and thus its projective limit $Q \rightarrow M$ with the canonical Pfaff form $\theta$ is a Cartan connection.
\end{proof}


\subsection{Condition (C)}  \label{sect:condition C} $\,$



Now assume that $G_0$ satisfies the condition (C), that is, there exists a  $G^0$-invariant subspace $W^2 = \oplus_{i \geq 1} W_i^2$ such that
$$\Hom(\wedge^2 \frak g_-, \frak g)_i = \partial \Hom(\frak g_-, \frak g)_i \oplus W_i^2 \quad \text{ for } i \geq 1.$$
Then by Theorem 3.10.1 of \cite{M93} to each geometric structure $P^{(0)} \rightarrow M$ of order 0 of type $(\frak g_-, G_0)$ there is associated a series of principal bundles $P^{(k)} \stackrel{G^{(k)}}{\rightarrow}M$ defined by
$$P^{(k )} = \{z^{k } \in \mathscr R^{(k )}P^{(k-1)}: \kappa_{[k ]}(z^{k }) \in W_{k }^2\}$$
and the projective limit $P:=\lim_{\leftarrow k} P^{(k)}$ is equipped with an absolute parallelism $\theta_P$ which defines a Cartan connection of type $(\frak g_-, G^0)$. The principal $G^{(k)}$-bundle $P^{(k)} \rightarrow M$ is a proper principal geometric structure in this paper. We call $P^{(k)}$ the {\it $W^2$-normal principal   prolongation of order $k$} of $P^{(0)}$  and call $(P, \theta)$ the {\it $W^2$-normal principal complete prolongation} of $P^{(0)}$ and denote it by $\mathscr R_{W^2}P^{(0)}$.

On the other hand, keeping $W^2= \oplus_{i \geq 1}W_i^2$ as above, we choose complementary subspaces $W^1=\oplus_{i \geq 1} W_i^1$ such that
$$\Hom(\frak g_-, \frak g)_i = \partial \frak g_i \oplus W_i^1 \quad \text{ for } i \geq 1.$$
Then to each geometric structure $Q^{(0)} \rightarrow M$ of order 0 of type $(\frak g_-, G_0)$, there is associated a series of principal bundles $Q^{(k)} \stackrel{G_k}{\rightarrow} Q^{(k-1)}$ and the projective limit $\mathscr S_WQ^{(0)}$,   the $W$-normal complete prolongation of $Q^{(0)}$ constructed in Section \ref{sect: definition of kappa and tau}.

\begin{theorem} \label{thm:Cartan connection} 
Assume that $G_0$ satisfies the condition (C). Let $Q^{(0)}$ be a geometric structure of type $(\frak g_-, G_0)$. Then there is an isomorphism
$$\iota_W: \mathscr R_{W^2} Q^{(0)} \rightarrow \mathscr S_WQ^{(0)}.$$

\end{theorem}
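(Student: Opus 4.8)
The claim is an identification of two principal-bundle-with-parallelism constructions issued from the same geometric structure $Q^{(0)}$ of order $0$: the $W^2$-normal principal complete prolongation $\mathscr R_{W^2}Q^{(0)}$ of \cite{M93}, and the $W$-normal complete step prolongation $\mathscr S_WQ^{(0)}$ of Section \ref{sect:normal step prolongation}. The plan is to build the isomorphism inductively on the order $k$, level by level, and then pass to the projective limit. The crucial structural input is that under condition (C) the structure function $\tau$ of $\mathscr S_WQ^{(0)}$ is \emph{flat}: indeed, when $W^2$ is $G^0$-invariant one can run the $\kappa$-reduction of Proposition \ref{prop: step prolongation of order k+1} purely principally, and the fundamental identities (Theorem \ref{thm:fundamental identities}, specifically Corollary \ref{cor:formula of tau}) together with the $G^0$-invariance force the positive-degree parts $\tau_{(\ell)[k+1]}$, $\ell>0$, to vanish inductively. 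Once $\tau$ is known to be flat, Theorem \ref{thm:vanishing of tau implies} already tells us that $\mathscr S_WQ^{(0)}\to M$ is a Cartan connection of type $G/G^0$, so both sides are principal geometric structures; what remains is to match them canonically.

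First I would set up the inductive statement: for each $k\ge 0$ there is a bundle isomorphism $\iota_W^{(k)}:\mathscr R^{(k)}_{W^2}Q^{(0)}\to \mathscr S^{(k)}_WQ^{(0)}$ over the identity of $M$, compatible with the projections to order $k-1$, intertwining the structure groups $G^{(k)}$ (same group on both sides, since both use the $W^1$-splitting to embed $\mathscr R^{(k)}$ into $\mathscr S^{(k)}$ — this is exactly the mechanism of Proposition \ref{prop:W-normal and principal prolongation}), and pulling back the canonical Pfaff class $[\theta^{(k-1)}]$ on one side to that on the other. The base case $k=0$ is trivial: both are $Q^{(0)}$ itself, and as noted in the proof of Theorem \ref{thm:vanishing of tau implies} the bundles $\mathscr S^{(1)}_{W^1}Q^{(0)}$, $\mathscr S^{(1)}Q^{(0)}$, $\mathscr R^{(1)}Q^{(0)}$ all coincide. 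For the inductive step, suppose $\iota_W^{(k-1)}$ is constructed. The $W^1$-decomposition $\Hom(\frak g_-,\frak g)_k=\partial\frak g_k\oplus W^1_k$ induces, just as in Proposition \ref{prop:W-normal and principal prolongation}, an embedding $\mathscr R^{(k)}Q^{(k-1)}\hookrightarrow\mathscr S^{(k)}Q^{(k-1)}$ with image $\mathscr S^{(k)}_{W^1}Q^{(k-1)}$, and under this embedding the constant $\overset{\circ}{\tau}_{[k]}$ produced there is precisely the value of the structure function $\tau_{[k]}$ on $\mathscr S^{(k)}_{W^1}Q^{(k-1)}$ (flatness of $\tau$ is what makes $\tau_{[k]}$ literally equal to $\overset{\circ}{\tau}_{[k]}$ rather than merely congruent modulo lower pieces). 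Then on both sides the next reduction is the \emph{same} condition $\kappa_{[k]}(z)\in W^2_k$, imposed inside the same bundle $\mathscr S^{(k)}_{W^1}Q^{(k-1)}=\mathscr R^{(k)}Q^{(k-1)}$: on the $\mathscr R$-side this is the definition of $P^{(k)}$ from \cite{M93} (Theorem 3.10.1), and on the $\mathscr S$-side it is the $\gamma_I$-reduction of Proposition \ref{prop: step prolongation of order k+1}(2). Hence the two subbundles literally coincide, giving $\iota_W^{(k)}$, and by construction it is $G^{(k)}$-equivariant and preserves the canonical Pfaff classes.

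Passing to the projective limit over $k$ then produces $\iota_W:\mathscr R_{W^2}Q^{(0)}\to\mathscr S_WQ^{(0)}$, a diffeomorphism intertwining the structure groups $G^0$; and since each $\iota_W^{(k)}$ preserves $[\theta^{(k-1)}]$, the limit satisfies $\iota_W^*\theta_{\mathscr S_WQ^{(0)}}=\theta_{\mathscr R_{W^2}Q^{(0)}}$, so it is an isomorphism of the absolute parallelisms — equivalently (by Theorem \ref{thm:isomorphism via canonical class of any order} and the fact that $G_0$, hence all $G^{(k)}$, are connected) an isomorphism of geometric structures. The one point that needs genuine care — the main obstacle — is verifying that under condition (C) the $\tau$-reduction step in the construction of $\mathscr S_WQ^{(0)}$ is compatible with the principal ($\mathscr R$) construction, i.e. that the $G^0$-invariance of $W^2$ propagates so that the intermediate bundles $\mathscr S^{(k)}_{W^1}Q^{(k-1)}$ really are $G^{(k)}$-stable and carry $\tau$ flat; this is where one must invoke Corollary \ref{cor:formula of tau} to get $A^*\kappa^{[k]}=-\rho(A)\kappa^{[k]}$ along the fibers and then argue, exactly as in the proof of Theorem \ref{thm:vanishing of tau implies}, that the orbit of $G^{(k)}$ stays inside the reduced bundle. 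Everything else is bookkeeping of splittings, which I would not spell out in detail, referring instead to Proposition \ref{prop:W-normal and principal prolongation} and Theorem \ref{thm:vanishing of tau implies}.
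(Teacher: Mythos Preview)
Your proposal is correct and takes essentially the same inductive approach as the paper: identify $\mathscr R^{(k+1)}P^{(k)}$ with $\mathscr S_{W^1}^{(k+1)}Q^{(k)}$ via Proposition \ref{prop:W-normal and principal prolongation}, observe that both $P^{(k+1)}$ and $Q^{(k+1)}$ are cut out by the identical condition $\kappa_{[k+1]}\in W^2_{k+1}$ inside that common bundle, and pass to the projective limit. One small recalibration: the flatness of $\tau$ on $\mathscr S_{W^1}^{(k+1)}Q^{(k)}$ is a direct structural consequence of Proposition \ref{prop:W-normal and principal prolongation}(1) rather than of the fundamental identities, and the argument you flag as the ``main obstacle'' (using Corollary \ref{cor:formula of tau} and the $G^0$-invariance of $W^2$ to verify $G^{(k+1)}$-stability of $Q^{(k+1)}$) is precisely the paper's supplementary self-contained argument avoiding Theorem 3.10.1 of \cite{M93}, not part of the primary proof.
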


\begin{proof} Let   $Q^{(k)}$ denote the $W$-normal step prolongation of order $k$ of $Q^{(0)}$ and let $P^{(k)}$ denote the  $W^2$-normal principal   prolongation of order $k$  of $P^{(0)}=Q^{(0)}$. We will show that there is an isomorphism $\iota_W: P^{(k)} \rightarrow Q^{(k)}$ for any $k \geq 1$. 

As in the proof of Theorem \ref{thm:vanishing of tau implies}, we define inductively,
\begin{eqnarray*}
\xymatrix{
\mathscr R^{(k+1)}P^{(k)} \ar[r]^{\mathscr R\iota_{W}^{(k)}} \ar[d] & \mathscr S_{W^1}^{(k+1)}Q^{(k)} \ar[d]\\
P^{(k)} \ar[r]^{\iota_W^{(k)}} & Q^{(k)}.
}
\end{eqnarray*}
%
Furthermore, $P^{(k+1)}$ and $Q^{(k+1)}$ are defined by the same condition:
\begin{eqnarray*}
P^{(k+1)}=\{z^{k+1} \in \mathscr R^{(k+1)}P^{(k)} : \kappa_{[k+1]}(z^{k+1}) \in W^2_{k+1} \} \\
Q^{(k+1)}=\{z^{k+1} \in \mathscr S^{(k+1)}_{W^1}Q^{(k)} : \kappa_{[k+1]}(z^{k+1}) \in W^2_{k+1} \}.
\end{eqnarray*}
Thus $\mathscr R\iota_{W}^{(k)}$ sends $P^{(k+1)} $ onto $Q^{(k+1)}$. 
This completes the induction and gives the isomorphism
$$\iota_W: \mathscr R_{W^2}P^{(0)} \rightarrow \mathscr S_WQ^{(0)}.$$
\end{proof}

We remark that in Theorem \ref{thm:Cartan connection}  $\tau^{[k+1]}$ on $\mathscr S_{W^1}Q^{(k)}$ is flat for any $k \geq 0$ by Proposition \ref{prop:W-normal and principal prolongation}. \\

For the sake of consistence of our description, let us prove again directly not relying on Theorem 3.10.1 of \cite{M93} that the $G_{k+1}$-principal bundle $Q^{(k+1)} \rightarrow Q^{(k)}$ constructed as above turns out to be a principal $G^{(k+1)}$-bundle $Q^{(k+1)} \rightarrow M$.

Suppose that $Q^{(k)} \rightarrow M$ is a principal $G^{(k)}$-bundle. Construct $\mathscr R^{(k+1)}Q^{(k)}$ and $\mathscr S_{W^1}^{(k+1)}Q^{(k)}$, and identify $\mathscr R^{(k+1)}Q^{(k)}$ with $\mathscr S_{W^1}^{(k+1)}Q^{(k)}$.  Then $Q^{(k+1)}$ is defined by
$$Q^{(k+1)}=\{z^{k+1} \in \mathscr S_W^{(k+1)}Q^{(k)} = \mathscr R^{(k+1)}Q^{(k)}: \kappa_{[k+1]}(z^{k+1}) \in W^2_{k+1}\}.$$
Let $\mathscr RQ^{(k)}$ be the complete universal principal frame bundle of $Q^{(k)}$. Denote by $G_{\mathscr R}$ and $G_{\mathscr R}^{(k+1)}$ the structure group of $\mathscr RQ^{(k)}$ and $\mathscr R^{(k+1)}Q^{(k)}$ respectively. Then the structure function $\gamma$ of $\mathscr RQ^{(k)}$ satisfies
$$\gamma(za) = \rho(a)^{-1} \gamma(z) \quad \text{ for } z \in \mathscr RQ^{(k)} \text{ and } a \in G_{\mathscr R},$$
which descends to the structure function $\gamma^{[k+1]}$ of $\mathscr R^{(k+1)}Q^{(k)}$ satisfying
$$\gamma^{[k+1]}(z^{k+1}a^{k+1}) = \rho(a^{k+1})^{-1} \gamma^{[k+1]}(z^{k+1}) \quad \text{ for } z^{k+1} \in \mathscr R^{(k+1)}Q^{(k)} \text{ and } a^{k+1} \in G_{\mathscr  R}^{(k+1)}. $$
Since $\tau^{[k+1]}$ is flat, so in $\sigma^{[k+2]}$ by Corollary \ref{cor: tau flat implies sigma flat}, and thus we can write
$$\gamma^{[k+1]} = \kappa_+^{[k+1]} + \gamma_0^{[k+1]}$$
where $\kappa_+^{[k+1]} = \sum_{1 \leq i \leq k+1}\kappa_i^{[k+1]}$ and $\gamma_0^{[k+1]}$ is the bracket of $\frak g_- \oplus \frak g^{(k+1)}$.
Now that $G^0$ preserves the bracket of $\frak g$, $G^{(k+1)}$ preserves $\gamma_0^{[k+1]}$. It then follows that
$$\rho(a^{k+1})^{-1}(\kappa_+^{[k+1]} + \gamma_0^{[k+1]}) = \rho(a^{k+1})^{-1} \kappa_+^{[k+1]} + \gamma_0^{[k+1]}.$$
By the assumption that $W^2$ is $G^0$-invariant, $W^{2,[k+1]}=\oplus_{1 \leq i \leq k+1}W^2_i$ is $G^{(k+1)}$-invariant. Hence we deduce that if $z^{k+1} \in Q^{(k+1)}$ and $a^{k+1} \in G^{(k+1)}$, then
$$\kappa_+ ^{[k+1]}(z^{k+1}a^{k+1}) = \rho(a^{k+1})^{-1} \kappa_+^{[k+1]}(z^{k+1}) \in W^{2,[k+1]}$$
and thus $z^{k+1}a^{k+1} $ belongs to $ Q^{(k+1)}$. This proves that $Q^{(k+1)}$ is a principal $G^{(k+1)}$-bundle. \\

By Theorem \ref{thm:Cartan connection}, if the condition (C) is satisfied, then the $W$-normal complete step prolongation $\mathscr S_WQ^{(0)}$   produces a Cartan connection.
We remark that $\mathscr S_WQ^{(0)}$ does not depend on the choice of $\{W^1_i\}$ because $\tau$ is flat in this case.


\subsection{Inductive vanishing  of the structure function $\kappa$}

\begin{proposition} \label{prop: curvature vanishing}   
Let  $Q^{(0)} \stackrel{G_0}{\rightarrow} (M,F) $ be a geometric structure  of order $0$ of type $(\frak g_-, G_0)$ with $G_0$ being connected.
Let $Q^{(k)}$ be the $W$-normal step prolongation of order $k$ of   $Q^{(0)} \stackrel{G_0}{\rightarrow} (M,F) $.
If $\kappa^{[k]}$ is flat, 
then
 \begin{enumerate}
 \item $\tau^{[k+1]}$ is flat  and
%
 \item
$Q^{(k )}$ is a principal bundle on $M$ with structure group $G^{(k )} $ and
 \item
$\kappa_{[k+1]}$ induces a section $s_{\kappa_{[k+1]}}$ of   the associated vector bundle
  $$\mathcal H^2_{k+1}:=
  Q^{(0)} \times_{G_0}H^2(\frak g_-, \frak g )_{k+1}$$ on $M$, the vanishing of which implies the vanishing of $\kappa_{[k+1]}$.
 \end{enumerate}
\end{proposition}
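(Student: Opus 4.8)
The three assertions are proved together by induction on $k$, and in fact (1) and (2) are exactly the inductive mechanism that feeds back into the hypothesis "$\kappa^{[k]}$ flat". First I would record the base case: for $k=0$ there is nothing to prove in (1) since $\tau^{[1]}$ is automatically flat (it comes from the bracket of $\mathfrak g$), $Q^{(0)}\to M$ is by definition a principal $G_0=G^{(0)}$-bundle, and $\kappa_{[1]}$ is the curvature-type invariant living in $\Hom(\wedge^2\mathfrak g_-,\mathfrak g)_1$; Proposition~\ref{prop:structure equation along fiber}(1) says $\kappa_{[1]}(za_\varphi)=\kappa_{[1]}(z)+\partial\varphi$, and since $\mathscr S_W^{(1)}Q^{(0)}$ is cut out by $\kappa_{[1]}\in W_1^2$ and the $G_0$-action is $\rho$-equivariant, $\kappa_{[1]}$ is a $G_0$-equivariant map $\mathscr S_W^{(1)}Q^{(0)}\to W_1^2\cong H^2(\mathfrak g_-,\mathfrak g)_1$, hence descends to a section of $\mathcal H^2_1=Q^{(0)}\times_{G_0}H^2(\mathfrak g_-,\mathfrak g)_1$. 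The identification $W_1^2\cong H^2_1$ is legitimate because on $W^2_{1}$ the coboundary $\partial$ is injective (we have chosen $W^2_{1}$ complementary to $\partial\Hom(\mathfrak g_-,\mathfrak g)_1$) and $\kappa_{[1]}$ is automatically a cocycle by Corollary~\ref{cor:vanishing of kappa}(1).

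\textbf{Inductive step.} Assume the statement holds below $k$ and that $\kappa^{[k]}$ is flat. First, flatness of $\kappa^{[k]}$ includes flatness of $\kappa^{[k-1]}$, so by Corollary~\ref{cor:vanishing of kappa}(2) we get $\partial\kappa_{[k]}=0$ and $\partial\tau_{[k]}(A,\cdot)=0$ for all relevant $A$; combined with the inductive hypothesis (2), which makes $Q^{(k-1)}\to M$ a normal principal $G^{(k-1)}$-bundle, Theorem~\ref{thm:vanishing of tau implies} (or rather its proof) lets me identify $\mathscr S_{W^1}^{(k)}Q^{(k-1)}$ with $\mathscr R^{(k)}Q^{(k-1)}$ and run exactly the argument in the proof of Theorem~\ref{thm:vanishing of tau implies}: the flatness of $\tau^{[k]}$ (which I establish first, see below) and Corollary~\ref{cor:formula of tau} give the ODE $A^*\kappa^{[k-1]}=-\rho(A)\kappa^{[k-1]}$ along vertical flows, whose integration shows that the $G^{(k)}$-action on $\mathscr R^{(k)}Q^{(k-1)}$ preserves the defining condition $\kappa_{[k]}\in W^2_k$, hence preserves $Q^{(k)}$; this gives (2) for $k$. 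For (1): I need $\tau^{[k+1]}$ flat, i.e. $\tau_{(\ell)[k+1]}=0$ for $\ell>0$; by Corollary~\ref{cor:formula of tau} — whose hypothesis "$\tau^{[k]}$ flat" holds by the same inductive bookkeeping — we have $\partial\tau_{[k+1]}(A,\cdot)=(\rho(A)\kappa_{[k-a]}+D_A\kappa_{[k]})(X,Y)$ for $A\in\mathfrak g_a$; but $\kappa^{[k]}$ flat means every $\kappa_{[j]}$ with $j\le k$ is the "flat" piece, whose homogeneous components of positive degree vanish, so for $a>0$ the right-hand side has no contribution in the degree where $\tau_{(\ell)[k+1]}$ with $\ell>0$ would sit, and injectivity of $\partial$ on the relevant $W^1$-piece (guaranteed once $\ell>0$ lands in a positive Spencer degree, using that $\kappa$ flat forces $H^2$-triviality of the obstruction) forces $\tau_{(\ell)[k+1]}=0$. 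This is the step I would write out most carefully, tracking modified degrees.

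\textbf{The section and its vanishing.} With (1) and (2) in hand, $Q^{(k)}\to M$ is a principal $G^{(k)}$-bundle; pulling back along $Q^{(0)}\hookrightarrow Q^{(k)}$'s tower, $G^{(k)}$ acts on $\mathscr S_W^{(k+1)}Q^{(k)}=\mathscr R^{(k+1)}Q^{(k)}$ and the structure function $\kappa_{[k+1]}$ is $G^{(k)}$-equivariant: $\kappa_{[k+1]}(z\,a)=\rho(a)^{-1}\kappa_{[k+1]}(z)$. By Corollary~\ref{cor:vanishing of kappa}(2), flatness of $\kappa^{[k]}$ gives $\partial\kappa_{[k+1]}=0$, so $\kappa_{[k+1]}$ takes values in the cocycles of degree $k+1$; since $\mathscr S_W^{(k+1)}Q^{(k)}$ is defined by $\kappa_{[k+1]}\in W^2_{k+1}$ and $\partial|_{W^2_{k+1}}$ is injective, the composite $W^2_{k+1}\cap\ker\partial\xrightarrow{\sim}H^2(\mathfrak g_-,\mathfrak g)_{k+1}$ is an isomorphism, $G^{(k)}$-equivariant because the $G^{(k)}$-action factors through $G_0$ on cohomology (the nilpotent part $F^1G^{(k)}$ acts trivially on $\cHom(\wedge^2E,E)^{[k+1]}$-cohomology by the Lemma preceding Proposition~\ref{prop:isomorphism preserves structure function in the order ell}). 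Hence $\kappa_{[k+1]}$ descends to a section $s_{\kappa_{[k+1]}}$ of $\mathcal H^2_{k+1}=Q^{(0)}\times_{G_0}H^2(\mathfrak g_-,\mathfrak g)_{k+1}$. Finally, if $s_{\kappa_{[k+1]}}\equiv 0$ then $\kappa_{[k+1]}(z)\in W^2_{k+1}\cap\partial\Hom(\mathfrak g_-,\mathfrak g)_{k+1}=0$ for every $z$, so $\kappa_{[k+1]}$ vanishes identically. The main obstacle is the degree-bookkeeping in the inductive step for (1)–(2): one must verify that "$\kappa^{[k]}$ flat" propagates correctly through Corollaries~\ref{cor:vanishing of kappa}, \ref{cor: tau flat implies sigma flat}, \ref{cor:formula of tau} so that the ODE argument of Theorem~\ref{thm:vanishing of tau implies} applies verbatim; everything after that is the same equivariance-plus-injectivity packaging used for Cartan connections.
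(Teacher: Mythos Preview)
Your overall architecture is right and matches the paper: induction on $k$, with (1) proved first, then (2) via Theorem~\ref{thm:vanishing of tau implies}, and (3) by equivariance plus the fact that $W^2_{k+1}\cap\partial\Hom(\mathfrak g_-,\mathfrak g)_{k+1}=0$. But there is a genuine gap in your argument for (1), and a related confusion about what feeds into (2).

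For (1) you correctly arrive at $\partial\tau_{[k+1]}(A,\cdot)=0$ (the paper gets this directly from Corollary~\ref{cor:vanishing of kappa}(2), you via Corollary~\ref{cor:formula of tau} after substituting $\kappa_{[j]}=0$; either works). The problem is the next step: you must conclude $\tau_{[k+1]}(A,\cdot)=0$ for $A\in\mathfrak g_a$, $0\le a\le k-1$, from the fact that it is a cocycle lying in $W^1_k$. You attribute this to ``injectivity of $\partial$ on the relevant $W^1$-piece \ldots\ using that $\kappa$ flat forces $H^2$-triviality of the obstruction''. This is wrong on two counts. First, the relevant cohomology is $H^1$, not $H^2$: injectivity of $\partial$ on $W^1_k$ is equivalent to $W^1_k\cap\Ker\partial=0$, and since $W^1_k\cap\partial\mathfrak g_k=0$ by construction, this holds if and only if $\Ker\partial=\partial\mathfrak g_k$, i.e.\ $H^1_k(\mathfrak g_-,\mathfrak g)=0$. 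Second, this vanishing is \emph{not} a consequence of $\kappa$ being flat; it is a structural fact about the algebra: because we start from a geometric structure of order $0$, the graded Lie algebra $\mathfrak g$ is by definition the prolongation of $\mathfrak g_-\oplus\mathfrak g_0$, and this is exactly the statement $H^1_r(\mathfrak g_-,\mathfrak g)=0$ for all $r>0$. The paper invokes this explicitly. Without it your argument for (1) does not close, and since (2) requires flatness of $\tau^{[k+1]}$ (not just $\tau^{[k]}$, as you wrote) before Theorem~\ref{thm:vanishing of tau implies} can be applied at level $k$, the gap propagates.

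Once you insert the one-line observation $H^1_+(\mathfrak g_-,\mathfrak g)=0$ and correct the order so that (1) is established before invoking Theorem~\ref{thm:vanishing of tau implies} for (2), the rest of your sketch is correct and essentially identical to the paper's proof.
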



 \begin{proof}
Note that $\kappa_{[1]}$ is a function on $Q^{(1)}$ with values in $\Hom(\wedge^2 \frak g_-, \frak g)_{1}$.
 From Corollary \ref{cor:vanishing of kappa}
 we get $\partial \kappa_{[1]} =0$.
By Proposition \ref{prop:structure equation along fiber},  $\kappa_{[1]}(z(1+A)) = \kappa_{[1]}(z) + \partial A$ for $A \in \frak g_1$  and thus $\kappa_{[1]}(za) \equiv \kappa_{[1]}(z) \mod \partial \frak g_1$ for $a \in G_1$. Therefore, $\kappa_{[1]}$ induces a function
 $$\overline{\kappa} _1: Q^{(0)} \rightarrow H^2(\frak g_-, \frak g )_1.   $$
Since $\overline{\kappa}_{ 1}(za) = a^{-1}\overline{\kappa}_{ 1}(z) $ for $a \in G_0$,
  the function $\overline{\kappa} _1$ induces a section $s_{\kappa_1}$ of the vector bundle $\mathcal H^2_1:=Q_0 \times_{G_0} H^2(\frak g_-, \frak g )_1$ on $M$. If  the section $s_{\kappa_1}$ vanishes, then the function  $\overline{\kappa} _1$ vanishes and thus  $\kappa_1 = \partial \chi$ for some $\chi:Q_0 \rightarrow \Hom(\frak g_-, \frak g)_1$. Since $\kappa_{[1]}$ has values in $W_1^2$ and $W_1 ^2\cap \partial \Hom(\frak g_-, \frak g)_{ 1}  =0$, we have $\kappa_{[1]}=0$.  \\

Now assume that $\kappa^{[k]}$ is flat, i.e.,  $\kappa _{[i]} =0$ for all $1 \leq i \leq k$. Then by the induction hypothesis, $ \tau^{[k]} $ is flat.
By Corollary \ref{cor:vanishing of kappa}, we have
 \begin{enumerate}
 \item [(i)] $\partial \kappa_{[k+1]} =0$;
 \item [(ii)] $\partial (\tau_{[k+1]}(A,\,\cdot\,))=0$ for any $A \in \frak g_{a}$, where $0 \leq a \leq k-1$.
 \end{enumerate}

By (ii)  $\tau_{[k+1]}(A, \,\cdot\,)$ is contained in  $\Ker \partial$.
Since $\frak g$ is the prolongation of $(\frak g_-, \frak g_0)$, we have $H^1_+(\frak g_-, \frak g)=0$. Thus    $\tau_{[k+1]}(A, \,\cdot\,)$ is contained in $\partial \frak g_{k }$. From  $\tau_{[k+1]}(A, \,\cdot\,) \in W_{k }^1$ and $W_{k }^1 \cap \partial \frak g_{k } =0$, it follows that  $\tau_{[k+1]}(A, \,\cdot\,) =0$ for any $A \in \frak g_{a }$, where $0 \leq a \leq k-1$. 
By Theorem  \ref{thm:vanishing of tau implies}
 $Q^{(k)}\rightarrow M$ is a proper principal geometric structure with structure group $G^{(k)}$. 
Furthermore, we have
  $$R_a^*(\kappa_0 +\kappa_1 + \dots +\kappa_{[k+1]}) =\rho(a)^{-1}(\kappa_0+ \kappa_1 + \dots +\kappa_{[k+1 ]}) \text{ for } a \in G^{(k)}.$$
  Hence we have
  \begin{eqnarray*}
  R_{a_0}^* \kappa_{[k+1]} &=& \rho(a_0)^{-1} \kappa_{[k+1]} \text{ for } a_0 \in G_0\\
  \widetilde{A} \kappa_{[k+1]} &=& -\rho(A) \kappa_{[k+1-a]} \text{ for }  A \in \frak g_a.
  \end{eqnarray*}
 By the assumption that $\kappa_{[i]}=0$ for all $1 \leq i \leq k$, for $a \in G^{(k)}$ we have
 $$R_a^*\kappa_{[k+1]} = \rho(a_0)^{-1} \kappa_{[k+1]}$$
where $a_0$ is the $G_0$-component of $a$.

By  Proposition \ref{prop:structure equation along fiber}
  $\kappa_{[k+1]}(z(1+A)) = \kappa_{[k+1]}(z) + \partial A$ for $A \in \frak g _{k+1}$, and thus    $\kappa_{[k+1]}(za) \equiv \kappa_{[k+1]}(z) \mod \partial \frak g _{k+1}$ for $a \in G_{k+1}$.   Together with  (i), it follows that   $\kappa_{[k+1]}:Q^{(k+1)} \rightarrow \Hom(\wedge^2 \frak g_-, \frak g)_{k+1}$ induces  a function
$$\overline{\kappa }_{k+1}: Q^{(k)} \rightarrow H^2(\frak g_-, \frak g )_{k+1},     $$
which again  induces a section $s_{\kappa_{k+1}}$ of the vector bundle $\mathcal H^2_{k+1}:=Q^{(k)} \times_{G^{(k)}} H^2(\frak g_-, \frak g )_{k+1}$ on $M$. By the same arguments as in the case when $k=0 $, if $s_{\kappa_{k+1}}$ vanishes, then $\kappa_{k+1}$ vanishes. This completes the proof of Proposition \ref{prop: curvature vanishing}
%
 \end{proof}

\begin{theorem} \label{thm: sections all vanishing}
 Let $Q^{(0)} \stackrel{G_0}{\rightarrow} (M,F) $ be a geometric structure of order $0$ with $G_0$ being connected. Assume that there is no nontrivial section  of $\mathcal H_k=Q^{(0)} \times_{G_0} H^2(\frak g_-, \frak g)_k$ for all $k\geq 1$.
 Then  $(\mathscr S_W Q^{(0)}, \theta)$ is a Cartan connection of type $G/G^0$  which is flat, i.e., whose  curvature vanishes.
\end{theorem}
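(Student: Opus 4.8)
The plan is to proceed by induction on $k$ using Proposition \ref{prop: curvature vanishing}, and then pass to the projective limit. First I would establish the base case: for $k=1$, the function $\kappa_{[1]}$ induces a section $s_{\kappa_1}$ of $\mathcal H_1 = Q^{(0)} \times_{G_0} H^2(\frak g_-,\frak g)_1$ as constructed in the proof of Proposition \ref{prop: curvature vanishing}. By the hypothesis that $\mathcal H_1$ has no nontrivial section, $s_{\kappa_1} = 0$, and hence (as shown there, using $W^2_1 \cap \partial\Hom(\frak g_-,\frak g)_1 = 0$) we get $\kappa_{[1]} = 0$. Thus $\kappa^{[1]}$ is flat.

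Next I would run the inductive step. Assume $\kappa^{[k]}$ is flat, i.e.\ $\kappa_{[i]} = 0$ for $1 \le i \le k$. By Proposition \ref{prop: curvature vanishing}, $\tau^{[k+1]}$ is flat, $Q^{(k)} \to M$ is a normal principal geometric structure with structure group $G^{(k)}$, and $\kappa_{[k+1]}$ induces a section $s_{\kappa_{[k+1]}}$ of $\mathcal H^2_{k+1} = Q^{(k)} \times_{G^{(k)}} H^2(\frak g_-,\frak g)_{k+1}$. Here one must note that this associated bundle is isomorphic to $Q^{(0)} \times_{G_0} H^2(\frak g_-,\frak g)_{k+1}$, since the structure group $G^{(k)}$ acts on the fibre $H^2(\frak g_-,\frak g)_{k+1}$ through its quotient $G_0 = G^{(k)}/F^1 G^{(k)}$ (the positive part $F^1 G^{(k)}$ acts trivially on Spencer cohomology, which is the content of the Lemma preceding Proposition 2.4 and the fact that $\partial$ and the $F^{\ell}$-action are compatible). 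Therefore $\mathcal H^2_{k+1} \cong \mathcal H_{k+1}$ in the notation of the theorem, so by hypothesis $s_{\kappa_{[k+1]}} = 0$, hence $\kappa_{[k+1]} = 0$, hence $\kappa^{[k+1]}$ is flat. This closes the induction, so $\kappa_{[k]} = 0$ for all $k \ge 1$, i.e.\ $\kappa$ is flat; and consequently $\tau$ is flat on all of $\mathscr S_W Q^{(0)}$.

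Finally, since $\tau$ is flat, Theorem \ref{thm:vanishing of tau implies} applies directly: $\mathscr S_W Q^{(0)} = Q \to M$ with the canonical Pfaff form $\theta$ is a Cartan connection of type $G/G^0$. It remains to observe that this Cartan connection is flat, i.e.\ its curvature vanishes. The curvature of a Cartan connection is exactly the structure function $\gamma$ restricted to $\Hom(\wedge^2\frak g_-, \frak g)$, that is $\kappa$; we have just shown $\kappa_{[i]} = 0$ for all $i \ge 1$, and $\kappa_{[0]} = \kappa_0$ is the Lie bracket of $\frak g_-$ (Proposition \ref{prop:degree of structure function}(1)), which is the ``flat model'' value. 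Hence $\gamma$ coincides with the bracket of $\frak g$ (using also that $\tau$ and $\sigma$ are then the bracket, by Corollaries \ref{cor:formula of tau} and \ref{cor: tau flat implies sigma flat}), so the Cartan curvature is zero.

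\textbf{Main obstacle.} The step I expect to require the most care is the identification $\mathcal H^2_{k+1} \cong \mathcal H_{k+1}$, i.e.\ checking that the associated bundle $Q^{(k)} \times_{G^{(k)}} H^2(\frak g_-,\frak g)_{k+1}$ appearing in Proposition \ref{prop: curvature vanishing} really does reduce to the $G_0$-associated bundle $Q^{(0)} \times_{G_0} H^2(\frak g_-,\frak g)_{k+1}$ stated in the theorem; this hinges on the $F^1 G^{(k)}$-triviality of the action on Spencer cohomology and on compatibility of the reductions $Q^{(k)} \to \cdots \to Q^{(0)}$ with these actions. Everything else is a routine assembly of Proposition \ref{prop: curvature vanishing}, Theorem \ref{thm:vanishing of tau implies}, and the definition of Cartan curvature.
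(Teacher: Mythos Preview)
Your proposal is correct and follows exactly the paper's approach: the paper's entire proof is the single sentence ``It follows from Proposition \ref{prop: curvature vanishing} and Theorem \ref{thm:vanishing of tau implies},'' and you have simply unpacked that sentence into the evident induction on $k$.

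One remark on your ``main obstacle'': the identification $Q^{(k)} \times_{G^{(k)}} H^2(\frak g_-,\frak g)_{k+1} \cong Q^{(0)} \times_{G_0} H^2(\frak g_-,\frak g)_{k+1}$ is already built into the \emph{statement} of Proposition \ref{prop: curvature vanishing} (which writes the bundle as $\mathcal H^2_{k+1} := Q^{(0)} \times_{G_0} H^2(\frak g_-,\frak g)_{k+1}$), so for the purposes of proving Theorem \ref{thm: sections all vanishing} you may simply quote that proposition as stated. Your analysis of why this identification holds (the positive part $F^1 G^{(k)}$ acts trivially on the cohomology) is correct, but it belongs to the proof of Proposition \ref{prop: curvature vanishing} rather than here; indeed the paper's proof of that proposition carries out exactly this reduction via the computation $R_a^* \kappa_{[k+1]} = \rho(a_0)^{-1}\kappa_{[k+1]}$ for $a \in G^{(k)}$ with $G_0$-component $a_0$.
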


\begin{proof}
It follows from Proposition \ref{prop: curvature vanishing}  and Theorem \ref{thm:vanishing of tau implies}.
\end{proof}

In the smooth category, 
there always exists a nontrivial smooth section of the smooth vector bundle $\mathcal H_k=Q^{(0)} \times_{G_0} H^2(  \frak g_-, \frak g)_k$ unless  $H^2(\frak g_-, \frak g)_k$ is zero. Thus Theorem \ref{thm: sections all vanishing} cannot be applied, and we need an explicit formula for structure functions to show their flatness or to compare them.   Relations among structure functions as in Theorem \ref{thm:fundamental identities} will play a more important role in this category. However, in the complex analytic category, certain conditions imposed on the geometry of the base manifold enforce the vanishing of holomorphic sections of the holomorphic vector bundle  $\mathcal H_k$, which will be explained  with a concrete example in subsection \ref{sect:complex geometry}.


\section{Applications to subriemannian geometry and complex geometry}


\subsection{Subriemannian geometry} \label{sec:subriemannian}


\subsubsection{}
A {\it subriemannian manifold}
$(M, D, s)$
 is a smooth manifold
$M$ equipped with
  a subbundle $ D \subset TM$
and a fibre metric $s$ of $D$, that is,
 a smooth section $s$
of the symmetric tensor product
$S^2D^* $ of the dual vector bundle $D^*$.
Since around 1980 subriemannian geometry has been studied
from many different domains, nilpotent geometry and analysis, riemannian and symplectic
geometry, control theory, etc.(\cite{AS04}, \cite{Mont02}, \cite{LS95}).

Subriemannian geometry, as a generalization of riemannian geometry, inherits from it
to some extent main notions and properties.
For instance, we may speak of a length minimizing curve $\gamma$ joining two points of a subriemannian manifold $(M, D, s)$ among the integral curves of $D$, and we may also speak of the Hamiltonian flow of the energy function associated with  a subriemannian metric.

A big surprise was brought by the discovery that
there  exists a minimizing curve of a subriemannian manifold  called
{\it abnormal} geodesic which appears   depending only on the distribution and does not satisfy the usual geodesic equation.

Another  difference between two geometries
comes from the fact that the first order approximation at
 a point of a riemannian manifold is nothing but an Euclidean vector space, while
the first order approximation of a subriemannian manifold
is a pair $(\mathfrak g _- , \sigma )$
of a nilpotent graded Lie algebra
$ \mathfrak g_- = \bigoplus _{p < 0}
\mathfrak g _p$
and an inner product
$\sigma $ on $\mathfrak g_{-1}$,
which has a great deal of variety.
The distributions $D$ are highly locally non trivial.
This gives to subriemannian geometry much more variety than to riemannian geometry.

   Moreover, it had not been clear how to define
   the curvatures of a subriemannian manifold.
   In the next paragraph, as an application of our general method,  we show how
   to define the curvature of a subriemannian manifold of constant symbol along with
   a general algorithm to compute it.

   \subsubsection{ } 

We define a {\it subriemannian filtered manifold} $(M,F,s)$  to be a filtered manifold $(M, F)$
equipped with a fibre metric
$s$ on $F^{-1}$.

When we consider a subriemannian  manifold $(M,D,s)$, the distribution $D$ is usually
assumed {\it bracket generating}.
There is a bijective correspondence  between the subriemannian manifolds $(M,D,s)$ with regularly bracket
generating   and the subriemannian filtered manifolds $(M,F,s)$ with $ F$ being  generated by $F^{ -1 }$.


Here after we consider subriemannian
manifolds $(M, F, s) $ whose metric $s$ is non-degenerate,
but not necessarily positive definite.

For each $ x \in M$ there is associated
a pair $(gr F_x,s_x)$
called the  {\it subriemannianan  symbol} of $(M, F, s)$ at $x$,
 where $s_x$ is viewed as an inner product
on $gr_{-1} F_x$.
We say that $ (M, F, s)$ has {\it constant symbol of type}
$(\mathfrak g_{-}, \sigma )$,
where
$ \mathfrak g _{-}
= \bigoplus _{p < 0 } \mathfrak g _p$
is a nilpotent graded Lie algebra and
$\sigma $
an inner product on $ \mathfrak g _{-1} $,
if there exists for every $x $
$$ \text{ a graded Lie algebra isomorphism } z :
\mathfrak g _- \to gr F_x \text{  such that  }
z|_{\frak g_{-1}}^*s_x = \sigma.
$$


Now consider a subriemannian filtered manifold $(M,F,s)$ of constant symbol $(\frak g_-, \sigma)$. Let $\mathscr S^{(0)}(M,F)$ be the universal frame bundle of $(M,F)$ of order 0 and let $ Q^{(0)}(M,F,s)$ be a principal subbundle of $\mathscr S^{(0)}(M,F)$ defined by
$$Q^{(0)}(M,F,s)_x:=\{ z: \frak g_- \rightarrow \gr F_x \mid \text{ graded Lie algebra isomorphism satisfying } z|_{\frak g_{-1}}^* s_x = \sigma \}.$$
The structure group of $Q^{(0)}(M,F,s)$ is
$$G_0(\frak g_-,\sigma):=\{ a: \frak g_- \rightarrow \frak g_- \mid \text{ graded Lie algebra automorphism satisfying } a|_{\frak g_{-1}}^* \sigma = \sigma \}$$
and we denote its Lie algebra by  $\frak g_0(\frak g_-, \sigma)$.

Consider the prolongation of the truncated transitive graded Lie algebra $\frak g_- \oplus \frak g_0(\frak g_-, \sigma)$ and denote it by 
$\frak g(\frak g_-, \sigma)$. 
$$\frak g(\frak g_-, \sigma) = \oplus \frak g_p(\frak g_-, \sigma)  .$$

\begin{proposition}
[\cite{M08}] If $\frak g_-$ is generated by $\frak g_{-1}$ and $\sigma$ is positive definite, then $\frak g_p(\frak g_-, \sigma)$ vanishes for $p >0$.
\end{proposition}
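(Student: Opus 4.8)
The plan is to reduce the proposition to the single assertion $\frak g_1(\frak g_-,\sigma)=0$, and then to prove this vanishing by the classical symmetrization argument for orthogonal Lie algebras, the new point being a device that disposes of the torsion terms arising from the deeper layers $\frak g_{-2},\frak g_{-3},\dots$, where the positive‑definiteness of $\sigma$ is used in an essential way. For the reduction: since $\frak g_-$ is generated by $\frak g_{-1}$, the restriction‑to‑$\frak g_{-1}$ map embeds $\frak g_{p+1}(\frak g_-,\sigma)$ into $\Hom(\frak g_{-1},\frak g_p(\frak g_-,\sigma))$ — indeed, if $Y\in\frak g_{p+1}(\frak g_-,\sigma)$ satisfies $[Y,\frak g_{-1}]=0$ then $[Y,[u,v]]=[[Y,u],v]+[u,[Y,v]]$ propagates this to $[Y,\frak g_-]=0$, and transitivity of the prolongation forces $Y=0$. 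Hence $\frak g_1(\frak g_-,\sigma)=0$ immediately yields $\frak g_2(\frak g_-,\sigma)=0$, and inductively $\frak g_p(\frak g_-,\sigma)=0$ for all $p>0$. So it is enough to treat $\frak g_1(\frak g_-,\sigma)$.

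\textbf{The identity for $X\in\frak g_1(\frak g_-,\sigma)$.} Fix $X\in\frak g_1(\frak g_-,\sigma)$ and set $\phi(u):=[X,u]\in\frak g_0(\frak g_-,\sigma)$ for $u\in\frak g_{-1}$. Since $\sigma$ is positive definite, $\frak g_0(\frak g_-,\sigma)$ consists of $\sigma$‑skew endomorphisms of $\frak g_{-1}$, so $\sigma(\phi(u)v,w)=-\sigma(v,\phi(u)w)$. Evaluating the derivation identity on $u,v\in\frak g_{-1}$ gives $\phi(u)v-\phi(v)u=[X,[u,v]]\in\frak g_{-1}$, a torsion term lying in the image of $\operatorname{ad}(X)\colon\frak g_{-2}\to\frak g_{-1}$ (it vanishes identically precisely in the depth‑one, i.e.\ Riemannian, case). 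Writing $B(u,v,w)=\sigma(\phi(u)v,w)$ and $\Theta(u,v,w)=\sigma([X,[u,v]],w)$, and using skew‑symmetry of $B$ in $(v,w)$ and of $\Theta$ in $(u,v)$, the usual cyclic manipulation produces $2B(u,v,w)=\Theta(u,v,w)+\Theta(w,v,u)-\Theta(u,w,v)$. Thus $\phi$, hence $\operatorname{ad}(X)|_{\frak g_{-1}}$, is completely determined by the torsion $\operatorname{ad}(X)\colon\frak g_{-2}\to\frak g_{-1}$; in particular, in depth one this already gives $\phi\equiv 0$ and $X=0$ using only non‑degeneracy of $\sigma$.

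\textbf{Disposing of the torsion.} In the general case I would transport $\sigma$ to an inner product $\langle\cdot,\cdot\rangle$ on all of $\frak g_-$ by pushing $\sigma^{\otimes k}$ forward along the surjective iterated bracket $\frak g_{-1}^{\otimes k}\to\frak g_{-k}$; positive‑definiteness is exactly what guarantees that this push‑forward is again positive definite and $\frak g_0(\frak g_-,\sigma)$‑invariant, so that $\frak g_0(\frak g_-,\sigma)$ acts on $(\frak g_-,\langle\cdot,\cdot\rangle)$ by skew endomorphisms (equivalently, that $\frak g_0(\frak g_-,\sigma)$ is a compact Lie algebra). Then I would combine the higher derivation identities — those coupling $\frak g_{-2}$ with $\frak g_{-1}$, and inductively $\frak g_{-k}$ with $\frak g_{-1}$ — with the relation of the previous step, and sum over a $\sigma$‑orthonormal basis of $\frak g_{-1}$, to obtain a nonnegative quadratic expression in $\phi$ that is forced to vanish. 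This gives $\phi\equiv 0$, whence $X=0$ by the reduction step, whence $\frak g_1(\frak g_-,\sigma)=0$, proving the proposition.

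\textbf{Main obstacle.} The crux is the last step. Non‑degeneracy of $\sigma$ alone disposes of the depth‑one contribution (as in the vanishing of the first prolongation of $\frak o(p,q)$), but the torsion contributions from the deeper layers are not controlled by non‑degeneracy; one genuinely needs the positive‑definite induced inner product on $\frak g_-$, together with the compactness of $\frak g_0(\frak g_-,\sigma)$, to turn the derivation identities into a definite — therefore vanishing — form, and assembling these identities in the right order is where the real work lies. The preliminary verification that the induced inner product on $\frak g_-$ is well defined and $\frak g_0(\frak g_-,\sigma)$‑invariant is routine but has to be carried out.
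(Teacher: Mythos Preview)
Your reduction to $\frak g_1(\frak g_-,\sigma)=0$ is correct, and the Koszul-type identity $2B=\Theta(u,v,w)+\Theta(w,v,u)-\Theta(u,w,v)$ is derived correctly. Note however that in higher depth this identity carries no new information: since the derivation property already gives $[X,[u,v]]=\phi(u)v-\phi(v)u$, the torsion $\Theta$ is itself a linear expression in $B$, and substituting back one recovers $2B=2B$. So the cyclic step records only that $\phi$ and $X|_{\frak g_{-2}}$ determine one another; it does not constrain $\phi$. Your construction of a $\frak g_0(\frak g_-,\sigma)$-invariant positive-definite inner product on all of $\frak g_-$ (via orthogonal complements of the iterated-bracket kernels in $\frak g_{-1}^{\otimes k}$) is sound. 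But the decisive step --- assembling the higher derivation identities into a definite quadratic form that forces $\phi=0$ --- is not carried out, and you are right to flag it as the main obstacle. For instance, in depth two the identity at the $\frak g_{-1}\otimes\frak g_{-2}$ level combined with skewness on $\frak g_{-2}$ yields only that $\xi\mapsto[u,\psi(\xi)]$ is a skew endomorphism of $\frak g_{-2}$ for each $u\in\frak g_{-1}$, which by itself does not force vanishing. It is not clear that a purely computational positivity argument of the kind you sketch can be completed without further structural input.

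The paper itself does not supply a self-contained argument: the proposition is quoted from \cite{M08}, and the proof there rests on Yatsui's structural result \cite{Y02} on completely reducible graded Lie algebras. The link to your setup is that positive-definiteness of $\sigma$ makes $G_0(\frak g_-,\sigma)$ compact --- it embeds as a closed subgroup of $O(\frak g_{-1},\sigma)$, since a graded automorphism of $\frak g_-$ is determined by its restriction to the generating piece $\frak g_{-1}$ --- so that $\frak g_0(\frak g_-,\sigma)$ acts completely reducibly on each $\frak g_p$. Yatsui's theorem then handles the prolongation of such graded Lie algebras via representation theory, a route quite different from the direct tensor calculus you attempt. Your approach would be attractive if it could be closed, but as written the gap at the final step is genuine.
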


For the proof we use Yatsui's result on completely reducible graded Lie algebra (\cite{Y02}).

If $\sigma$ is neither positive definite nor negative definite, the above proposition does not hold. We know examples of indefinite $(\frak g_-,\sigma)$ for which $\frak g_1(\frak g_-,\sigma)$ does not vanish.
However, we have:

\begin{proposition}
If $\frak g_-$ is generated by $\frak g_{-1}$, then $(\frak g_-, \sigma)$ is of finite type, that is, the prolongation $\frak g_p(\frak g_-, \sigma)$ vanishes for large enough $p$.
\end{proposition}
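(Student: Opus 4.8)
The plan is to reduce the finite dimensionality of the prolongation $\mathfrak{g}(\mathfrak{g}_-,\sigma)=\bigoplus_p\mathfrak{g}_p(\mathfrak{g}_-,\sigma)$ to the classical vanishing of the first prolongation of the pseudo-orthogonal Lie algebra, and then to pass from that depth-one statement to the filtered one via Tanaka's prolongation theorem. First I would record the basic reduction. Since $\mathfrak{g}_-$ is generated by $\mathfrak{g}_{-1}$, a degree-$0$ derivation of $\mathfrak{g}_-$ is determined by its restriction to $\mathfrak{g}_{-1}$ (apply the Leibniz rule to iterated brackets of $\mathfrak{g}_{-1}$), so the restriction map $\operatorname{Der}_0(\mathfrak{g}_-)\to\mathfrak{gl}(\mathfrak{g}_{-1})$ is injective. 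By the very definition of $G_0(\mathfrak{g}_-,\sigma)$, its Lie algebra is thereby identified with a subalgebra
$$
\bar{\mathfrak{g}}_0\ :=\ \{\,A|_{\mathfrak{g}_{-1}}\ :\ A\in\mathfrak{g}_0(\mathfrak{g}_-,\sigma)\,\}\ \subset\ \mathfrak{o}(\mathfrak{g}_{-1},\sigma),
$$
the orthogonal Lie algebra of the nondegenerate form $\sigma$. More generally, transitivity of $\mathfrak{g}(\mathfrak{g}_-,\sigma)$ together with the generation of $\mathfrak{g}_-$ by $\mathfrak{g}_{-1}$ shows that $X\mapsto(\operatorname{ad}X)|_{\mathfrak{g}_{-1}}$ embeds $\mathfrak{g}_p(\mathfrak{g}_-,\sigma)$ into $\Hom(\mathfrak{g}_{-1},\mathfrak{g}_{p-1}(\mathfrak{g}_-,\sigma))$ for every $p\ge 0$, so each $\mathfrak{g}_p(\mathfrak{g}_-,\sigma)$ is finite dimensional; what remains is to prove that $\mathfrak{g}_p(\mathfrak{g}_-,\sigma)=0$ for $p$ large.

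Next I would invoke the elementary fact that $\mathfrak{o}(V,\sigma)^{(1)}=0$ for any nondegenerate symmetric form $\sigma$ on a finite dimensional vector space $V$, in every signature: if $T\colon V\to\mathfrak{o}(V,\sigma)$ satisfies $T(u)v=T(v)u$, then cycling the three arguments and repeatedly using the skew-symmetry of each $T(u)$ gives $\sigma(T(u)v,w)=-\sigma(T(u)v,w)$ for all $u,v,w$, whence $T=0$ by nondegeneracy of $\sigma$. Since $\bar{\mathfrak{g}}_0\subset\mathfrak{o}(\mathfrak{g}_{-1},\sigma)$ we obtain $\bar{\mathfrak{g}}_0^{(1)}\subset\mathfrak{o}(\mathfrak{g}_{-1},\sigma)^{(1)}=0$, hence $\bar{\mathfrak{g}}_0^{(p)}=0$ for all $p\ge 1$; that is, $\bar{\mathfrak{g}}_0$, as a linear Lie algebra on $\mathfrak{g}_{-1}$, is of finite type.

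Finally I would conclude by Tanaka's prolongation theorem (\cite{T70}; the depth-one case is due to Singer--Sternberg, \cite{SS65}): for a fundamental graded Lie algebra $\mathfrak{m}$ and a subalgebra $\mathfrak{g}_0\subset\operatorname{Der}_0(\mathfrak{m})$ whose (necessarily faithful) image in $\mathfrak{gl}(\mathfrak{m}_{-1})$ is a linear Lie algebra of finite type, the prolongation $\operatorname{Prol}(\mathfrak{m}\oplus\mathfrak{g}_0)$ is finite dimensional. Applying this with $\mathfrak{m}=\mathfrak{g}_-$ and $\mathfrak{g}_0=\mathfrak{g}_0(\mathfrak{g}_-,\sigma)$ yields that $\mathfrak{g}(\mathfrak{g}_-,\sigma)$ is finite dimensional, i.e. $(\mathfrak{g}_-,\sigma)$ is of finite type. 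The step I expect to be the genuine obstacle is exactly this last reduction: when $\mathfrak{g}_-$ has depth greater than $1$, the embedding $\mathfrak{g}_p(\mathfrak{g}_-,\sigma)\hookrightarrow\Hom(\mathfrak{g}_{-1},\mathfrak{g}_{p-1})$ does \emph{not} identify $\mathfrak{g}_p$ with the classical prolongation $\bar{\mathfrak{g}}_{p-1}^{(1)}$, since the symmetry $[[X,u],v]=[[X,v],u]$ fails by the torsion term $[X,[u,v]]$ supported on brackets of $\mathfrak{g}_{-1}$ landing in $\mathfrak{g}_{-2},\mathfrak{g}_{-3},\dots$, and one must check that this torsion does not create infinitely many new prolongation directions. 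Following Tanaka, I would control it by an induction on $p$ bounding $\dim\mathfrak{g}_p(\mathfrak{g}_-,\sigma)$ in terms of $\dim\bar{\mathfrak{g}}_{p-1}^{(1)}$ together with a contribution governed by $\dim\mathfrak{g}_-$, so that once $\bar{\mathfrak{g}}_0^{(p)}=0$ the prolongation terminates. (Theorem~\ref{vanishing cohomology of higher degree} already ensures that the prolongation is generated in bounded degree; the extra input needed for finite \emph{dimensionality} is precisely the orthogonal vanishing above. Note also that, in contrast with the positive definite case established earlier via Yatsui's theorem, one cannot here assert $\mathfrak{g}_p(\mathfrak{g}_-,\sigma)=0$ for all $p>0$: for indefinite $\sigma$ the component $\mathfrak{g}_1(\mathfrak{g}_-,\sigma)$ may be nonzero, and only the eventual termination of the prolongation survives.)
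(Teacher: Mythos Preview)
Your proposal is correct and follows essentially the same route as the paper: reduce to the classical vanishing $\mathfrak{o}(\mathfrak{g}_{-1},\sigma)^{(1)}=0$ and then invoke Tanaka's finite-type criterion from \cite{T70}. The only difference is in how Tanaka's criterion is quoted. The paper uses Tanaka's precise formulation: $\mathfrak{g}_-\oplus\mathfrak{g}_0$ is of finite type if and only if $\mathfrak{f}_{-1}\oplus\mathfrak{f}_0$ is, where $\mathfrak{f}_{-1}=\mathfrak{g}_{-1}$ and $\mathfrak{f}_0=\{A\in\mathfrak{g}_0:[A,\mathfrak{g}_p]=0\text{ for }p<-1\}$. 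You instead phrase the criterion in terms of the full image $\bar{\mathfrak{g}}_0\subset\mathfrak{gl}(\mathfrak{g}_{-1})$. Since $\mathfrak{f}_0|_{\mathfrak{g}_{-1}}\subset\bar{\mathfrak{g}}_0\subset\mathfrak{o}(\mathfrak{g}_{-1},\sigma)$ and prolongations are monotone under inclusion, your version follows from Tanaka's and both yield the conclusion immediately from $\mathfrak{o}(\mathfrak{g}_{-1},\sigma)^{(1)}=0$; the paper's formulation is the one actually stated in \cite{T70}, so it is cleaner to cite it that way.
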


This follows from Tanaka's criterion that $\frak g_- \oplus \frak g_0$ ($\frak g_-$ being generated by $\frak g_{-1}$) is of finite type if and only if $\frak f_{-1} \oplus \frak f_0$ is of finite type, where $\frak f_{-1}=\frak g_{-1}$ and $\frak f_0 =\{A \in \frak g_0:[A,X_p] =0 \text{ for } X_p \in \frak g_p \text{ and } p<-1 \}$ (\cite{T70}). 

To study the equivalence problem of subriemannian structures, the following is fundamental.

\begin{theorem} [\cite{M08}] Let $\frak g_- = \bigoplus_{p<0} \frak g_p$ be a graded Lie algebra generated by $\frak g_{-1}$ and $\sigma$ a positive definite inner product on $\frak g_{-1}$. Then for every subriemannian filtered manifold $(M,F,s)$ having constant symbol of type $(\frak g_-, \sigma)$, there exists canonically a Cartan connection $(P, G_0, \theta)$ of type $(\frak g_- \oplus \frak g_0(\frak g_-,\sigma), G_0(\frak g_-, \sigma))$.

\end{theorem}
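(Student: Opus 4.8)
The plan is to obtain the Cartan connection as the $W$-normal complete step prolongation of the order-$0$ geometric structure $Q^{(0)}(M,F,s)$, and then invoke Theorem~\ref{thm:Cartan connection} (or, failing the verification of condition (C), Theorem~\ref{thm:vanishing of tau implies} together with the finite-type reduction). First I would record that $Q^{(0)}(M,F,s)$ is a geometric structure of order $0$ of type $(\frak g_-, G_0(\frak g_-,\sigma))$ in the sense of nilpotent geometry: this is immediate from the definition of $\mathscr S^{(0)}(M,F)$ and the fact that the fibrewise condition $z|_{\frak g_{-1}}^*s_x=\sigma$ cuts out a $G_0(\frak g_-,\sigma)$-subbundle. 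Since $\sigma$ is positive definite and $\frak g_-$ is generated by $\frak g_{-1}$, the preceding proposition (from \cite{M08}) gives $\frak g_p(\frak g_-,\sigma)=0$ for $p>0$, so $\frak g(\frak g_-,\sigma)=\frak g_-\oplus\frak g_0(\frak g_-,\sigma)$ is finite dimensional; in particular $Q^{(0)}(M,F,s)$ is a normal geometric structure of finite type, and the truncated transitive graded Lie algebra $\frak g[0]=\frak g_-\oplus\frak g_0(\frak g_-,\sigma)$ is already its own prolongation.

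Next I would apply Theorem~\ref{thm:W normal prolongation}: fix a set of complementary subspaces $W=\{W^1_\ell,W^2_{\ell+1}\}_{\ell\ge 0}$ with $\Hom(\frak g_-,\frak g)_\ell=W^1_\ell\oplus\partial\frak g_\ell$ and $\Hom(\wedge^2\frak g_-,\frak g)_{\ell+1}=W^2_{\ell+1}\oplus\partial\Hom(\frak g_-,\frak g)_{\ell+1}$, and form the $W$-normal complete step prolongation $\mathscr S_WQ^{(0)}(M,F,s)$, which carries the canonical absolute parallelism $\theta$ with values in $E=\frak g(\frak g_-,\sigma)$. Because $\frak g_p(\frak g_-,\sigma)=0$ for $p>0$, the step prolongation stabilizes: $\mathscr S_W^{(\ell)}Q^{(0)}=\mathscr S_W^{(k_0)}Q^{(0)}$ for $\ell\ge k_0$ where $k_0$ is the top degree of $\frak g$, so $\mathscr S_WQ^{(0)}$ is a finite-dimensional manifold of dimension $\dim\frak g(\frak g_-,\sigma)$, equipped with a $\frak g(\frak g_-,\sigma)$-valued absolute parallelism $\theta$ satisfying $\theta:T_z\mathscr S_W\to E$ is an isomorphism, $R_a^*\theta=a^{-1}\theta$ for $a$ in the structure group, and $\gr\theta$ is the canonical identification on vertical vectors (Proposition~\ref{prop:canonical forms}).

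Then I would identify $\mathscr S_WQ^{(0)}(M,F,s)$ as a Cartan connection of type $(\frak g_-\oplus\frak g_0(\frak g_-,\sigma),\,G_0(\frak g_-,\sigma))$. There are two routes. The clean one is to check that $G_0(\frak g_-,\sigma)$ satisfies condition (C) of Section~\ref{sect:condition C}, i.e.\ that there is a $G_0$-invariant complement $W^2=\oplus_i W^2_i$ to $\partial\Hom(\frak g_-,\frak g)_i$ inside $\Hom(\wedge^2\frak g_-,\frak g)_i$; since $G_0(\frak g_-,\sigma)$ preserves a positive definite inner product on $\frak g_{-1}$ one can build a $G_0$-invariant inner product on the relevant Spencer cochain spaces by averaging (the orthogonal complement of a submodule is then an invariant complement), so condition (C) holds. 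Theorem~\ref{thm:Cartan connection} then yields $\mathscr S_WQ^{(0)}=\mathscr R_{W^2}Q^{(0)}$, and the latter is a Cartan connection of type $G/G^0$ by construction. Alternatively — and this is essentially what Proposition~\ref{prop:W-normal and principal prolongation} and Theorem~\ref{thm:vanishing of tau implies} give — the structure function $\tau$ of $\mathscr S_WQ^{(0)}$ is flat (the degree-$0$ piece is the bracket of $\frak g$ and there are no higher pieces to worry about once one chooses the $W^1_i$ compatibly with the embedding of the principal prolongation), whence $\mathscr S_WQ^{(0)}\to M$ is a normal principal geometric structure and $(\mathscr S_WQ^{(0)},\theta)$ is a Cartan connection of type $G/G^0=G(G_0(\frak g_-,\sigma))/G^0$. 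Finally, canonicity and the Cartan-connection axioms (in particular axiom (3), $\theta(\widetilde A)=A$ for $A\in\frak g_0$) follow from Proposition~\ref{prop:canonical forms}(2), and naturality under isomorphisms of subriemannian filtered manifolds follows from Theorem~\ref{thm:W normal prolongation}(1).

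\textbf{Main obstacle.} The genuinely non-formal point is the finiteness input $\frak g_p(\frak g_-,\sigma)=0$ for $p>0$ (the proposition attributed to \cite{M08}, resting on Yatsui's result on completely reducible graded Lie algebras); without it the prolongation would be of infinite type and one would only get an infinite-dimensional absolute parallelism rather than a bona fide Cartan connection. Given that input, the remaining work is the verification of condition (C) — an invariant-theoretic averaging argument using positive definiteness of $\sigma$ — which is where I expect the only real care to be needed; the rest is an application of the machinery of Sections~\ref{sect:normal step prolongation}, \ref{sec:Invariants}, and \ref{sect:Cartan connection}.
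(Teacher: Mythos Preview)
Your proposal is correct and matches the paper's own indication: the construction proceeds by verifying condition (C) for $G_0(\frak g_-,\sigma)$ and then invoking the general Cartan-connection machinery (the paper cites \cite{M93} directly, which by Theorem~\ref{thm:Cartan connection} coincides with your step-prolongation route). Your averaging argument for condition (C) is exactly the point: since $\frak g_-$ is generated by $\frak g_{-1}$, the restriction map $G_0(\frak g_-,\sigma)\to O(\frak g_{-1},\sigma)$ is injective, and positive definiteness of $\sigma$ makes the target compact, so $G_0(\frak g_-,\sigma)$ is compact and every $G_0$-module admits an invariant complement.
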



The construction of the canonical Cartan connection  is made according to the general construction (\cite{M93}) by applying the criterion, the condition (C) to this case.
Hence for positive definite case, the curvature of the Cantan connection associated with a subriemannian structure gives a generalization of riemannian curvature.



 While
in the case of indefinite metrics,  the condition (C) being not assured,  we have no
 hope to have  Cartan connection, in general,
associated  with subriemannian $G_0$-structure $Q^{(0)}$,
  but it is
  our step prolongation $\mathscr S_WQ^{(0)}$
  and fundamental identities
  that enable us to define the subriemannian curvature
  and give  a theoretical basis for studying  the equivalence problem of the indefinite
 subriemannian structures.
 Indefinite subriemannian structures are not yet studied much but we know interesting examples in subriemannian contact
 geometry and in subriemannian geometry associated with Clifford modules (\cite{FGMMV}).

 As we see even in the simplest example of subriemannian contact structure, there are a great variety of subriemannian symbols, and it  is rather restrictive to treat only subriemannian structure of constant symbols, and we are naturally led to consider subriemannian structures of nonconstant symbol.

 This is one of motivations for us to extend the present scheme to geometric structures of nonconstant symbol. 


\subsection{Complex geometry} \label{sect:complex geometry}

\subsubsection{ }  One of the import geometric structures of order 0 is $G_0^{\sharp}$-structure of type $\frak g_-$, which was studied by Tanaka in \cite{T79}.  Let $\frak g=\oplus_{i=-\mu}^{\mu} \frak g_i$ be a simple graded Lie algebra over $\mathbb R$ or $\mathbb C$.  Let $\frak g_-$ be the negative part of $\frak g$ and let $G_0^{\sharp}$ be the closed subgroup $G_0\cdot N^0$, where $G_0$ is  a Lie subgroup of  the automorphism group of $\frak g$ and $N^0$ is the subgroup of $GL(\frak g_-)$ consisting of all $a \in GL(\frak g_-)$ such that
$$a X\equiv X \mod \sum_{j= p+1}^{-1}\frak g_j \text{ for all } X \in \frak g_p, \text{ where } p<0.$$
A $G_0^{\sharp}$-structure of type $\frak g_-$ on a manifold $M$ is a reduction $P^{\sharp}$ of the  linear  frame bundle of $M$, whose structure group is $G_0^{\sharp}$.

To the simple graded Lie algebra $\frak g$, there is associated a homogeneous space $G/G^0$, where $G$ is a Lie group with Lie algebra $\frak g$ and $G^0$ is the subgroup of $G$ corresponding to the nonnegative part $\frak g^0=\oplus _{i \geq 0}^{\mu} \frak g_i$. Under the assumption that $\frak g$ is the prolongation of $(\frak g_-, \frak g_0)$, the equivalence problem for $G_0^{\sharp}$-structure $P^{\sharp}$ of type $\frak g_-$ can be solved by associating a normal Cartan connection $(P, \theta)$ of type $G/G^0$ to $P^{\sharp}$ (Theorem 2.7 of \cite{T79}). Furthermore, the harmonic part $H(K)$ of the curvature $K$ of $(P, \theta)$ gives a fundamental system of invariants, i.e., the vanishing of $H(K)$ implies the vanishing of $K$, and vice versa (Theorem 2.9 of \cite{T79}).

A $G_0^{\sharp}$-structure of type $\frak g_-$ can be defined via a fiber subbundle of the projective tangent bundle $\mathbb P(TM)$. Let ${\bf S} \subset \mathbb P(\frak g_{-})$ denote the closed $G_0$-orbit in $\mathbb P(\frak g_{-1})$.
  A fiber subbundle $\mathcal S $ of the  projective  tangent bundle $\mathbb P(TM)$ defines a $G_0^{\sharp}$-structure if  the embedding $\mathcal S_x \subset \mathbb P(T_xM)$ is projectively equivalent to the embedding $ {\bf S} \subset \mathbb P(\frak g_-)$.   

  The theory of Tanaka applied to give a characterization of the homogeneous space $G/G^0$ in the complex analytic category as follows.

  \begin{theorem} \cite{HH08} \label{thm:rational homogeneous} Let $X$ be a homogeneous space $G/G^0$ associated to a long simple root.
  Let $M$ be a Fano manifold of Picard number one. Assume that the variety $\mathcal C \subset \mathbb P(TM)$ of minimal rational tangents of $M$ defines a $G_0^{\sharp}$-structure on $M$, that is, the embedding $\mathcal C_x \subset \mathbb P(T_xM)$ is projectively equivalent to the embedding $ {\bf S} \subset \mathbb P(\frak g_-)$ for a general point $x \in M$.
  Then $M$ is biholomorphic to $G/G^0$.
  \end{theorem}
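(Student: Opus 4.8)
\textbf{Proof plan for Theorem \ref{thm:rational homogeneous}.}

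The plan is to reduce the statement to Tanaka's equivalence theorem for $G_0^{\sharp}$-structures (Theorem 2.7 and Theorem 2.9 of \cite{T79}, as recalled above) combined with a rigidity argument in the complex analytic category using the hypothesis that $M$ is Fano of Picard number one. First I would recall the setup from the theory of minimal rational tangents: on a Fano manifold $M$ of Picard number one, fixing a dominating family of minimal rational curves, one obtains for a general point $x\in M$ the variety of minimal rational tangents $\mathcal C_x\subset \mathbb P(T_xM)$, and these fit together (over the smooth locus) into a fiber subbundle $\mathcal C\subset \mathbb P(TM)$. By hypothesis each $\mathcal C_x$ is projectively equivalent to $\mathbf S\subset \mathbb P(\frak g_{-1})$, where $\mathbf S$ is the closed $G_0$-orbit; since $X=G/G^0$ is associated to a long simple root, $\frak g_{-1}=\frak g_-$ and $\mathbf S$ is (up to the projective equivalence) the highest weight orbit, so this projective-equivalence datum is exactly a $G_0^{\sharp}$-structure of type $\frak g_-$ on the open set where $\mathcal C$ is well-defined.

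Next I would invoke Tanaka's construction: to the $G_0^{\sharp}$-structure $P^{\sharp}$ of type $\frak g_-$ there is canonically associated a normal Cartan connection $(P,\theta)$ of type $G/G^0$, whose curvature $K$ has a harmonic part $H(K)$ forming a complete system of invariants, with $H(K)=0$ if and only if $K=0$, and $K=0$ if and only if the structure is locally isomorphic to the flat model on $G/G^0$. The key step is then to prove $H(K)\equiv 0$. By general representation theory (Kostant's theorem), $H(K)$ is a section of a holomorphic vector bundle $\mathcal H$ on $M$ built from an irreducible $G_0$-submodule of $H^2(\frak g_-,\frak g)$ concentrated in positive homogeneous degree; this corresponds, in the spirit of Proposition \ref{prop: curvature vanishing} and the discussion following Theorem \ref{thm: sections all vanishing}, to the statement that the essential curvature invariant lives in a bundle $\mathcal H_k$ with $k\ge 1$. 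Because $M$ is Fano of Picard number one, the relevant bundle $\mathcal H$ is a tensor bundle that is "negative" in the appropriate sense — more precisely, one checks using the weight of the harmonic space and $b_2(M)=1$ that $\mathcal H$ has no nonzero holomorphic sections — so $H(K)$ vanishes identically, hence $K\equiv 0$, first on the general locus and then, by the irreducibility/continuity argument, on all of $M$. This is essentially the content of \cite{HH08}, and I would follow that argument: the vanishing of holomorphic sections of $\mathcal H$ is forced by the Fano, Picard-number-one hypothesis together with the precise description of the $G_0$-module appearing in the harmonic curvature for a long simple root.

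Finally, once $K\equiv 0$, the Cartan connection $(P,\theta)$ is flat, so $M$ is locally biholomorphic to the model $G/G^0$, and in particular $M$ carries a locally flat $(G,G/G^0)$-structure. To upgrade this to a global biholomorphism $M\cong G/G^0$, I would use that $G/G^0$ is simply connected (being a rational homogeneous space), that $M$ is simply connected (a Fano manifold is simply connected), and a completeness/developing-map argument: the developing map $\widetilde M\to G/G^0$ is a local biholomorphism between compact-type objects and, by the rigidity of the model (or by a Liouville-type extension theorem on $G/G^0$, using again Picard number one so that $M$ contains enough rational curves to force the developing map to be surjective and injective), it is a biholomorphism. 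I expect the main obstacle to be precisely this last vanishing-of-holomorphic-sections step: verifying that the harmonic curvature bundle $\mathcal H$ on a Fano manifold of Picard number one admits no nonzero holomorphic sections requires the explicit identification of the $G_0$-module and a numerical positivity argument tied to the long-simple-root hypothesis, and it is here that the geometry of minimal rational curves (not just the formal theory of Cartan connections) genuinely enters.
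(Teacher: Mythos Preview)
The paper does not give its own proof of this theorem; it is stated as a cited result from \cite{HH08} and placed in Section 8 as an application illustrating how the general framework connects to complex geometry. So there is no in-paper argument to compare against.

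That said, your sketch is a faithful outline of the approach in \cite{HH08} and is consistent with the surrounding discussion in the paper: one uses Tanaka's normal Cartan connection for the $G_0^{\sharp}$-structure, reduces to the vanishing of the harmonic curvature $H(K)$, identifies $H(K)$ with a holomorphic section of a bundle built from $H^2(\frak g_-,\frak g)$, and then uses the Fano, Picard-number-one hypothesis (together with the geometry of minimal rational curves) to force that section to vanish. The paper's own commentary right after Theorem \ref{thm: sections all vanishing} and in Section \ref{sect:complex geometry} explicitly frames things this way. Your identification of the main obstacle is also accurate: the nontrivial input specific to \cite{HH08} is precisely the vanishing of holomorphic sections of the harmonic-curvature bundle, which requires the long-simple-root hypothesis and genuine input from the theory of varieties of minimal rational tangents, not just the formal Cartan-connection machinery.
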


\subsubsection{ }
 The theory of Tanaka on simple graded Lie algebra $\frak g$ is generalized to the case when $(\frak g_-, \frak g_0)$ satisfies the condition (C) in \cite{M93}.
Another development on the construction of Cartan connection in the complex analytic category is given as follows.

\begin{theorem} [\cite{HL19}] \label{thm:symplectic grass}

Let $X=G/G^0$ be a symplectic Grassmannian $Gr_{\omega}(k,V)$   and let ${\bf S} \subset \mathbb P(\frak g_-)$ be the variety of minimal rational tangents of $X$ at a general point $o$. Let $M$ be a Fano manifold of Picard number one and $\mathcal C \subset \mathbb P(TM)$ be the variety of minimal rational tangents associated to a choice of minimal rational component.
If  the embedding $\mathcal C_x \subset \mathbb P(T_xM)$ is projectively equivalent to ${\bf S} \subset \mathbb P(\frak g_-)$ for a general point $x \in M$, then $M$ is biholomorphic to $X$.

\end{theorem}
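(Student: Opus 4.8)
\textbf{Proof plan for Theorem \ref{thm:symplectic grass}.}

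The plan is to follow the same architecture as the proof of Theorem \ref{thm:rational homogeneous} for simple graded Lie algebras, replacing the input ``Tanaka theory for simple graded Lie algebras'' by the step-prolongation machinery of the present paper specialized to the symbol $\frak g_-$ attached to the symplectic Grassmannian $X = Gr_{\omega}(k,V)$. First I would record the algebraic facts about $X$: write $\frak g = \frak{sp}(V)$ with the grading $\frak g = \bigoplus_i \frak g_i$ determined by the node of the marked Dynkin diagram corresponding to $Gr_{\omega}(k,V)$, so that $X = G/G^0$ with $\frak g^0 = \bigoplus_{i\ge 0}\frak g_i$ and $\frak g_-$ the nilpotent negative part. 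The depth of this grading is $2$ (for $1<k<\dim V/2$), so $\frak g_- = \frak g_{-1}\oplus\frak g_{-2}$ is two-step and generated by $\frak g_{-1}$. One then checks that $\frak g$ is the prolongation of $(\frak g_-,\frak g_0)$, i.e. $H^1_r(\frak g_-,\frak g) = 0$ for $r>0$; this is where I would invoke the computation of Spencer cohomology in \cite{HL19} (or the relevant Kostant-type statement), together with the condition (C): there is a $G^0$-invariant complement $W^2 = \bigoplus_{i\ge 1}W^2_i$ to $\partial\Hom(\frak g_-,\frak g)_i$ in $\Hom(\wedge^2\frak g_-,\frak g)_i$. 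The variety ${\bf S}\subset\mathbb P(\frak g_{-1})$ is the closed $G_0$-orbit, i.e. the VMRT of the model $X$.

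Next, the geometric translation. Given the Fano manifold $M$ of Picard number one with VMRT $\mathcal C_x\subset\mathbb P(T_xM)$ projectively equivalent to ${\bf S}\subset\mathbb P(\frak g_-)$ at a general point, the first step is to propagate this to a $G_0^{\sharp}$-structure (equivalently, a geometric structure of order $0$ of type $(\frak g_-,G_0)$ after the $N^0$-reduction is understood) on the Zariski-open locus where the VMRT is well defined, and then to extend it across the bad locus. Here I would use the standard technology from the theory of minimal rational curves: the VMRT-structure is holomorphic and, by the codimension estimates on the singular locus of the VMRT-fibration (the locus has codimension $\ge 2$), the associated $G_0^{\sharp}$-structure, and in particular the bundles $\mathcal H_k = Q^{(0)}\times_{G_0}H^2(\frak g_-,\frak g)_k$, extend as holomorphic bundles over all of $M$. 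Then I would run the $W$-normal step prolongation of Section \ref{sect:normal step prolongation} starting from $Q^{(0)}$: by condition (C) and Theorem \ref{thm:Cartan connection}, $\mathscr S_W Q^{(0)}$ is (isomorphic to) the $W^2$-normal principal complete prolongation $\mathscr R_{W^2}Q^{(0)}$, hence a Cartan connection $(P,\theta)$ of type $G/G^0$ on $M$, with curvature $\kappa$ living in the bundles $\mathcal H_k$.

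The decisive step is the vanishing of the curvature. By Proposition \ref{prop: curvature vanishing} and Theorem \ref{thm: sections all vanishing}, it suffices to show that every holomorphic section of $\mathcal H_k = Q^{(0)}\times_{G_0}H^2(\frak g_-,\frak g)_k$ vanishes for all $k\ge 1$. This is where the hypotheses on $M$ (Fano, Picard number one, and the special geometry of $X$) enter in an essential, and non-formal, way: one analyzes the $G_0$-module $H^2(\frak g_-,\frak g)_k$ via Kostant's theorem, identifies the induced holomorphic vector bundle $\mathcal H_k$ on $M$ in terms of the tautological/minimal-rational-curve data, and shows by a positivity/semistability argument --- typically that $\mathcal H_k$ is a sum of line bundles or bundles of strictly negative degree along minimal rational curves, so $H^0(M,\mathcal H_k) = 0$ --- that no nonzero holomorphic section can exist. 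I expect this to be the main obstacle: the formal prolongation theory of the paper reduces the problem cleanly to a statement about holomorphic sections of explicit bundles, but proving that vanishing requires genuine input from the birational geometry of $M$ and the representation theory of the symplectic group, essentially the content isolated in \cite{HL19}. Once the curvature vanishes, $(P,\theta)$ is a flat Cartan connection of type $G/G^0$, so its development gives a local biholomorphism $M \dashrightarrow X = Gr_{\omega}(k,V)$; since $M$ is Fano of Picard number one and $X$ is the model, a monodromy/simple-connectedness argument (the same as in the proof of Theorem \ref{thm:rational homogeneous}) upgrades this to a global biholomorphism $M \cong X$, completing the proof.
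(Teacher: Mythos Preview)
The paper does not give its own proof of this theorem: it is stated with attribution to \cite{HL19} and no proof environment follows it. In the surrounding text the authors present it as an external result (``Another development on the construction of Cartan connection in the complex analytic category is given as follows'') and then remark that analogous characterizations for horospherical varieties can be obtained via the machinery of the present paper (citing \cite{HK}). So there is nothing in the paper to compare your proposal against.

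That said, your outline is exactly the kind of argument the paper is advertising as an application of its framework: translate the VMRT hypothesis into a $G_0$-structure $Q^{(0)}$ on a Zariski-open set, extend across codimension $\ge 2$, run the $W$-normal step prolongation, and reduce flatness to the vanishing of holomorphic sections of $\mathcal H^2_k$ via Proposition~\ref{prop: curvature vanishing} and Theorem~\ref{thm: sections all vanishing}. One caution: you assume condition (C) holds and invoke Theorem~\ref{thm:Cartan connection} to get a Cartan connection directly, but the symplectic Grassmannian $Gr_\omega(k,V)$ for $1<k<\dim V/2$ corresponds to a short simple root, and the clean condition-(C) route used for long-root cases in Theorem~\ref{thm:rational homogeneous} does not apply verbatim; this is precisely why \cite{HL19} required additional work. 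Your plan is correct in spirit, but the step ``condition (C) holds, so $\mathscr S_W Q^{(0)}$ is a Cartan connection'' is where the genuine difficulty specific to the symplectic case lies, and you would need either to verify condition (C) directly for this grading or to bypass it via the inductive flatness argument of Theorem~\ref{thm:vanishing of tau implies} together with the section-vanishing input from \cite{HL19}.
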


In fact, Theorem \ref{thm:symplectic grass} also hold for an odd symplectic Grassmannian $X$ (\cite{HL19}). Here, by an odd symplectic Grassmannian, we mean the space of all isotropic $k$-dimensional subspace of vector space $V$ of dimension $2n+1$ with a skew-symmetric form $\omega$ of maximal rank. Then the odd symplectic Grassmannian $Gr_{\omega}(k,V)$ with $\dim V=2n+1$ is no longer a homogeneous space, while the symplectic Grassmannian $Gr_w(k,V)$ with $\dim V=2n$ is a homogeneous space $G/G^0$.

An odd symplectic Grassmannian  is one of the examples of almost homogeneous manifolds, a compact complex manifold on which its automorphism group has an open orbit. There are only a few results on equivalence problem associated with an almost homogeneous manifold. The next simplest example of an almost homogeneous manifold  is a smooth horospherical variety of Picard number one, classified by Pasquier (\cite{Pa}). An odd symplectic Grassmannian is one of this kind of examples.
 We can now apply the theory developed in this paper to study the rigidity of horospherical varieties.  According to Theorem  \ref{thm: sections all vanishing}, a manifold with a geometric structure $Q^{(0)}$ modeled on $X^0=G/G^0 \subset X$ is locally  equivalent to the model space if there is no nontrivial holomorphic section of $\mathcal H_k=Q^{(0)} \times _{G_0} H^2(\frak g_-, \frak g)_k$ for all $k\geq 1$, as in the case of $G_0^{\sharp}$-structure of type $\frak g_-$ studied by Tanaka.
 %
 Instead of trying to confirm the condition (C), by showing that any holomorphic section of $\mathcal H_k=Q^{(0)} \times _{G_0} H^2(\frak g_-, \frak g)_k$ vanishes for all $k\geq 1$, we get a characterization of
 smooth horospherical varieties of Picard number one  as in Theorem \ref{thm:rational homogeneous} and Theorem \ref{thm:symplectic grass}
  (\cite{HK}).

\begin{theorem} [\cite{HK}]

Let $X $ be a smooth horospherical variety of Picard number one  and let ${\bf S} \subset \mathbb P(\frak g_-)$ be the variety of minimal rational tangents of $X$ at a general point $o$. Let $M$ be a Fano manifold of Picard number one and $\mathcal C \subset \mathbb P(TM)$ be the variety of minimal rational tangents associated to a choice of minimal rational component.
If  the embedding $\mathcal C_x \subset \mathbb P(T_xM)$ is projectively equivalent to ${\bf S} \subset \mathbb P(\frak g_-)$ for a general point $x \in M$, then $M$ is biholomorphic to $X$.

\end{theorem}





\section*{Appendix}

 In this Appendix we give an elementary proof of the following proposition: \\

\noindent{\bf Proposition \ref{prop:weighted estimate}.} {\it
Let $X_1, \dots, X_n$ be $n$-independent analytic vector fields defined on a neighborhood of $0$ in  $\mathbb R^n$ or $\mathbb C^n$. A formal function $f$ at $0$ is convergent in a neighborhood of $0$ if and only if there exists positive constants $C$ and $\rho$ such that}
$$(*) \qquad \qquad |(X_{i_1} X_{i_2}\dots X_{i_{\ell}} f)(0)| \leq C \ell!\rho^{\ell} \text{ for } \ell=0,1,2, \dots, (i_1, i_2, \dots, i_{\ell}) \in \{1,2,\dots, n\}^{\ell}.$$


\begin{proof} [Proof of  Proposition \ref{prop:weighted estimate}]
Note first that it is easy to see the proposition holds if $\{X_1, \dots, X_n\}$ is a coordinate frame $\{\frac{\partial}{\partial x^1}, \dots, \frac{\partial}{\partial x^n}\}$, where $(x^1, \dots, x^n)$ is a coordinate system of $\mathbb R^n$ or $\mathbb C^n$. We will therefore show that if $(*)$ holds for $u$ with respect to $X_1, \dots, X_n$, then a similar estimate for  $u$ holds with respect to  $\{\frac{\partial}{\partial x^1}, \dots, \frac{\partial}{\partial x^n}\}$.

Let $V$ be an $n$-dimensional vector space with a basis $\{e_1, \dots, e_n\}$. Write
$$X=e_1 \otimes X_1 + \dots + e_nX_n =\left(\begin{array}{c}X_1\\ \vdots\\X_n\end{array} \right) \qquad D=e_1 \otimes D_1 + \dots + e_n \otimes D_n = \left( \begin{array}{c}D_1\\ \vdots\\D_n\end{array}\right)$$
where $D_i = \frac{\partial}{\partial x^i}$ for $i=1, \dots, n$.
Then we can write
$$D= AX$$
with $GL(V)$-valued holomorphic function $A=A(x^1, \dots, x^n)$. By successive differentiation we have
\begin{eqnarray*}
Du &= & AXu \\
D^2 u &=& (DA) Xu + A^2 X^2 u \\
D^3u &=& (D^2A) Xu + ((DA)A + D(A^2)) X^2 u + A^3 X^3 u
\end{eqnarray*}
and in general
$$D^k u = \Phi^k_1 X u + \Phi^k_2 X^2 u + \dots + \Phi^k_k X^k u $$
with
\begin{eqnarray*}
\Phi^k_i &=& \Phi^{k-1}_{i-1} A + D \Phi^{k-1}_i \\
\Phi^k_i &=& 0 \text{ for } i \leq 0 \text{ or } i >k.
\end{eqnarray*}
In the above formula $D^k=\otimes ^k D$ and $X^k = \otimes ^k X$ should be regarded as sections of $\otimes ^k V \otimes \mathscr D$, where $\mathscr D$ denotes the sheaf of differential operators on $\mathbb C^n$.

Consider a directed graph whose vertex set is $\{(i,j): i \geq j, 1 \leq i,j \leq k\}$ and whose edge set consists of all arrows $(i,j) \rightarrow (i',j')$ satisfying either ($\searrow$) $i+1=i'$ and $j+1=j'$  or ($\downarrow$) $i+1=i'$ and $j=j'$.

\begin{eqnarray*}
\xymatrix{
(1,1) \ar[d] \ar[dr] &&&& \\
(2,1) \ar[d] \ar[dr] & (2,2)\ar[d] \ar[dr] &&&\\
\dots &&&& \\
(k-1,1)\ar[d] \ar[dr]& \ar[d] \ar[dr]  & \ar[d]\ar[dr] &\ar[d]\ar[dr]& \\
(k,1) & \dots & (k,i) & \dots &  (k,k)
}
\end{eqnarray*}

\noindent
Thus any path $\Gamma$ from $(1,1)$ to $(k,i)$ has $(i-1)$ edges of the first type $\searrow$ and $(k-i)$ edges of the second type $\downarrow$.
 By the recursion formula we see that $\Phi^k_i$ is the sum of all elements $\varphi^k_{i;\Gamma}$    for every path $\Gamma$ from $(1,1)$ to $(k,i)$, which is created from $A$ by multiplying $A$ on the right when the path $\Gamma$ passes an edge of the first type $\searrow$ and by applying  differentiation  $D$ when it passes an edge of the second type $\downarrow$. \\

We endow $\otimes^kV$ with a norm defined by
$$||\sum a_{i_1 \dots i_k} e_{i_1} \otimes \dots \otimes e_{i_k}||:={\rm sup} |a_{i_1 \dots i_k}|.$$
Then $(*)$ may be written as $||X^k u(0)|| \leq C k! \rho^k$, and we want to show
$$ ||D^k u(0) || \leq \overline{C} k! \overline{\rho}^k$$
for some positive constants $\overline{C}$ and $\overline{\rho}$. For that we will show that
$$|\Phi^k_i(0)| \leq C_1 k(k-1) \dots(i+1){ \rho_1}^k \text{ for all } k$$
where $C_1$ and $\rho_1$ are positive constants independent of $k$. \\

To prove this we use the following norm $| \,\cdot\,|_r$ with $r=(r_1, \dots, r_n) \in \mathbb R^n_{> 0}$ for formal power series $F=\sum_{k=0}^nF_k$, $F_k = \sum_{|\alpha|=k} f_{\alpha} x^{\alpha}$, $f_{\alpha} \in \mathbb C $:
\begin{eqnarray*}
|F_k|_r &:=& \mathrm{sup}_{|\alpha|=k} \left( \frac{\alpha !}{|\alpha|!} |f_{\alpha}|r^{\alpha} \right) \\
|F|_r&:=& \sum |F_k|_r.
\end{eqnarray*}

\medskip  \noindent {\bf Lemma A.} {\it
\begin{enumerate}
\item If $F$ is convergent, then there is $r$ such that $|F|_r < \infty$.
\item $|F_k G_{\ell}|_r \leq |F_k|_r |G_{\ell}|_r$, where $F_k$ and $ G_{\ell}$ are homogeneous polynomials of degree $k$ and $\ell$, respectively.
\item $|FG|_r \leq |F|_r |G|_r$.
\item $|DF_k|_r \leq \frac{k}{r_{\min}}|F_k|_r$ for homogeneous polynomial of degree $k$,
where $r_{\min}:=\min\{r_1, \dots, r_n\}$.
\end{enumerate}
}

\begin{proof} Let us prove (2). Let $F_k = \sum_{|\alpha|=k}f_{\alpha}x^{\alpha}$, $G_{\ell} = \sum_{|\beta|=\ell} g_{\beta}x^{\beta}$, and $H_m   = \sum_{|\gamma|=m} h_{\gamma}x^{\gamma}$, where $m=k+\ell$ and $H_m=F_k G_{\ell}$. Recall that $|H_m|_r = {\rm sup}_{|\gamma|=m}\left( \frac{\gamma !}{|\gamma|!} |h_{\gamma}| r^{\gamma}\right)$. But for any $\gamma$ with $|\gamma|=m$ we have
\begin{eqnarray*}
\frac{\gamma !}{|\gamma|!} |h_{\gamma}| r^{\gamma} &=& \frac{\gamma !}{|\gamma|!} \left|\sum_{\alpha+ \beta=\gamma}f_{\alpha}g_{\beta}\right| r^{\gamma} \\
&\leq & \frac{\gamma !}{|\gamma|!}  \sum_{\substack{\alpha+\beta=\gamma\\[2 pt]|\alpha|=k, |\beta|=\ell }}  \frac{|\alpha|! |\beta|!}{\alpha ! \beta !} \left( \frac{\alpha !}{|\alpha|!} |f_{\alpha}|r^{\alpha}\right) \left( \frac{\beta !}{|\beta|!} |g_{\beta}|r^{\beta}\right) \\
&\leq & \frac{k ! \ell !}{m !} \left(
\sum_{\substack{\alpha+\beta = \gamma\\[2 pt] |\alpha|=k, |\beta|=\ell  } } \frac{ \gamma !} {\alpha ! \beta !}
\right) |F_k|_r |G_{\ell}|_r \\
&=& |F_k|_r |G_{\ell}|_r
\end{eqnarray*}
because of the identity
$$\sum_{\substack{\alpha+\beta = \gamma\\[2 pt] |\alpha|=k, |\beta|=\ell  }} \frac{ \gamma !} {\alpha ! \beta !} = \frac{m !} {k! \ell !} $$
which is derived from the binary expansion.

The other assertions (1), (3), (4) are easy to verify.
\end{proof}

Now let us return to the proof of Proposition \ref{prop:weighted estimate}. Take $r=(r_1, \dots, r_n)$ such that $|A|_r < \infty$.
We see that
$$|\Phi A|_r \leq K |\Phi|_r |A|_r.$$
Denote by $Trun^{(\ell)}F:=\sum_{i \leq \ell}F_i$ for $F=\sum F_{\ell}$ and set
$$^{k} \Psi^{\ell}_i=Trun ^{(k-\ell+1)}\Phi^k_i.$$
Then $\Phi^k_i(0)= ^{k}\Psi^{k}_i(0)$.  By Lemma A 
(4)
we have
$$| D (^{k} \Psi^{\ell}_i)|_r \leq \frac{ k-\ell+1 }{r_{\min}}|^{k} \Psi^{\ell}_i|_r.$$
Thus we see that
$$|\varphi^k_{i;\Gamma}(0)| \leq L k(k-1) \dots (i+1) {\rho''}^k.$$
Since the number of path $\Gamma$ from $(1,1)$ to $(k,i)$ is less than $2^k$, we have finally
$$|\Phi^k_i(0)| \leq C_1 k(k-1) \dots(i+1){ \rho_1}^k $$
which completes the proof of Proposition \ref{prop:weighted estimate}.
\end{proof}




\end{document}